\title[Wall-Crossings in Toric Gromov--Witten Theory II]
{Wall-Crossings in Toric Gromov--Witten Theory II: Local~Examples}
\author{Tom Coates}
\address{Department of Mathematics\\
Imperial College London\\
Huxley Building, 180 Queen's Gate\\
London SW7 2AZ 
\\UK}
\email{t.coates@imperial.ac.uk}
\newcommand{\PP}{\mathbb{P}}
\newcommand{\FF}{\mathbb{F}}
\newcommand{\CC}{\mathbb{C}}
\newcommand{\CCP}{\mathbb{CP}}
\newcommand{\ZZ}{\mathbb{Z}}
\newcommand{\RR}{\mathbb{R}}
\newcommand{\QQ}{\mathbb{Q}}
\newcommand{\Cstar}{{\CC^\times}}
\newcommand{\cF}{\mathcal{F}}
\newcommand{\cHX}{\mathcal{H}_\cX}
\newcommand{\cHY}{\mathcal{H}_Y}
\newcommand{\cHZ}{\mathcal{H}_\cZ}
\newcommand{\cO}{\mathcal{O}}
\newcommand{\ev}{\mathrm{ev}}
\newcommand{\be}{\mathbf{e}}
\renewcommand{\(}{\left(}
\renewcommand{\)}{\right)}
\DeclareMathOperator{\Res}{Res}
\newcommand{\fun}{\mathbf{1}}
\newcommand{\cB}{\mathcal{B}}
\newcommand{\cC}{\mathcal{C}}
\newcommand{\cE}{\mathcal{E}}
\newcommand{\cX}{\mathcal{X}}
\newcommand{\cY}{\mathcal{Y}}
\newcommand{\cZ}{\mathcal{Z}}
\newcommand{\cIX}{\mathcal{IX}}
\newcommand{\cLX}{\mathcal{L}_{\mathcal{X}}}
\newcommand{\cLY}{\mathcal{L}_Y}
\newcommand{\cLZ}{\mathcal{L}_\cZ}
\newcommand{\correlator}[1]{\left \langle #1 \right \rangle}
\newcommand{\smcorrelator}[1]{\big \langle #1 \big \rangle}
\newlength{\mybracketspacing}
\DeclareMathOperator{\Eff}{Eff}
\DeclareMathOperator{\NETT}{NETT}
\newcommand{\U}{\mathbb{U}}
\newcommand{\CR}{\underset{\scriptscriptstyle \text{CR}}{\cup}}
\newcommand{\JJ}{J^{\text{big}}}
\DeclareMathOperator{\fr}{frac}
\newcommand{\tw}{{\text{tw}}}
\theoremstyle{plain}
\newtheorem{thm}{Theorem}[section]
\newtheorem{proposition}[thm]{Proposition}
\newtheorem{conj}[thm]{Conjecture}
\newtheorem{claim}[thm]{Claim}
\newtheorem*{conj*}{Conjecture}
\newtheorem{cor}[thm]{Corollary}
\newtheorem*{cor*}{Corollary}
\newtheorem{ex}{Exercise}[section]
\theoremstyle{definition}
\newtheorem*{rem*}{Remark}
\newcommand{\cM}{\mathcal{M}}
\newcommand{\cI}{\mathcal{I}}
\renewcommand{\cH}{\mathcal{H}}
\renewcommand{\cL}{\mathcal{L}}
\newcommand{\GIT}[1]{/\!\!/_{\kern-.2em #1 \kern0.1em}}
\newcommand{\fp}{\mathfrak{p}}
\newcommand{\hy}{\hat{y}}
\newcommand{\rank}{\operatorname{rank}}
\begin{document}

\begin{abstract}
  In this paper we analyze six examples of birational transformations
  between toric orbifolds: three crepant resolutions, two crepant
  partial resolutions, and a flop.  We study the effect of these
  transformations on genus-zero Gromov--Witten invariants, proving the
  Coates--Corti--Iritani--Tseng/Ruan form of the Crepant Resolution
  Conjecture in each case.  Our results suggest that this form of the
  Crepant Resolution Conjecture may also hold for more general crepant
  birational transformations.  They also suggest that Ruan's original
  Crepant Resolution Conjecture should be modified, by including
  appropriate ``quantum corrections'', and that there is no
  straightforward generalization of either Ruan's original Conjecture
  or the Cohomological Crepant Resolution Conjecture to the case of
  crepant partial resolutions. Our methods are based on mirror
  symmetry for toric orbifolds.
\end{abstract}

\maketitle

\section{Introduction}

Suppose that $\cX$ is an algebraic orbifold and that $\cY$ is an orbifold or
algebraic variety which is birational to $\cX$.  It is natural to try
to understand the relationship between the quantum cohomology of $\cX$
and that of $\cY$.  In this paper we analyze six examples of this
situation --- three crepant resolutions, two crepant partial
resolutions, and a flop --- which together exhibit some of the range
of phenomena which can occur.  Our methods are based on mirror
symmetry for toric orbifolds.

Small quantum cohomology is a family of algebras depending on
so-called \emph{quantum parameters}.  The quantum parameters
$u_1,\ldots,u_s$ for $\cX$ correspond to a choice of basis for
$H^2(\cX;\QQ)$, which we take to be primitive integer vectors on the
rays of the K\"ahler cone for $\cX$; the quantum parameters
$q_1,\ldots,q_r$ for $\cY$ correspond, similarly, to a choice of basis
for $H^2(\cY;\QQ)$.  If $\cY \to X$ is a crepant resolution (or
partial resolution) of the coarse moduli space $X$ of $\cX$ then there
is a natural embedding $j:H^2(\cX;\QQ) \to H^2(\cY;\QQ)$ which
identifies the K\"ahler cone for $\cX$ with a face of the K\"ahler
cone for $\cY$.  The embedding $j$ does not in general identify the
integer lattices in $H^2(\cX;\QQ)$ and $H^2(\cY;\QQ)$, but nonetheless
we can choose bases such that $q_i \leftrightarrow u_i^{r_i}$, $1 \leq
i \leq s$, for some positive rational numbers $r_i$.

An influential conjecture of Ruan asserts that if $\cY \to X$ is a
crepant resolution then there are roots of unity $\omega_i$, $1 \leq i
\leq r$, and a choice of path of analytic continuation such that the
algebra obtained from the small quantum cohomology of $\cY$ by
analytic continuation in the parameters $q_i$ followed by the change
of variables
\begin{equation}
  \label{eq:Ruancov}
  q_i =
  \begin{cases}
    \omega_i u_i^{r_i} & 1 \leq i \leq s \\
    \omega_i & s < i \leq r
  \end{cases}
\end{equation}
is isomorphic to the small quantum cohomology of $\cX$.  One
consequence of this is the Cohomological Crepant Resolution Conjecture
(CCRC) \cite{Ruan:CCRC}, which asserts that the algebra obtained from
the small quantum cohomology of $\cY$ by analytic continuation in the
$q_i$ followed by the change of variables
\begin{equation*}
  q_i =
  \begin{cases}
    0 & 1 \leq i \leq s \\
    \omega_i & s < i \leq r
  \end{cases}
\end{equation*}
is isomorphic to the Chen--Ruan orbifold cohomology algebra of $\cX$.
An extension of Ruan's Conjecture proposed by Bryan--Graber
\cite{Bryan--Graber} asserts that if $\cX$ satisfies a Hard Lefschetz
condition on Chen--Ruan cohomology (a condition whose necessity was
first suggested in \cite{CCIT:crepant1}) then the \emph{big} quantum
cohomology algebras of $\cX$ and $\cY$ coincide, after analytic
continuation in the $q_i$ and the change of variables
\eqref{eq:Ruancov}, via a linear isomorphism which identifies the
orbifold Poincar\'e pairing on $\cX$ with the Poincar\'e pairing on
$\cY$.  These conjectures have been verified in a number of examples
\citelist{\cite{Perroni}\cite{Bryan--Graber--Pandharipande}\cite{Bryan--Graber}\cite{Boissiere--Mann--Perroni:1}\cite{CCIT:crepant1}\cite{Wise}\cite{CCIT:typeA}\cite{Bryan--Gholampour}\cite{Gillam}\cite{Boissiere--Mann--Perroni:2}}.

In recent joint work with Corti, Iritani, and Tseng
\cite{CCIT:crepant1} we proposed\footnote{Similar ideas occurred in
  unpublished work of Ruan; an expository account can be found in
  \cite{Coates--Ruan}.} a rather different picture of the relationship
between the Gromov--Witten theory of $\cX$ and that of $\cY$.  Our
conjecture was phrased in terms of Givental's symplectic
formalism
\citelist{\cite{Coates--Givental:QRRLS}\cite{Givental:symplectic}}.
Genus-zero Gromov--Witten invariants of $\cX$ (and respectively $\cY$)
are encoded in a Lagrangian submanifold-germ $\cLX$ in a symplectic
vector space $\cHX$ (respectively $\cL_\cY \subset \cH_\cY$).  As
$\cLX$ and $\cL_\cY$ are germs of submanifolds it makes sense to
analytically continue them, and we conjectured the existence of a
linear symplectic isomorphism $\U:\cHX \to \cH_\cY$ satisfying some
quite restrictive conditions such that after analytic continuation we
have $\U(\cLX) = \cL_\cY$.  We also proved our conjecture when $\cX$
is one of the weighted projective spaces $\PP(1,1,2)$ or
$\PP(1,1,1,3)$ and $\cY \to X$ is a crepant resolution.

Our conjecture has consequences for quantum cohomology: it implies the
Bryan--Graber Conjecture, the Cohomological Crepant Resolution
Conjecture, and a modified version of Ruan's Conjecture, each with the
caveat that we must allow the quantities $\omega_i$ to be arbitrary
constants rather than roots of unity.  (In the examples below the
$\omega_i$ turn out to be roots of unity and so the caveat disappears;
Iritani has suggested an attractive conceptual reason for this to be
true in general \cite{Iritani:integral}.)\phantom{.} The modified
version of Ruan's Conjecture has an additional hypothesis, that $\cX$
be \emph{semi-positive}, and replaces the change of variables
\eqref{eq:Ruancov} by $ q_i = f_i(u_1,\ldots,u_s)$ where
\begin{equation*}
  f_i(u_1,\ldots,u_r) = 
  \begin{cases}
    \omega_i u_i^{r_i} + \text{higher order terms in $u_1,\ldots,u_r$}
    & 1 \leq i \leq s \\
    \omega_i + \text{higher order terms in $u_1,\ldots,u_r$} & s < i \leq r.
  \end{cases}
\end{equation*}
Thus we get a ``quantum corrected'' version of Ruan's original
conjecture.

In this paper we consider six examples:
\begin{enumerate}
\item[(I)] the crepant resolution of $\cX = \CC^3/\ZZ_3$, where $\ZZ_3$
  acts with weights $(1,1,1)$;
\item[(II)] the crepant resolution of the canonical bundle $\cX = K_{\PP(1,1,2)}$;
\item[(III)] the crepant partial resolution of $\cX = \CC^3/\ZZ_4$, where
  $\ZZ_4$ acts with weights $(1,1,2)$;
\item[(IV)] the crepant resolution of the canonical bundle $\cX =
  K_{\PP(1,1,3)}$;
\item[(V)] the crepant partial resolution of $\cX = \CC^3/\ZZ_5$, where
  $\ZZ_5$ acts with weights $(1,1,3)$;
\item[(VI)] a toric flop with $\cX  = \cO_{\PP(1,2)}(-1)^{\oplus 3}$
  and $\cY = \cO_{\PP^2}(-1) \oplus \cO_{\PP^2}(-2)$.
\end{enumerate}
We prove the Coates--Corti--Iritani--Tseng/Ruan Crepant Resolution
Conjecture in each case.  This has implications as follows: \medskip
\begin{center}
  \begin{tabular}{@{}cccccc@{}} \toprule 
    & \multicolumn{5}{c}{Conjecture} \\ \cmidrule(r){2-6} 
    Example & CCIT/Ruan & CCRC & Bryan--Graber & original Ruan &
    modified Ruan \\ \midrule  
    I & \checkmark& \checkmark&n/a& \checkmark & \checkmark\\
    II & \checkmark& \checkmark& \checkmark& \checkmark& \checkmark\\
    III & \checkmark& \checkmark&n/a& \checkmark& \checkmark\\
    IV & \checkmark& \checkmark&n/a&?& \checkmark\\
    V & \checkmark&?&n/a&?&?\\
    VI & \checkmark&n/a&n/a&n/a&n/a\\ \bottomrule 
  \end{tabular} 
\end{center} \medskip I expect that wherever there is a ``?'' in this
table, the corresponding conjecture fails to hold, so that for example
the original form of Ruan's Conjecture fails in Example~IV and the
modified form of Ruan's Conjecture fails in Example~V.  I expect also
that the conclusion of the Bryan--Graber Conjecture fails to hold in
every case except Example~II.  It is difficult to prove these
assertions, as this would involve ruling out every possible choice of
path of analytic continuation and all choices of roots of unity, but I
know of no reason to expect these conjectures to hold.

In forthcoming work, Iritani will prove our form of the Crepant
Resolution Conjecture for all crepant birational transformations
between toric Deligne--Mumford stacks.  His method uses the full force
of the mirror Landau--Ginzburg model, the variation of semi-infinite
Hodge structure
\citelist{\cite{Barannikov:periods}\cite{Iritani:integral}\cite{Iritani:inprogress}}
associated to it, and the mirror theorem for toric Deligne--Mumford
stacks \cite{CCIT:stacks}.  Since all of our examples are included in
his discussion, it is natural to ask: ``what is the point of this
paper?''  The discussion here has quite modest goals, and is meant to
illustrate four points.  Firstly, these questions are \emph{not
  difficult}.  If $\cX$ is a toric orbifold $\cX$ and $\cY \to X$ is a
crepant resolution then the relationship between the quantum
cohomology of $\cX$ and that of $\cY$ can be determined
systematically, using well-understood methods from toric mirror
symmetry.  Secondly, our form of the Crepant Resolution Conjecture may
also hold, without significant change, for \emph{more general crepant
  birational transformations}: we see this here for two crepant
partial resolutions and a flop.  Thirdly, the method of proof
described here also \emph{applies without change} to the more general
crepant toric situation.  Finally, it seems likely that \emph{no
  na\"\i ve modification} of Ruan's original conjecture holds true; we
discuss this further in the next paragraph.  Along the way, we will
see two things which were perhaps already obvious: that Givental-style
mirror theorems are well-adapted to the analysis of toric birational
transformations, and that the methods of \cite{CCIT:crepant1} are
applicable to the (local) Calabi--Yau examples which are of greatest
interest to physicists \cite{ABK}.

The original conjectures of Ruan and of Bryan--Graber have an
attractive simplicity, and one might therefore ask whether our
formulation of the Crepant Resolution Conjecture is unneccessarily
complicated and whether some simpler statement holds
\cite{Cadman--Cavalieri}.  The examples below constitute some evidence
that the answer to these questions is ``no''.  In Example~IV below we
see that quantum corrections to Ruan's original conjecture are
probably necessary, and in Example~V the situation is even worse:
there is probably not even a generalization of the Cohomological Crepant
Resolution Conjecture to partial resolutions which involves only small
(rather than big) quantum cohomology.  This is related to the absence
of a Divisor Equation for degree-two classes from the twisted sectors,
and is discussed further in Section~\ref{sec:C3Z5}.

\subsection*{Conventions} We will assume that the reader is familiar
with the Gromov--Witten theory of orbifolds.  This theory is
constructed in
\citelist{\cite{Chen--Ruan:orbifolds}\cite{Chen--Ruan:GW}\cite{AGV:1}\cite{AGV:2}};
a rapid overview can be found in \cite{CCLT}*{Section~2}.  We work in
the algebraic category, so for us ``orbifold'' means ``smooth
algebraic Deligne--Mumford stack over $\CC$''.  All of our examples
are non-compact, but they carry the action of a torus $T=\Cstar$ such
that the $T$-fixed locus is compact.  We therefore work throughout
with $T$-equivariant Gromov--Witten invariants, which in this setting
behave much as the Gromov--Witten invariants of compact orbifolds (see
\emph{e.g.}  \cite{Bryan--Graber}), and with $T$-equivariant
Chen--Ruan orbifold cohomology.  We always take the product of
$T$-equivariant Chen--Ruan classes using the Chen--Ruan product; when
we want to emphasize this, we will write the product as $\CR$.  The
degree of a Chen--Ruan class always means its orbifold or age-shifted
degree.

An expository account of our Crepant Resolution Conjecture and its
consequences can be found in \cite{Coates--Ruan}.  The reader should
take care when comparing the discussion in this paper with those in
\citelist{\cite{Bryan--Graber}\cite{Coates--Ruan}}, as here we measure
the degrees of orbifold curves using a basis of degree-two cohomology
classes chosen as above, whereas there the authors use a so-called
\emph{positive basis} for $H_2$.  Our choice of degree conventions
fits well with toric geometry, and this will be important below, but
we pay a price for our choice: the presence of the rational numbers
$r_i$ described above.

\subsection*{Outline of the Paper} In Section~\ref{sec:statement} we
fix notation and give a precise description of the conjecture which we
will prove.  In Section~\ref{sec:theory} we collect various
preparatory results, as well as describing how our
conjecture implies versions of Ruan's Conjecture, the Bryan--Graber
Conjecture, and the Cohomological Crepant Resolution Conjecture.
Examples~I--VI are in Sections~\ref{sec:C3Z3}--\ref{sec:toricflop}
respectively, and  we conclude with an Appendix in which we compute
various genus-zero Gromov--Witten invariants.

\subsection*{Acknowledgements} The author thanks Hiroshi
Iritani for many extremely useful conversations, and Alessio Corti,
Yongbin Ruan, and Hsian-Hua Tseng for a productive collaboration.  He
thanks Andrea Brini and Alessandro Tanzini for enlightening
discussions of Example~III.

\section{Statement of the Conjecture}
\label{sec:statement}
In this section we give a precise statement of the conjecture that we will
prove.  Before we do so, we describe our general setup and fix
notation.

\subsection*{General Setup} 

Let $\cX$ be a Gorenstein orbifold with projective coarse moduli space
$X$ and let $\pi:Y \to X$ be a crepant resolution.  Assume that $\cX$,
$X$, and $Y$ carry actions of a torus $T = \Cstar$ such that both
$\pi$ and the structure map $\cX \to X$ are $T$-equivariant and such
that the $T$-fixed loci on $\cX$ and $Y$ are compact.  Let
$\CC[\lambda]$ denote the $T$-equivariant cohomology of a point, where
$\lambda$ is the first Chern class of the line bundle $\cO(1) \to
\CCP^\infty$, and let $\CC(\lambda)$ be its field of fractions.  Write
$H(\cX):= H^\bullet_{\text{CR},T}(\cX;\CC) \otimes \CC(\lambda)$ for
the localized $T$-equivariant Chen--Ruan orbifold cohomology of $\cX$,
and $H(Y) := H^\bullet_T(Y;\CC)\otimes \CC(\lambda)$ for the localized
$T$-equivariant cohomology of $Y$.  We work throughout over the field
$\CC(\lambda)$. The $\CC(\lambda)$-vector spaces $H(\cX)$ and $H(Y)$
carry non-degenerate inner products, given by
\begin{align*}
  (\alpha,\beta)_\cX := 
  \int_{\cIX^T} {i^\star (\alpha \cup I^\star \beta) \over
    \be(N_{\cIX^T/\cIX})}
  && \text{and} &&
  (\alpha,\beta)_Y := 
  \int_{Y^T} {j^\star (\alpha \cup \beta) \over
    \be(N_{Y^T/Y})}
\end{align*}
where $I$ is the canonical involution on the inertia stack $\cIX$ of
$\cX$; $i:\cIX^T \to \cIX$ and $j:Y^T \to Y$ are the inclusions of the
$T$-fixed loci in $\cIX$ and $Y$ respectively; $N_{\cIX^T/\cIX}$ and
$N_{Y^T/Y}$ are the normal bundles to the $T$-fixed loci; and $\be$ is the
$T$-equivariant Euler class.  Note that the $T$-equivariant Euler
classes are invertible over $\CC(\lambda)$.

\subsection*{The Symplectic Vector Space}

In what follows write $\cZ$ for either $\cX$ or $Y$, and write $Z$ for
the coarse moduli space of $\cZ$ (\emph{i.e.} for either $X$ or $Y$).
Introduce the symplectic vector space
\begin{align*}
  &\cHZ := H(\cZ) \otimes \CC(\!(z^{-1})\!) & \text{the vector space} \\
  &\Omega_\cZ(f,g) := \Res_{z=0} \big(f(-z),g(z)\big)_\cZ \, dz &
  \text{the symplectic form}
\end{align*}
and set $\cHZ^+ := H(\cZ) \otimes \CC[z]$, $\cHZ^- := z^{-1} H(\cZ)
\otimes \CC[\![z^{-1}]\!]$.  The polarization $\cHZ = \cHZ^+ \oplus
\cHZ^-$ identifies $\cHZ$ with the cotangent bundle $T^\star \cHZ^+$.
We regard $\cHZ$ as a graded vector space where $\deg z = 2$.

\subsection*{Degrees and Novikov Variables}

Fix a basis $\omega_1,\ldots,\omega_s$ for $H^2(\cX;\QQ)$ consisting
of primitive integer vectors on the rays of the K\"ahler cone for
$\cX$, and a basis $\omega'_1,\ldots,\omega'_r$ for $H^2(Y;\QQ)$
consisting of primitive integer vectors on the rays of the K\"ahler
cone for $Y$.  Note that $H^2(\cX;\QQ)$ is canonically isomorphic to
$H^2(X;\QQ)$, so we can regard $\omega_1,\ldots,\omega_s$ as
cohomology classes on $X$, and in our situation we can always insist
that $\pi^\star \omega_i = r_i \omega'_i$, $1 \leq i \leq s$, for some
rational numbers $r_i$.  We measure the degrees of orbifold curves
using the bases $\omega_i$ and $\omega_i'$.  Recall that a stable map
$f:\cC \to \cZ$ from an orbifold curve to $\cZ$ has a well-defined
degree in the free part
\[ 
H_2(Z;\ZZ)_{\text{free}}:= H_2(Z;\ZZ)/H_2(Z;\ZZ)_{\text{tors}}
\]
of $H_2(Z;\ZZ)$; we write $\Eff(\cZ) \subset H_2(Z;\ZZ)_{\text{free}}$
for the set of degrees of stable maps from orbifold curves to $\cZ$.  Given
an element $d \in \Eff(\cZ)$, set $d_i = \langle d, \omega_i \rangle$
if $\cZ = \cX$ and $d_i = \langle d, \omega'_i \rangle$ if $\cZ = Y$.
Note that the $d_i$ here are in general rational numbers.  Define $Q^d
:= Q_1^{d_1} \cdots Q_s^{d_s}$ where $d \in \Eff(\cX)$ and $Q^{d'} :=
Q_1^{d_1'} \cdots Q_r^{d_r'}$ where $d' \in \Eff(Y)$.  Here
$Q_1,Q_2,\ldots$ are formal variables called Novikov variables; the
number of Novikov variables associated with $\cZ$ is $b_2(Z)$, the
second Betti number of $Z$.

\subsection*{Bases and Darboux Co-ordinates}

We fix $\CC(\lambda)$-bases $\phi_0,\ldots,\phi_N$ and
$\phi^0,\ldots,\phi^N$ for $H(\cX)$ such that
\begin{itemize}
\item[(a)] $\phi_0$ is the identity element $\fun_\cX \in 
H(\cX)$;
\item[(b)] $\phi_1,\phi_2,\ldots,\phi_s$ are lifts to
  $T$-equivariant cohomology of
  $\omega_1,\omega_2,\ldots,\omega_s$;
\item[(c)] $(\phi_i,\phi^j)_\cX ={\delta_i}^j$;
\end{itemize}
and $\CC(\lambda)$-bases $\varphi_0,\ldots,\varphi_N$ and
$\varphi^0,\ldots,\varphi^N$ for $H(Y)$ such that
\begin{itemize}
\item[(d)] $\varphi_0$ is the identity element $\fun_Y \in H(Y)$;
\item[(e)] $\varphi_1,\varphi_2,\ldots,\varphi_r$ are lifts to
  $T$-equivariant cohomology of
  $\omega'_1,\omega'_2,\ldots,\omega'_r$;
\item[(f)] $(\varphi_i,\varphi^j)_Y ={\delta_i}^j$.
\end{itemize}
Conditions (b) and (e) here will be useful below when we discuss the
Divisor Equation.  Write
\begin{align*}
  \Phi_i = 
  \begin{cases}
    \phi_i & \text{if $\cZ = \cX$} \\
    \varphi_i & \text{if $\cZ = Y$} 
  \end{cases}
  && \text{and} &&
  \Phi^i = 
  \begin{cases}
    \phi^i & \text{if $\cZ = \cX$} \\
    \varphi^i & \text{if $\cZ = Y$.} 
  \end{cases}
\end{align*}
Then
\begin{equation}
  \label{eq:Darboux}
  \sum_{k \geq 0} q^\alpha_k \Phi_\alpha z^k + \sum_{l \geq 0}
  p_{\beta,l} \Phi^\beta (-z)^{-1-l}
\end{equation}
gives a Darboux co-ordinate system $\{q_{\alpha,k},p_{\beta,l} \}$ on
$\cHZ$; here and henceforth we use the summation convention on Greek
indices, summing repeated Greek (but not Roman) indices over the range
$0,1,\ldots,N$.

\subsection*{Gromov--Witten Invariants}

We use correlator notation for $T$-equivariant Gromov--Witten
invariants of $\cZ$, writing
\begin{equation}
  \label{eq:GW}
  \correlator{\alpha_1 \psi^{i_1},\ldots,\alpha_n \psi^{i_n}}^\cZ_{0,n,d} = 
  \int_{[\cZ_{0,n,d}]^{\text{vir}}} \prod_{k=1}^n
  \ev_k^\star(\alpha_k) \cdot \psi_k^{i_k}
\end{equation}
where $\alpha_1,\ldots,\alpha_n$ are elements of $H(\cZ)$ and
$i_1,\ldots,i_n$ are non-negative integers.  The cohomology classes
$\psi_1,\ldots,\psi_n$ here are the first Chern classes of the
universal cotangent line bundles on the moduli space $\cZ_{0,n,d}$ of
genus-zero $n$-pointed stable maps to $\cZ$ of degree $d \in
\Eff(\cZ)$.  The integral denotes the cap product with the
$T$-equivariant virtual fundamental class of $\cZ_{0,n,d}$: we discuss
this further in the next paragraph.  The right-hand side of equation
\eqref{eq:GW} is defined in \S8.3 of \cite{AGV:2} where it is denoted
$\correlator{\tau_{i_1} (\alpha_1),\ldots,\tau_{i_n}
  (\alpha_n)}_{0,d}$; our choice of notation allows compact
expressions for many important quantities, such as
\begin{align*}
  \correlator{{\alpha \over
      z-\psi}}^\cZ_{0,1,d} && \text{for} &&
  \sum_{m \geq 0} {1 \over z^{m+1}}
  \correlator{\alpha \psi^m \vphantom{\big\vert}}^\cZ_{0,1,d},
\end{align*}
as correlators are multilinear in their entries.

\subsection*{Twisted Gromov--Witten Invariants}

In the examples we consider below, $\cZ$ will be the total space of a
concave vector bundle $\cE$ over a compact orbifold (or manifold)
$\cB$, and the $T$-action on $\cZ$ will rotate the fibers of $\cE$ and
cover the trivial action on $\cB$.  That $\cE$ is concave means that
$H^0(\cC,f^\star \cE) = 0$ for all stable maps $f:\cC \to \cB$ of
non-zero degree.  This implies that stable maps to $\cE$ of non-zero
degree all land in the zero section and so, for $d \ne 0$, the moduli
space $\cZ_{0,n,d}$ coincides as a scheme with $\cB_{0,n,d}$.  The
natural obstruction theories on $\cZ_{0,n,d}$ and $\cB_{0,n,d}$
differ, though, and the $T$-equivariant virtual fundamental classes
satisfy
\[
[\cZ_{0,n,d}]^{\text{vir}} = 
[\cB_{0,n,d}]^{\text{vir}} \cap \be(\text{Obs}_{0,n,d})
\]
where $\be$ is the $T$-equivariant Euler class and
$\text{Obs}_{0,n,d}$ is the vector bundle over $\cB_{0,n,d}$ with
fiber at a stable map $f:\cC \to \cB$ equal to $H^1(\cC,f^\star \cE)$.
Thus
\[
\int_{[\cZ_{0,n,d}]^{\text{vir}}} ( \cdots ) = 
\int_{[\cB_{0,n,d}]^{\text{vir}}} ( \cdots ) \cup \be(\text{Obs}_{0,n,d}).
\]
This means that Gromov--Witten invariants of $\cZ$ coincide with
\emph{twisted Gromov--Witten invariants} \cite{Coates--Givental:QRRLS,
  CCIT:computing} of $\cB$ where the twisting characteristic class is
the inverse $T$-equivariant Euler class $\be^{-1}$ and the twisting
bundle is $\cE$: this is explained in detail in \cite{CCIT:computing}.
Results of \cite{CCIT:computing} allow us to compute these twisted
Gromov--Witten invariants in terms of the ordinary Gromov--Witten
invariants of $\cB$, a fact which we exploit repeatedly below.

In the exceptional case $d=0$, the moduli space $\cZ_{0,n,d}$ is
non-compact and so we need to say what we mean by the integral in
\eqref{eq:GW}.  Since $\cZ_{0,n,d}$ carries a $T$-action with compact
fixed set, we can define the integral using the virtual localization
formula of Graber--Pandharipande \cite{Graber--Pandharipande}; note
that we could do this in the case $d \ne 0$, too, and this would
reproduce the definition which we just gave.

\subsection*{Gromov--Witten Potentials}

The genus-zero Gromov--Witten potential $F^0_\cZ$ is a generating
function for certain genus-zero Gromov--Witten invariants of $\cZ$.
It is a formal power series in variables $\tau^a$, $0 \leq a \leq N$,
and the Novikov variables $Q_i$, $1 \leq i \leq b_2(Z)$, defined by
\begin{equation}
  \label{eq:GWpotentialwithQ}
  F^0_\cZ = \sum_{n \geq 0} 
  \sum_{d \in \Eff(\cZ)} {Q^d \over n!}
  \smcorrelator{\overbrace{\tau,\tau,\ldots,\tau}^{\text{$n$
        times}}}^\cZ_{0,n,d}, 
\end{equation}
where $\tau = \tau^\alpha \Phi_\alpha$.  Since correlators are
multilinear, the expression
$\correlator{\tau,\tau,\ldots,\tau}^\cZ_{0,n,d}$ expands into a
polynomial in the variables $\tau^a$.  The second summation here 
is over the set $\Eff(\cZ)$ of degrees of maps from orbifold curves to
$\cZ$.

The genus-zero descendant potential $\cF^0_\cZ$ is a generating
function for all genus-zero Gromov--Witten invariants of $\cZ$.  It is
a formal power series in variables $t^a_k$, $0 \leq a \leq N$, $0
\leq k < \infty$, and the Novikov variables $Q_i$, $1 \leq i \leq
b_2(Z)$, defined by
\begin{equation}
  \label{eq:descendantpotentialwithQ}
  \cF^0_\cZ = \sum_{n \geq 0} \sum_{0 \leq k_1,\ldots,k_n < \infty}
  \sum_{d \in \Eff(\cZ)} {Q^d \over n!}
  \correlator{t_{k_1} \psi^{k_1},\ldots,t_{k_n} \psi^{k_n}}^\cZ_{0,n,d}
\end{equation}
where $t_k = t_k^\alpha \Phi_\alpha$.  The expression
$\correlator{t_{k_1} \psi^{k_1},\ldots,t_{k_n}
  \psi^{k_n}}^\cZ_{0,n,d}$ here expands, by multilinearity again,
into a polynomial in the variables $t^a_k$.

\subsection*{Analytic Continuation} Let us call the coefficient in
$F^0_\cZ$ of any monomial $\tau^{a_1}\cdots \tau^{a_n}$ a
\emph{coefficient series of $F^0_\cZ$}, and call the coefficient in
$\cF^0_\cZ$ of any monomial $t^{a_1}_{k_1} \cdots t^{a_n}_{k_n}$ a
\emph{coefficient series of $\cF^0_\cZ$}.  Each coefficient series is
a formal power series in the Novikov variables $Q_i$, $1 \leq i \leq
b_2(Z)$.  All of the examples we consider below satisfy:
\begin{itemize}
\item[(A)] each coefficient series of $F^0_\cZ$ converges in a
  neighbourhood of $Q_1 = Q_2 = \cdots = 0$ to an analytic function of
  the $Q_i$; and
\item[(B)] the coefficient series of $F^0_\cZ$ admit simultaneous
  analytic continuation to a neighbourhood of $Q_1 = Q_2 = \cdots =
  1$.
\end{itemize}
Condition (A) implies that each coefficient series of $\cF^0_\cZ$
converges in a neighbourhood of $Q_1 = Q_2 = \cdots = 0$ to an
analytic function of the $Q_i$, and condition (B) implies that
the coefficient series of $\cF^0_\cZ$ also admit simultaneous analytic
continuation to a neighbourhood of $Q_1 = Q_2 = \ldots = 1$: see
\cite{Coates--Ruan}*{Appendix} for discussion of a closely-related
point.

In what follows we will assume that a simultaneous analytic
continuation of the coefficient series has been chosen, and will set
$Q_1 = Q_2 = \ldots = 1$ throughout. Thus we regard the genus-zero
Gromov--Witten potential as a formal power series 
\begin{equation}
  \label{eq:GWpotentialwithoutQ}
  F^0_\cZ = \sum_{n \geq 0} 
  \sum_{d \in \Eff(\cZ)} {1 \over n!}
  \smcorrelator{\overbrace{\tau,\tau,\ldots,\tau}^{\text{$n$
        times}}}^\cZ_{0,n,d}, 
\end{equation}
in the variables $\tau^a$, $0 \leq a \leq N$, and we regard the
genus-zero descendant potential as a formal power series
\begin{equation}
  \label{eq:descendantpotentialwithoutQ}
  \cF^0_\cZ = \sum_{n \geq 0} \sum_{0 \leq k_1,\ldots,k_n < \infty}
  \sum_{d \in \Eff(\cZ)} {1 \over n!}
  \correlator{t_{k_1} \psi^{k_1},\ldots,t_{k_n} \psi^{k_n}}^\cZ_{0,n,d}
\end{equation}
in the variables $t^a_k$, $0 \leq a \leq N$, $0 \leq k < \infty$.

\subsection*{The Divisor Equation} 

The reader might worry that by suppressing Novikov variables ---
\emph{i.e.} by setting $Q_1 = Q_2 = \cdots = 1$ --- we have lost some
information about the degrees of curves.  This is not the case.  We
will discuss this for the case $\cZ = Y$; the case $\cZ = \cX$ is
entirely analogous.  Recall that our basis
$\varphi_0,\ldots,\varphi_N$ for $H(Y)$ was chosen so that
$\varphi_1,\ldots,\varphi_r$ is a lift to $T$-equivariant cohomology
of the basis $\omega_1',\ldots,\omega_r'$ for $H^2(Y;\CC)$ with which
we measure the degrees of curves.  Then, writing
\begin{align*}
  \tau = \tau^\alpha \varphi_\alpha, &&
  \tau_{\text{rest}} = \tau^0 \varphi_0 + \tau^{r+1} \varphi_{r+1} +
  \tau^{r+2} \varphi_{r+2} + \cdots + \tau^N \varphi_N,
\end{align*}
the Divisor Equation\footnote{This is the identity
  \begin{multline*}
    \correlator{\alpha_1 \psi^{i_1},\ldots,\alpha_n
      \psi^{i_n},\gamma}^\cZ_{0,n+1,d} 
  =
  \left \langle \gamma, d \right \rangle \, 
  \correlator{\alpha_1 \psi^{i_1},\ldots,\alpha_n
      \psi^{i_n}}^\cZ_{0,n,d} \\ + 
    \sum_{\substack{j:1 \leq j \leq n,\\ i_j>0}}
     \correlator{\alpha_1 \psi^{i_1},\ldots,
       \alpha_{j-1} \psi^{i_{j-1}},
       (\gamma \CR \alpha_j) \psi^{i_j-1},
       \alpha_{j+1} \psi^{i_{j+1}},\ldots,
       \alpha_n
      \psi^{i_n}}^\cZ_{0,n,d}
 \end{multline*}
 where $\gamma \in H^2(\cZ;\CC)$ and either $d \ne 0$ or $n \geq 3$.}
gives
\[
F^0_Y = {1 \over 6} \big(\tau \cup \tau,\tau\big)_Y + 
\sum_{n \geq 0} 
\sum_{\substack{d \in \Eff(\cZ): \\ d \neq 0}} {e^{d_1 \tau^1} \cdots e^{d_r \tau^r} \over n!}
  \smcorrelator{\overbrace{\tau_{\text{rest}},\tau_{\text{rest}},\ldots,\tau_{\text{rest}}}^{\text{$n$
        times}}}^\cZ_{0,n,d}
\]
and so the substitution 
\[
e^{\tau^i} \longmapsto Q_i e^{\tau^i}, \qquad 1 \leq i \leq r,
\]
turns \eqref{eq:GWpotentialwithoutQ} into
\eqref{eq:GWpotentialwithQ}.  The story for the descendant potential
$\cF^0_Y$ is a little more complicated --- it is discussed, for
instance, in \cite{CCIT:crepant1}*{Remark~5.3} --- but the upshot is
the same: the Divisor Equation allows us to recover
\eqref{eq:descendantpotentialwithQ} from \eqref{eq:descendantpotentialwithoutQ}.

\subsection*{The Lagrangian Submanifold-Germ}

Following Givental
\cite{Givental:quantization,Givental:symplectic,Coates--Givental:QRRLS}
we encode all genus-zero Gromov--Witten invariants of $\cZ$ via the
formal germ of a Lagrangian submanifold of $\cHZ$, defined as
follows.  Regard the genus-zero descendant potential $\cF^0_\cZ$ as the
formal germ of a function on $\cHZ^+$ via the change of variables
\[
q^a_k =
\begin{cases}
  t^0_1 - 1 & \text{if $(a,k) = (0,1)$} \\
  t^a_k & \text{otherwise}.
\end{cases}
\]
This change of variables is called the dilaton shift.  The variables
$q^a_k$ here are the Darboux co-ordinates from \eqref{eq:Darboux}, so
a general point on $\cHZ^+$ is $\sum_{k \geq 0} q_k^\alpha \Phi_\alpha
z^k$.  The dilaton shift makes $\cF^0_\cZ$ into the formal germ at
$-z$ of a function on $\cHZ^+$.  The graph of the differential of
$\cF^0_\cZ$ therefore defines the formal germ of a submanifold of
$\cHZ \cong T^\star \cHZ^+$, defined by the equations
\begin{align}
  \label{eq:coneequations}
  p^a_k = {\partial \cF^0_\cZ \over \partial q^a_k}
  && 0 \leq a \leq N, \; 0 \leq k < \infty.
\end{align}
We denote this Lagrangian submanifold-germ by $\cLZ$.

\subsection*{More Analytic Continuation}

In what follows we will need to analytically continue the
submanifold-germ $\cLZ$.  There is nothing exotic about this, as we
now explain.  The germ $\cLZ$ is defined by the equations
\eqref{eq:coneequations}, and to analytically continue $\cLZ$ we will
analytically continue each partial derivative ${\partial\cF^0_\cZ
  \over \partial q^a_k}$ in the variables\footnote{These variables
  correspond to basis elements of $H(\cZ)$ with degree at most $2$.}
$t^a_0$, $0 \leq a \leq b_2(Z)$.  The partial derivative
${\partial\cF^0_\cZ \over \partial q^a_k}$ is a formal power series in
the variables $t^b_l$, $0 \leq b \leq N$, $0 \leq l < \infty$, so we
write it in the form
\begin{align*}
  \sum_I f_I\, t^I
  && \parbox{0.7\textwidth}{$t^I$ a monomial in the variables $t^b_l$
    with $b>b_2(Z)$ or $l>0$, \\
    $f_I$ a formal power series in the variables $t^a_0$, $0 \leq a
    \leq b_2(Z)$,}
\end{align*}
and then analytically continue each $f_I$.

\subsection*{The Crepant Resolution Conjecture} We are now ready to
state the conjecture.

\begin{conj} \label{CRC}
  There is a degree-preserving $\CC(\!(z^{-1})\!)$-linear
  symplectic isomorphism $\U:\cHX \to \cHY$ and a choice of analytic
  continuations of $\cLX$ and $\cLY$ such that $\U\(\cLX\) = \cLY$.
  Furthermore, $\U$ satisfies:
  \begin{itemize}
  \item[(a)] $\U(\fun_\cX) = \fun_Y + O(z^{-1})$;
  \item[(b)] $\U \circ \(\rho \CR\) = \({\pi^\star \rho} \, \cup\)
    \circ \U$ for every untwisted degree-two class $\rho \in
    H^2(\cX;\CC)$;
  \item[(c)] $\U\(\cHX^+ \)\oplus \cHY^- = \cHY$.
  \end{itemize}
\end{conj}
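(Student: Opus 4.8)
We will establish Conjecture~\ref{CRC} in each of the six examples listed in the Introduction. The strategy, common to all of them, is a B-model computation in the spirit of \cite{CCIT:crepant1}: in every case $\cX$ and $Y$ (and $\cY$, in the flop of Example~VI) are total spaces of concave vector bundles over compact toric bases, so by \cite{CCIT:computing} their $T$-equivariant genus-zero Gromov--Witten theory --- and hence the submanifold-germs $\cLX$ and $\cLY$ --- is obtained from that of the base, twisted by the inverse equivariant Euler class of the bundle. Feeding this into the Givental-style mirror theorem for toric Deligne--Mumford stacks and toric varieties \citelist{\cite{CCIT:stacks}\cite{CCIT:computing}} produces explicit hypergeometric \emph{$I$-functions} $I_\cX(x,z)\in\cLX$ and $I_Y(y,z)\in\cLY$, where $x$ and $y$ are co-ordinates centred at the large-radius limit points of $\cX$ and of $Y$. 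The first step, then, is to write these $I$-functions down --- keeping careful track of the age-shifted (twisted-sector) contributions and of the Euler-class twist --- and to verify the convergence Assumptions~(A) and~(B), which in practice amounts to recognizing the coefficient series as concrete hypergeometric functions with known domains of convergence.

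The geometric point is that $\cX$ and $Y$ are two adjacent chambers for one and the same toric GKZ data: $I_\cX$ and $I_Y$ are both (vector-valued) solutions of a single GKZ/Picard--Fuchs system on the common K\"ahler moduli space, and the large-radius limits of $\cX$ and of $Y$ lie on a shared boundary wall precisely because the birational transformation is crepant. This solution space has rank $\dim_{\CC(\lambda)} H(\cX) = \dim_{\CC(\lambda)} H(Y)$. I would analytically continue $I_\cX$ from the $\cX$-chamber into the $Y$-chamber --- concretely via Mellin--Barnes integral representations of the hypergeometric series, which is where the roots of unity $\omega_i$ and the fractional exponents $r_i$ make their appearance --- and then read off that, after a change of variables $x \leftrightarrow y$ of the quantum-corrected form described in the Introduction, the continued $I_\cX$ equals $I_Y$ after applying a single $\CC(\!(z^{-1})\!)$-linear map $\U:\cHX\to\cHY$ which does \emph{not} depend on the K\"ahler parameters. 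Degree-preservation of $\U$ is forced once one checks that both $I$-functions are homogeneous of the same degree; symplecticity follows by comparing the two Poincar\'e pairings with the chamber-independent residue pairing of the common mirror Landau--Ginzburg model (equivalently, from self-adjointness of the quantum connection).

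It remains to upgrade the identity ``$\U(I_\cX) = I_Y$ after analytic continuation'' to $\U(\cLX) = \cLY$, and to check (a)--(c). For the first, I would use that $\cLX$ and $\cLY$ are (germs of) overruled Lagrangian cones \cite{Coates--Givental:QRRLS}: each cone is swept out by the ruling subspaces through the points of its $J$-function, and the $J$-function is itself recovered from the $I$-function by a $z$-linear change of polarization. Since $\U$ is linear, symplectic, and intertwines multiplication by $z$ (being $\CC(\!(z^{-1})\!)$-linear), it carries the $J$-function and ruling of $\cLX$ to those of $\cLY$, whence $\U(\cLX) = \cLY$. Properties (a)--(c) are then read off from the leading-order behaviour of $\U$: (a) because $\fun$ is the leading term of each $I$-function and the mirror maps are ``identity $+$ higher order''; (b) from the Divisor Equation together with the fact that $I_\cX$ depends on a pulled-back divisor class $\rho$ only through a factor of the form $e^{(\pi^\star\rho)\log Q/z}$ --- the $\pi^\star$, and the numbers $r_i$, coming from the mismatch of integer lattices in degree-two cohomology; and (c) because $\U$ is upper-triangular with respect to the $z$-grading with invertible leading symbol $H(\cX)\to H(Y)$, so that the two polarizations are transverse.

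The genuine obstacle is the middle step: carrying out the analytic continuation of the hypergeometric $I$-function across the wall and verifying that the result really does assemble into a \emph{single, parameter-free} symplectic isomorphism matching the two cones --- in particular correctly tracking the twisted-sector contributions, the change of degree-two lattice (hence the $r_i$ and the $\omega_i$), and, in Examples~III and~V, the fact that $Y \to X$ is only a \emph{partial} resolution, so that $Y$ is itself an orbifold. Everything downstream of a clean statement of ``$\U(I_\cX)=I_Y$ after analytic continuation'' is formal given the overruled-cone formalism of \cite{Coates--Givental:QRRLS}; the real work lies in establishing that statement, example by example, and this is what occupies Sections~\ref{sec:C3Z3}--\ref{sec:toricflop}.
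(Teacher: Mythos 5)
Your overall strategy is the paper's: produce hypergeometric $I$-functions lying on $\cLX$ and $\cLY$ via the concave-bundle twisting results, observe that their components solve one Picard--Fuchs/GKZ system on the secondary-fan ($B$-model) moduli space, analytically continue by Mellin--Barnes, and read off a parameter-independent matrix $\U$. But two of your claims paper over genuine ingredients. First, the upgrade from ``$\U(I_\cX)=\widetilde{I}_Y$ after continuation'' to $\U(\cLX)=\cLY$ is \emph{not} formal in the overruled-cone calculus: $\U(\cLX)$ is certainly a cone containing the continued family, but two such cones can share a family of points, and there is no reason the image of $J_\cX$ is $J_Y$ (it is the continued $I_Y$, which differs from $J_Y$ by a nontrivial mirror map). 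What closes the argument in the paper is the reconstruction statement (Proposition~\ref{thm:stuff}(b),(c)): under semi-positivity and generation of Chen--Ruan cohomology by a degree-two subspace, $\cLZ$ is \emph{uniquely} determined by the small $J$-function (resp.\ the restricted big $J$-function), hence by the $I$-function family; you never invoke this, and it is a substantive input (proved elsewhere via WDVV/TRR), not a consequence of the cone formalism. Second, your blanket premise that both sides are always total spaces of concave bundles over compact bases fails for the crepant resolution $Y$ in Example~IV (Section~\ref{sec:KP113}), which the paper stresses is not a bundle; there $I_Y\in\cLY$ needs an extension of Givental's toric mirror theorem to a non-compact toric variety (Claim~\ref{claim:KP113resolution}), and because the generating degree-two subspace must include the twisted class $\fun_{1/3}$, one also needs the two-parameter statement $I_\cX(x_1,x_2,-z)\in\cLX$ (Claim~\ref{claim:KP113}), which goes beyond the twisting theorems and rests on the toric-stack mirror theorem.

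Your verifications of the listed properties also rely on a false premise. You deduce (c) from ``$\U$ is upper-triangular in the $z$-grading with invertible leading symbol,'' i.e.\ from $\U$ having only non-positive powers of $z$; but in Examples~III, IV and~V the explicit matrices contain strictly positive powers of $z$ (e.g.\ $\U(\fun_{3/4})$ in Section~\ref{sec:C3Z4}, and the entries proportional to $z$ in \eqref{eq:KP113nonequivariantU}), which is precisely why the Bryan--Graber-type conclusion is not expected there. So (c), like symplecticity and (a), cannot be declared automatic: in the paper they are checked directly from the explicit formula for $\U$ obtained in Step~7 of each example, whereas your proposed a priori argument via the residue pairing of the Landau--Ginzburg model is the more sophisticated route of \cite{CCIT:crepant1}/Iritani and is not carried out here. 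Finally, condition (b) is not a consequence of the Divisor Equation alone; it reflects the fact that $\U$ intertwines the relevant monodromies of the Picard--Fuchs system (cf.\ the discussion in Section~\ref{sec:toricflop}), or, as in the paper, is read off from the computed matrix.
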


This is a slight modification of a conjecture due to
Coates, Corti, Iritani, and Tseng \cite{CCIT:crepant1}; very similar ideas
occurred, simultaneously and independently, in unpublished work of
Ruan.  An expository account of the conjecture and its consequences
can be found in \cite{Coates--Ruan}.  

\section{General Theory}
\label{sec:theory}
In this section we describe various aspects of Givental's symplectic
formalism which we will need below, as well as stating some
consequences of Conjecture~\ref{CRC}.

\subsection*{Big and Small $J$-Functions}

Let $\tau = \tau^\alpha \Phi_\alpha$.  The \emph{big $J$-function} of $\cZ$
is 
\[
\JJ_\cZ(\tau,z) := z + \tau + 
\sum_{n \geq 0} \sum_{d \in \Eff(\cZ)} 
{1 \over n!} 
\correlator{\tau,\tau,\ldots,\tau,{\Phi_\epsilon \over z -
    \psi}}^\cZ_{0,n+1,d} \Phi^\epsilon.
\]
It is a formal family of elements of $\cHZ$ --- in other words,
$\JJ_\cZ$ is a formal power series in the variables $\tau^a$, $0 \leq
a \leq N$, which takes values in $\cHZ$.  By writing out the equations
\eqref{eq:coneequations} defining $\cLZ$, it is easy to see that
$J_\cZ(\tau,-z)$ is the unique family of elements of $\cLZ$ of the
form $-z + \tau + O(z^{-1})$.

Take $\cZ = Y$ and restrict the parameter $\tau$ in the big
$J$-function to the locus $\tau = \tau^1 \varphi_1 + \cdots + \tau^r
\varphi_r$.  Then the Divisor Equation gives that $\JJ_Y(\tau^1 \varphi_1 +
\cdots + \tau^r \varphi_r,z)$ is equal to
\begin{equation}
  \label{eq:JYdivisor}
  z \, e^{\tau^1 \varphi_1/z} \cdots e^{\tau^r \varphi_r /z}
  \Bigg( 1 + \sum_{d \in \Eff(Y)} e^{d_1 \tau^1} \cdots e^{d_r \tau^r} 
  \correlator{\varphi_\epsilon \over z(z - \psi)}^Y_{0,1,d} \varphi^\epsilon \Bigg).
\end{equation}
Making the change of variables $q_i = e^{\tau^i}$, $1 \leq i \leq r$,
we define the \emph{small $J$-function} of $Y$ to be 
\begin{equation}
  \label{eq:JYsmall}
  J_Y(q,z) := z \, q_1^{\varphi_1/z} \cdots q_r^{\varphi_r /z}
  \Bigg( 1 + \sum_{d \in \Eff(Y)} q_1^{d_1} \cdots q_r^{d_r} 
  \correlator{\varphi_\epsilon \over z(z - \psi)}^Y_{0,1,d} \varphi^\epsilon \Bigg).
\end{equation}
In examples below we will see that this converges, in a domain where
each $|q_i|$ is sufficiently small, to a multi-valued analytic
function of $q_1,\ldots,q_r$ which takes values in $\cHY$.  The
multi-valuedness comes from the factors $q_i^{\varphi_i/z} := \exp(\varphi_i
\log(q_i)/z)$.  We have $J_Y(q,-z) \in \cLY$ for all $q$ in the domain
of convergence of $J_Y$.

Similarly, take $\cZ = \cX$ and restrict the parameter $\tau$ in the big
$J$-function to the locus $\tau = \tau^1 \phi_1 + \cdots + \tau^s
\phi_s$.  Then the Divisor Equation gives that 
\[
\JJ_\cX(\tau^1 \phi_1 +
\cdots + \tau^s \phi_s,z) = 
z \, e^{\tau^1 \phi_1/z} \cdots e^{\tau^s \phi_s /z}
\Bigg( 1 + \sum_{d \in \Eff(\cX)} e^{d_1 \tau^1} \cdots
e^{d_s \tau^s}  
\correlator{\phi_\epsilon \over z(z - \psi)}^\cX_{0,1,d} \phi^\epsilon \Bigg).
\]
Making the change of variables $u_i = e^{\tau^i}$, $1 \leq i \leq s$,
we define the \emph{small $J$-function} of $\cX$ to be
\begin{equation}
  \label{eq:JXsmall}
  J_\cX(u,z) := \\ z \, u_1^{\phi_1/z} \cdots u_s^{\phi_s /z}
  \Bigg( 1 + \sum_{d \in \Eff(\cX)} u_1^{d_1} \cdots u_s^{d_s} 
  \correlator{\phi_\epsilon \over z(z - \psi)}^\cX_{0,1,d} \phi^\epsilon \Bigg).
\end{equation}
In the examples below this converges, in a domain where each $|u_i|$
is sufficiently small, to a multi-valued analytic function of
$u_1,\ldots,u_s$ which takes values in $\cHX$.  We have $J_\cX(u,-z)
\in \cLX$ for all $u$ in the domain of convergence of $J_\cX$.

\subsection*{Three Consequences of Conjecture~\ref{CRC}}

Recall that the $T$-equivariant small quantum cohomology of $\cX$ is a
family of algebra structures on $H(\cX)$ parametrized by
$u_1,\ldots,u_s$, defined by
\begin{equation}
  \label{eq:smallQCX}
  \phi_\alpha \bullet \phi_\beta = 
  \sum_{d \in \Eff(\cX)} u_1^{d_1} \cdots u_s^{d_s} 
  \correlator{\phi_\alpha, \phi_\beta, \phi^\epsilon}^\cX_{0,3,d}
  \phi_\epsilon.
\end{equation}
The $T$-equivariant small quantum cohomology of $Y$ is a family of
algebra structures on $H(Y)$ parametrized by $q_1,\ldots,q_r$,
defined by
\begin{equation}
  \label{eq:smallQCY}
  \varphi_\alpha \bullet \varphi_\beta = 
  \sum_{d \in \Eff(Y)} q_1^{d_1} \cdots q_r^{d_r} 
  \correlator{\varphi_\alpha,\varphi_\beta,\varphi^\epsilon}^Y_{0,3,d} 
  \varphi_\epsilon.
\end{equation}
For the remainder of this subsection, assume that:
\begin{itemize}
\item Conjecture~\ref{CRC} holds;
\item the symplectic transformation $\U$ remains well-defined in the
  non-equivariant limit $\lambda \to 0$;
\item $\cX$ is semi-positive\footnote{The orbifold $\cZ$ is
      semi-positive if and only if there does not exist $d \in
      \Eff(\cZ)$ such that 
      \[
      3 - \dim_\CC \cZ \leq \langle c_1(T\cZ), d\rangle < 0
      \]
      All Fano and Calabi--Yau orbifolds are semi-positive, as are all
      orbifold curves, surfaces, and $3$-folds.  In particular, all
      the orbifolds that we consider in the examples below are
      semi-positive.}.
\end{itemize}
Three consequences of Conjecture~\ref{CRC} are then as follows: these
are proved\footnote{This is not, strictly speaking, true: the
  $T$-equivariant version of the Crepant Resolution Conjecture is not
  treated in \cite{Coates--Ruan}.  It is straightforward to check,
  however, that the arguments given there also prove the results
  stated here.  The key point is that $\U$ has a non-equivariant
  limit, and so only non-negative powers of $\lambda$ can occur.}  in
\cite{Coates--Ruan}.  Define the class $c \in H(Y)$ by
\[
\U(\fun_\cX) = \fun_Y - c z^{-1} + O(z^{-2}),
\]
and write 
\begin{align} \label{eq:defofci}
c = c^1 \varphi_1
+ \cdots + c^r \varphi_r + d \lambda, &&
c^1,\ldots,c^r,d \in \CC;
\end{align}
such an equality exists because $c$ has degree $2$.
Then:

\begin{cor} \label{cor:CCRC}
  The algebra obtained from the small quantum cohomology algebra of
  $Y$ by analytic continuation\footnote{\label{acfootnote}The analytic
    continuation of the small quantum product here is induced by the
    analytic continuation of $\cLY$.  This is explained in
    \cite{Coates--Ruan}.} in the parameters $q_{s+1},\ldots,q_r$ (if
  necessary) followed by the substitution
  \[
  q_i =
  \begin{cases}
    0 & 1 \leq i \leq s \\
    e^{c^i} & s < i \leq r
  \end{cases}
  \]
  is isomorphic to the Chen--Ruan orbifold cohomology algebra of
  $\cX$, via an isomorphism which sends $\alpha \in H^2(\cX;\CC)
  \subset H(\cX)$ to $\pi^\star \alpha \in H(Y)$.
\end{cor}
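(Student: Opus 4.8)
The plan is to derive Corollary~\ref{cor:CCRC} from Conjecture~\ref{CRC} by extracting the small quantum product from the geometry of the Lagrangian cones $\cLX$ and $\cLY$, and then comparing the two sides under $\U$. The key point is that the small quantum product at a point is recorded by the second-order behaviour of the respective $J$-function along the degree-two locus, or equivalently by the tangent space to the cone: for $\cZ$ either $\cX$ or $Y$, the subspace $z \, T_f \cLZ$ (where $f$ ranges over $J_\cZ(u,-z)$ or $J_Y(q,-z)$) is a module over the small quantum cohomology algebra at the corresponding parameter value, and the algebra structure is recovered from how multiplication by degree-two classes acts. This is standard in Givental's formalism; I would cite \cite{Coates--Givental:QRRLS} and the discussion in \cite{Coates--Ruan} for the precise statement, and in particular for the fact that the analytic continuation of $\cLY$ induces the analytic continuation of the small quantum product referred to in the footnote.

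First I would use Conjecture~\ref{CRC}: after a suitable analytic continuation, $\U(\cLX) = \cLY$, and property~(c), $\U(\cHX^+) \oplus \cHY^- = \cHY$, guarantees that $\U$ intertwines the two polarizations well enough that $\U$ carries the family $J_\cX(u,-z)$ on $\cLX$ to a family on $\cLY$ of the correct normalized form; combined with property~(b), $\U \circ (\rho \CR\,) = (\pi^\star\rho \cup) \circ \U$, this shows that $\U$ intertwines the small quantum product of $\cX$ (parametrized by $u_1,\dots,u_s$) with the small quantum product of $Y$, restricted to the subfamily $q_i \leftrightarrow$ (the image of $u_i$) for $1 \le i \le s$ and $q_i$ fixed at the value dictated by the $z^{-1}$-coefficient of $\U(\fun_\cX)$ for $s < i \le r$. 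Next I would take the limit $u_i \to 0$ (equivalently $q_i \to 0$ for $1 \le i \le s$), which is legitimate because we assumed $\U$ survives the non-equivariant limit and because $\cX$ is semi-positive, so that the relevant $J$-function specializes correctly; on the $\cX$ side this limit produces precisely the Chen--Ruan orbifold cohomology algebra, since the $d \ne 0$ terms in \eqref{eq:smallQCX} all carry a positive power of some $u_i$ and hence vanish. On the $Y$ side, the values $q_i = 0$ for $i \le s$ and $q_i = e^{c^i}$ for $i > s$ are read off from the expansion $\U(\fun_\cX) = \fun_Y - cz^{-1} + O(z^{-2})$ together with \eqref{eq:defofci}; property~(a) is what ensures $\U(\fun_\cX)$ has this normalized form. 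Finally, property~(b) applied to $\rho \in H^2(\cX;\CC)$ shows the isomorphism sends $\alpha$ to $\pi^\star\alpha$ as claimed.

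The main obstacle, I expect, is justifying the limit $q_i \to 0$ ($1 \le i \le s$) on the $Y$-side together with the simultaneous analytic continuation in $q_{s+1},\dots,q_r$: one must check that the relevant coefficient series converge and continue in a neighbourhood that includes the locus $q_1 = \cdots = q_s = 0$, and that the module-theoretic reconstruction of the quantum product from the cone is compatible with taking this boundary limit (so that the ``small quantum product at $q_i = 0, e^{c^i}$'' genuinely exists and is the one obtained by analytic continuation of $\cLY$). Semi-positivity of $\cX$ is the hypothesis that makes the $\cX$-side limit produce the honest Chen--Ruan algebra rather than a quantum-corrected deformation of it, and the non-equivariant limit of $\U$ is what keeps all powers of $\lambda$ non-negative so the specialization is well-defined; modulo these two points the argument is a formal consequence of Conjecture~\ref{CRC}, and indeed this is exactly the content of the cited results in \cite{Coates--Ruan}, so the proof here amounts to recalling that derivation and noting that its arguments apply verbatim in the $T$-equivariant setting (cf.\ the footnote above).
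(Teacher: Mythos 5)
Your proposal matches the paper's treatment: the paper does not prove this corollary directly but simply cites \cite{Coates--Ruan}, remarking in a footnote that the arguments there carry over to the $T$-equivariant setting because $\U$ admits a non-equivariant limit, so only non-negative powers of $\lambda$ occur. Your sketch of the cone/$J$-function mechanism is exactly the content of that cited derivation, and your closing deferral to \cite{Coates--Ruan} together with the equivariance remark is essentially the paper's own argument.
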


\noindent This is a version of Ruan's Cohomological Crepant Resolution
Conjecture \cite{Ruan:CCRC}.  

Define elements $b_e \in H(Y)$, $0 \leq
e\leq N$, by $b_e = 0$ if $\deg \phi_e \leq 2$ and 
\begin{align*}
  \U\big(\phi_e z^{1 - {1 \over 2} \deg \phi_e}\big) =
  b_e + O(z^{-1})
\end{align*}
otherwise.  Define power series $f^1,\ldots,f^r,g \in \CC[\![u_1,\ldots,u_s]\!]$ by
\begin{equation}
  \label{eq:defoffi}
  f^1 \varphi_1 + \cdots + f^r \varphi_r + g \lambda = 
  \sum_{d \in \Eff(\cX)}
  \sum_{e=r+1}^N
  (-1)^{{1 \over 2} \deg \phi_e +1}
  \correlator{\phi^e \psi^{{1 \over 2} \deg \phi_e - 2}}^\cX_{0,1,d}
  u_1^{d_1} \cdots u_s^{d_s} \, b_e;
\end{equation}
such an equality exists because each class $b_e$ has degree $2$.
Recall the definition of the rational numbers $r_i$, $1 \leq i \leq
s$, from Section~\ref{sec:statement}.  Then:

\begin{cor} \label{cor:Ruan}
  The algebra obtained from the small quantum cohomology algebra of
  $Y$ by analytic continuation\footnotemark[6] in the
  parameters $q_{s+1},\ldots,q_r$ (if necessary) followed by the
  substitution
  \[
  q_i =
  \begin{cases}
    e^{c^i + f^i} u_i^{r_i} & 1 \leq i \leq s \\
    e^{c^i + f^i} & s < i \leq r
  \end{cases}
  \]
  is isomorphic to the small quantum cohomology algebra of $\cX$, via
  an isomorphism which sends $\alpha \in H^2(\cX;\CC) \subset H(\cX)$
  to $\pi^\star \alpha \in H(Y)$.
\end{cor}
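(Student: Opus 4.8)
The plan is to deduce the statement from Conjecture~\ref{CRC} by the same mechanism used to prove Corollary~\ref{cor:CCRC}, but keeping track of the first-order (in $z^{-1}$) corrections to $\U(\fun_\cX)$ and to the images of the twisted-sector classes, rather than discarding them. First I would recall that the small quantum product on $\cZ$ is recovered from $\cLZ$ as the second-order part of the Lagrangian cone: if $J_\cZ$ is the small $J$-function then the tangent space $T$ to $\cLZ$ at $J_\cZ(t,-z)$ satisfies $zT = T$, and the quantum product is read off from how multiplication by $z$ acts on $T/zT$ after choosing the basis $\{\Phi_\alpha\}$. Concretely, the structure constants of $\bullet$ are the coefficients appearing in $z\,\partial_a J_\cZ(t,-z)$ expanded in the $\partial_b J_\cZ$; this is the statement in \cite{Coates--Ruan} that the analytic continuation of $\cLY$ induces an analytic continuation of the small quantum product (footnote~\ref{acfootnote}). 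Since $\U(\cLX)=\cLY$ and $\U$ is $\CC(\!(z^{-1})\!)$-linear and symplectic, $\U$ carries the cone structure of $\cX$ to that of $Y$, hence intertwines the two families of quantum products once the parameters are matched correctly.

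The heart of the matter is the change of variables. On the $\cX$ side the small $J$-function is $J_\cX(u,-z)$ with $u_i=e^{\tau^i}$; on the $Y$ side it is $J_Y(q,-z)$. The point $\U(J_\cX(u,-z))$ lies on $\cLY$ and is of the form $-z+O(1)$ with a prescribed degree-$\le 2$ part, so it must coincide with $J_Y(q,-z)$ for a uniquely determined $q=q(u)$, and the corollary amounts to identifying that map. The leading behaviour $q_i\sim u_i^{r_i}$ comes from property~(b) of $\U$: since $\U\circ(\omega_i\CR\,-)=(\pi^\star\omega_i\cup-)\circ\U=(r_i\omega_i'\cup-)\circ\U$, the operator by which $z$ acts on the degree-two part is scaled by $r_i$, which is exactly the effect of the substitution $q_i=u_i^{r_i}$ on the divisor prefactors $q_i^{\varphi_i/z}$ versus $u_i^{\phi_i/z}$. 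The constant $e^{c^i}$ comes from the $O(z^{-1})$ term of $\U(\fun_\cX)$: matching the $O(z^{-1})$ parts of $\U(J_\cX(u,-z))$ and $J_Y(q,-z)$ at $u=0$ forces the logarithmic shift $\tau^i\mapsto\tau^i+c^i$, i.e.\ an extra factor $e^{c^i}$ in $q_i$. Finally the higher-order correction $f^i(u)$ arises from the images $b_e=\U(\phi_e z^{1-\frac12\deg\phi_e})+O(z^{-1})$ of the twisted-sector classes: expanding $\U(J_\cX(u,-z))$ using the definition of $J_\cX$ and of $\cLX$, the twisted-sector terms of $J_\cX$ (the $\correlator{\phi^e\psi^{\cdots}}^\cX_{0,1,d}$) get pushed by $\U$ into the degree-$\le 2$ part of $\cHY$ and must be absorbed into the $Y$-side divisor exponentials; the precise combinatorics of the dilaton shift and of the expansion $\frac{1}{z(z-\psi)}=\sum \psi^m z^{-m-2}$ produce exactly formula~\eqref{eq:defoffi} for $f^i$. (The $\lambda$-components $d$ and $g$ are discarded because we work modulo the non-equivariant limit, where $\U$ is well-defined by hypothesis; semi-positivity of $\cX$ guarantees that only finitely many twisted-sector classes of the relevant degrees contribute, so $f^i$ is a genuine power series.)

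Having fixed the substitution $q_i=e^{c^i+f^i}u_i^{r_i}$ for $i\le s$ and $q_i=e^{c^i+f^i}$ for $i>s$ (with analytic continuation in $q_{s+1},\dots,q_r$ when the K\"ahler cone of $\cX$ is a proper face, exactly as in Corollary~\ref{cor:CCRC}), the isomorphism of algebras is then $\U$ itself, read on $T/zT$; property~(b) and the normalization $\U(\fun_\cX)=\fun_Y+O(z^{-1})$ show that it sends $\fun_\cX\mapsto\fun_Y$ and $\alpha\in H^2(\cX;\CC)\mapsto\pi^\star\alpha$, which is the asserted form of the isomorphism. I expect the main obstacle to be purely bookkeeping: showing that the corrections coming from $\U(\fun_\cX)$, from the $b_e$, and from the dilaton shift organize themselves into precisely the closed forms $c^i$ and $f^i$ as written, and that no further corrections from higher powers of $z^{-1}$ in $\U$ survive in the degree-$\le 2$ part after the analytic continuation — this is where one must use both the degree-preserving property of $\U$ and semi-positivity in an essential way. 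All of this is carried out in \cite{Coates--Ruan}, modulo the trivial remark in the footnote that the equivariant parameter $\lambda$ causes no trouble.
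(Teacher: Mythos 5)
Your outline is essentially the argument the paper relies on: the paper gives no proof of Corollary~\ref{cor:Ruan} beyond citing \cite{Coates--Ruan} (with the footnote that the $T$-equivariant statement follows because $\U$ has a non-equivariant limit, so only non-negative powers of $\lambda$ occur), and your sketch --- apply $\U$ to $J_\cX(u,-z)$, obtain a point of $\cLY$ of the form $-z+\tau'+O(z^{-1})$, read off the factors $u_i^{r_i}$ from property~(b), the constants $e^{c^i}$ from the $z^{-1}$-term of $\U(\fun_\cX)$, and the corrections $f^i$ from the classes $b_e$ via the terms $\correlator{\phi^e\psi^{\frac12\deg\phi_e-2}}^\cX_{0,1,d}$, with degree-preservation and semi-positivity controlling the $z^{\geq 0}$ part --- is precisely that cited argument, so it is the same approach. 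The one quibble is your justification for discarding the $\lambda$-components $d$ and $g$: they are harmless because a shift of $\tau$ along $\lambda\fun_Y$ does not change the quantum product (string equation; compare the use of Proposition~\ref{thm:stuff}(a) in the examples), whereas the non-equivariant-limit hypothesis plays the different role of excluding negative powers of $\lambda$, as the paper's footnote indicates.
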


\noindent This is a ``quantum-corrected'' version of Ruan's Crepant
Resolution Conjecture.

Suppose now that the matrix entries of $\U$ contain only non-positive
powers of $z$, so that the limit $\U_\infty := \lim_{z \to \infty} \U$
exists.  Then $\U_\infty:H(\cX) \to H(Y)$ is a degree-preserving
linear isometry such that $\U_\infty(\fun_\cX) = \fun_Y$ and that
$\U_\infty \circ (\rho\CR) = \pi^\star \rho \cup \U_\infty$ for all
$\rho \in H^2(\cX;\CC)$.  Furthermore:

\begin{cor} \label{cor:BG}
  The map $\U_\infty$ gives an isomorphism between the $T$-equivariant
  small quantum cohomology algebra of $\cX$ and the algebra obtained
  from the $T$-equivariant small quantum cohomology of $Y$ by analytic
  continuation in the parameters $q_{s+1},\ldots,q_r$ (if necessary)
  followed by the substitution
  \[
  q_i =
  \begin{cases}
    e^{c^i} u_i^{r_i} & 1 \leq i \leq s \\
    e^{c^i} & s < i \leq r.
  \end{cases}
  \]
\end{cor}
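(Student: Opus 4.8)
The plan is to deduce Corollary~\ref{cor:BG} by re-running the Givental-formalism argument behind Corollary~\ref{cor:Ruan} and observing that the extra hypothesis here collapses it. First I would set up $\U_\infty$: since $\U$ is $\CC(\!(z^{-1})\!)$-linear with matrix entries involving only non-positive powers of $z$, the limit $\U_\infty=\lim_{z\to\infty}\U$ exists; it is degree-preserving because $\U$ is and $\deg z=2$; property~(a) of Conjecture~\ref{CRC} gives $\U_\infty(\fun_\cX)=\fun_Y$ on letting $z\to\infty$; property~(b) gives $\U_\infty\circ(\rho\CR)=(\pi^\star\rho\,\cup)\circ\U_\infty$ for $\rho\in H^2(\cX;\CC)$ in the same limit; and a short residue computation --- take $f=\alpha$, $g=\beta z^{-1}$ in $\Omega_Y(\U f,\U g)=\Omega_\cX(f,g)$ and use once more that $\U$ has no positive powers of $z$ --- yields $(\U_\infty\alpha,\U_\infty\beta)_Y=(\alpha,\beta)_\cX$, so $\U_\infty$ is an isometry. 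The same power count annihilates the classes $b_e$: for $\deg\phi_e>2$ the element $\phi_e z^{1-\frac12\deg\phi_e}$ is a strictly negative power of $z$ times $\phi_e$, and $\U(\phi_e)$ has only non-positive powers of $z$, so their product has no constant term; hence every $b_e=0$, the series $f^1,\dots,f^r,g$ of \eqref{eq:defoffi} are identically zero, and the substitution in Corollary~\ref{cor:Ruan} degenerates to exactly the one asserted in Corollary~\ref{cor:BG}. What remains is to check that $\U_\infty$ intertwines the small quantum products \eqref{eq:smallQCX} and \eqref{eq:smallQCY} after that substitution (and after analytic continuation in $q_{s+1},\dots,q_r$, when necessary).

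For that I would transport the small $J$-functions through $\U$. Recall that $J_\cX(u,-z)\in\cLX$ and $J_Y(q,-z)\in\cLY$, and that in Givental's formalism the small quantum product on $H(\cZ)$ is recovered from the tangent space to $\cLZ$ along the normalized family $J_\cZ(\tau,-z)$ --- concretely, multiplication by $\Phi_\alpha$ is read off from $z\partial_{\tau^\alpha}J_\cZ(\tau,-z)$ modulo the cone's ruling. Since $\U$ is $\CC(\!(z^{-1})\!)$-linear, symplectic, and carries $\cLX$ onto $\cLY$ after the chosen analytic continuations, it transports this datum from $\cLX$ to $\cLY$; so it is enough to identify $\U\bigl(J_\cX(u,-z)\bigr)$ with $J_Y(q,-z)$ for $q=q(u)$ as in the corollary, up to the overall scalar prefactor and the $q_i^{\varphi_i/z}$ multivaluedness, neither of which affects the product. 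Here property~(b), applied to the divisor factors $u_i^{\phi_i/(-z)}$ in \eqref{eq:JXsmall}, converts them into $(u_i^{r_i})^{\varphi_i/(-z)}$ since $\pi^\star\omega_i=r_i\omega'_i$, which produces the exponents $r_i$; and the discrepancy $\U(\fun_\cX)=\fun_Y-cz^{-1}+O(z^{-2})$, combined with the divisor equation \eqref{eq:JYdivisor} and the decomposition \eqref{eq:defofci} of $c$, produces the constant factors $q_i=e^{c^i}u_i^{r_i}$ for $1\le i\le s$ and $q_i=e^{c^i}$ for $s<i\le r$. That the resulting algebra isomorphism sends $\alpha\in H^2(\cX;\CC)$ to $\pi^\star\alpha$ then follows from property~(b) and $\U_\infty(\fun_\cX)=\fun_Y$.

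The step I expect to be the main obstacle is making this last identification precise: showing that $\U$ really lands the small-quantum locus of $\cLX$ inside the \emph{small}-quantum locus of $\cLY$, rather than somewhere in the full big-quantum parameter space of $Y$. The absence of positive powers of $z$ in $\U$, together with degree-preservation, is what does the work: it forces $\U\bigl(J_\cX(u,-z)\bigr)$ back into the normalized form $-z+\tilde\tau(u)+O(z^{-1})$ --- hence, by uniqueness of such families on $\cLY$, equal to $J_Y(\tilde\tau(u),-z)$ for a unique $\tilde\tau(u)$ --- and it forces $\tilde\tau(u)$ to be purely divisorial (a $\CC$-combination of $\varphi_1,\dots,\varphi_r$, up to a $\lambda$-multiple of $\fun_Y$), because the descendant and higher-degree terms of $J_\cX(u,-z)$, being $O(z^{-2})$, cannot be moved into the $z^0$-part by a map with no positive powers of $z$. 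That divisorial shape is exactly what lets the string and divisor equations (cf.\ \eqref{eq:JYdivisor}) reduce $J_Y(\tilde\tau(u),-z)$ to the small $J$-function $J_Y(q(u),-z)$ with the advertised $q(u)$. Finally, a routine point: Corollary~\ref{cor:Ruan} is stated after the non-equivariant limit whereas Corollary~\ref{cor:BG} is $T$-equivariant, but since the hypotheses keep $\U_\infty$ well-defined over $\CC(\lambda)$ and no step inverts $\lambda$, the equivariant version follows verbatim --- this is the footnote caveat in the excerpt.
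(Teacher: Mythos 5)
Your proposal is correct and is essentially the proof the paper relies on: the paper does not prove Corollary~\ref{cor:BG} in-text but defers to \cite{Coates--Ruan} (with a footnote that the $T$-equivariant statement follows because $\U$ has a non-equivariant limit, so no negative powers of $\lambda$ arise), and your reconstruction --- the vanishing of the classes $b_e$ and hence of the corrections $f^i,g$ under the no-positive-powers-of-$z$ hypothesis, the transport of $J_\cX(u,-z)$ through $\U$ onto a value of $\JJ_Y$ at a parameter that is divisorial up to a $\lambda$-multiple of $\fun_Y$, and the reduction to $J_Y(q(u),-z)$ via the Divisor Equation and Proposition~\ref{thm:stuff}(a), from which the product identification via $\U_\infty$ is read off --- is exactly that argument. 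Your closing remark on equivariance matches the paper's footnote, so nothing further is needed.
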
 

\noindent This Corollary also holds with ``small quantum cohomology''
replaced by ``big quantum cohomology'', but we will not pursue this.
The conclusion here is a slightly modified version of the Crepant
Resolution Conjecture due to Bryan--Graber \cite{Bryan--Graber}.

\subsection*{Three Results Which We Will Need}

We next record three results which we will need below.  Part~(a)
follows from the String Equation: this is explained in \emph{e.g.}
\cite{Givental:symplectic}.  Part~(b) is a reconstruction result for
Gromov--Witten invariants --- it says that all genus-zero
Gromov--Witten invariants can be uniquely reconstructed from the
\emph{one-point descendants} $\correlator{\Phi_\alpha
  \psi^k}^\cZ_{0,1,d}$.  Part~(c) is a generalization of part~(b).
One can prove (b) and (c) by repeated application of the WDVV
equations and the Topological Recursion Relations.  Since there does
not seem to be an appropriate reference for this in the generality we
need ($T$-equivariant, orbifolds, Calabi--Yau, etc.) we will give a
proof elsewhere \cite{Coates--Iritani:inprep}; results along similar
lines can be found in \cite{Dubrovin,Kontsevich--Manin:GW,
  Bertram--Kley, Iritani:gen, Lee--Pandharipande, Rose}.

\begin{proposition} \ \label{thm:stuff}
  \begin{itemize}
  \item[(a)] The submanifold-germ $\cLZ \subset \cHZ$ is closed under
    multiplication by $\exp({a \lambda}/z)$ for any $a \in \CC$.
  \item[(b)] If $\cZ$ is semi-positive and the Chen--Ruan
    orbifold cohomology algebra of $\cZ$ is generated by
    $H^2(\cZ;\CC)$ then the submanifold-germ $\cLZ$ can be uniquely
    reconstructed from the small $J$-function $J_\cZ(q,z)$.
  \item[(c)] If $\cZ$ is semi-positive and $H^2_{\text{gen}} \subset
    H^2_{\text{CR}}(\cZ;\CC)$ is a subspace such that the Chen--Ruan
    orbifold cohomology algebra of $\cZ$ is generated by
    $H^2_{\text{gen}}$ then the submanifold-germ $\cLZ$ can be
    uniquely reconstructed from the restriction of the big
    $J$-function $\JJ_\cZ(\tau,z)$ to the locus $\tau \in
    H^2_{\text{gen}}$.  \qed
  \end{itemize}
\end{proposition}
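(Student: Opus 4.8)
For part~(a), the plan is to reduce everything to an identity for the big $J$-function coming from the String Equation. A standard computation --- taking care of the exceptional low-degree, low-valence terms that produce the leading part $z + \tau$ --- shows that $\JJ_\cZ(\tau + s\fun_\cZ, z) = e^{s/z}\,\JJ_\cZ(\tau, z)$ for any scalar $s$, and differentiating in $\tau$ gives $\partial_{\tau^\alpha}\JJ_\cZ(\tau + s\fun_\cZ, z) = e^{s/z}\,\partial_{\tau^\alpha}\JJ_\cZ(\tau, z)$. Now $\cLZ$ is an overruled Lagrangian cone: it is the union, over $\tau$, of the rulings $z\,T_{\JJ_\cZ(\tau,-z)}\cLZ$, and each such ruling is the $\CC[z]$-span of the vectors $z\,\partial_{\tau^\alpha}\JJ_\cZ(\tau, -z)$, $\alpha = 0,\ldots,N$. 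The displayed identities therefore show that multiplication by $e^{s/z}$ carries the ruling over $\tau$ to the ruling over $\tau + s\fun_\cZ$, hence preserves $\cLZ$. Taking $s = a\lambda$, which is a legitimate scalar since we work over $\CC(\lambda)$, gives~(a). (A direct check shows that $f(z) \mapsto e^{a\lambda/z}f(z)$ is $\CC(\!(z^{-1})\!)$-linear and symplectic, though only closure is needed here; see also \cite{Givental:symplectic}.)

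For parts~(b) and~(c) I would first reduce~(b) to~(c). By the Divisor Equation, the restriction of $\JJ_\cZ(\tau, z)$ to the untwisted degree-two locus $\tau \in H^2(\cZ;\CC)$ is precisely the small $J$-function $J_\cZ(q,z)$ --- this is exactly the formula defining $J_\cZ$ --- and reading off the coefficient of $z^{-m-2}$ recovers the one-point descendants $\correlator{\Phi_\alpha \psi^m}^\cZ_{0,1,d}$. Thus~(b) is the case $H^2_{\text{gen}} = H^2(\cZ;\CC)$ of~(c), and I focus on~(c). The first step there is to reduce ``reconstruct $\cLZ$'' to ``reconstruct $\JJ_\cZ(\tau, z)$ on a neighbourhood of $\tau = 0$ in $H^\bullet_{\text{CR}}(\cZ)$'': since $\cLZ$ is the union of the rulings $z\,T_{\JJ_\cZ(\tau,-z)}\cLZ$, and each ruling is the $\CC[z]$-span of the vectors $z\,\partial_{\tau^\alpha}\JJ_\cZ(\tau, -z)$, knowledge of $\JJ_\cZ$ as a function of all of $\tau$ determines $\cLZ$.

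The remaining --- and main --- step is to reconstruct $\JJ_\cZ(\tau, z)$ on all of $H^\bullet_{\text{CR}}(\cZ)$ from its restriction to $H^2_{\text{gen}}$, which I would carry out in two stages. First, ``spreading in the base variable'': the restriction $\JJ_\cZ|_{H^2_{\text{gen}}}$ encodes, by multilinearity, every correlator $\correlator{\gamma_1,\ldots,\gamma_n,\beta\psi^k}^\cZ_{0,n+1,d}$ with $\gamma_1,\ldots,\gamma_n \in H^2_{\text{gen}}$, $\beta$ an arbitrary class and $k \geq 0$. Using the WDVV equations one replaces a pair of insertions $\gamma_i,\gamma_j$ by their Chen--Ruan product $\gamma_i \CR \gamma_j$ modulo correlators of strictly lower complexity; since the Chen--Ruan algebra is generated by $H^2_{\text{gen}}$, iterating this --- with an induction over the degree semigroup $\Eff(\cZ)$ that terminates because $\cZ$ is semi-positive --- promotes the $H^2_{\text{gen}}$-insertions to arbitrary classes, and so recovers all primary invariants of $\cZ$ as well as all correlators carrying exactly one descendant. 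Second, ``removing excess descendants'': the genus-zero Topological Recursion Relations express any correlator carrying two or more $\psi$-classes in terms of correlators of strictly smaller total $\psi$-degree (splitting at a node), so an induction on the total power of $\psi$ reduces everything to the data just obtained; the degenerate low-valence cases are handled directly --- the one-point descendants are the $n = 0$ data above, and the two-point descendants $\correlator{\alpha\psi^j,\beta\psi^k}^\cZ_{0,2,d}$ are determined by the one-point descendants via the standard fact that the fundamental solution is symplectic. This reconstructs $\JJ_\cZ$ everywhere, hence $\cLZ$, and with it~(b).

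The main obstacle is the first stage of this last step --- the WDVV-plus-generation bootstrap. One must set up the induction so that each application of WDVV strictly decreases a well-founded complexity, a lexicographic combination of the curve degree $d$ and the number of insertions not lying in $H^2_{\text{gen}}$; this is precisely where semi-positivity of $\cZ$ enters, as it forces the quantum product to be a deformation of the Chen--Ruan product by terms of strictly positive degree, so that the degree-$0$ term of each WDVV relation isolates the correlator one is solving for. One must also treat the low-valence correlators, where TRR and WDVV degenerate, separately. All of this is classical in spirit \cite{Dubrovin,Kontsevich--Manin:GW,Bertram--Kley,Iritani:gen,Lee--Pandharipande,Rose}, but a reference in the $T$-equivariant orbifold generality needed here does not seem to exist; the details will appear in \cite{Coates--Iritani:inprep}.
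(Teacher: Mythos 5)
Your proposal takes essentially the same route the paper intends: part (a) from the String Equation (via the identity $\JJ_\cZ(\tau+s\fun_\cZ,z)=e^{s/z}\JJ_\cZ(\tau,z)$ and the overruled-cone structure, as explained in Givental's symplectic formalism), and parts (b)--(c) by a WDVV/Topological Recursion Relations bootstrap reconstructing $\JJ_\cZ$ from its restriction to the degree-two generating subspace. The paper itself does not spell out these details --- it cites the String Equation for (a) and defers the reconstruction argument for (b)--(c) to the forthcoming Coates--Iritani paper --- and your sketch, including its deferral of the full WDVV induction to that same reference, is consistent with the indicated strategy.
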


\noindent It is easy to check that in all the examples we consider
below, the Chen--Ruan cohomology algebra of $\cZ$ is generated in
degree $2$.

\subsection*{Computing Twisted Gromov--Witten Invariants}

As discussed above, in our examples $\cZ$ will be the total space of a
concave vector bundle $\cE$ over a compact orbifold $\cB$, and the
$T$-action on $\cZ$ will be the canonical $\Cstar$-action which
rotates the fibers of $\cE$ and covers the trivial action on $\cB$.
In this situation $\Eff(\cZ)$ is canonically isomorphic to
$\Eff(\cB)$ and $H(\cZ)$ is canonically isomorphic to $H(\cB) :=
H^\bullet_{\text{CR}}(\cB;\CC) \otimes \CC(\lambda)$. Our bases
$\{\Phi_a\}$ and $\{\Phi^a\}$ for $H(\cZ)$ determine bases for
$H(\cB)$, which we also denote by $\{\Phi_a\}$ and $\{\Phi^a\}$.
Gromov--Witten invariants of $\cZ$ coincide with Gromov--Witten
invariants of $\cZ$ twisted, in the sense of
\cite{Coates--Givental:QRRLS, CCIT:computing}, by the $T$-equivariant
inverse Euler class $\be^{-1}$ and the vector bundle $\cE$.  Results
in \cite{CCIT:computing} allow the calculation of twisted
Gromov--Witten invariants in a quite general setting.  We will need
three special cases of these results, as follows.  Each of these
special cases determines a family of elements $q \mapsto I_\cZ(q,-z)$
of elements of $\cLZ$; in each case this family $I_\cZ(q,z)$ is an
appropriate \emph{hypergeometric modification} of the small
$J$-function $J_\cB(q,z)$ of $\cB$.

\begin{thm} \label{thm:smalllinebundle} Suppose that $\cE \to \cB$ is
  a concave line bundle.  Let $\rho$ denote the first Chern class of
  $\cE$, regarded as an element of localized $T$-equivariant
  Chen--Ruan cohomology $H(\cB)$ via the canonical inclusion
  $H^\bullet(\cB;\CC) \hookrightarrow H^\bullet_{\text{CR}}(\cB;\CC)$, and
  set
  \[
  M_\cE(d) := \prod_{\substack{b: \langle \rho,d \rangle < b \leq 0,\\
      \fr(b) = \fr(\langle \rho,d \rangle)}}
  (\lambda + \rho + b z)
  \]
  where $d \in \Eff(\cB)$ and $\fr(r)$ denotes the fractional part of
  $r$.  Let $k = b_2(\cB)$, so that the small $J$-function of $\cB$ is
  \[
  J_\cB(q,z) =  z \, q_1^{\Phi_1/z} \cdots q_k^{\Phi_k /z}
  \Bigg( 1 + \sum_{d \in \Eff(\cB)} q_1^{d_1} \cdots q_k^{d_k}  
  \correlator{\Phi_\epsilon \over z(z - \psi)}^\cB_{0,1,d}
  \Phi^\epsilon \Bigg). 
  \]
  Then
  \begin{equation}
    \label{eq:defofIZ}
    I_\cZ(q,z) :=  z \, q_1^{\Phi_1/z} \cdots q_k^{\Phi_k /z}
    \Bigg( 1 + \sum_{d \in \Eff(\cB)} 
    q_1^{d_1} \cdots q_k^{d_k} \, 
    M_\cE(d) \,
    \correlator{\Phi_\epsilon \over z(z - \psi)}^\cB_{0,1,d}
    \Phi^\epsilon \Bigg)
  \end{equation}
  satisfies $I_\cZ(q,-z) \in \cLZ$ for all $q$ in the domain of
  convergence of $I_\cZ$.
\end{thm}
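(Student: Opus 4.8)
The plan is to deduce this from the general twisted Gromov–Witten formalism of \cite{CCIT:computing}, specialized to the case where the twisting is by the inverse equivariant Euler class and the line bundle $\cE$. First I would recall the main theorem of \cite{CCIT:computing}: for a twisting characteristic class applied to a bundle $\cE$, the twisted Lagrangian cone $\cL^{\tw}_\cB$ is obtained from the untwisted cone $\cLB$ by the action of a certain quantized symplectic transformation, built as an infinite product over the Chern roots of $\cE$ and over ``ages'' of stabilizers of points of $\cB$, of operators of the form $\exp\big(\sum_{m} s_m \mathrm{ch}_m(\cE) z^{-?}\big)$-type series evaluated via the Givental--style Quantum Riemann--Roch. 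Since here $\cE$ is a line bundle, there is a single Chern root $\rho$, and the twisting class is $\be^{-1} = $ the inverse equivariant Euler class, whose universal function is $s(x) = -\log(\lambda + x)$ (so that $\exp\big(\sum_m s_m (\cdots)\big)$ reproduces $1/\be$).

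The key steps, in order: (1) identify, using the orbifold Quantum Riemann--Roch theorem of \cite{CCIT:computing}, the precise hypergeometric factor that the quantized operator inserts into a degree-$d$ term; for an orbifold line bundle one must track the age of $\cE$ at each inertia component, which is exactly what forces the restriction $\fr(b) = \fr(\langle\rho,d\rangle)$ in the product defining $M_\cE(d)$, and the product ranges over integers shifted by that fractional part lying in the half-open interval $(\langle\rho,d\rangle, 0]$. (2) Check that for $\cE$ \emph{concave} the relevant cohomology is $H^1(\cC, f^\star\cE)$, that this is the obstruction bundle, that its equivariant Euler class is what appears, and hence that twisted invariants of $\cB$ by $(\be^{-1},\cE)$ literally equal Gromov--Witten invariants of the total space $\cZ$ — this is the content already quoted from \cite{CCIT:computing} in the ``Twisted Gromov--Witten Invariants'' subsection, so I would simply invoke it, noting $\cLZ = \cL^{\tw}_\cB$. (3) Apply the quantized operator to the small $J$-function $J_\cB(q,-z) \in \cLB$: because $J_\cB$ lies on the untwisted cone, its image under the quantization lies on $\cL^{\tw}_\cB = \cLZ$. (4) Compute that image explicitly: the operator acts on the one-point descendant series $\correlator{\Phi_\epsilon/(z(z-\psi))}^\cB_{0,1,d}$ by multiplication by $M_\cE(d)$, while on the prefactor $q_i^{\Phi_i/z}$ it acts trivially (the twisting class $\be^{-1}$ of a line bundle does not affect the degree-zero/fundamental part beyond the identity, since $\rho$ enters only through the Chern character and the divisor-equation prefactor is unchanged); this yields exactly formula \eqref{eq:defofIZ} for $I_\cZ$. (5) Conclude $I_\cZ(q,-z)\in\cLZ$ for $q$ in the common domain of convergence.

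**The main obstacle** I expect is Step (1): pinning down the exact combinatorial form of $M_\cE(d)$ — in particular the half-open range of $b$ and the fractional-part condition — from the orbifold Quantum Riemann--Roch formula. This is where the orbifold structure genuinely enters: on a smooth variety one would get a product over integers in $(\langle\rho,d\rangle,0]$ of $(\lambda+\rho+bz)$ with $b\in\ZZ$, but on a Deligne--Mumford stack the line bundle $\cE$ pulled back to a twisted curve has fractional ``degree'' along orbifold points, contributing the shift $\fr(b) = \fr(\langle\rho,d\rangle)$, and one must verify that the $H^0$-vanishing (concavity) together with the Kawasaki--Riemann--Roch bookkeeping for $H^1$ produces precisely the stated interval and not an off-by-one variant. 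I would handle this by a careful comparison with the hypergeometric factors that appear in the mirror theorem for toric Deligne--Mumford stacks \cite{CCIT:stacks}, where exactly such products occur, and by checking the rank-one case against the known formula for $I$-functions of total spaces of line bundles over weighted projective spaces and their quotients. Once $M_\cE(d)$ is correctly identified, the remaining steps are formal manipulations within Givental's formalism.

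**A remark on an alternative route.** If one prefers to avoid the full Quantum Riemann--Roch machinery, one can instead verify \eqref{eq:defofIZ} directly by the reconstruction result Proposition~\ref{thm:stuff}(b): since in the examples $\cZ$ is semi-positive with Chen--Ruan cohomology generated in degree $2$, it suffices to check that $I_\cZ(q,z)$ has the shape $z + (\text{degree-two stuff}) + O(z^{-1})$ and that its derivatives satisfy the differential equations cutting out $\cLZ$ — but this essentially re-derives Quantum Riemann--Roch in disguise, so I would not take that path as the primary argument; I note it only as a consistency check on the final formula.
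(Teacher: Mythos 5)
There is a genuine gap at your steps (3)--(4), and it sits exactly where the real content of the statement lies. The main theorem you invoke from \cite{CCIT:computing} in the form ``the twisted cone $\cL^{\tw}$ is the image of $\cL_\cB$ under a quantized symplectic transformation'' is the orbifold Quantum Riemann--Roch theorem, and it does give $\Delta\big(J_\cB(q,-z)\big)\in\cL^{\tw}=\cLZ$ for free; but it is \emph{not} true that this operator acts on the small $J$-function by multiplying the degree-$d$ one-point descendant term by $M_\cE(d)$ and leaving the prefactor $q_1^{\Phi_1/z}\cdots q_k^{\Phi_k/z}$ untouched. The quantized operator is an exponential involving Bernoulli-number terms and strictly positive powers of $z$, so $\Delta\,J_\cB$ is a different family of points on the cone from the hypergeometric modification $I_\cZ$; the passage from Quantum Riemann--Roch to a statement of the form ``$I_\cZ(q,-z)\in\cLZ$'' is precisely the nontrivial step (it uses the ruled-cone geometry of $\cL^{\tw}$), and it is already packaged in \cite{CCIT:computing} as Theorem~4.6, which is what the paper's proof quotes. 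Asserting the diagonal action of $\Delta$ without that intermediate result is where your argument would fail.

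Relatedly, your proposal never engages with the decomposition of $\JJ_\cB$ by topological types, which is the mechanism by which Theorem~4.6 is stated: the modification factor $M_\theta(z)$ depends on the $n$-tuple of inertia components picked out by the marked points, not just on the degree $d$. The paper's proof restricts the parameter $\tau$ to untwisted degree-two classes $\tau^1\Phi_1+\cdots+\tau^k\Phi_k$, observes that then only topological types $(0,d,(0,\ldots,0,i))$ contribute, and checks that for these $M_\theta(z)=M_\cE(d)$; the Divisor Equation then converts the restricted big $J$-function into the small one and the change of variables $q_i=e^{\tau^i}$ gives \eqref{eq:defofIZ}. Your step (1) correctly anticipates the fractional-part bookkeeping in $M_\cE(d)$, and step (2) (concavity giving $\cL^{\tw}=\cLZ$) is fine, so the repair is simply to cite the hypergeometric-modification theorem of \cite{CCIT:computing} (Theorem~4.6) together with the topological-type analysis on the small parameter locus, rather than attempting to re-derive it by applying the quantization operator termwise.
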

 
\begin{proof}
  Theorem~4.6 in \cite{CCIT:computing} concerns a Lagrangian
  submanifold-germ $\cL^\tw$ which encodes twisted Gromov--Witten
  invariants: in our situation, $\cL^\tw = \cL_\cZ$.  The Theorem
  gives a formula for a formal family $\tau \mapsto I^\tw(\tau,-z)$ of
  elements of $\cL^\tw$, as follows.  Let $\cI$ be a set which indexes
  the components of the inertia stack $\cI\cB$ of $\cB$, and let $0
  \in \cI$ be the index of the distinguished component $\cB \subset
  \cI\cB$.  One decomposes the big $J$-function of $\cB$ as a sum
  \[
  \JJ_\cB(\tau,z) = \sum_{\theta \in \NETT(\cB)} J_\theta(\tau,z)
  \]
  of contributions from stable maps of different \emph{topological
    types}; here $\NETT(\cB)$ is the set of topological types.  The
  topological type of a degree-$d$ stable map $f:\cC \to \cB$ from a
  genus-$g$ orbifold curve with $n$ marked points is the triple
  $(g,d,S)$, where $S = (i_1,\ldots,i_n)$ is the ordered $n$-tuple of
  elements of $\cI$ indexing the components of $\cI\cB$ picked out by
  the marked points.  Then
  \[
  I^\tw(\tau,z) := \sum_{\theta \in \NETT(\cB)} M_\theta(z) \cdot
  J_\theta(\tau,z)
  \]
  where $M_\theta(z)$ is a \emph{modification factor} defined in \S4.2
  of \cite{CCIT:computing}.

  If we set $\tau = \tau^1 \Phi_1 + \cdots + \tau^k \Phi_k$ then
  $J_\theta(\tau,z)$ vanishes unless the topological type $\theta$ is
  of the form $(0,d,S)$ where $S = (0,0,\ldots,0,i)$ for some $i \in
  \cI$; this is because the classes $\Phi_i$, $1 \leq i \leq k$ are
  supported on the distinguished component $\cB$ of $\cI\cB$.  In this
  case the modification factor $M_\theta(z)$ depends only on $d$ and
  is equal to $M_\cE(d)$.  Also,
  \[
  \JJ_\cB(\tau^1 \Phi_1 +
  \cdots + \tau^k \Phi_k,z) = 
  z \, e^{\tau^1 \Phi_1/z} \cdots e^{\tau^k \Phi_k /z}
  \Bigg( 1 + \sum_{d \in \Eff(\cB)} e^{d_1 \tau^1}
  \cdots e^{d_k \tau^k}   
  \correlator{\Phi_\epsilon \over z(z - \psi)}^\cB_{0,1,d}
  \Phi^\epsilon \Bigg)
  \]
  and it follows that $I^\tw(\tau^1 \Phi_1 + \cdots + \tau^k
  \Phi_k,z)$ is equal to
  \[
  z \, e^{\tau^1 \Phi_1/z} \cdots e^{\tau^k \Phi_k /z}
  \Bigg( 1 + \sum_{d \in \Eff(\cB)} e^{d_1 \tau^1}
  \cdots e^{d_k \tau^k}\, M_\cE(d)\,   
  \correlator{\Phi_\epsilon \over z(z - \psi)}^\cB_{0,1,d}
  \Phi^\epsilon \Bigg).
  \]
  Making the change of variables $q_i = e^{\tau^i}$, $1 \leq i \leq
  k$, we conclude that $I_\cZ(q,-z) \in \cLZ$ for all $q$ such that
  the series defining $I_\cZ$ converges.
\end{proof}

Exactly the same argument proves:

\begin{thm} \label{thm:smallvb}
  If $\cE = \cE_1 \oplus \cdots \oplus \cE_m$ is the direct sum of
  convex line bundles, 
  \[
  M_\cE(d) := \prod_{1 \leq i \leq m}
  M_{\cE_i}(d),
  \] 
  and $I_\cZ(q,z)$ is defined exactly as in \eqref{eq:defofIZ} then
  $I_\cZ(q,-z) \in \cLZ$ for all $q$ in the domain of convergence of
  $I_\cZ$. \qed
\end{thm}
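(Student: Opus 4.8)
The plan is to follow the proof of Theorem~\ref{thm:smalllinebundle} essentially verbatim; the only new input needed is the behaviour of the modification factor of \cite{CCIT:computing}*{\S4.2} under direct sums. First I would record, exactly as before, that Gromov--Witten invariants of the total space $\cZ$ of $\cE \to \cB$ agree with the Gromov--Witten invariants of $\cB$ twisted, in the sense of \cite{Coates--Givental:QRRLS, CCIT:computing}, by the $T$-equivariant inverse Euler class $\be^{-1}$ and the bundle $\cE = \cE_1 \oplus \cdots \oplus \cE_m$, so that the Lagrangian submanifold-germ $\cL^\tw$ appearing in \cite{CCIT:computing}*{Theorem~4.6} is again $\cLZ$. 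That theorem then exhibits $\sum_{\theta \in \NETT(\cB)} M_\theta(z) \cdot J_\theta(\tau,z)$ as a formal family of elements of $\cLZ$, where $\JJ_\cB(\tau,z) = \sum_\theta J_\theta(\tau,z)$ is the decomposition of the big $J$-function of $\cB$ by topological type and $M_\theta(z)$ is the associated modification factor.

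The key step is to observe that this modification factor is multiplicative over the summands: $M_\theta(z) = \prod_{i=1}^m M_{\theta,i}(z)$, where $M_{\theta,i}(z)$ is the modification factor for the twisting by $(\be^{-1},\cE_i)$. This should follow directly from the construction in \cite{CCIT:computing}*{\S4.2}: the inverse Euler class is multiplicative under direct sums, so the twisting class for $\cE$ corresponds to the sum of the twisting parameters attached to the classes $\be^{-1}(\cE_i)$, and the modification factor depends on those parameters through an exponential, so that summing parameters multiplies the factors. I expect this bookkeeping to be the only real obstacle, and even it is essentially formal.

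With multiplicativity in hand, I would restrict $\tau$ to the locus $\tau = \tau^1\Phi_1 + \cdots + \tau^k\Phi_k$ with $k = b_2(\cB)$. As in the proof of Theorem~\ref{thm:smalllinebundle}, since $\Phi_1,\ldots,\Phi_k$ are supported on the distinguished component of the inertia stack $\cI\cB$, the term $J_\theta(\tau,z)$ vanishes unless $\theta = (0,d,S)$ with $S = (0,\ldots,0,i)$ for some $i \in \cI$, and for such $\theta$ the single-line-bundle calculation already carried out shows $M_{\theta,i}(z) = M_{\cE_i}(d)$, depending only on $d$. Hence $M_\theta(z) = \prod_{i=1}^m M_{\cE_i}(d) = M_\cE(d)$. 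Substituting the Divisor-Equation form of $\JJ_\cB(\tau^1\Phi_1 + \cdots + \tau^k\Phi_k,z)$ and changing variables $q_i = e^{\tau^i}$ then yields precisely the family $I_\cZ(q,z)$ of \eqref{eq:defofIZ} with $M_\cE(d) = \prod_i M_{\cE_i}(d)$, and hence $I_\cZ(q,-z) \in \cLZ$ on the domain of convergence.
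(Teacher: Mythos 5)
Your proposal is correct and follows the same route as the paper, which proves this result simply by observing that the argument of Theorem~\ref{thm:smalllinebundle} applies verbatim once one notes that, for topological types of the form $(0,d,(0,\ldots,0,i))$, the modification factor of \cite{CCIT:computing}*{\S4.2} for the direct sum is the product of the factors for the summands, i.e.\ equals $M_\cE(d)=\prod_i M_{\cE_i}(d)$. Your extra paragraph justifying this multiplicativity is exactly the bookkeeping the paper leaves implicit, so there is nothing to add.
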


The final special case which we need is where $\cZ$ is the total space
of a direct sum of line bundles $\cE = \cE_1 \oplus \cdots \oplus
\cE_m$ over $\cB = B\ZZ_n$.  Components of the inertia stack of
$B\ZZ_n$ are indexed by fractions $k/n$, $0 \leq k < n$: the
component indexed by $k/n$ corresponds to the element $[k] \in
\ZZ_n$.  Let $\fun_{k/n} \in H(\cB)$ denote the orbifold
cohomology class which restricts to the unit class on the component of
the inertia stack indexed by $k/n$ and restricts to zero on
the other components.  The set $\{\fun_{k/n} : 0 \leq k < n\}$
forms a basis for $H(\cB)$; as $H(\cB)$ and $H(\cZ)$ are
canonically isomorphic it determines a basis for $H(\cZ)$ as well.

\begin{thm} \label{thm:BZn}
  Let $\cZ$ be the total space of the direct sum of line bundles $\cE
  = \cE_1 \oplus \cdots \oplus \cE_m$ over $\cB = B\ZZ_n$.  Let $e_i$
  be the integer such that $\cE_i$ is given by the character $[k]
  \mapsto \exp({2 \pi e_i k \sqrt{-1} \over n})$ of $\ZZ_n$ and that
  $0 \leq e_i < n$.  Let
  \[
  P_{i,k} := \Big\{ b : \fr(b) = \fr\big({\textstyle-{e_i k \over
      n}}\big),  \,
  {\textstyle-{e_i k \over n}} < b \leq 0\Big\}
  \]
  and 
  \[
  I_\cZ(x,z) := \sum_{k \geq 0} 
  x^k { \prod_{i=1}^m \prod_{b \in P_{i,k}}
  \big({e_i \over n}\lambda + b z\big) \over k! \, z^k} \fun_{\fr({k/n})}.
  \]
  Then $x \mapsto I_\cZ(x,-z)$ is a formal family of elements of
  $\cLZ$.
\end{thm}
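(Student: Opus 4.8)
The plan is to follow the template of the proofs of Theorems~\ref{thm:smalllinebundle} and~\ref{thm:smallvb}. As in the general setup, Gromov--Witten invariants of $\cZ$ --- the total space of $\cE = \cE_1 \oplus \cdots \oplus \cE_m$ over $\cB = B\ZZ_n$, with $T$ rotating the fibres --- coincide with Gromov--Witten invariants of $B\ZZ_n$ twisted by the inverse $T$-equivariant Euler class and the bundle $\cE$; since $\Eff(B\ZZ_n) = \{0\}$ every moduli space that occurs is a $d = 0$ space, non-compact but with point-like (hence compact) $T$-fixed locus, so all integrals are defined by Graber--Pandharipande localization exactly as discussed in the Conventions. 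Theorem~4.6 of \cite{CCIT:computing} then supplies a formal family $\tau \mapsto I^\tw(\tau,-z)$ of elements of $\cL^\tw = \cLZ$, of the form
\[
I^\tw(\tau,z) = \sum_{\theta \in \NETT(B\ZZ_n)} M_\theta(z)\, J_\theta(\tau,z),
\]
where $\JJ_{B\ZZ_n}(\tau,z) = \sum_\theta J_\theta(\tau,z)$ is the decomposition of the big $J$-function of $B\ZZ_n$ into contributions of topological types and $M_\theta(z)$ is the modification factor of \S4.2 of \cite{CCIT:computing}.

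Next I would restrict $\tau$ to the line $\CC \cdot \fun_{1/n}$, writing $\tau = x\,\fun_{1/n}$; this plays the role that restriction to a divisor locus played in the proofs of Theorems~\ref{thm:smalllinebundle} and~\ref{thm:smallvb}, the only difference being that $\fun_{1/n}$ lies in a twisted sector rather than in $H^2$. Since $\fun_{1/n}$ is supported on the component of the inertia stack of $B\ZZ_n$ indexed by $1/n$, and since a genus-zero twisted stable map to $B\ZZ_n$ exists only when the product of the monodromies at its marked points is trivial, the topological types $\theta$ that contribute to $\JJ_{B\ZZ_n}(x\,\fun_{1/n},z)$ are precisely those --- call them $\theta_k$, $k \ge 0$ --- with $k$ insertion-carrying marked points all on the sector indexed by $1/n$ (which forces the remaining marked point onto the sector indexed by $\fr(-k/n)$). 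Hence
\[
I^\tw(x\,\fun_{1/n},z) = \sum_{k \ge 0} M_{\theta_k}(z)\, J_{\theta_k}(x\,\fun_{1/n},z).
\]

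It remains to compute the two families of ingredients. For the untwisted contribution $J_{\theta_k}$ one uses that the moduli space $\Mbar_{0,k+1}(B\ZZ_n)$ carrying the monodromies prescribed by $\theta_k$, when non-empty, is a $\ZZ_n$-gerbe over $\Mbar_{0,k+1}$ on which the $\psi$-classes are pulled back from $\Mbar_{0,k+1}$; combined with the Poincar\'e pairing $(\fun_{a/n},\fun_{b/n})_{B\ZZ_n} = \frac{1}{n}$ when $a + b \equiv 0 \pmod{n}$ and the standard values of the integrals $\int_{\Mbar_{0,k+1}} \psi^j$, this gives $J_{\theta_k}(x\,\fun_{1/n},z)$ explicitly as $x^k/k!$ times a monomial in $z$ times $\fun_{\fr(k/n)}$. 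For the modification factor $M_{\theta_k}(z) = \prod_{i=1}^m M_{\cE_i,\theta_k}(z)$ one unwinds the recipe of \S4.2 of \cite{CCIT:computing} for the type $\theta_k$: using that $\cE_i$ is the line bundle on $B\ZZ_n$ with character $[1] \mapsto \exp(2\pi e_i \sqrt{-1}/n)$ --- so that the ages that enter are the fractional parts $\fr(e_i k/n)$ and the equivariant weight that enters is $\frac{e_i}{n}\lambda$ --- a direct count identifies $M_{\cE_i,\theta_k}(z)$ with the product $\prod_{b \in P_{i,k}} \big(\frac{e_i}{n}\lambda + b z\big)$. Substituting both ingredients into the displayed sum and simplifying yields the claimed formula for $I_\cZ(x,z)$.

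The conceptual content is entirely in Theorem~4.6 of \cite{CCIT:computing}; the work is the bookkeeping in the previous paragraph, and I expect the delicate step to be the identification of $M_{\cE_i,\theta_k}(z)$ with the product over $P_{i,k}$. This means matching the abstract modification factor against an explicit product, which requires tracking the age shifts $\fr(e_i k/n)$, the equivariant weights $\frac{e_i}{n}\lambda$ coming from the $T$-action on the fibres of $\cE_i$, the $\psi$-class insertions that produce the powers of $z$, and --- since the degree in play is $0$ --- the contribution of the pushforward $R^\bullet \pi_\star f^\star \cE_i$ along the universal curve, computed via orbifold Riemann--Roch. A little care is also needed with the normalization by powers of $z$, i.e.\ with the precise form in which Theorem~4.6 of \cite{CCIT:computing} delivers its family, but that is routine.
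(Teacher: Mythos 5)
Your proposal is correct and follows essentially the same route as the paper: Theorem~4.6 of \cite{CCIT:computing} supplies the family $\sum_\theta M_\theta(z)J_\theta(\tau,z)$ lying on $\cLZ$, restriction to $\tau = x\,\fun_{1/n}$ isolates exactly the topological types $\big(0,0,(1/n,\ldots,1/n,\fr(-k/n))\big)$, and one then evaluates $J_{\theta_k}$ and the modification factor $M_{\theta_k}$. The only cosmetic difference is that the paper reads off $J_{\theta_k}$ from the explicit formula for the big $J$-function of $B\ZZ_n$ given in \cite{CCIT:computing}*{Proposition~6.1}, rather than recomputing it from the gerbe structure of the moduli spaces and the $\psi$-integrals on $\Mbar_{0,k+1}$ as you propose.
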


\begin{proof}
  We argue as in the proof of Theorem~\ref{thm:smalllinebundle}.  If
  we decompose the big $J$-function of $\cB$ as a sum
  \[
  \JJ_\cB(\tau,z) = \sum_{\theta \in \NETT(\cB)} J_\theta(\tau,z)
  \]
  of contributions from stable maps of different topological types and
  set
  \[
  I^\tw(\tau,z) := \sum_{\theta \in \NETT(\cB)} M_\theta(z) \cdot
  J_\theta(\tau,z)
  \]
  where $M_\theta(z)$ is defined in \cite{CCIT:computing}*{\S4.2}
  then $\tau \mapsto I^\tw(\tau,-z)$ defines a formal family of
  elements of $\cL^\tw = \cLZ$.  Proposition~6.1 in
  \cite{CCIT:computing} gives an explicit formula for the big
  $J$-function of $\cB = B \ZZ_n$, and we see from this that if $\tau = x
  \fun_{1 \over n}$ then $J_\theta(\tau,z)$ vanishes unless the
  topological type $\theta$ is $(0,0,S)$ with
  \[
  S = \Big(\overbrace{\textstyle{1 \over n},{1 \over n},\ldots,{1 \over
      n}}^{\text{$k$ times}},\fr \big(\textstyle{n-k \over
    n} \big) \Big).
  \]
  In this case, 
  \begin{align*}
    J_\theta(\tau,z) = {x^k \over k! \, z^k} \fun_{\fr({k/n})}
    && \text{and} &&
    M_\theta(z) = \prod_{i=1}^m \prod_{b \in P_{i,k}} \big(\textstyle{e_i
      \over n} \lambda + b z\big). 
  \end{align*}
  Thus $x \mapsto I_\cZ(x,-z)$ is a formal family of elements of
  $\cLZ$.
\end{proof}

\section{Example I: $\cX = \big[\CC^3/\ZZ_3\big]$, $Y = K_{\PP^2}$}
\label{sec:C3Z3}

Let $\cX$ be the orbifold $\big[ \CC^3/\ZZ_3 \big]$ where $\ZZ_3$ acts
on $\CC^3$ with weights $(1,1,1)$.  The coarse moduli space $X$ of
$\cX$ is the quotient\footnote{This is Miles Reid's notation
  \cite{Reid}.} singularity ${1 \over 3}(1,1,1)$, and the crepant
resolution $Y$ of $X$ is the canonical bundle $K_{\PP^2}$.

\subsection*{Toric Geometry}

The space $Y$ is the toric variety corresponding to a fan with rays
\begin{equation}
  \label{eq:KP2rays}
  \begin{pmatrix}
    1 \\ 0 \\ 0
  \end{pmatrix},
  \begin{pmatrix}
    0 \\ 1 \\ 0
  \end{pmatrix},
  \begin{pmatrix}
    -1 \\ -1 \\ 3
  \end{pmatrix},
  \begin{pmatrix}
    0 \\ 0 \\ 1
  \end{pmatrix};
\end{equation}
this fan is a cone over the picture in the plane $x+y+z=1$ shown in
Figure~\ref{fig:KP2fan}.  
\begin{figure}[bhtp]
  \centering
  \subfloat[$\cX$]{
    \begin{picture}(80,80)(-40,-40)
      \multiput(-30,-30)(30,0){3}{\makebox(0,0){$\cdot$}}
      \multiput(-30,0)(30,0){3}{\makebox(0,0){$\cdot$}}
      \multiput(-30,30)(30,0){3}{\makebox(0,0){$\cdot$}}
      \put(30,0){\makebox(0,0){$\bullet$}}
      \put(0,30){\makebox(0,0){$\bullet$}}
      \put(-30,-30){\makebox(0,0){$\bullet$}}
      \put(36,6){\makebox(0,0){$\scriptstyle 1$}}
      \put(6,36){\makebox(0,0){$\scriptstyle 2$}}
      \put(-36,-24){\makebox(0,0){$\scriptstyle 3$}}
      \put(30,0){\line(-1,1){30}}
      \put(-30,-30){\line(1,2){30}}
      \put(-30,-30){\line(2,1){60}}
    \end{picture}
    \label{fig:C3Z3fan}
  }
  \qquad \qquad
  \subfloat[$Y$]{
      \begin{picture}(80,80)(-40,-40)
      \multiput(-30,-30)(30,0){3}{\makebox(0,0){$\cdot$}}
      \multiput(-30,0)(30,0){3}{\makebox(0,0){$\cdot$}}
      \multiput(-30,30)(30,0){3}{\makebox(0,0){$\cdot$}}
      \put(30,0){\makebox(0,0){$\bullet$}}
      \put(0,30){\makebox(0,0){$\bullet$}}
      \put(-30,-30){\makebox(0,0){$\bullet$}}
      \put(0,0){\makebox(0,0){$\bullet$}}
      \put(36,6){\makebox(0,0){$\scriptstyle 1$}}
      \put(6,36){\makebox(0,0){$\scriptstyle 2$}}
      \put(-36,-24){\makebox(0,0){$\scriptstyle 3$}}
      \put(6,6){\makebox(0,0){$\scriptstyle 4$}}
      \put(30,0){\line(-1,1){30}}
      \put(-30,-30){\line(1,2){30}}
      \put(-30,-30){\line(2,1){60}}
      \put(0,0){\line(1,0){30}}
      \put(0,0){\line(0,1){30}}
      \put(0,0){\line(-1,-1){30}}
    \end{picture}
    }
    \caption{The fans for $\cX$ and $Y$ (respectively) are the cones over these
      pictures in the plane $x+y+z=1$}
  \label{fig:KP2fan}
\end{figure}
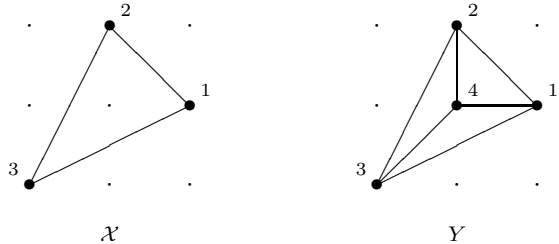
We can construct $Y$ as a GIT quotient,
following \emph{e.g.}  \cite{Audin}, by considering the exact sequence
\begin{equation}
  \label{eq:KP2sequence}
  \begin{CD}
    0 @>>> \ZZ @>{
      \begin{pmatrix}
        1 \\ 1 \\ 1 \\ -3
      \end{pmatrix}}>>  
    \ZZ^4 @>{
      \begin{pmatrix} \textstyle
        1 & 0 & -1 & 0 \\
        0 & 1 & -1 & 0 \\
        0 & 0 &  3 & 1
      \end{pmatrix}}>> \ZZ^3 @>>> 0.
  \end{CD}.
\end{equation}
This shows that $Y$ is a quotient $\CC^4/\!\!/\Cstar$, where $\tau \in
\Cstar$ acts on $\CC^4$ as 
\begin{equation}
  \label{eq:KP2action}
\begin{pmatrix}
  x \\ y \\ z \\ w
\end{pmatrix} 
\longmapsto
\begin{pmatrix}
  \tau x \\  \tau y \\   \tau z \\   \tau^{-3} w
\end{pmatrix}.
\end{equation}
Dualizing \eqref{eq:KP2sequence} gives
\[
\begin{CD}
  0 @>>> \ZZ^3 @>{
    \begin{pmatrix} \textstyle
      1 & 0 & 0 \\
      0 & 1 & 0 \\
      -1 & -1 & 3 \\
      0 & 0 & 1
    \end{pmatrix}}>>
  \ZZ^4 @>{
\begin{pmatrix}
      1 & 1 & 1 & -3
    \end{pmatrix}
    }>> \ZZ @>>> 0
\end{CD}
\]
where the right-hand entry is $H^2(Y;\ZZ)$ and the columns of the
right-hand matrix give the four toric divisors in $Y$.  If we draw
this picture in $H^2(Y;\RR)$ then it gives the chamber decomposition
for the GIT problem (Figure~\ref{fig:KP2secondaryfan} below); this
chamber decomposition is also known as the \emph{secondary fan} for
$Y$.
\begin{figure}[hbtp]
  \centering
     \begin{picture}(160,10)(-90,-5)
      \multiput(-80,0)(20,0){7}{\makebox(0,0){$\cdot$}}
      \put(0,0){\makebox(0,0){$\bullet$}}
      \put(22,6){\makebox(0,0){$\scriptstyle 1,2,3$}}
      \put(-62,6){\makebox(0,0){$\scriptstyle 4$}}
      \put(0,0){\vector(1,0){20}}
      \put(0,0){\vector(-1,0){60}}
    \end{picture}
    \caption{The secondary fan for $Y = K_{\PP^2}$}
  \label{fig:KP2secondaryfan}
\end{figure}

Each chamber in the secondary fan corresponds to a fan $\Sigma$ which
is a triangulation of the rays \eqref{eq:KP2rays}: a cone $\sigma$ is
in $\Sigma$ if and only if the co-ordinate subspace corresponding to
the complement of $\sigma$ covers the chosen chamber.  The fans are
shown in Figure~\ref{fig:KP2fan}.  For $\xi$ in the left-hand chamber
the GIT quotient $\CC^4 \GIT{\xi}\Cstar$ gives $\cX$; we delete the
locus $w=0$ from $\CC^4$ and then take the quotient by the action
\eqref{eq:KP2action}.  For $\xi$ in the right-hand chamber we have
$\CC^4 \GIT{\xi} \Cstar = Y$; we delete the locus $(x,y,z) = (0,0,0)$
from $\CC^4$ and then take the quotient by \eqref{eq:KP2action}.  For
$\xi = 0$ the quotient $\CC^4 \GIT{\xi} \Cstar$ gives the coarse
moduli space $X$.  Moving from the right-hand chamber into the
``wall'' $\xi = 0$ gives the resolution map $Y \to X$; this sends
\begin{align*}
  \begin{bmatrix}
    x \\ y \\ z \\ w
  \end{bmatrix}
  \in \CC^4 \GIT{\xi} \Cstar
  && \text{to} &&
  \begin{bmatrix}
    x w^{1/3} \\ y w^{1/3} \\ z w^{1/3}
  \end{bmatrix}
  \in \CC^3/\ZZ_3
\end{align*}
where $[A]$ denotes class of $A$ in the appropriate quotient.

\subsection*{The $T$-Action}

Consider the action of $T = \Cstar$ on $\CC^4$ such that $\alpha \in
T$ acts as
\[
\begin{pmatrix}
  x \\ y \\ z \\ w
\end{pmatrix}
\longmapsto
\begin{pmatrix}
  x \\ y \\ z \\ \alpha w
\end{pmatrix}.
\]
This action descends to give $T$-actions on $\cX$, $X$, and $Y$.
The induced action on $\cX$ is 
\[
\begin{bmatrix}
  x \\ y \\ z
\end{bmatrix}
\longmapsto
\begin{bmatrix}
  \alpha^{1/3} x \\   \alpha^{1/3} y \\   \alpha^{1/3} z
\end{bmatrix}.
\]
The induced action on $Y$ is the canonical $\Cstar$-action on the line
bundle $K_{\PP^2} \to \PP^2$; it covers the trivial action on
$\PP^2$.  The diagram
\[
\xymatrix{
  \cX \ar[rd] & & Y \ar[ld]\\
  &X&} 
\]
is $T$-equivariant.

\subsection*{Bases for Everything}

We have
\begin{align*}
  & r := \rank H^2(Y;\CC) = 1, &
  & s := \rank H^2(\cX;\CC) = 0.
\end{align*}
Let $p$ be the first Chern class of the line bundle $\cO(1) \to
\PP^2$, pulled back to $Y = K_{\PP^2}$ via the projection $K_{\PP^2}
\to \PP^2$.  The class $p$ has a canonical lift to $T$-equivariant
cohomology, which we also denote by $p$, and
\[
H(Y) = \CC(\lambda)[p]/\langle p^3 \rangle.
\]
We set
\begin{align*}
  & \varphi_0 = 1, &
  &  \varphi_1 = p, &
  & \varphi_2 = p^2, \\
\intertext{so that}
  & \varphi^0 = \lambda p^2, &
  & \varphi^1 = \lambda p - 3 p^2, &
  & \varphi^2 = \lambda - 3 p.  
\end{align*}

The components of the inertia stack of $\cX$ are indexed by elements
of $\ZZ_3$.  Let $\fun_{k/3} \in H(\cX)$ denote the orbifold
cohomology class which restricts to the unit class on the inertia
component indexed by $[k] \in \ZZ_3$ and restricts to zero on the
other components.  Set
\begin{align*}
  &\phi_0 = \fun_0, &
  &\phi_1 = \fun_{1/3}, &
  &\phi_2 = \fun_{2/3}, \\
  \intertext{so that}
  & \phi^0 = \textstyle {\lambda^3 \over 9} \fun_0, &
  & \phi^1 = 3 \fun_{2/3}, &
  & \phi^2 = 3 \fun_{1/3}.  
\end{align*}

\subsection*{Step 1: A Family of Elements of $\cLY$}

Consider
\begin{equation}
  \label{eq:KP2IGamma}
  I_Y(y,z) := z \sum_{d \geq 0}
  { \Gamma\big(1 + {p \over z} \big)^3 
    \over \Gamma\big(1 + {p \over z} + d \big)^3}
  { \Gamma\big(1 + {\lambda - 3 p \over z} \big)
    \over \Gamma\big(1 + {\lambda - 3 p \over z} - 3d \big)} \, 
  y^{d + p/z}.
\end{equation}
This series converges in the region $\big\{y \in \CC : 0 < |y| < {1
  \over 27} \big\}$ to a multi-valued analytic function of $y$ which
takes values in $\cHY$.  We have
\begin{equation}
  \label{eq:KP2I}
  I_Y(y,z) = z \sum_{d \geq 0}
  {
    \prod_{-3d < m \leq 0} (\lambda - 3 p + m z)
    \over \prod_{0 < m \leq d} (p + m z)^3} \,
  y^{d + p/z}.
\end{equation}

\begin{proposition} \label{thm:KP2mirror}
  \begin{align*}
    I_Y(y,-z) \in \cLY && \text{for all $y$ such that $0<|y|<{1 \over
        27}$}. 
  \end{align*}
\end{proposition}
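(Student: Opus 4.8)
The plan is to present $Y = K_{\PP^2}$ as the total space of the concave line bundle $\cE = \cO_{\PP^2}(-3)$ over $\cB = \PP^2$ and then apply Theorem~\ref{thm:smalllinebundle} directly. Concavity is immediate: for any genus-zero stable map $f\colon \cC \to \PP^2$ of degree $d \geq 1$ the restriction of $f^\star \cO_{\PP^2}(-3)$ to each component is $\cO(-3d')$ with $d' \geq 0$, which has vanishing $H^0$ on every component of positive degree, so $H^0(\cC, f^\star\cE) = 0$. Hence the Gromov--Witten invariants of $Y$ are the $(\be^{-1},\cE)$-twisted invariants of $\PP^2$, and Theorem~\ref{thm:smalllinebundle} produces an explicit family $y \mapsto I_\cZ(y,-z)$ of points of $\cLY$.

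To identify that family with \eqref{eq:KP2I} I would first recall the small $J$-function of $\PP^2$, which by Givental's mirror theorem for projective space has the closed form
\[
J_{\PP^2}(y,z) = z\sum_{d \geq 0} \frac{y^{d+p/z}}{\prod_{0 < m \leq d}(p+mz)^3},
\]
where $p$ is the hyperplane class (with its canonical $T$-equivariant lift) and $p^3 = 0$. Then I would compute the modification factor of Theorem~\ref{thm:smalllinebundle}: here $\rho = c_1(\cE) = -3p$, so $\langle\rho,d\rangle = -3d$ is an integer with $\fr(\langle\rho,d\rangle) = 0$, and the definition gives
\[
M_\cE(d) = \prod_{-3d < m \leq 0}(\lambda - 3p + mz),
\]
the product being over integers $m$. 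Inserting $M_\cE(d)$ in front of the $d$-th summand of the closed form of $J_{\PP^2}$ — which is precisely what formula \eqref{eq:defofIZ} prescribes, since for the topological types contributing to $\JJ_{\PP^2}$ restricted to $H^2(\PP^2)$ the modification factor $M_\theta(z)$ depends only on $d$ and equals $M_\cE(d)$ — yields exactly the series \eqref{eq:KP2I}. Therefore $I_Y(y,-z) \in \cLY$ for every $y$ at which the series converges.

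Two routine points remain. First, the $\Gamma$-function expression \eqref{eq:KP2IGamma} equals \eqref{eq:KP2I}: using $\Gamma(1+x)/\Gamma(1+x+n) = \prod_{m=1}^n(x+m)^{-1}$ for positive integers $n$ one gets $\Gamma(1+p/z)^3/\Gamma(1+p/z+d)^3 = z^{3d}\prod_{0<m\le d}(p+mz)^{-3}$ and $\Gamma\big(1+\tfrac{\lambda-3p}{z}\big)/\Gamma\big(1+\tfrac{\lambda-3p}{z}-3d\big) = z^{-3d}\prod_{-3d<m\le 0}(\lambda-3p+mz)$, and the two powers of $z$ cancel. Second, the radius of convergence is $1/27$: the ratio of the coefficients of $y^{d+1}$ and $y^d$ behaves like $(3d+2)(3d+1)(3d)/(d+1)^3 \to 27$ as $d\to\infty$ (three new linear factors of size $\sim 3d$ enter the numerator $M_\cE$ against one new cube of size $\sim d^3$ in the denominator), so the series defines a multi-valued analytic function of $y$ on $0 < |y| < \tfrac{1}{27}$, the multi-valuedness coming from the factor $y^{p/z}$.

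I do not expect a genuine obstacle: the proposition is a direct application of Theorem~\ref{thm:smalllinebundle}. The only step requiring care is confirming that the hypergeometric factor appearing in \eqref{eq:KP2I} is indeed the modification factor $M_\cE(d)$ applied term-by-term to the closed form of $J_{\PP^2}$; this is exactly the bookkeeping already carried out in the proof of Theorem~\ref{thm:smalllinebundle} (the restriction of the big $J$-function to $H^2$ picks out topological types $(0,d,(0,\ldots,0,i))$ whose modification factor depends only on $d$), so no new idea is needed.
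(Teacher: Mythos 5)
Your proposal is correct and follows essentially the same route as the paper: apply Theorem~\ref{thm:smalllinebundle} with $\cB = \PP^2$, $\cE = \cO_{\PP^2}(-3)$, and Givental's closed form for $J_{\PP^2}$, then observe that inserting the modification factor $M_\cE(d)=\prod_{-3d<m\leq 0}(\lambda-3p+mz)$ term-by-term reproduces \eqref{eq:KP2I}. The extra checks you record (concavity, the equivalence of \eqref{eq:KP2IGamma} with \eqref{eq:KP2I}, and the radius of convergence $1/27$) are routine and consistent with the paper, which leaves them implicit.
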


\begin{proof}
  We are in the situation of Theorem~\ref{thm:smalllinebundle} with
  $\cB = \PP^2$ and $\cE = \cO(-3)$.  Givental has proved
  \cite{Givental:equivariant} that the small $J$-function of $\PP^2$
  is
  \[
  J_{\PP^2}(q,z) = z \, q^{p/z} \sum_{d \geq 0}
  { q^{d} \over \prod_{0 < m \leq d} (p+mz)^3},
  \]
  and it follows (by comparing with the statement of
  Theorem~\ref{thm:smalllinebundle}) that
  \begin{align}
    \label{eq:P2Jpart}
    \correlator{{\Phi^\epsilon \over z - \psi}}^{\PP^2}_{0,1,d}
    \Phi_\epsilon = {1 \over \prod_{0 < m \leq d} (p+mz)^3}
    && \text{whenever $d>0$.}
  \end{align}
  Theorem~\ref{thm:smalllinebundle} thus implies that $I_Y(y,-z) \in
  \cLY$ for all $y$ in the domain of convergence of $I_Y$, as claimed.
\end{proof}

\subsection*{Step 2: $I_Y$ Determines $\cLY$}

We have:

\begin{cor} \label{cor:KP2mirror}
  \begin{align*}
    J_Y(q,z) &= e^{\lambda f(y)/z} I_Y(y,z) &
    \text{where} &&
    q & = y \exp \big(3 f(y)\big), \\
    && && f(y) &= \sum_{d>0} \textstyle{(3d-1)! \over (d!)^3} (-y)^d.
  \end{align*}
\end{cor}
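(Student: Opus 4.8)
The plan is to identify $I_Y(y,z)$ with $J_Y(q,z)$ after an explicit change of variables and a scalar multiple, using the fact that both lie on the cone $\cLY$ together with the reconstruction result of Proposition~\ref{thm:stuff}. First I would invoke Proposition~\ref{thm:stuff}(b): since $K_{\PP^2}$ is semi-positive and its Chen--Ruan cohomology is generated in degree $2$, the germ $\cLY$ is determined by the small $J$-function $J_Y(q,z)$, and conversely $J_Y(q,z)$ is characterized among families in $\cLY$ by its shape. Concretely, the relevant input is that $J_Y(q,-z)$ is the unique family of the form $-z + (\text{degree-two term}) + O(z^{-1})$ lying on $\cLY$, with its $q$-dependence controlled by the Divisor Equation as in \eqref{eq:JYsmall}. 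So I would analyze the asymptotic expansion of $I_Y(y,z)$ in powers of $z^{-1}$ and massage it into the normalized form that matches $J_Y$.

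The computational heart is the expansion of \eqref{eq:KP2IGamma}: expanding each ratio of Gamma functions, the $d=0$ term contributes $z\,\Gamma(1+p/z)^3\Gamma(1+(\lambda-3p)/z)\,y^{p/z}$, whose logarithm is $p\log y + O(z^{-1})$, giving leading behavior $z\,y^{p/z}(1 + O(z^{-1}))$. For $d > 0$, the quotient $\Gamma(1+p/z)^3/\Gamma(1+p/z+d)^3$ has a factor of $z^{-3d}$ while $\Gamma(1+(\lambda-3p)/z)/\Gamma(1+(\lambda-3p)/z-3d)$ has a factor of $z^{+3d}$, so the powers of $z$ cancel and each $d$-term contributes at order $z^0$ or lower; extracting the $z^0$ part of the $d$-term yields the coefficient $\tfrac{(3d-1)!}{(d!)^3}$ up to sign, which is exactly $-(-1)^d$ times the coefficient appearing in $f(y)$. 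This identifies the non-trivial $z^1$- and $z^0$-parts of $I_Y$. Writing $I_Y(y,z) = z\,y^{p/z}\bigl(1 + (\text{stuff})\bigr)$ and comparing with the desired form $J_Y(q,z) = z\,q^{p/z}(1 + O(z^{-1}))$, I read off that one must set $\log q = \log y + 3f(y)$, i.e. $q = y\exp(3f(y))$, to absorb the $z^0$-part of $I_Y$ into the prefactor $q^{p/z}$; the scalar $e^{\lambda f(y)/z}$ then corrects the $\lambda$-component coming from the $\Gamma\bigl(1+(\lambda-3p)/z\bigr)$ factors, which is legitimate by Proposition~\ref{thm:stuff}(a) (the germ $\cLY$ is closed under multiplication by $\exp(a\lambda/z)$).

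Having produced a family $e^{\lambda f(y)/z} I_Y(y,z)$ which (i) lies on $\cLY$ (by Proposition~\ref{thm:KP2mirror} and Proposition~\ref{thm:stuff}(a)) and (ii) has the normalized form $z\,q^{p/z}(1 + O(z^{-1}))$ with $q = y\exp(3f(y))$, I would conclude by the uniqueness clause: $J_Y(q,-z)$ is the unique element of $\cLY$ of the form $-z + (\text{degree-two}) + O(z^{-1})$ with the correct divisor-equation $q$-dependence, so the two families must coincide. The main obstacle I anticipate is the bookkeeping in step~two: one must check carefully that after stripping off the prefactors $q^{p/z}$ and $e^{\lambda f(y)/z}$ the remainder really is $1 + O(z^{-1})$ with \emph{no} residual $z^1$ or $z^0$ terms in the $p$- and $p^2$-directions, which amounts to verifying that the full $z^0$-part of $\log I_Y$ (not just the leading correction) is precisely $3f(y)\cdot p + \lambda f(y)$; this requires knowing that the higher contributions reorganize correctly, and is the one place where the argument is more than formal. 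Everything else --- the Gamma-function expansions, the cancellation of powers of $z$, and the extraction of $f(y)$ --- is routine.
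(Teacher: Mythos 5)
Your proposal is correct and follows essentially the same route as the paper's proof: expand $I_Y$ to find $I_Y(y,z)=z+p\log y-(\lambda-3p)f(y)+O(z^{-1})$, multiply by $e^{\lambda f(y)/z}$ (allowed on $\cLY$ by Proposition~\ref{thm:stuff}(a) together with Proposition~\ref{thm:KP2mirror}), and conclude by the uniqueness of families of elements of $\cLY$ of the form $-z+\tau+O(z^{-1})$, with the Divisor Equation identifying the result with the small $J$-function at $q=y\exp(3f(y))$. The appeal to Proposition~\ref{thm:stuff}(b) is not needed for this step (the paper uses it only afterwards, to deduce that $I_Y$ determines $\cLY$), and the ``bookkeeping'' you flag is exactly the one-line expansion the paper records, so there is no gap.
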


\begin{proof}
  We have
  \[
  I_Y(y,z) = z + p \log y - (\lambda - 3 p)f(y) + O(z^{-1}).
  \]
  Applying Propositions~\ref{thm:stuff} and~\ref{thm:KP2mirror}, we
  see that
  \begin{align*}
    y \mapsto e^{{-\lambda} f(y)/z} I_Y(y,-z), && 0 < |y| <
    \textstyle {1 \over 27}, 
  \end{align*}
  is a family of elements of $\cLY$.  But
  \[
  e^{-\lambda f(y)/z} I_Y(y,-z) = -z + p \log q + O(z^{-1}),
  \]
  where $q$ is defined above, and the unique family of elements of
  $\cLY$ of this form is $q \mapsto J_Y(q,-z)$.
\end{proof}

As $I_Y(y,z)$ is multivalued-analytic and the change of variables $y
\rightsquigarrow q$ is analytic, we conclude that the series defining
$J_Y(q,z)$ converges, when $|q|$ is sufficiently small, to a
multivalued analytic function of $q$.  Furthermore, as the small
$J$-function $J_Y(q,z)$ determines $\cLY$
(Proposition~\ref{thm:stuff}b), it follows that $\cLY$ is uniquely
determined by the fact that $y \mapsto I_Y(y,-z)$ is a family of
elements of $\cLY$.

\subsection*{Aside: Computing Gromov--Witten Invariants of $Y$}

As is well-known, Corollary~\ref{cor:KP2mirror} determines many
genus-zero Gromov--Witten invariants of $Y$.  The inverse to the
change of variables $y \rightsquigarrow q$ is
\[
y = q + 6q^2 + 9q^3 + 56q^4 - 300q^5+ \ldots
\] 
Substituting this into the equality
\[
z \, q^{p/z}
\Bigg( 1 + \sum_{d > 0} q^d
\correlator{\varphi_\epsilon \over z(z - \psi)}^Y_{0,1,d}
\varphi^\epsilon \Bigg) = 
z \, e^{\lambda f(y)/z} \sum_{d \geq 0}
{
  \prod_{-3d < m \leq 0} (\lambda - 3 p + m z)
  \over \prod_{0 < m \leq d} (p + m z)^3} \,
y^{d + p/z}
\]
and comparing coefficients of $q$, one finds that
\begin{align*}
  \correlator{\varphi^\alpha \over z - \psi}^Y_{0,1,1}
  \varphi_\alpha &= -{9p^2 \over z} + o(\lambda),
  & \correlator{\varphi^\alpha \over z - \psi}^Y_{0,1,2}
  \varphi_\alpha &= {135 p^2 \over 4z} + o(\lambda), \\
  \correlator{\varphi^\alpha \over z - \psi}^Y_{0,1,3}
  \varphi_\alpha &= -{244p^2 \over z} + o(\lambda), &
  \correlator{\varphi^\alpha \over z - \psi}^Y_{0,1,4}
  \varphi_\alpha &= {36999 p^2 \over 16z} + o(\lambda),
\end{align*}
and so on, where $o(\lambda)$ denotes terms containing strictly
positive powers of $\lambda$.  Taking the non-equivariant limit
$\lambda \to 0$ yields the local Gromov--Witten invariants $K_d$
calculated in \cite{Chiang--Klemm--Yau--Zaslow}*{\S2.2}:
\begin{align*}
  \correlator{\vphantom{\big\vert}p}^Y_{0,1,1}
  &= 3,
  & \correlator{\vphantom{\big\vert}p}^Y_{0,1,2}
  &= -{45 \over 4}, \\
  \correlator{\vphantom{\big\vert}p}^Y_{0,1,3}
  &= {244 \over 3}, &
  \correlator{\vphantom{\big\vert}p}^Y_{0,1,4}
  &= -{12333 \over 16},
\end{align*}
and therefore, using the Divisor Equation, we find
\[
\begin{array}{c||c|c|c|c|c}
  \vphantom{\Big\vert} d & 1 & 2 & 3 & 4 & \cdots \\ \hline
  \vphantom{\Big\vert} K_d & 3 & -{45 \over 8} & {244 \over 9} & -{12333
    \over 64} & \cdots
\end{array}
\]

\subsection*{Step 3: A Family of Elements of $\cLX$}

Let
\begin{equation}
  \label{eq:IC3Z3}
  I_\cX(x,z) := z \, x^{-\lambda/z}
  \sum_{l \geq 0} {x^l  \over l! \, z^l}
  \prod_{\substack{b : 0 \leq b < {l  \over 3} \\ \fr{b} = \fr{l
        \over 3}}} 
  \big(\textstyle {\lambda \over 3} - b z)^3 \;
  \fun_{\fr({l\over 3})}.
\end{equation}
This converges, in the region $|x|<27$, to an analytic function which
takes values in $\cHX$.  Theorem~\ref{thm:BZn} and
Proposition~\ref{thm:stuff}(a) imply that $I_\cX(x,-z) \in \cLX$ for
all $x$ such that $|x|<27$.

\subsection*{Step 4: $I_\cX$ Determines $\cLX$}

We have:

\begin{cor} \label{cor:C3Z3mirror}
  \begin{align*}
    \JJ_\cX\big(\tau^1 \fun_{1/3},z\big) &= x^{\lambda/z} I_\cX(x,z) & \text{where}
    & & \tau^1 &= \sum_{m \geq 0} (-1)^m {x^{3m+1} \over (3m+1)!}
    {\Gamma\big(m+\textstyle{1 \over 3}\big)^3 \over
      \Gamma\big(\textstyle{1 \over 3}\big)^3}.
  \end{align*}
\end{cor}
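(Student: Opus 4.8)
The plan is to mimic the proof of Corollary~\ref{cor:KP2mirror}: to recognize $x^{\lambda/z}I_\cX(x,z)$ as the big $J$-function of $\cX$ restricted to the one-parameter locus $\tau=\tau^1\fun_{1/3}$, by means of the characterization of $\JJ_\cX(\tau,-z)$ as the \emph{unique} family of elements of $\cLX$ of the form $-z+\tau+O(z^{-1})$. Write $\widetilde{J}(x,z):=x^{\lambda/z}I_\cX(x,z)$. Since the prefactor $x^{\lambda/z}$ cancels the $x^{-\lambda/z}$ built into \eqref{eq:IC3Z3}, we have
\[
  \widetilde{J}(x,z)=z\sum_{l\geq 0}\frac{x^{l}}{l!\,z^{l}}\prod_{\substack{b\,:\,0\leq b<l/3\\ \fr(b)=\fr(l/3)}}\bigl(\tfrac{\lambda}{3}-bz\bigr)^{3}\,\fun_{\fr(l/3)},
\]
an honest Laurent series in $z^{-1}$ with coefficients in $H(\cX)$, and the goal is to show $\widetilde{J}(x,z)=\JJ_\cX(\tau^1\fun_{1/3},z)$.

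First I would dispose of the prefactor. For each fixed $x$ the number $\log x$ is a constant, so Proposition~\ref{thm:stuff}(a) gives that $\cLX$ is closed under multiplication by $x^{-\lambda/z}=\exp\bigl((-\log x)\lambda/z\bigr)$; combined with Step~3 (which gives $I_\cX(x,-z)\in\cLX$ for $|x|<27$) this yields $\widetilde{J}(x,-z)=x^{-\lambda/z}I_\cX(x,-z)\in\cLX$ for all $x$ with $|x|<27$.

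The substantive step is to read off the non-negative powers of $z$ in $\widetilde{J}(x,z)$; I would do this by splitting the sum according to the residue of $l$ modulo $3$. The $l=0$ term is $z\,\fun_0=z$. For $l=3m$ with $m\geq 1$ the $b$-product is $\prod_{j=0}^{m-1}\bigl(\tfrac{\lambda}{3}-jz\bigr)^{3}$, of degree $3(m-1)$ in $z$ (the $j=0$ factor carries no $z$), so the term is $O(z^{-2})$; for $l=3m+2$ the product is $\prod_{j=0}^{m-1}\bigl(\tfrac{\lambda}{3}-(j+\tfrac23)z\bigr)^{3}$, of degree $3m$, so the term is $O(z^{-1})$. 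For $l=3m+1$ the product is $\prod_{j=0}^{m-1}\bigl(\tfrac{\lambda}{3}-(j+\tfrac13)z\bigr)^{3}$, a polynomial in $z$ of degree exactly $3m$, so this term contributes only to $z^{0}$ and to lower powers, and its $z^{0}$-coefficient is $\frac{x^{3m+1}}{(3m+1)!}$ times the leading coefficient $(-1)^{3m}\bigl(\prod_{j=0}^{m-1}(j+\tfrac13)\bigr)^{3}=(-1)^{m}\,\Gamma(m+\tfrac13)^{3}/\Gamma(\tfrac13)^{3}$. Summing over $m\geq 0$ reproduces exactly the series $\tau^1$ of the statement, and since no other term contributes to $z^{0}$ or to any positive power of $z$ beyond $z$, we obtain $\widetilde{J}(x,z)=z+\tau^1\fun_{1/3}+O(z^{-1})$, hence $\widetilde{J}(x,-z)=-z+\tau^1\fun_{1/3}+O(z^{-1})$.

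To finish, observe that $\widetilde{J}(x,-z)$ is an element of $\cLX$ of the form $-z+\tau+O(z^{-1})$ with $\tau=\tau^1\fun_{1/3}$, so by the uniqueness characterization of $\JJ_\cX(\tau,-z)$ quoted above it must equal $\JJ_\cX(\tau^1\fun_{1/3},-z)$; replacing $z$ by $-z$ gives the Corollary. (Since the Chen--Ruan cohomology of $\cX$ is generated by $\fun_{1/3}$, Proposition~\ref{thm:stuff}(c) then upgrades this to the statement that $\cLX$ is determined by the family $I_\cX$ --- the point of Step~4.) The main obstacle is the bookkeeping in the penultimate step: correctly matching each residue class $l\bmod 3$ with the highest power of $z$ it produces, and extracting the Gamma-function leading coefficient in the case $l\equiv 1$; everything else is a formal consequence of Step~3, Proposition~\ref{thm:stuff}(a), and the uniqueness of $\JJ_\cX(\tau,-z)$.
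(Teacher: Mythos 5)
Your proposal is correct and follows essentially the same route as the paper: use Step~3 together with Proposition~\ref{thm:stuff}(a) to place $x^{-\lambda/z}I_\cX(x,-z)$ on $\cLX$, expand it as $-z+\tau^1\fun_{1/3}+O(z^{-1})$, and invoke the uniqueness of the family $\JJ_\cX(\tau,-z)$ of elements of $\cLX$ of that form. The only difference is that you write out the mod-$3$ degree bookkeeping and the Gamma-function leading coefficient explicitly, which the paper leaves as an unproved assertion, and your computation is accurate.
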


\begin{proof}
  On the one hand, we know that $ x^{-\lambda/z} I_\cX(x,-z) \in \cLX$,
  and on the other hand we know that
  \[
  x^{-\lambda/z} I_\cX(x,-z) = -z + \left(\sum_{m \geq 0} (-1)^m {x^{3m+1} \over (3m+1)!}
  {\Gamma\big(m+\textstyle{1 \over 3}\big)^3 \over
    \Gamma\big(\textstyle{1 \over 3}\big)^3}\right) \fun_{1/3} +
  O(z^{-1}).
  \]
  As the unique family of elements of $\cLX$ of the form $-z + \tau^1
  \fun_{1/3} + O(z^{-1})$ is $\tau^1 \mapsto \JJ_\cX(\tau^1
  \fun_{1/3},-z)$, the result follows.
\end{proof}

Since $x^{\lambda/z} I_\cX(x,z)$ and the change of variables $x
\rightsquigarrow \tau^1$ are analytic, this implies that
$\JJ_\cX\big(\tau^1 \fun_{1/3},z\big)$ depends analytically on
$\tau^1$ in some region where $|\tau^1|$ is sufficiently small.  It
also, via Proposition~\ref{thm:stuff}c, shows that $\cLX$ is uniquely
determined by the fact that $x \mapsto I_\cX(x,-z)$ is a family of
elements of $\cLX$.

\subsection*{Aside: Computing Gromov--Witten Invariants of $\cX$}

Just as we did for $Y$, one can use Corollary~\ref{cor:C3Z3mirror} to
compute genus-zero Gromov--Witten invariants of $\cX$.  This
calculation is carried out in \cite{CCIT:computing}*{\S6.3}; it
verifies some of the predictions made by Aganagic, Bouchard, and Klemm
\cite{ABK}.

\subsection*{Step 5: The $B$-model Moduli Space and the Picard--Fuchs
  System}

The \emph{$B$-model moduli space} $\cM_B$ is the toric
orbifold corresponding to the secondary fan for $Y$.
\begin{figure}[h]
  \centering
     \begin{picture}(160,10)(-90,-5)
      \multiput(-80,0)(20,0){7}{\makebox(0,0){$\cdot$}}
      \put(0,0){\makebox(0,0){$\bullet$}}
      \put(22,6){\makebox(0,0){$\scriptstyle 1,2,3$}}
      \put(-62,6){\makebox(0,0){$\scriptstyle 4$}}
      \put(0,0){\vector(1,0){20}}
      \put(0,0){\vector(-1,0){60}}
    \end{picture}
    \caption{The secondary fan for $Y = K_{\PP^2}$.}
\end{figure}
It has two co-ordinate patches, one for each chamber.  Let $x$ be the
co-ordinate corresponding to the left-hand chamber (recall that this
chamber gives rise to $\cX$) and let $y$ be the co-ordinate
corresponding to the right-hand chamber (recall that this chamber
gives $Y$).  The co-ordinate patches are related by
\begin{equation}
  \label{eq:KP2gluing}
  y = x^{-3}
\end{equation}
and it follows that $\cM_B$ is the weighted projective space
$\PP(1,3)$.  The space $\cM_B$ is called the $B$-model moduli space as
it is the base of the Landau--Ginzburg model (``the $B$-model'') which
corresponds to the quantum cohomology of $Y$ (``the $A$-model'') under
mirror symmetry: see \emph{e.g.}
\citelist{\cite{Givental:toric}\cite{Hori--Vafa}}.

We regard $I_\cX(x,z)$ as a function on the co-ordinate patch
corresponding to $\cX$ and $I_Y(y,z)$ as a function on the co-ordinate
patch corresponding to $Y$.  Writing
\[
I_Y(y,z) = 
I^0_Y\, \varphi_0 +
I^1_Y\, \varphi_1 +
I^2_Y\, \varphi_2, 
\]
the components $\big\{I^j_Y: j=0,1,2\big\}$, which are functions of
$y$, $\lambda$, and $z$, form a basis of solutions to the differential
equation\footnote{The equation \eqref{eq:KP2PF} is the Picard--Fuchs
  equation associated to the Landau--Ginzburg mirror to $Y$.  The fact
  that the quantum cohomology of $Y$ can be determined from this
  Picard--Fuchs equation has been proved many times from many
  different points of view: see \emph{e.g.}
  \citelist{\cite{Givental:elliptic}\cite{Chiang--Klemm--Yau--Zaslow}\cite{Lian--Liu--Yau:I}\cite{Elezi}}.}
\begin{align}
  \label{eq:KP2PF}
  D_y^3 f &= y (\lambda - 3 D_y)(\lambda - 3 D_y-z)(\lambda - 3 D_y-2z)
   f, & D_y = z y \textstyle {\partial \over \partial y}.
\end{align}
Writing
\[
I_\cX(x,z) = 
I^0_\cX \, \phi_0 + 
I^1_\cX \, \phi_1 + 
I^2_\cX \, \phi_2,
\]
the components $\big\{I^j_\cX: j=0,1,2\big\}$, which are functions of
$x$, $\lambda$, and $z$, form a basis of solutions to the differential
equation
\begin{align}
  \label{eq:C3Z3PF}
  D_x^3 f &= {-27}x^{-3}(\lambda + D_x)(\lambda + D_x - z)(\lambda +
  D_x -2 z)f, & D_x = z x \textstyle {\partial \over \partial x}.
\end{align}

Recall that the functions $I_Y^j$ are defined in a region where $|y|$
is small.  The change of variables \eqref{eq:KP2gluing} turns
\eqref{eq:KP2PF} into \eqref{eq:C3Z3PF}.  This implies that if we
analytically continue the functions $I_Y^j$ to a region where $|y|$ is
large (and hence $|x|$ is small), and then write the analytic
continuations $\widetilde{I}^j_Y$ in terms of the co-ordinate $x$,
then $\{\widetilde{I}^j_Y(x,z): j=0,1,2\big\}$ will satisfy
\eqref{eq:C3Z3PF}.  We have a basis of solutions to \eqref{eq:C3Z3PF},
given by the components $I_\cX^k(x,z)$ of $I_\cX$, and so
\begin{equation}
  \label{eq:KP2match}
  \begin{pmatrix}
    \widetilde{I}^0_Y(x,z) \\
    \widetilde{I}^1_Y(x,z) \\
    \widetilde{I}^2_Y(x,z)
  \end{pmatrix}
  = M(\lambda,z)
  \begin{pmatrix}
    I_\cX^0(x,z) \\
    I_\cX^1(x,z) \\
    I_\cX^2(x,z) 
  \end{pmatrix}
\end{equation}
for some $3 \times 3$ matrix $M$ which is independent of $x$ and $y$
(and hence depends only on $\lambda$ and $z$).  The matrix
$M(\lambda,-z)$ defines the $\CC(\!(z^{-1})\!)$-linear symplectic
transformation $\U:\cHX \to \cHY$ which we seek.  It remains to
calculate the analytic continuations and to determine the matrix $M$.

\subsection*{Step 6: Analytic Continuation}

To compute the analytic continuation of $I_Y(y,z)$ we use the
Mellin--Barnes method.  Good references for this are
\citelist{\cite{Horja}\cite{CDGP}\cite{CCIT:crepant1}*{Appendix}}. 
First, take the expression \eqref{eq:KP2IGamma} for $I_Y$ and apply
the identity $\Gamma(x) \Gamma(1-x) = \pi/\sin(\pi x)$ until each
factor $\Gamma(a + b d)$ which occurs has $b>0$:
\begin{equation}
  \label{eq:IKP2before}
  I_Y(y,z) = - \Theta_Y \sum_{d \geq 0}
  {\Gamma\big(3d-\textstyle{\lambda - 3p \over z}\big) \over 
    \Gamma\big(1+{p \over z} + d\big)^3}
  (-1)^d \, y^{d+p/z}
\end{equation}
where
\[
\Theta_Y = \pi^{-1} z \,
\Gamma\big(1+\textstyle{p \over z}\big)^3 \,
\Gamma\big(1+\textstyle{\lambda - 3p \over z}\big) \,
\sin\big(\pi\big[\textstyle{\lambda - 3p \over z}\big]\big).
\]
Then, in view of \cite[Lemma 3.3]{Horja}, consider the contour
integral
\begin{equation}
  \label{eq:contourintegral}
  \int_C \Theta_Y 
  { 
    \Gamma\big(3s - {\lambda-3p \over z}\big) 
    \Gamma(s) \Gamma(1-s)
    \over
    \Gamma\(1 + {p \over z} + s\)^3
  } q^{s + p/z} 
\end{equation}
where the contour of integration $C$ is chosen as in
Figure~\ref{fig:contour}.
\begin{figure}[hbtp]
  \centering
  \begin{picture}(260,200)(-130,-70)
    \multiput(-120,-60)(30,0){9}{\makebox(0,0){$\cdot$}}
    \multiput(-120,-30)(30,0){9}{\makebox(0,0){$\cdot$}}
    \multiput(-120,0)(30,0){9}{\makebox(0,0){$\bullet$}}
    \multiput(-120,30)(30,0){9}{\makebox(0,0){$\cdot$}}
    \multiput(-120,60)(30,0){9}{\makebox(0,0){$\cdot$}}
    \multiput(-120,90)(30,0){9}{\makebox(0,0){$\cdot$}}
    \multiput(-120,120)(30,0){9}{\makebox(0,0){$\cdot$}}
    \multiput(-30,0)(30,0){3}{\makebox(0,0){$\cdot$}}
    \multiput(-30,30)(30,0){3}{\makebox(0,0){$\cdot$}}
    \put(6,6){\makebox(0,0){$\scriptstyle 0$}}
    \put(36,6){\makebox(0,0){$\scriptstyle 1$}}
    \put(66,6){\makebox(0,0){$\scriptstyle 2$}}
    \multiput(75,45)(-10,0){20}{\makebox(0,0){$\bullet$}}
    \put(-15,20){\vector(0,-1){40}}
    \put(-15,-20){\line(0,-1){40}}
    \put(-15,20){\line(1,1){10}}
    \put(-5,60){\vector(1,0){50}}
    \put(45,60){\line(1,0){30}}
    \put(75,30){\vector(-1,0){35}}
    \put(40,30){\line(-1,0){45}}
    \put(-15,120){\vector(0,-1){25}}
    \put(-15,95){\line(0,-1){25}}
    \put(-15,70){\line(1,-1){10}}
    \put(75,45){\oval(30,30)[r]}
    \put(22,-45){\makebox(0,0){the contour $C$}}
  \end{picture}
  \caption{The contour of integration}
  \label{fig:contour}
\end{figure}
The integral \eqref{eq:contourintegral} is defined and
analytic throughout the region $|\arg(q)|<\pi$.  For $|q|<{1 \over
  27}$ we can close the contour to the right, and
\eqref{eq:contourintegral} is then equal to the sum of residues
\eqref{eq:IKP2before}.  For $|q|>{1 \over 27}$ we can close the
contour to the left, and then \eqref{eq:contourintegral} is equal to
the sum of residues at
\begin{align*}
  s = -1-n, & \quad n \geq 0, & \text{and} && s = \textstyle{\lambda - 3p \over 3z} -
  \textstyle{n \over 3}, & \quad n \geq 0.
\end{align*}
The residues at $s = -1-n$, $n \geq 0$, vanish in $H(Y)$ as they are
divisible by $p^3$.  Thus the analytic continuation $\widetilde{I}_Y$
of $I_Y$ is equal to the sum of the remaining residues:
\[
\Theta_Y \sum_{n \geq 0} {(-1)^n \over 3 . n!}
{ \pi \over \sin \big( \pi \big[\textstyle
  { \lambda - 3 p \over 3 z} - \textstyle{n \over 3} \big]
  \big)}
{1 \over \Gamma\big(1 + \textstyle {\lambda \over 3} - {n \over 3}
  \big)^3} \;
y^{\lambda/3z - n/3}.
\]
Writing this in terms of the co-ordinate $x$, we find that the
analytic continuation $\widetilde{I}_Y(x,{-z})$ is equal to
\begin{equation}
  \label{eq:IKP2ac}
  {-z}\, x^{\lambda /z} 
  \sum_{n \geq 0} {(-x)^n \over 3.n!}
  {\sin \big(\pi \big[ {\lambda - 3p \over z}\big] \big) \over
    \sin \big(\pi \big[ {\lambda - 3p \over 3z} + {n \over 3}\big]
    \big)}
  {\Gamma\big(1 - {p \over z}\big)^3 \over 
    \Gamma\big(1 - {\lambda \over 3 z} - {n \over 3}\big)^3} \,
  \Gamma\big(1-\textstyle{\lambda - 3p \over z}\big).
\end{equation}

\subsection*{Step 7: Compute the Symplectic Transformation} Our final
step is to compute the linear symplectic transformation $\U:\cHX \to
\cHY$ represented by the matrix $M(\lambda,-z)$.  We have
$\U(I_\cX(x,-z)) = \widetilde{I}_Y(x,-z)$, and
\begin{equation}
  \label{eq:IC3Z3firstfew}
  I_\cX(x,-z) = {-z}\, x^{\lambda/z} \Bigg(\fun_0 - {x \over z} \fun_{1/3} +
  {x^2 \over 2z^2}\fun_{2/3} + O(x^3)\Bigg).
\end{equation}
As the transformation $\U$ does not depend on $x$, we can compute it
by equating powers of $x$ in \eqref{eq:IKP2ac} and
\eqref{eq:IC3Z3firstfew}:
\begin{align*}
  & \U(\fun_0) = 
  {1 \over 3}
  {\sin \big(\pi \big[ {\lambda - 3p \over z}\big] \big) \over
    \sin \big(\pi \big[ {\lambda - 3p \over 3z}\big]
    \big)}
  {\Gamma\big(1 - {p \over z}\big)^3 \over 
    \Gamma\big(1 - {\lambda \over 3 z}\big)^3} \,
  \Gamma\big(1-\textstyle{\lambda - 3p \over z}\big)  \\
  & \U(\fun_{1/3}) = 
  {z \over 3}
  {\sin \big(\pi \big[ {\lambda - 3p \over z}\big] \big) \over
    \sin \big(\pi \big[ {\lambda - 3p \over 3z} + {1 \over 3}\big]
    \big)}
  {\Gamma\big(1 - {p \over z}\big)^3 \over 
    \Gamma\big(1 - {\lambda \over 3 z} - {1 \over 3}\big)^3} \,
  \Gamma\big(1-\textstyle{\lambda - 3p \over z}\big) \\
  & \U(\fun_{2/3}) = 
  {z^2 \over 3}
  {\sin \big(\pi \big[ {\lambda - 3p \over z}\big] \big) \over
    \sin \big(\pi \big[ {\lambda - 3p \over 3z} + {2 \over 3}\big]
    \big)}
  {\Gamma\big(1 - {p \over z}\big)^3 \over 
    \Gamma\big(1 - {\lambda \over 3 z} - {2 \over 3}\big)^3} \,
  \Gamma\big(1-\textstyle{\lambda - 3p \over z}\big). 
\end{align*}
The matrix $M$ of $\U$ does not have a simple form, but in the
non-equivariant limit it becomes
\begin{equation}
  \label{eq:UC3Z3nonequivariant}
  \begin{pmatrix}
    1 & 0 & 0 \\
    0 & -\frac{2 \pi }{\sqrt{3} \Gamma \left(\frac{2}{3}\right)^3} & -\frac{2 \pi  z}{\sqrt{3} \Gamma \left(\frac{1}{3}\right)^3} \\
    -\frac{\pi ^2}{3 z^2} & -\frac{2 \pi ^2}{3 z \Gamma \left(\frac{2}{3}\right)^3} & \frac{2 \pi ^2}{3 \Gamma \left(\frac{1}{3}\right)^3}
  \end{pmatrix}.
\end{equation}

From this point of view it is not obvious \emph{a priori} that $\U$ is
a symplectomorphism, or that it satisfies conditions (a) and (c) in
Conjecture~\ref{CRC} --- this is one advantage of the more
sophisticated approach taken in
\citelist{\cite{CCIT:crepant1}\cite{Iritani:inprogress}} --- but now that we
have an explicit expression for $\U$ it is easy to check these things.

\begin{thm}[The Crepant Resolution Conjecture for
  \protect{$\big[\CC^3/\ZZ_3\big]$}] \label{thm:C3Z3CRC}
  Conjecture~\ref{CRC} holds for $\cX = \big[\CC^3/\ZZ_3\big]$, $Y =
  K_{\PP^2}$.  
\end{thm}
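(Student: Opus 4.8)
The plan is to set $\U\colon\cHX\to\cHY$ equal to the $\CC(\!(z^{-1})\!)$-linear transformation represented by the matrix $M(\lambda,-z)$ constructed in Steps~5 and~7, so that by construction $\U\big(I_\cX(x,-z)\big)=\widetilde I_Y(x,-z)$ for $|x|$ small, and then to check one by one the properties demanded by Conjecture~\ref{CRC}. Almost everything is already in hand: Steps~1--4 supply the families $I_\cX(x,-z)\in\cLX$ and $I_Y(y,-z)\in\cLY$ and show that each determines the corresponding Lagrangian submanifold-germ, while Steps~5--7 identify the matrix intertwining them. First I would record that $\U$ is degree-preserving: with $\deg z=2$, $\deg\fun_{k/3}=2k$ and $\deg\varphi_i=2i$, the entries of $M(\lambda,-z)$ written out in Step~7 are each homogeneous of the correct weight. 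Condition~(b) is vacuous, since $s=\rank H^2(\cX;\CC)=0$ and so there are no nonzero untwisted degree-two classes on $\cX$; and condition~(a) follows by expanding $\U(\fun_\cX)=\U(\fun_0)$ in powers of $z^{-1}$, where the ratio of sines tends to $3$ and every $\Gamma$-factor tends to $1$, giving $\U(\fun_\cX)=\fun_Y+O(z^{-1})$.

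The one step that is not pure bookkeeping is to verify that $\U$ is a symplectic isomorphism and that it satisfies condition~(c); as the excerpt stresses, neither is visible \emph{a priori} from the Mellin--Barnes computation, so one must confirm them from the explicit formula. Symplecticity can be checked by computing $\Omega_Y(\U f,\U g)$ directly from the expressions for $\U(\fun_0),\U(\fun_{1/3}),\U(\fun_{2/3})$ in Step~7 and comparing with $\Omega_\cX(f,g)$; alternatively, and more conceptually, $M(\lambda,z)$ is the transition matrix between two fundamental systems of solutions of the single Picard--Fuchs operator \eqref{eq:C3Z3PF}, each system (the components of $I_\cX$, and the components of $\widetilde I_Y$) being normalised compatibly with the symplectic pairing on the relevant $\cHZ$, so that $M$ necessarily preserves that pairing. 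For condition~(c), $\U(\cHX^+)\oplus\cHY^-=\cHY$, one projects $\U(\fun_0),\U(\fun_{1/3}),\U(\fun_{2/3})$ onto $\cHY^+$ along $\cHY^-$ (that is, discarding the strictly negative powers of $z$) and checks that the three resulting elements form a $\CC[z]$-basis of $\cHY^+$: from the matrix of Step~7 the corresponding $3\times3$ change-of-basis matrix has nonzero, $z$-independent determinant, hence is a unit over $\CC[z]$.

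It remains to establish $\U(\cLX)=\cLY$ after analytic continuation. Take the trivial continuation of $\cLX$ (already a germ at $x=0$) and the Mellin--Barnes continuation $\cLY\rightsquigarrow\widetilde\cLY$ of Step~6. Being a $\CC(\!(z^{-1})\!)$-linear symplectomorphism, $\U$ commutes with multiplication by $z$ and by $e^{a\lambda/z}$ and carries Lagrangian submanifold-germs to Lagrangian submanifold-germs, so $\U(\cLX)$ is again a germ to which the reconstruction results of Proposition~\ref{thm:stuff} apply, and it contains the family $\U\big(I_\cX(x,-z)\big)=\widetilde I_Y(x,-z)$. By Step~4 (using semi-positivity of $\cX$, generation of $H^\bullet_{\text{CR}}(\cX;\CC)$ in degree~$2$, and Proposition~\ref{thm:stuff}(a),(c)), $\cLX$ is the unique such germ containing $x\mapsto I_\cX(x,-z)$; applying the analogous reasoning of Step~2 to the analytic continuation, $\widetilde\cLY$ is the unique such germ containing $x\mapsto\widetilde I_Y(x,-z)$. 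Hence $\U(\cLX)=\widetilde\cLY$, which is what was to be shown.

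The main obstacle is therefore not conceptual: it is the explicit verification --- flagged in the excerpt itself --- that the matrix $M(\lambda,-z)$ produced by analytic continuation is symplectic and satisfies the transversality condition~(c). Once these two checks are carried out, the theorem is just the assembly of Steps~1--7: the explicit $\U$, the trivial continuation of $\cLX$, the Mellin--Barnes continuation of $\cLY$, and the reconstruction results quoted above.
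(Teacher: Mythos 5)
Your proposal follows essentially the same route as the paper: the theorem is proved by assembling Steps 1--7, noting that $\U$ was constructed so that $\U(I_\cX(x,-z))=\widetilde I_Y(x,-z)$, invoking the uniqueness statements of Steps 2 and 4 (via Proposition~\ref{thm:stuff}) to conclude $\U(\cLX)=\cLY$ after analytic continuation, and checking symplecticity together with conditions (a)--(c) directly from the explicit formula for $\U$ (the paper records these checks as straightforward once the matrix is known, exactly the verifications you spell out). Your added detail on degree-preservation, the vacuity of (b) since $s=0$, and the $\cHY^+$-projection argument for (c) is just an expansion of what the paper leaves as routine, not a different argument.
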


\begin{proof}
  It remains only to check that, after analytic continuation, $\U$
  maps $\cLX$ to $\cLY$.  But $\U$ was constructed so as to map
  $I_\cX$ to the analytic continuation of $I_Y$, and $\cLX$
  (respectively $\cLY$) is uniquely determined by the fact that $x
  \mapsto I_\cX(x,-z)$ is a family of elements of $\cLX$ (respectively
  that $y \mapsto I_Y(y,-z)$ is a family of elements of $\cLY$).  Thus
  $\U$ maps $\cLX$ to the analytic continuation of $\cLY$.
\end{proof}

\begin{cor}[The Cohomological Crepant Resolution Conjecture for
  \protect{$\big[\CC^3/\ZZ_3\big]$}]
  The algebra obtained from the $T$-equivariant small quantum
  cohomology algebra of $Y = K_{\PP^2}$ by analytic continuation in
  the parameter $q_1$ followed by the specialization $q_1 = 1$ is
  isomorphic to the $T$-equivariant Chen--Ruan orbifold cohomology of
  $\cX = \big[\CC^3/\ZZ_3\big]$.
\end{cor}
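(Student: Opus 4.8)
The plan is to read this off from Theorem~\ref{thm:C3Z3CRC} together with Corollary~\ref{cor:CCRC}; the substantive work has already been done, and what remains is bookkeeping. First I would record that the hypotheses of Corollary~\ref{cor:CCRC} hold in this case: Conjecture~\ref{CRC} holds for $\cX$ and $Y = K_{\PP^2}$ by Theorem~\ref{thm:C3Z3CRC}; the orbifold $\cX = \big[\CC^3/\ZZ_3\big]$ is Calabi--Yau (and in any case three-dimensional), hence semi-positive; and the symplectic isomorphism $\U$ has a well-defined non-equivariant limit, exhibited by the matrix \eqref{eq:UC3Z3nonequivariant} in Step~7. Since $s = \rank H^2(\cX;\CC) = 0$ and $r = \rank H^2(Y;\CC) = 1$, Corollary~\ref{cor:CCRC} then tells us that the algebra obtained from the $T$-equivariant small quantum cohomology of $Y$ by analytic continuation in $q_1$ --- along the path induced by the chosen analytic continuation of $\cLY$ used in the proof of Theorem~\ref{thm:C3Z3CRC} --- followed by the substitution $q_1 = e^{c^1}$, is isomorphic to the $T$-equivariant Chen--Ruan orbifold cohomology of $\cX$, via an isomorphism sending $\alpha \in H^2(\cX;\CC)$ to $\pi^\star\alpha$.

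It then remains only to check that $c^1 = 0$, so that the substitution $q_1 = e^{c^1}$ is exactly $q_1 = 1$. Recall that $c \in H(Y)$ is defined by $\U(\fun_\cX) = \fun_Y - c\,z^{-1} + O(z^{-2})$ and decomposes as $c = c^1 \varphi_1 + d\lambda$ with $c^1, d \in \CC$. I would extract $c^1$ from the first column of the matrix \eqref{eq:UC3Z3nonequivariant}: that column records $\U(\fun_\cX) = \U(\phi_0)$ in the basis $\varphi_0 = 1,\ \varphi_1 = p,\ \varphi_2 = p^2$, and its $\varphi_1$-entry vanishes, so the coefficient of $p\,z^{-1}$ in $\U(\fun_\cX)$ is zero; equivalently, one expands the explicit expression for $\U(\fun_0)$ from Step~7 as a series in $z^{-1}$ and observes that the $p/z$ term drops out. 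Since $c^1$ is a constant independent of $\lambda$, it is enough to see this in the non-equivariant limit, and we conclude $c^1 = 0$.

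There is no serious obstacle here: the only computation needed is this last one, and it is immediate from the explicit form of $\U$ obtained in Step~7. The entire content of the statement --- the existence of $\U$ and of the matching analytic continuations --- was already secured by Theorem~\ref{thm:C3Z3CRC}, and the passage from Conjecture~\ref{CRC} to the Cohomological Crepant Resolution Conjecture is a formal consequence recorded in Corollary~\ref{cor:CCRC}, once one checks that $c^1$ vanishes so that the orbifold limit corresponds to $q_1 = 1$.
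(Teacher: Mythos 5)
Your proposal is correct and follows essentially the same route as the paper, which proves the corollary by observing that $c^1 = 0$ and then invoking Corollary~\ref{cor:CCRC}; your extraction of $c^1$ from the $\varphi_1$-coefficient of the $z^{-1}$ term of $\U(\fun_0)$ (visible in the first column of \eqref{eq:UC3Z3nonequivariant}, and legitimately checked in the non-equivariant limit since $c^1$ is a constant) is exactly the needed verification. The additional hypothesis checks (semi-positivity, existence of the non-equivariant limit of $\U$) are fine and merely make explicit what the paper leaves implicit.
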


\begin{proof}
  The quantity $c^1$ defined in \eqref{eq:defofci} is zero.  Now apply
  Corollary~\ref{cor:CCRC}.
\end{proof}

\begin{rem*}
  The symplectic transformation \eqref{eq:UC3Z3nonequivariant} with
  $z=1$ looks similar to the symplectic transformation computed by
  Aganagic--Bouchard--Klemm in \cite{ABK}, but it is not the same.  It
  would be interesting to understand the source of the discrepancy.
\end{rem*}

\section{Example II: $\cX = K_{\PP(1,1,2)}$, $Y = K_{\FF_2}$}
\label{sec:KP112}

In this example we take $\cX:=K_{\PP(1,1,2)}$ to be the total space of
the canonical bundle of the weighted projective space $\PP(1,1,2)$ and
$Y:=K_{\FF_2}$ to be the total space of the canonical bundle of the
Hirzebruch surface $\FF_2$.  We use exactly the same methods as
before.

\subsection*{Toric Geometry}

Consider the action of $(\Cstar)^2$ on $\CC^5$ such that $(s,t) \in
(\Cstar)^2$ acts as 
\begin{equation}
  \label{eq:KP112action}
  \begin{pmatrix}
    x \\ y \\ z \\ u \\ v
  \end{pmatrix}
  \longmapsto
  \begin{pmatrix}
    t \, x \\ t \, y \\ s \, z \\ s t^{-2} \, u \\ s^{-2} \, v
  \end{pmatrix}.
\end{equation}
The secondary fan is:
\begin{figure}[htp]
  \centering
     \begin{picture}(90,90)(-45,-45)
      \multiput(-40,20)(20,0){4}{\makebox(0,0){$\cdot$}}
      \multiput(-40,0)(20,0){4}{\makebox(0,0){$\cdot$}}
      \multiput(-40,-20)(20,0){4}{\makebox(0,0){$\cdot$}}
      \multiput(-40,-40)(20,0){4}{\makebox(0,0){$\cdot$}}
      \put(0,1){\vector(0,1){20}}
      \put(0,-1){\vector(0,1){20}}
      \put(0,0){\vector(1,0){20}}
      \put(0,0){\vector(1,-2){20}}
      \put(0,0){\vector(-1,0){40}}
      \put(0,0){\makebox(0,0){$\bullet$}}
      \put(22,6){\makebox(0,0){$\scriptstyle 3$}}
      \put(2,26){\makebox(0,0){$\scriptstyle 1,2$}}
      \put(24,-38){\makebox(0,0){$\scriptstyle 4$}}
      \put(-42,6){\makebox(0,0){$\scriptstyle 5$}}
      \put(25,25){\makebox(0,0){I}}
      \put(25,-20){\makebox(0,0){II}}
      \put(-25,-20){\makebox(0,0){III}}
      \put(-20,25){\makebox(0,0){IV}}
    \end{picture}
  \caption{The secondary fan for $Y = K_{\FF_2}$}
  \label{fig:KF2secondaryfan}
\end{figure}

\noindent where the roman numerals label the different chambers.
There is an exact sequence:
\[
\begin{CD}
  0 @>>> \ZZ^2 @>{
    \begin{pmatrix}
      0 & 1 \\ 0 & 1 \\ 1 & 0 \\ 1 & -2 \\ -2 & 0
    \end{pmatrix}}>>  
  \ZZ^5 @>{
    \begin{pmatrix} \textstyle
      1 & -1 & 0 & 0 & 0\\
      0 & 2 & -1 & 1 & 0 \\
      0 & 0 &  2 & 0 & 1
    \end{pmatrix}}>> \ZZ^3 @>>> 0.
\end{CD},
\]
and so each chamber in the secondary fan corresponds to a toric
orbifold with fan equal to some triangulation of the rays
\[
  \begin{pmatrix}
    1 \\ 0 \\ 0
  \end{pmatrix},
  \begin{pmatrix}
    -1 \\ 2 \\ 0
  \end{pmatrix},
  \begin{pmatrix}
    0 \\ -1 \\ 2
  \end{pmatrix},
  \begin{pmatrix}
    0 \\ 1 \\ 0
  \end{pmatrix},
  \begin{pmatrix}
    0 \\ 0 \\ 1
  \end{pmatrix}.
\]
These fans are cones over the following pictures in the plane
$x+y+z=1$:
\begin{figure}[htbp]
  \centering
  \subfloat[I]{
    \begin{picture}(60,80)(-30,-30)
      \multiput(-20,-20)(20,0){3}{\makebox(0,0){$\cdot$}}
      \multiput(-20,0)(20,0){3}{\makebox(0,0){$\cdot$}}
      \multiput(-20,20)(20,0){3}{\makebox(0,0){$\cdot$}}
      \multiput(-20,40)(20,0){3}{\makebox(0,0){$\cdot$}}
      \put(20,0){\makebox(0,0){$\bullet$}}
      \put(25,5){\makebox(0,0){$\scriptstyle 1$}}
      \put(-20,40){\makebox(0,0){$\bullet$}}
      \put(-25,45){\makebox(0,0){$\scriptstyle 2$}}
      \put(0,-20){\makebox(0,0){$\bullet$}}
      \put(-5,-24){\makebox(0,0){$\scriptstyle 3$}}
      \put(0,20){\makebox(0,0){$\bullet$}}
      \put(5,25){\makebox(0,0){$\scriptstyle 4$}}
      \put(0,0){\makebox(0,0){$\bullet$}}
      \put(4,-4){\makebox(0,0){$\scriptstyle 5$}}
      \put(20,0){\line(-1,1){20}}
      \put(0,20){\line(-1,1){20}}
      \put(-20,40){\line(1,-2){20}}
      \put(-20,40){\line(1,-3){20}}
      \put(0,0){\line(1,0){20}}
      \put(0,0){\line(0,1){20}}
      \put(0,0){\line(0,-1){20}}
      \put(0,-20){\line(1,1){20}}
    \end{picture}
    \label{fig:KF2fan}
  }
  \qquad 
  \subfloat[II]{
    \begin{picture}(60,80)(-30,-30)
      \multiput(-20,-20)(20,0){3}{\makebox(0,0){$\cdot$}}
      \multiput(-20,0)(20,0){3}{\makebox(0,0){$\cdot$}}
      \multiput(-20,20)(20,0){3}{\makebox(0,0){$\cdot$}}
      \multiput(-20,40)(20,0){3}{\makebox(0,0){$\cdot$}}
      \put(20,0){\makebox(0,0){$\bullet$}}
      \put(25,5){\makebox(0,0){$\scriptstyle 1$}}
      \put(-20,40){\makebox(0,0){$\bullet$}}
      \put(-25,45){\makebox(0,0){$\scriptstyle 2$}}
      \put(0,-20){\makebox(0,0){$\bullet$}}
      \put(-5,-24){\makebox(0,0){$\scriptstyle 3$}}
      \put(0,0){\makebox(0,0){$\bullet$}}
      \put(4,-4){\makebox(0,0){$\scriptstyle 5$}}
      \put(20,0){\line(-1,1){20}}
      \put(0,20){\line(-1,1){20}}
      \put(-20,40){\line(1,-2){20}}
      \put(-20,40){\line(1,-3){20}}
      \put(0,0){\line(1,0){20}}
      \put(0,0){\line(0,-1){20}}
      \put(0,-20){\line(1,1){20}}
    \end{picture}
    \label{fig:KP112fan}
  }
  \qquad 
  \subfloat[III]{
    \begin{picture}(60,80)(-30,-30)
      \multiput(-20,-20)(20,0){3}{\makebox(0,0){$\cdot$}}
      \multiput(-20,0)(20,0){3}{\makebox(0,0){$\cdot$}}
      \multiput(-20,20)(20,0){3}{\makebox(0,0){$\cdot$}}
      \multiput(-20,40)(20,0){3}{\makebox(0,0){$\cdot$}}
      \put(20,0){\makebox(0,0){$\bullet$}}
      \put(25,5){\makebox(0,0){$\scriptstyle 1$}}
      \put(-20,40){\makebox(0,0){$\bullet$}}
      \put(-25,45){\makebox(0,0){$\scriptstyle 2$}}
      \put(0,-20){\makebox(0,0){$\bullet$}}
      \put(-5,-24){\makebox(0,0){$\scriptstyle 3$}}
      \put(20,0){\line(-1,1){20}}
      \put(0,20){\line(-1,1){20}}
      \put(-20,40){\line(1,-3){20}}
      \put(0,-20){\line(1,1){20}}
    \end{picture}
    \label{fig:C3Z4fan}
  }
  \qquad 
  \subfloat[IV]{
    \begin{picture}(60,80)(-30,-30)
      \multiput(-20,-20)(20,0){3}{\makebox(0,0){$\cdot$}}
      \multiput(-20,0)(20,0){3}{\makebox(0,0){$\cdot$}}
      \multiput(-20,20)(20,0){3}{\makebox(0,0){$\cdot$}}
      \multiput(-20,40)(20,0){3}{\makebox(0,0){$\cdot$}}
      \put(20,0){\makebox(0,0){$\bullet$}}
      \put(25,5){\makebox(0,0){$\scriptstyle 1$}}
      \put(-20,40){\makebox(0,0){$\bullet$}}
      \put(-25,45){\makebox(0,0){$\scriptstyle 2$}}
      \put(0,-20){\makebox(0,0){$\bullet$}}
      \put(-5,-24){\makebox(0,0){$\scriptstyle 3$}}
      \put(0,20){\makebox(0,0){$\bullet$}}
      \put(5,25){\makebox(0,0){$\scriptstyle 4$}}
      \put(20,0){\line(-1,1){20}}
      \put(0,20){\line(-1,1){20}}
      \put(-20,40){\line(1,-3){20}}
      \put(0,-20){\line(0,1){40}}
      \put(0,-20){\line(1,1){20}}
    \end{picture}
    \label{fig:IVfan}
  }
  \caption{The fans corresponding to chambers~I--IV (respectively) are
    the cones over these pictures in the plane $x+y+z=1$}
  \label{fig:KF2cones}
\end{figure}
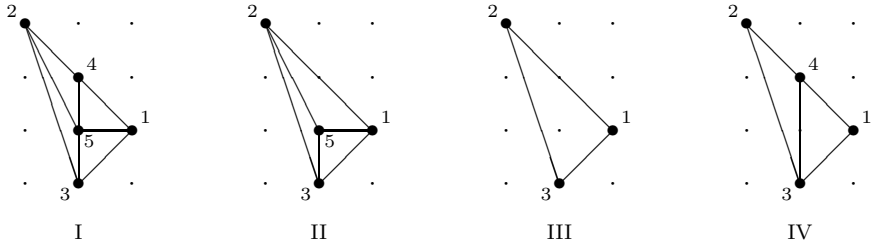

The toric orbifold corresponding to a chamber $C$ in the secondary fan
is the GIT quotient $\CC^5 \GIT{\xi} (\Cstar)^2$, $\xi \in C$.  This is
produced by deleting an appropriate union of co-ordinate subspaces
from $\CC^5$ and then taking the quotient by the action
\eqref{eq:KP112action}. When $C$ is chamber~I, the corresponding toric
orbifold is the canonical bundle $K_{\FF_2}$; chamber~II gives rise to
the canonical bundle $K_{\PP(1,1,2)}$; chamber~III gives the orbifold
$\big[\CC^3/\ZZ_4\big]$, where $\ZZ_4$ acts on $\CC^3$ with weights
$(1,1,2)$; and chamber~IV gives a quotient by $\ZZ_2$ of the total space
of the vector bundle $\cO \oplus \cO(-2) \to \PP^1$.
\begin{table}[htbp]
  \centering
  \begin{tabular}{@{}ccc@{}} \toprule
    \multicolumn{1}{c}{chamber} & 
    \multicolumn{1}{c}{locus to delete} & 
    \multicolumn{1}{c}{quotient} \\ \midrule
    I & $\{x = y = 0\} \cup \{z = u = 0\}$ & $Y = K_{\FF_2}$ \\[0.2em]
    II & $\{u = 0\} \cup \{x = y = z = 0\}$ & $\cX = K_{\PP(1,1,2)}$ \\[0.2em]
    III & $\{ u = 0\} \cup \{ v = 0\}$ & $\big[\CC^3/\ZZ_4\big]$ \\[0.2em]
    IV & $\{v = 0\} \cup \{x = y = 0\}$ & $\big[ \big(\cO_{\PP^1} \oplus
    \cO_{\PP^1}(-2)\big) / \ZZ_2 \big]$ \\ \bottomrule
  \end{tabular}
  \caption{The different GIT quotients given by the secondary fan for
    $Y = K_{\FF_2}$}
  \label{tab:KP112quotients}
\end{table}

\noindent In this section we study the crepant resolution 
\begin{equation}
  \label{eq:KP112diagram}
  \xymatrix{
    K_{\FF_2} \ar[rd] && K_{\PP(1,1,2)} \ar[ld] \\
    & X &}
\end{equation}
induced by moving from chamber~I to chamber~II.  In the next section
we consider the crepant partial resolution 
\[
  \xymatrix{
    K_{\PP(1,1,2)} \ar[rd]&& \big[\CC^3/\ZZ_4\big] \ar[ld] \\
    &\CC^3/\ZZ_4&
  }
\]
obtained by moving from chamber~II to chamber~III.  We will not
discuss chamber~IV at all.

\subsection*{The $T$-Action}

The action of $T = \Cstar$ on $\CC^5$ such that $\alpha \in
T$ maps
\[
\begin{pmatrix}
  x \\ y \\ z \\ u \\ v
\end{pmatrix}
\longmapsto
\begin{pmatrix}
  x \\ y \\ z \\ u \\ \alpha v
\end{pmatrix}
\]
descends to give actions of $T$ on $\cX$, $X$, and $Y$.  The induced
actions on $\cX$ and $Y$ are respectively the canonical
$\Cstar$-actions on the line bundles $K_{\FF_2} \to \FF_2$ and
$K_{\PP(1,1,2)} \to \PP(1,1,2)$; they cover the trivial actions on
respectively $\FF_2$ and $\PP(1,1,2)$.  The crepant resolution
\eqref{eq:KP112diagram} is $T$-equivariant.

\subsection*{Bases for Everything}

We have
\begin{align*}
  & r := \rank H^2(Y;\CC) = 2, &
  & s := \rank H^2(\cX;\CC) = 1.
\end{align*}
Let $p_1, p_2 \in H(Y)$ denote the $T$-equivariant Poincar\'e-duals to
the divisors $\{z = 0\}$ and $\{x=0\}$ respectively.  Then
\[
H(Y) = \CC(\lambda)[p_1,p_2]/\big\langle p_2^2, p_1(p_1-2p_2) \big\rangle.
\]
We set
\begin{align*}
  &\varphi_0 = 1, &
  &\varphi_1 = p_1, &
  &\varphi_2 = p_2, &
  &\varphi_3 = p_1 p_2, \\
  \intertext{so that}
  &\varphi^0 = \lambda p_1 p_2, &
  &\varphi^1 = \lambda p_2 - 2 p_1 p_2, &
  &\varphi^2 = \lambda p_1 - 2 \lambda p_2,  &
  &\varphi^3 = \lambda - 2 p_1.
\end{align*}

The inertia stack $\cIX$ of $\cX$ is the disjoint union $\cX_0 \coprod
\cX_{1/2}$, where $\cX_f$ is the component of the inertia stack
corresponding to the fixed locus of the element $\big(1,e^{2 \pi f
  \sqrt{-1}}\big) \in (\Cstar)^2$.  We have $\cX_0 = K_{\PP(1,1,2)}$
and $\cX_{1/2} = [\CC/\ZZ_2]$.  Define $\fun_f \in H(\cX)$ to be the
class which restricts to the unit class on the component $\cX_f$ and
restricts to zero on the other component, and let $p \in H(\cX)$
denote the first Chern class of the line bundle $\cO(1) \to
\PP(1,1,2)$, pulled back to $\cX = K_{\PP(1,1,2)}$ via the natural
projection and then regarded as an element of Chen--Ruan orbifold
cohomology via the inclusion $\cX = \cX_0 \to \cIX$.  Let
\begin{align*}
  &\phi_0 = \fun_0, &
  &\phi_1 = p, &
  &\phi_2 = p^2, &
  &\phi_3 = \fun_{1/2},\\
  \intertext{so that}
  &\phi^0 = 2 \lambda p^2, & 
  &\phi^1 = 2 \lambda p - 8 p^2, &
  &\phi^2 = 2 \lambda \fun_0 - 8 p, &
  &\phi^3 = 2 \lambda \fun_{1/2},
\end{align*}
and $r_1 = {1 \over 2}$.

\subsection*{Step 1: A Family of Elements of $\cLY$}

Consider
\begin{multline}
  \label{eq:KF2IGamma}
  I_Y(y_1,y_2,z) :=  z \sum_{k,l \geq 0}
  { \Gamma\big(1 + {p_2 \over z} \big)^2 
    \over \Gamma\big(1 + {p_2 \over z} + l \big)^2}
  { \Gamma\big(1 + {p_1 \over z} \big)
    \over \Gamma\big(1 + {p_1 \over z} + k \big)}
  { \Gamma\big(1 + {p_1 - 2 p_2 \over z} \big)
    \over \Gamma\big(1 + {p_1 - 2 p_2 \over z} + k - 2l \big)}
  \times \\
  { \Gamma\big(1 + {\lambda - 2 p_1 \over z} \big)
    \over \Gamma\big(1 + {\lambda - 2 p_1 \over z} - 2k \big)} \, 
  y_1^{k + p_1/z} y_2^{l + p_2/z}.
\end{multline}
This series converges, in a region where $|y_1|$ and $|y_2|$ are
sufficiently small, to a multi-valued analytic function of $(y_1,y_2)$
which takes values in $\cHY$. 
%
%
We have
\begin{equation}
  \label{eq:KF2I}
  I_Y(y_1,y_2,z) = z \sum_{k,l \geq 0}
  {
    \prod_{-2k < m \leq 0} (\lambda - 2 p_1 + m z)
    \over 
    \prod_{1 \leq m \leq l} (p_2 + m z)^2 
    \prod_{1 \leq m \leq k} (p_1 + m z) 
  } 
  {
    \prod_{m \leq 0} (p_1 - 2 p_2 + m z) 
    \over
    \prod_{m \leq k-2l} (p_1 - 2 p_2 + m z) 
  } \,
  y_1^{k + p_1/z} y_2^{l + p_2/z}.
\end{equation}
Note that all but finitely many terms in the two infinite products
here cancel.

\begin{proposition} \label{thm:KF2mirror}
  \begin{align*}
    I_Y(y_1,y_2,-z) \in \cLY && \text{for all $(y_1,y_2)$ in the domain of
      convergence of $I_Y$}. 
  \end{align*}
\end{proposition}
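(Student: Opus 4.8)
The plan is to run the proof of Proposition~\ref{thm:KP2mirror} with $\PP^2$ replaced by the Hirzebruch surface $\FF_2$. We are in the situation of Theorem~\ref{thm:smalllinebundle} with $\cB = \FF_2$ and $\cE = K_{\FF_2}$, the canonical bundle equipped with its fibre-rotating $T$-action, so that $\cZ = Y = K_{\FF_2}$. Here $c_1(\cE) = -2p_1$, so, writing $k = \langle p_1, d\rangle$ for a curve class $d$, the modification factor of Theorem~\ref{thm:smalllinebundle} is
\[
M_\cE(d) = \prod_{-2k < m \leq 0}(\lambda - 2 p_1 + m z),
\]
which is precisely the $\lambda$-dependent factor in \eqref{eq:KF2I}; the fractional-part condition in Theorem~\ref{thm:smalllinebundle} is vacuous because $\FF_2$ is a manifold. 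Two features of $M_\cE(d)$ will be essential: it depends only on $k$, and it equals $1$ when $k = 0$.

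The first substantive step is to pin down the small $J$-function of $\FF_2$. Unlike $\PP^2$, the surface $\FF_2$ is not Fano --- it is only semi-positive --- so its naive toric $I$-function, call it $I_{\FF_2}(y_1, y_2, z)$ (the series obtained from \eqref{eq:KF2I} by deleting the factors $M_\cE(d)$), differs from its small $J$-function by a non-trivial mirror map. By Givental's mirror theorem for toric varieties \citelist{\cite{Givental:equivariant}\cite{Givental:toric}}, $y \mapsto I_{\FF_2}(y, -z)$ is a family of elements of $\cL_{\FF_2}$, and reading off its $O(z^{-1})$ part exhibits it as $I_{\FF_2}(y, z) = \JJ_{\FF_2}(\tau(y), z)$ for a mirror map $\tau(y) = \tau^1(y) p_1 + \tau^2(y) p_2 \in H^2(\FF_2)$ with $\tau^1(y) = \log y_1 + O(y_2)$ and $\tau^2(y) = \log y_2 + O(y_2)$. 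The point is that all corrections to the mirror map involve only $y_2$, hence only curve classes $d$ with $k = \langle p_1, d\rangle = 0$ --- precisely the classes on which $M_\cE(d)$ is trivial.

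Next I would run the argument behind Theorem~\ref{thm:smalllinebundle} --- i.e.\ Theorem~4.6 of \cite{CCIT:computing} --- at the parameter value $\tau = \tau(y) \in H^2(\FF_2)$. Since $\tau(y)$ has degree two the only surviving topological types are the $(0, d, (0, \ldots, 0, i))$, each with modification factor $M_\theta = M_\cE(d)$, so the hypergeometric modification $I^\tw(\tau(y), z)$ is obtained from $\JJ_{\FF_2}(\tau(y), z) = I_{\FF_2}(y, z)$ by multiplying the contribution of each curve class $d$ by $M_\cE(d)$, and satisfies $I^\tw(\tau(y), -z) \in \cLY$. It remains to identify $I^\tw(\tau(y), z)$ with the series $I_Y(y_1, y_2, z)$ of \eqref{eq:KF2I} --- that is, to check that class-by-class multiplication in the $\tau(y)$-parametrisation agrees with multiplying the $y_1^k y_2^l$-coefficients of $I_{\FF_2}$. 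Expanding $e^{d \cdot \tau(y)} = y_1^k y_2^l (1 + O(y_2))$, one sees that the coefficient of $y_1^{k'} y_2^{l'}$ in the modified series receives contributions only from classes with first coordinate $k'$; these all carry the common factor $M_\cE(d) = M_\cE\bigl((k', \cdot)\bigr)$, which moreover equals $1$ when $k' = 0$, so the modification commutes with passing to the $y$-expansion and $I^\tw(\tau(y), z) = I_Y(y_1, y_2, z)$. This gives $I_Y(y_1, y_2, -z) \in \cLY$ on the domain of convergence.

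The step I expect to be the main obstacle is this last compatibility between the hypergeometric modification and the $\FF_2$ mirror map; it is invisible in the $\PP^2$ case because $\PP^2$ carries no mirror map, so there one may invoke Theorem~\ref{thm:smalllinebundle} verbatim. A secondary, bookkeeping issue is that $K_{\FF_2}$ restricts trivially to the $(-2)$-curve of $\FF_2$ and so is not literally concave; but $\FF_2$ is semi-positive, and the $T$-equivariant theory of $\FF_2$ twisted by $\be^{-1}$ and $K_{\FF_2}$ is nonetheless well-defined (the offending $H^0$'s contribute invertible equivariant classes) and computes the $T$-equivariant Gromov--Witten invariants of $Y$, so the results of \cite{CCIT:computing} used above apply without change.
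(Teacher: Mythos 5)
Your proposal is correct and takes essentially the same route as the paper: it combines Theorem~\ref{thm:smalllinebundle} (equivalently Theorem~4.6 of \cite{CCIT:computing}) with Givental's mirror theorem for $\FF_2$, exploiting exactly the two facts you isolate --- that $M_\cE(d)$ depends only on $k=\langle p_1,d\rangle$ and that the $\FF_2$ mirror map involves only the second variable --- to identify the hypergeometrically modified series with $I_Y$. The paper organises the same computation by extracting the coefficient of $q_1^k$ from the identity $J_{\FF_2}(q_1,q_2,z)=I_{\FF_2}(w_1,w_2,z)$ and substituting into the modified small $J$-function before changing variables to $(y_1,y_2)$, which matches your regrouping by powers of $y_1$.
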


\begin{proof}
  We combine Theorem~\ref{thm:smalllinebundle}, which tells us how to
  modify the small $J$-function of $\FF_2$, with Theorem~0.1 in
  \cite{Givental:toric}, which tells us how to compute the small
  $J$-function of $\FF_2$.  In detail, this goes as follows.  We apply
  Theorem~\ref{thm:smalllinebundle} with $\cB = \FF_2$ and $\cE =
  K_{\FF_2}$.  Note that $c_1(K_{\FF_2}) = -2 p_1$.
  Theorem~\ref{thm:smalllinebundle} implies that if
  \[
  M_k := \prod_{-2k<m \leq 0} (\lambda - 2p_1 + m z)
  \]
  and 
  \begin{equation}
    \label{eq:defofIprime}
    I_Y'(q_1,q_2,z) := 
    z \, q_1^{p_1/z} q_2^{p_2 /z}
    \Bigg( 1 + \sum_{k \geq 0} M_k \sum_{l \geq 0} 
    q_1^k q_2^l
    \correlator{\varphi_\epsilon \over z(z - \psi)}^{\FF_2}_{0,1,k \beta_1
      + l \beta_2}
    \varphi^\epsilon \Bigg)
  \end{equation}
  then $(q_1,q_2) \mapsto I'_Y(q_1,q_2,-z)$ is a family of elements of
  $\cLY$.  

  Givental has shown \cite{Givental:toric}*{Theorem~0.1} that the
  small $J$-function of $\FF_2$,
  \[
  J_{\FF_2}(q_1,q_2,z) =  z \, q_1^{p_1/z} q_2^{p_2/z}
  \Bigg( 1 + \sum_{k,l \geq 0} q_1^k q_2^l
  \correlator{\varphi_\epsilon \over z(z - \psi)}^{\FF_2}_{0,1,k
    \beta_1 + l \beta_2}
  \varphi^\epsilon \Bigg),
  \]
  coincides with
  \[
  I_{\FF_2}(w_1,w_2,z) = z \, w_1^{p_1/z} w_2^{p_2/z}
  \sum_{k,l \geq 0}
  {
    w_1^k w_2^l
    \over 
    \prod_{1 \leq m \leq l} (p_2 + m z)^2 
    \prod_{1 \leq m \leq k} (p_1 + m z) 
  }
  {
    \prod_{m \leq 0} (p_1 - 2 p_2 + m z) 
    \over
    \prod_{m \leq k-2l} (p_1 - 2 p_2 + m z) 
  } 
  \]
  after the change of variables
  \begin{align*}
    q_1 = w_1 \exp\big({-f}(w_2)\big), &&
    q_2 = w_2 \exp\big(2 f(w_2)\big),
  \end{align*}
  where
  \[
  f(x) = \sum_{l>0} {(2l-1)! \over (l!)^2} x^l.
  \]
  The inverse change of variables is
  \begin{align*}
    w_1 = q_1 \exp\big(F(q_2)\big), &&
    w_2 = q_2 \exp\big({-2} F(q_2)\big), 
  \end{align*}
  for some\footnote{In this example it is easy to compute a closed
    form for $F$, but typically this is not the case.} function $F$
  and so, from the equality
  \[
  J_{\FF_2}(q_1,q_2,z) = 
  I_{\FF_2}(w_1,w_2,z),
  \]
  we deduce that
  \begin{multline*}
    1 + \sum_{k,l \geq 0} q_1^k q_2^l
    \correlator{\varphi_\epsilon \over z(z - \psi)}^{\FF_2}_{0,1,k
      \beta_1 + l \beta_2}
    \varphi^\epsilon = \\
    \exp\Big({\textstyle{p_1 - 2 p_2 \over z}} F(q_2)\Big)
    \sum_{k,l \geq 0}
    {
      q_1^k q_2^l \exp\big( (k-2l) F(q_2) \big)
      \over 
      \prod_{1 \leq m \leq l} (p_2 + m z)^2 
      \prod_{1 \leq m \leq k} (p_1 + m z) 
    } 
    {
      \prod_{m \leq 0} (p_1 - 2 p_2 + m z) 
      \over
      \prod_{m \leq k-2l} (p_1 - 2 p_2 + m z) 
    } .
  \end{multline*}
  Extracting the coefficient of $q_1^k$ here and substituting it into
  \eqref{eq:defofIprime} gives
  \begin{multline*}
    I_Y'(q_1,q_2,z) = 
    z \, q_1^{p_1/z} q_2^{p_2 /z}
    \exp\Big({\textstyle{p_1 - 2 p_2 \over z}} F(q_2)\Big)
    \sum_{k,l \geq 0}
    {
      \prod_{-2k<m \leq 0} (\lambda - 2p_1 + m z)
      \over 
      \prod_{1 \leq m \leq l} (p_2 + m z)^2 
      \prod_{1 \leq m \leq k} (p_1 + m z) 
    } \times \\
    {
      \prod_{m \leq 0} (p_1 - 2 p_2 + m z) 
      \over
      \prod_{m \leq k-2l} (p_1 - 2 p_2 + m z) 
    } q_1^k q_2^l \exp\big( (k-2l) F(q_2) \big).
  \end{multline*}
  Now setting
  \begin{align*}
    y_1 = q_1 \exp\big(F(q_2)\big), &&
    y_2 = q_2 \exp\big({-2} F(q_2)\big),
  \end{align*}
  we see that $I_Y'(q_1,q_2,z) = I_Y(y_1,y_2,z)$.  It follows that
  $I_Y(y_1,y_2,-z) \in \cLY$ for all $(y_1,y_2)$ in the domain of
  convergence of $I_Y$.
\end{proof}

\subsection*{Step 2: $I_Y$ Determines $\cLY$}

We have:

\begin{cor} \label{cor:KF2mirror}
  \[
  J_Y(q_1,q_2,z) = e^{\lambda g(y_1,y_2)/z} I_Y(y_1,y_2,z) 
  \]
  where:
  \begin{align*}
    & q_1 = y_1 \exp\big( 2g(y_1,y_2) - f(y_2)\big), 
    && q_2 = y_2 \exp\big( 2 f(y_2) \big), \\
    & f(y_2) = \sum_{l>0} {(2l-1)! \over (l!)^2} y_2^l,
    && g(y_1,y_2) = \sum_{\substack{0<k<\infty \\ 0 \leq l \leq k/2}}
    {(2k-1)! \over (l!)^2 k! (k-2l)!} y_1^k y_2^l.
  \end{align*}
\end{cor}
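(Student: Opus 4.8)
The plan is to reproduce, for $Y = K_{\FF_2}$, the argument used for Corollary~\ref{cor:KP2mirror}. First I would compute the expansion of $I_Y(y_1,y_2,z)$ as $z \to \infty$ through order $z^0$. Factoring $y_1^{p_1/z}y_2^{p_2/z}$ out of \eqref{eq:KF2I} and using $z\,y_1^{p_1/z}y_2^{p_2/z} = z + p_1\log y_1 + p_2\log y_2 + O(z^{-1})$, this reduces to extracting the coefficient of $z^{-1}$ from the remaining series $\sum_{k,l \geq 0}(\cdots)\,y_1^k y_2^l$. A leading-order analysis of the summand shows that its top power of $z$ is $z^{-1}$ when $k \geq 2l$, $z^{-1}$ when $k = 0$, and only $z^{-2}$ when $1 \leq k < 2l$; so only the summands with $k \geq 1$, $0 \leq l \leq k/2$ and those with $k = 0$, $l \geq 1$ contribute. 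Computing these leading coefficients — keeping careful track of the $m = 0$ factors, which is what makes the top power of $z$ in the numerator odd, and using $H(Y) = \CC(\lambda)[p_1,p_2]/\langle p_2^2, p_1(p_1-2p_2)\rangle$ to collapse the quotient $\prod_{m\leq 0}(\cdots)/\prod_{m\leq k-2l}(\cdots)$ — I expect to obtain
\begin{equation*}
  I_Y(y_1,y_2,z) = z + p_1 \log y_1 + p_2 \log y_2 + \big(2 g(y_1,y_2) - f(y_2)\big) p_1 + 2 f(y_2)\, p_2 - \lambda\, g(y_1,y_2) + O(z^{-1}),
\end{equation*}
the first family of summands producing $-(\lambda - 2 p_1)\,g$ (and thereby recognizing the series $g$) and the $k = 0$ summands producing $-(p_1 - 2 p_2)\,f$ (recognizing $f$).

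Granting this expansion, the rest is formal. For each fixed $(y_1,y_2)$ in the domain of convergence the quantity $g(y_1,y_2)$ is a complex number, so Proposition~\ref{thm:stuff}(a) and Proposition~\ref{thm:KF2mirror} give that $(y_1,y_2) \mapsto e^{-\lambda g(y_1,y_2)/z}\,I_Y(y_1,y_2,-z)$ is a family of elements of $\cLY$. Replacing $z$ by $-z$ in the expansion above and multiplying by $e^{-\lambda g/z}$, the $-\lambda g$ term cancels and I would find
\begin{equation*}
  e^{-\lambda g(y_1,y_2)/z}\,I_Y(y_1,y_2,-z) = -z + p_1 \log q_1 + p_2 \log q_2 + O(z^{-1}),
\end{equation*}
where $\log q_1 = \log y_1 + 2 g(y_1,y_2) - f(y_2)$ and $\log q_2 = \log y_2 + 2 f(y_2)$ — precisely the change of variables in the statement. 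Since the unique family of elements of $\cLY$ of the form $-z + \tau + O(z^{-1})$ is $\tau \mapsto \JJ_Y(\tau,-z)$, and its restriction to $\tau = \tau^1\varphi_1 + \tau^2\varphi_2$ becomes $q \mapsto J_Y(q,-z)$ under $q_i = e^{\tau^i}$, this identifies $e^{-\lambda g/z}I_Y(y_1,y_2,-z)$ with $J_Y(q_1,q_2,-z)$; replacing $z$ by $-z$ once more gives the claimed equality $J_Y(q_1,q_2,z) = e^{\lambda g(y_1,y_2)/z}I_Y(y_1,y_2,z)$.

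The hard part is the expansion in the first paragraph. Compared with the $\CC^3/\ZZ_3$ case the summand of \eqref{eq:KF2I} is genuinely $l$-dependent and behaves differently according to the sign of $k - 2l$, so one must split into cases and be careful that the order-$z^0$ part of $I_Y$ has no $H^4(Y)$-component and that its $\lambda$-dependence is exactly a scalar multiple of the identity — both are needed for the $e^{-\lambda g/z}$ correction to do its job. Everything else — convergence of $J_Y(q_1,q_2,z)$ for $|q_1|,|q_2|$ small, and the fact that $\cLY$ is then reconstructed from $I_Y$ via Proposition~\ref{thm:stuff}(b) — follows exactly as in Section~\ref{sec:C3Z3} once the change of variables $(y_1,y_2)\rightsquigarrow(q_1,q_2)$ is seen to be analytic.
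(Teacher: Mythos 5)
Your proposal is correct and takes essentially the same route as the paper: the paper's proof likewise records the expansion $I_Y(y_1,y_2,z)=z+p_1[\log y_1-f+2g]+p_2[\log y_2+2f]-\lambda g+O(z^{-1})$, multiplies by $e^{-\lambda g(y_1,y_2)/z}$ using Proposition~\ref{thm:stuff}(a) together with Proposition~\ref{thm:KF2mirror}, and identifies the result with $J_Y(q_1,q_2,-z)$ by the uniqueness of families of elements of $\cLY$ of the form $-z+\tau+O(z^{-1})$. Your case analysis of the summands (contributions at order $z^{0}$ only from $k\ge 1$, $0\le l\le k/2$ and from $k=0$, $l\ge 1$, with $1\le k<2l$ suppressed by one power of $z$) is a correct verification of the expansion that the paper asserts without detail.
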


\begin{proof}
  We argue exactly as in Corollary~\ref{cor:KP2mirror}.  Note that
  \[
  I_Y(y_1,y_2,z) = z + p_1 \big[\log y_1 - f(y_2) + 2 g(y_1,y_2)\big]
  + 
  p_2 \big[\log y_2 + 2 f(y_2) \big] - \lambda g(y_1,y_2) + O(z^{-1}).
  \]
  It follows from Propositions~\ref{thm:stuff} and~\ref{thm:KP2mirror}
  that
  \[
  y \mapsto e^{{-\lambda} g(y_1,y_2)/z} I_Y(y_1,y_2,-z)
  \]
  is a family of elements of $\cLY$.  But
  \[
  e^{-\lambda g(y_1,y_2)/z} I_Y(y_1,y_2,-z) = -z + p_1 \log q_1 + p_2
  \log q_2 + O(z^{-1}),
  \]
  where $q_1$ and $q_2$ are defined above, and the unique family of
  elements of $\cLY$ of this form is $(q_1,q_2) \mapsto
  J_Y(q_1,q_2,-z)$.
\end{proof}

It follows, as before, that the series defining $J_Y(q_1,q_2,z)$
converges (to a multivalued analytic function) when $|q_1|$ and
$|q_2|$ are sufficiently small. Proposition~\ref{thm:stuff}b implies
that $\cLY$ is uniquely determined by the fact that $(y_1,y_2) \mapsto
I_Y(y_1,y_2,-z)$ is a family of elements of $\cLY$.

\subsection*{Aside: Computing Gromov--Witten Invariants of $Y$}

As in the previous example, one can invert the change of variables
$(y_1,y_2) \rightsquigarrow (q_1,q_2)$ and read off genus-zero
Gromov--Witten invariants of $Y$.  In the non-equivariant limit
$\lambda\to 0$ this reproduces the results of the $B$-model
calculation of Chiang--Klemm--Yau--Zaslow: see
\cite{Chiang--Klemm--Yau--Zaslow}*{Table~11} and the discussion
thereafter.

\subsection*{Step 3: A Family of Elements of $\cLX$}

Let
\begin{equation}
  \label{eq:IKP112Gamma}
  I_\cX(x,z) := z \, 
  \sum_{\substack{d: d \geq 0,\\
      2d\in \ZZ}}
  x^{d + p/z}
  { \Gamma\big(1 - \fr(-d) + {p \over z} \big)^2 
    \over \Gamma\big(1 + {p \over z} + d \big)^2}
  { \Gamma\big(1 + {2 p \over z} \big)
    \over \Gamma\big(1 + {2 p \over z} +2d \big)}
  { \Gamma\big(1 + {\lambda - 4 p \over z} \big)
    \over \Gamma\big(1 + {\lambda - 4 p \over z} - 4d \big)} \, 
  { \fun_{\fr(-d)} \over z^{2 \fr({-d})}}. 
\end{equation}
This converges, in the region $\{x \in \CC:0<|x|<{1
  \over 64}\}$, to a multivalued analytic function which takes values
in $\cHX$. We have 
\begin{equation}
  \label{eq:IKP112}
  I_\cX(x,z) =  z \, 
  \sum_{\substack{d: d \geq 0,\\
      2d\in \ZZ}}
  x^{d+p/z}
  {
    \prod_{-4d<m\leq 0} (\lambda - 4 p + m z) 
    \over
    \prod_{\substack{b : 0 < b \leq d, \\ \fr(b) = \fr(d)}}
    (p + b z)^2
    \prod_{1 \leq m \leq 2d} (2p+mz)
  }
  \fun_{\fr(-d)}
\end{equation}

\begin{proposition} \label{thm:KP112mirror}
  \begin{align*}
    I_\cX(x,-z) \in \cLX && \text{for all $x$ such that $0<|x|<{1
        \over 64}$}.
  \end{align*}
\end{proposition}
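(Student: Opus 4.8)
The plan is to follow the pattern of the proof of Proposition~\ref{thm:KP2mirror}, replacing $(\PP^2,\cO(-3))$ by $(\PP(1,1,2),K_{\PP(1,1,2)})$. First I would observe that $\cX$ is the total space of the concave line bundle $\cE = K_{\PP(1,1,2)} = \cO_{\PP(1,1,2)}(-4)$ over $\cB = \PP(1,1,2)$, with $T$ rotating the fibres of $\cE$ and covering the trivial action on $\cB$; this places us in the situation of Theorem~\ref{thm:smalllinebundle}. Taking $\rho = c_1(\cE) = -4p$ and noting that $\langle\rho,d\rangle = -4d$ is an integer for every $d \in \Eff(\PP(1,1,2)) \subset \tfrac12\ZZ_{\geq 0}$, so that $\fr(\langle\rho,d\rangle)=0$, the modification factor of Theorem~\ref{thm:smalllinebundle} becomes $M_\cE(d) = \prod_{-4d < m \leq 0}(\lambda - 4p + mz)$ --- exactly the numerator occurring in \eqref{eq:IKP112}.

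Next I would feed in the small $J$-function of $\PP(1,1,2)$. Since $\PP(1,1,2)$ is Fano, the toric mirror theorem (\cite{CCIT:stacks}, or \cite{Givental:toric} combined with \cite{CCIT:computing}) identifies it with the hypergeometric $I$-function
\begin{equation*}
  J_{\PP(1,1,2)}(q,z) = z\, q^{p/z} \sum_{\substack{d \geq 0 \\ 2d \in \ZZ}}
  { q^d \over
    \prod_{\substack{0 < b \leq d \\ \fr(b) = \fr(d)}} (p + bz)^2 \;
    \prod_{0 < m \leq 2d} (2p + mz)} \; \fun_{\fr(-d)},
\end{equation*}
with no mirror-map correction, as this series is already of the form $z + p\log q + O(z^{-1})$ (all the $d>0$ contributions being $O(z^{-2})$). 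Substituting the $q^d$-coefficients of this series, together with $M_\cE(d)$, into the formula \eqref{eq:defofIZ} of Theorem~\ref{thm:smalllinebundle} and matching powers of $z$ while setting the K\"ahler parameter equal to $x$, one recovers verbatim the series $I_\cX(x,z)$ of \eqref{eq:IKP112} (equivalently \eqref{eq:IKP112Gamma}). Theorem~\ref{thm:smalllinebundle} then yields $I_\cX(x,-z) \in \cLX$ throughout the disc $0 < |x| < \tfrac{1}{64}$ of convergence of the series --- the radius $\tfrac{1}{64} = \tfrac{1^1 \cdot 1^1 \cdot 2^2}{4^4}$ being the usual one for such a hypergeometric series.

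The one point where genuinely new input is needed --- so that the proof is not a literal transcription of the $K_{\PP^2}$ case --- is checking that Theorem~\ref{thm:smalllinebundle} applies with $\cB$ a stack: in the big-$J$-function expansion one meets topological types whose single non-divisor marked point lies on the twisted sector $\fun_{1/2}$, i.e. whose domain curve acquires a $\ZZ_2$ orbifold point, and one must verify that the modification factor $M_\theta(z)$ of \cite{CCIT:computing}*{\S4.2} still collapses to $M_\cE(d)$ there. This reduces to the orbifold Riemann--Roch statement that $\dim H^1(\cC, f^\star\cO(-4))$ equals $4d$ irrespective of the order-$2$ sector chosen, which holds because $4\cdot\tfrac12 \in \ZZ$, so $\cE$ is untwisted at the orbifold point; I expect this to be a routine check. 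Alternatively --- and this is the route I would actually take --- one sidesteps the issue entirely by invoking the toric mirror theorem of \cite{CCIT:stacks} directly for the toric orbifold $\cX = K_{\PP(1,1,2)}$, reading \eqref{eq:IKP112Gamma} off the fan data of \eqref{eq:KP112action} restricted to chamber~II and to the single K\"ahler parameter surviving in $H^2(\cX;\QQ)$.
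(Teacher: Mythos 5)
Your proposal is correct and takes essentially the same route as the paper: the paper's proof is precisely to apply Theorem~\ref{thm:smalllinebundle} with $\cB = \PP(1,1,2)$ and $\cE = K_{\PP(1,1,2)} = \cO(-4)$, feeding in the known small $J$-function of $\PP(1,1,2)$, and the twisted-sector point you flag is already built into the orbifold statement and proof of Theorem~\ref{thm:smalllinebundle} (the modification factor depends only on $d$, and here $\fr(-4d)=0$). The only correction is the source of the small $J$-function of $\PP(1,1,2)$: the paper cites \cite{CCLT}*{Theorem~1.7}, which is the appropriate available result, whereas \cite{Givental:toric} does not cover the orbifold $\PP(1,1,2)$ (its $J$-function has twisted-sector components) and \cite{CCIT:stacks} --- your preferred shortcut of applying the stacky toric mirror theorem directly to $\cX$ --- is treated throughout the paper as not yet available, so the paper deliberately avoids relying on it here.
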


\begin{proof}
  Argue exactly as in Propositions~\ref{thm:KP2mirror}
  and~\ref{thm:KF2mirror}, combining Theorem~\ref{thm:smalllinebundle}
  with \cite{CCLT}*{Theorem~1.7}.
  Theorem~\ref{thm:smalllinebundle} here tells us how to modify the
  small $J$-function of
  $\PP(1,1,2)$ and Theorem~1.7 in \cite{CCLT} tells us how to
  compute the small $J$-function of $\PP(1,1,2)$. 
\end{proof}

\subsection*{Step 4: $I_\cX$ Determines $\cLX$}

We have:

\begin{cor} \label{cor:KP112mirror}
  \begin{align*}
    J_\cX(q,z) &= e^{\lambda h(x)/z} I_\cX(x,z) & \text{where}
    & &q &= x \exp\big(4 h(x)\big) \\
    &&&& h(x) &= \sum_{n>0} {(4n-1)! \over (n!)^2 (2n)!} x^n.
  \end{align*}
\end{cor}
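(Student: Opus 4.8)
The plan is to reproduce, step for step, the argument used in the proofs of Corollaries~\ref{cor:KP2mirror}, \ref{cor:C3Z3mirror}, and~\ref{cor:KF2mirror}. We already know, from Proposition~\ref{thm:KP112mirror}, that $I_\cX(x,-z) \in \cLX$ for $0 < |x| < \frac{1}{64}$, and from Proposition~\ref{thm:stuff}(a) that $\cLX$ is closed under multiplication by $\exp(a\lambda/z)$ for every $a \in \CC$. The power series $h$ has radius of convergence $\frac{1}{64}$ (the ratio of consecutive coefficients tends to $64$), so $h(x)$ is a well-defined complex number whenever $|x| < \frac{1}{64}$, and hence $x \mapsto e^{-\lambda h(x)/z} I_\cX(x,-z)$ is a family of elements of $\cLX$ on the punctured disc $0 < |x| < \frac{1}{64}$. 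It remains only to identify this family.

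First I would expand $I_\cX(x,z)$, using the explicit formula \eqref{eq:IKP112}, in powers of $z$ up to and including the $z^0$ term. The $d = 0$ summand contributes $z\,x^{p/z} = z + p\log x + O(z^{-1})$. For a summand with $d = n \in \ZZ_{>0}$ --- so $\fr(-d) = 0$ and the summand carries $\fun_0$ --- the numerator $\prod_{-4n < m \leq 0}(\lambda - 4p + mz)$ has $z$-degree $4n-1$ with leading coefficient $-(4n-1)!\,(\lambda - 4p)$ (using $\prod_{j=1}^{4n-1}(-j) = -(4n-1)!$), while the denominator $\prod_{1 \leq m \leq n}(p + mz)^2 \prod_{1 \leq m \leq 2n}(2p + mz)$ has $z$-degree $4n$ with leading coefficient $(n!)^2(2n)!$; hence this summand equals $\frac{(4n-1)!}{(n!)^2(2n)!}\,(4p - \lambda)\,x^n + O(z^{-1})$. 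For a summand with $d$ a positive half-integer --- so $\fr(-d) = \frac{1}{2}$ and the summand carries $\fun_{1/2}$ --- the numerator has $z$-degree exactly two less than the denominator, so such summands contribute only at order $z^{-1}$ and below. Summing over $d$ gives
\[
  I_\cX(x,z) = z + p\log x - (\lambda - 4p)\,h(x) + O(z^{-1}).
\]

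Second, I would feed this into the characterization of the small $J$-function. Multiplying by $e^{-\lambda h(x)/z} = 1 - \lambda h(x)/z + O(z^{-2})$ and then replacing $z$ by $-z$ yields
\[
  e^{-\lambda h(x)/z}\,I_\cX(x,-z)
  = -z + p\big(\log x + 4 h(x)\big) + O(z^{-1})
  = -z + p\log q + O(z^{-1}),
\]
where $q = x\exp(4 h(x))$: the term $+\lambda h(x)$ coming from the exponential cancels the $-\lambda h(x)$ in the $z^0$-coefficient of $I_\cX$, leaving only the untwisted degree-two class $p\log q$. Now $q \mapsto J_\cX(q,-z)$ is, by \eqref{eq:JXsmall} together with the uniqueness of the (big, hence small) $J$-function, the unique family of elements of $\cLX$ of the form $-z + p\log q + O(z^{-1})$; and $q = x\exp(4 h(x)) = x(1 + O(x))$ is an invertible analytic change of variable near $x = 0$. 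Therefore $e^{-\lambda h(x)/z}\,I_\cX(x,-z) = J_\cX(q,-z)$, and replacing $z$ by $-z$ once more gives the asserted identity $J_\cX(q,z) = e^{\lambda h(x)/z}\,I_\cX(x,z)$.

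The main obstacle is the first step: the careful extraction of the $z^0$-coefficient of $I_\cX$. The two points that require attention are (i) that the twisted-sector summands ($d \notin \ZZ$) are suppressed by an extra power of $z^{-1}$ relative to the untwisted ones --- a consequence of the degree mismatch between numerator and denominator, which is closely related to the fact that $K_{\PP(1,1,2)}$ satisfies the Hard Lefschetz condition --- and (ii) the computation of the leading $z$-coefficients of the numerator and denominator, which reduces to the elementary factorial identity noted above. Everything else is a formal rerun of the earlier corollaries.
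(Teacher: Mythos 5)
Your proposal is correct and is exactly the argument the paper intends: the printed proof simply says to argue as in Corollaries~\ref{cor:KP2mirror} and~\ref{cor:KF2mirror}, and your extraction of the $z^0$-term of $I_\cX$ (leading coefficients $-(4n-1)!(\lambda-4p)$ over $(n!)^2(2n)!$, with half-integer $d$ suppressed to order $z^{-1}$), the use of Proposition~\ref{thm:stuff}(a), and the uniqueness of the family $-z + p\log q + O(z^{-1})$ in $\cLX$ reproduce that argument faithfully. Only a cosmetic point: the phrase ``multiplying by $e^{-\lambda h(x)/z}$ and then replacing $z$ by $-z$'' slightly misstates the order of operations, but the displayed identity $e^{-\lambda h(x)/z}I_\cX(x,-z) = -z + p\log q + O(z^{-1})$ and the cancellation you describe are correct.
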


\begin{proof}
  Argue exactly as in Corollaries~\ref{cor:KP2mirror}
  and~\ref{cor:KF2mirror}. 
\end{proof}

This implies that the series defining $J_\cX(q,z)$ converges, for
$|q|$ sufficiently small, to a multivalued analytic function of $q$.
It also implies, via Proposition~\ref{thm:stuff}b, that $\cLX$ is
uniquely determined by the fact that $x \mapsto I_\cX(x,-z)$ is a
family of elements of $\cLX$.

\subsection*{Aside: Computing Gromov--Witten Invariants of $\cX$}

We can, as before, use Corollary~\ref{cor:KP112mirror} to compute
genus-zero Gromov--Witten invariants of $\cX$. We do this in
Appendix~\ref{appendix:computing}.

\subsection*{Step 5: The $B$-model Moduli Space and the Picard--Fuchs
  System}

The $B$-model moduli space $\cM_B$ here is the toric orbifold
corresponding to the secondary fan for $Y$
(Figure~\ref{fig:KF2secondaryfan}). It has four co-ordinate patches,
one for each chamber. We will concentrate on the co-ordinate patches
corresponding to chambers~I and~II. The co-ordinates $(y_1,y_2)$
coming from chamber~I are dual respectively to $p_1$ and $p_2$; the
co-ordinates $(\hy_1,\hy_2)$ from chamber~II are dual respectively to
$p_1$ and $p_1 - 2 p_2$. We have 
\begin{equation}
  \label{eq:KF2gluing}
  \begin{aligned}
    y_1 &= \hy_1 \hy_2 & \qquad \qquad \qquad \hy_1 &= y_1 y_2^{1/2} \\
    y_2 &= \hy_2^{-2} & \qquad \qquad \qquad \hy_2 &= y_2^{-1/2}.
  \end{aligned}
\end{equation}

We regard $I_Y(y_1,y_2,z)$ as a function on the co-ordinate patch
corresponding to chamber I. Writing \[ I_Y(y,z) = I^0_Y\, \varphi_0 +
I^1_Y\, \varphi_1 + I^2_Y\, \varphi_2 + I^3_Y\, \varphi_3, \] the
components $\big\{I^j_Y: j=0,1,2,3\big\}$, which are functions of
$y_1$, $y_2$, $\lambda$, and $z$, form a basis of solutions to the
system of differential equations
\begin{equation}
  \label{eq:KF2PF}
  \begin{aligned}
    D_1(D_1 - 2D_2) f &= y_1 (\lambda - 2D_1)(\lambda - 2D_1 - z) f\\
    D_2^2 f &= y_2(D_1 - 2D_2)(D_1 - 2 D_2 - z)f 
  \end{aligned} 
\end{equation} 
where $D_1 = z y_1 {\partial \over
  \partial y_1}$ and $D_2 = z y_2 {\partial \over \partial y_2}$. 

We regard $I_\cX(x,z)$ as a function on the sublocus $(\hy_1,\hy_2) =
(x^{1/2},0)$ of the co-ordinate patch corresponding to chamber II.
(The choice of square root causes no ambiguity here, as the locus
$\hy_2 = 0$ in the orbifold $\cM_B$ has automorphism group
$\ZZ_2$.)\phantom{.} Writing
\[ 
I_\cX(x,z) = I^0_\cX \, \phi_0 + I^1_\cX \, \phi_1 + I^2_\cX \, \phi_2
+ I^3_\cX \, \phi_3, 
\] 
the components $\big\{I^j_\cX: j=0,1,2,3\big\}$, which are functions
of $x$, $\lambda$, and $z$, form a basis of solutions to the
differential equation
\begin{equation}
  \label{eq:KP112PF}
  D_x^2 (2 D_x) (2 D_x - z) f = 
  \Bigg[ x \prod_{m=0}^3 (\lambda - 4D_x-m z) \Bigg] \, f
\end{equation}
where $D_x = z x {\partial \over \partial x}$.

Restricting the system of differential equations \eqref{eq:KF2PF} to
the locus $(\hy_1,\hy_2) = (x^{1/2},0)$ gives the differential
equation \eqref{eq:KP112PF}.  Thus if we analytically continue
$I_Y(y_1,y_2,z)$ to a region where $|y_2|$ is large, write the
analytic continuation $\widetilde{I}_Y$ in terms of the co-ordinates
$(\hy_1,\hy_2)$, and then set $\hy_1 = x^{1/2}$, $\hy_2 = 0$ then the
components $\widetilde{I}_Y^j$ of $\widetilde{I}_Y$ will satisfy the
differential equation \eqref{eq:KP112PF}.  The components $I_\cX^j$ of
$I_\cX$ give a basis of solutions to \eqref{eq:KP112PF}, so
\begin{equation}
  \label{eq:KF2match}
  \begin{pmatrix}
    \widetilde{I}^0_Y(x^{1/2},0,z) \\
    \widetilde{I}^1_Y(x^{1/2},0,z) \\
    \widetilde{I}^2_Y(x^{1/2},0,z) \\
    \widetilde{I}^3_Y(x^{1/2},0,z)
  \end{pmatrix}
  = M(\lambda,z)
  \begin{pmatrix}
    I_\cX^0(x,z) \\
    I_\cX^1(x,z) \\
    I_\cX^2(x,z) \\
    I_\cX^3(x,z) 
  \end{pmatrix}
\end{equation}
for some $3 \times 3$ matrix $M$ which is independent of $x$ (and
hence depends only on $\lambda$ and $z$).  The matrix $M(\lambda,-z)$
defines the $\CC(\!(z^{-1})\!)$-linear symplectic transformation
$\U:\cHX \to \cHY$ which we seek.  To determine it, we first calculate
the analytic continuation $\widetilde{I}_Y$.

\subsection*{Step 6: Analytic Continuation}

To calculate $\widetilde{I}_Y$ we, for each $k \geq 0$, extract the
coefficient of $q_1^k$ from \eqref{eq:KF2IGamma} and then analytically
continue it to a region where $|y_2|$ is large, using the
Mellin--Barnes method described in Section~\ref{sec:C3Z3}.  The result
is:
\begin{multline*}
\widetilde{I}_Y(\hy_1,\hy_2,z) = 
z \sum_{k,n \geq 0}
{(-1)^{{k-n \over 2} - {p_1 - 2p_2 \over 2 z}} \over 2 . n!}
{ \sin \big( \pi \big[ {p_1 -2 p_2 \over z} \big] \big)
  \over 
  \sin \big( \pi \big[ {p_1 -2 p_2 \over 2z}
  + {k-n \over 2} \big] \big)}
{ \Gamma \big(1 + {p_2 \over z} \big)^2
  \over 
  \Gamma \big(1 + {p_1 \over 2 z} + {k-n \over 2} \big)^2}
\times \\
{ \Gamma \big(1 + {p_1 \over z} \big)
  \over 
  \Gamma \big(1 + {p_1 \over z} + k \big)}
\Gamma \big(1 + \textstyle{p_1 - 2 p_2 \over z}\big)
{ \Gamma \big(1 + {\lambda - 2p_1 \over z} \big)
  \over 
  \Gamma \big(1 + {\lambda - 2 p_1 \over z} - 2 k \big)}
\hy_1^{k+{p_1 \over z}} \hy_2^n.
\end{multline*}
Thus
\begin{multline}
  \label{eq:KF2ac}
  \widetilde{I}_Y(x^{1/2},0,-z) = 
  {-z}\, x^{-{p_1 \over 2z}} (-1)^{p_1 - 2p_2 \over 2 z} \sum_{k \geq 0}
{(-x)^{k / 2} \over 2}
{ \sin \big( \pi \big[ {p_1 -2 p_2 \over z} \big] \big)
  \over 
  \sin \big( \pi \big[ {p_1 -2 p_2 \over 2z}
  - {k \over 2} \big] \big)}
{ \Gamma \big(1 - {p_2 \over z} \big)^2
  \over 
  \Gamma \big(1 - {p_1 \over 2 z} + {k \over 2} \big)^2}
\times \\
{ \Gamma \big(1 - {p_1 \over z} \big)
  \over 
  \Gamma \big(1 - {p_1 \over z} + k \big)}
\Gamma \big(1 - \textstyle{p_1 - 2 p_2 \over z}\big)
{ \Gamma \big(1 - {\lambda - 2p_1 \over z} \big)
  \over 
  \Gamma \big(1 - {\lambda - 2 p_1 \over z} - 2 k \big)}.
\end{multline}
\subsection*{Step 7: Compute the Symplectic Transformation} 
Recall that $\U(I_\cX(x,-z)) = \widetilde{I}_Y(x^{1/2},0,-z)$ and that
\begin{equation}
  \label{eq:IKP112firstfew}
  I_\cX(x,-z) = {-z} \, x^{-p/z} \Bigg(\fun_0 - x^{1/2} {4 \lambda (\lambda + z)
    \over z^3} \fun_{1/2} + \cdots \Bigg).
\end{equation}
We compute $\U$ by comparing powers of $x^a (\log x)^b$
in \eqref{eq:KF2ac} and \eqref{eq:IKP112firstfew}.  This gives:
\begin{align*}
  &\U(\fun_0) = (-1)^{p_1 - 2p_2 \over 2 z}, \\
  &\U(p) = {p_1 \over 2}, \\
  &\U(p^2) = {p_1 p_2 \over 2}, \\
  &\U(\fun_{1/2}) = (-1)^{p_1 - 2p_2 \over 2 z} (-1)^{1/2} 
  {p_1-2 p_2\over 2 }.
\end{align*}
Note that the expression \eqref{eq:KF2ac} simplifies significantly
when evaluated in the algebra $H(Y)$, and in particular the dependence
of $\U$ on $\lambda$ cancels.  The matrix of $\U$ is
\[
\begin{pmatrix}
  \vphantom{\Big|} 1 & 0 & 0 & 0 \\
  \vphantom{\Big|} {\pi \sqrt{-1} \over 2 z} & {1 \over 2} & 0 &
  {\sqrt{-1} \over 2}\\
  \vphantom{\Big|} {-{ \pi \sqrt{-1}\over z}} & 0 & 0 & 
  {-\sqrt{-1}}\\
  \vphantom{\Big|} {\pi^2 \over 4 z^2} & 0 & {1 \over 2} & 
  {\pi \over 2 z}
\end{pmatrix}.
\]

\begin{thm}[The Crepant Resolution Conjecture for
  \protect{$K_{\PP(1,1,2)}$}] \label{thm:KP112CRC} Conjecture~\ref{CRC}
  holds for $\cX = K_{\PP(1,1,2)}$, $Y = K_{\FF_2}$.
\end{thm}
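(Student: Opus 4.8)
The plan is to argue exactly as in the proof of Theorem~\ref{thm:C3Z3CRC}. Steps~1--7 above have already done the real work: Step~7 produces an explicit $\CC(\!(z^{-1})\!)$-linear map $\U\colon\cHX\to\cHY$, represented by the matrix $M(\lambda,-z)$, and by construction $\U$ carries the family $x\mapsto I_\cX(x,-z)$ of elements of $\cLX$ to the analytic continuation $\widetilde I_Y(x^{1/2},0,-z)$ of the family $(y_1,y_2)\mapsto I_Y(y_1,y_2,-z)$ of elements of $\cLY$. By Corollary~\ref{cor:KP112mirror} together with Proposition~\ref{thm:stuff}(b), the germ $\cLX$ is the unique Lagrangian submanifold-germ of $\cHX$ containing the family $x\mapsto I_\cX(x,-z)$; likewise, by Corollary~\ref{cor:KF2mirror}, $\cLY$ is pinned down by the family $(y_1,y_2)\mapsto I_Y(y_1,y_2,-z)$. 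Hence, precisely as in Example~I, $\U$ maps $\cLX$ onto the chosen analytic continuation of $\cLY$. It remains only to check that $\U$ is a degree-preserving symplectic isomorphism satisfying conditions~(a)--(c) of Conjecture~\ref{CRC}, and all of this can be read off from the explicit matrix of $\U$ recorded at the end of Step~7.

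First I would observe that each entry of that matrix is a monomial in $z$ of exactly the weight forced by $\deg z=2$ together with the degrees of the classes $\phi_i$ and $\varphi_j$ fixed in the paragraph ``Bases for Everything'', so $\U$ is degree-preserving; and since the matrix is block-triangular with invertible diagonal blocks it is an isomorphism. For the symplectic condition I would verify directly that $\Omega_Y(\U f,\U g)=\Omega_\cX(f,g)$ on basis elements $f,g$ of the form $\phi_i z^k$; this reduces to a finite identity relating $M(\lambda,z)$, $M(\lambda,-z)$ and the Gram matrices of the inner products on $H(\cX)$ and $H(Y)$ in the chosen bases, and the computation is shortened by the observation already made in Step~7 that the $\lambda$-dependence of $M$ cancels once one works inside $H(Y)$. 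Condition~(a) is immediate: $\U(\fun_\cX)=\U(\fun_0)=(-1)^{(p_1-2p_2)/2z}=\exp\!\bigl(\pi\sqrt{-1}\,(p_1-2p_2)/2z\bigr)=\fun_Y+O(z^{-1})$.

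For condition~(b) there is a single untwisted degree-two class on $\cX$, namely a multiple of $p$, so it suffices to check $\U\circ(p\,\CR\,-)=(\pi^\star p\,\cup\,-)\circ\U$ on the basis $\{\fun_0,p,p^2,\fun_{1/2}\}$; this uses $\U(p)=\tfrac12 p_1$, $\U(p^2)=\tfrac12 p_1p_2$, $\U(\fun_{1/2})=(-1)^{(p_1-2p_2)/2z}(-1)^{1/2}\tfrac12(p_1-2p_2)$, the relations $p_2^2=0$ and $p_1(p_1-2p_2)=0$ in $H(Y)$, the identification $\pi^\star p=r_1 p_1=\tfrac12 p_1$, and the Chen--Ruan products of $p$ with the basis classes. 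Condition~(c), $\U(\cHX^+)\oplus\cHY^-=\cHY$, follows from the leading-order structure of $M(\lambda,-z)$, just as in Example~I. I do not expect any serious obstacle at this stage: once Steps~1--7 are in place the proof of the theorem is essentially bookkeeping. The genuine difficulty of this example is Step~6 --- the Mellin--Barnes analytic continuation of $I_Y$ to the region where $|y_2|$ is large, together with the restriction to the locus $(\hy_1,\hy_2)=(x^{1/2},0)$ --- and that has already been carried out.
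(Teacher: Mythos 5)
Your argument is exactly the paper's: the paper proves Theorem~\ref{thm:KP112CRC} by the one-line instruction ``argue exactly as in the proof of Theorem~\ref{thm:C3Z3CRC}'', i.e.\ $\U$ maps the family $x\mapsto I_\cX(x,-z)$ to $\widetilde I_Y(x^{1/2},0,-z)$, the germs $\cLX$ and $\cLY$ are uniquely determined by their respective $I$-function families (Corollaries~\ref{cor:KP112mirror}, \ref{cor:KF2mirror} with Proposition~\ref{thm:stuff}), and the remaining properties of $\U$ are read off from its explicit matrix. Your proposal is a correct, slightly more detailed write-up of that same argument.
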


\begin{proof}
  Argue exactly as in the proof of Theorem~\ref{thm:C3Z3CRC}
\end{proof}

\begin{cor}[The Bryan--Graber Conjecture for
  \protect{$K_{\PP(1,1,2)}$}]
  The $\CC(\lambda)$-linear map $\U_\infty:H(\cX) \to H(Y)$ given by
  \begin{align*}
    &\U_\infty(\fun_0) = \fun_Y, && \U_\infty(p^2) =  \textstyle {p_1
      p_2 \over 2}, \\ 
    &\U_\infty(p) =  \textstyle {p_2 \over 2}, && \U_\infty(\fun_{1/2}) =
     \textstyle {\sqrt{-1} \over 2}(p_1-2p_2)
  \end{align*}
  induces an algebra isomorphism between the small quantum cohomology
  of $\cX$ and the algebra obtained from the small quantum cohomology
  of $Y$ by analytic continuation in the parameter $q_2$ followed by
  the substitution $q_1 = {- u_1^{1/2}} \sqrt{-1}$, $q_2 = -1$.
\end{cor}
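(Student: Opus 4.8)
The plan is to read the statement off from Corollary~\ref{cor:BG}, feeding into it the explicit symplectic transformation $\U$ already produced in Step~7. First I would check the hypotheses needed to invoke Corollary~\ref{cor:BG}: that Conjecture~\ref{CRC} holds here (this is Theorem~\ref{thm:KP112CRC}); that $\cX = K_{\PP(1,1,2)}$ is semi-positive (it is, being a Calabi--Yau $3$-fold --- the total space of a canonical bundle --- and all such orbifolds are semi-positive); that $\U$ has a non-equivariant limit; and that the matrix entries of $\U$ contain only non-positive powers of $z$. The last two requirements are manifest from the $4\times 4$ matrix written down at the close of Step~7: once the Gamma-function and sine factors in \eqref{eq:KF2ac} are reduced modulo the relations $p_2^2=0$ and $p_1(p_1-2p_2)=0$ in $H(Y)$, every matrix entry is a polynomial in $z^{-1}$ with $\lambda$-independent coefficients. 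Carrying out that reduction cleanly --- confirming that the would-be infinite products truncate and that all powers of $\lambda$ really do cancel --- is the one place where genuine computation (and the attendant risk of sign errors) occurs; everything else is formal.

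Granting this, $\U_\infty=\lim_{z\to\infty}\U$ is read off by setting $z^{-1}=0$ in that matrix, which gives the map $\U_\infty$ of the statement. To pin down the change of variables I would extract the class $c$ of \eqref{eq:defofci}, defined by $\U(\fun_\cX)=\fun_Y - c z^{-1} + O(z^{-2})$: Step~7 gives $\U(\fun_0) = (-1)^{(p_1-2p_2)/(2z)}$, which with the branch $\log(-1)=\pi\sqrt{-1}$ expands as $\exp\!\big(\tfrac{\pi\sqrt{-1}}{2z}(p_1-2p_2)\big) = 1 + \tfrac{\pi\sqrt{-1}}{2z}(p_1-2p_2) + O(z^{-2})$, using $(p_1-2p_2)^2=-2p_1p_2$. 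Hence $c = -\tfrac{\pi\sqrt{-1}}{2}(p_1-2p_2)$, so in the decomposition $c = c^1\varphi_1 + c^2\varphi_2 + d\lambda$ (with $\varphi_1=p_1$, $\varphi_2=p_2$) we get $c^1=-\tfrac{\pi\sqrt{-1}}{2}$, $c^2=\pi\sqrt{-1}$ and $d=0$, whence $e^{c^1}=-\sqrt{-1}$ and $e^{c^2}=-1$. Getting the branch right here --- so that $e^{c^1}$ comes out $-\sqrt{-1}$ and not $+\sqrt{-1}$ --- is the only subtle point in this step.

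Finally I would plug these numbers into Corollary~\ref{cor:BG}. For this example $s=1$, $r=2$, and $r_1=\tfrac12$, so the Corollary asserts that $\U_\infty$ carries the small quantum cohomology of $\cX$ isomorphically onto the small quantum cohomology of $Y$, analytically continued in the parameter $q_2$ (the continuation being the one induced by the analytic continuation of $\cLY$ set up in Steps~5--6 via the Picard--Fuchs system) and then specialized by $q_1 = e^{c^1}u_1^{r_1} = -\sqrt{-1}\,u_1^{1/2}$, $q_2 = e^{c^2} = -1$ --- which is precisely the claim, the isometry and degree-preserving properties of $\U_\infty$ also being part of the output of Corollary~\ref{cor:BG}. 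There is thus no real obstacle left: the whole content was assembled in Steps~1--7, and the present statement is simply the general consequence recorded in Corollary~\ref{cor:BG} specialized to the data computed there.
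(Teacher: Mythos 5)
Your proposal is correct and follows the paper's own (one-line) proof --- ``Apply Corollary~\ref{cor:BG}'' --- just with the hypothesis-checking and the extraction of $c$, giving $e^{c^1}=-\sqrt{-1}$, $e^{c^2}=-1$, $r_1=\tfrac12$, made explicit. One small caveat: the $z\to\infty$ limit of the Step~7 matrix actually yields $\U_\infty(p)=\tfrac{p_1}{2}$ (as it must, since property~(b) forces $\U_\infty(p)=\pi^\star p\cup\fun_Y=\tfrac{p_1}{2}$), not the $\tfrac{p_2}{2}$ printed in the Corollary, so your assertion that the limit ``gives the map of the statement'' silently passes over what appears to be a typo in the paper's statement rather than a flaw in your argument.
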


\begin{proof}
  Apply Corollary~\ref{cor:BG}.
\end{proof}

\section{Example III: $\cX = \big[\CC^3/\ZZ_4\big]$, $Y =
  K_{\PP(1,1,2)}$}
\label{sec:C3Z4}

We next consider an example of a crepant partial resolution.  Let
$\cX$ be the orbifold $\big[ \CC^3/\ZZ_4 \big]$ where $\ZZ_4$ acts on
$\CC^3$ with weights $(1,1,2)$.  The coarse moduli space $X$ of $\cX$
is the quotient singularity ${1 \over 4}(1,1,2)$, and a crepant
partial resolution $\cY$ of $X$ is the canonical bundle
$K_{\PP(1,1,2)}$. We make the obvious modifications to our general
setup, replacing the vector space $H(Y)$ with
\[
H(\cY) := H^\bullet_{\text{CR},T}(\cY;\CC) \otimes \CC(\lambda)
\]
and writing $Y$ for the coarse moduli space of $\cY$.  In this section
we omit some details and all proofs, as the argument is completely
parallel to that in Section~\ref{sec:C3Z3}.

\subsection*{Toric Geometry}

Consider the action of $\Cstar$ on $\CC^4$ such that $s \in \Cstar$
acts as 
\begin{equation}
  \label{eq:C3Z4action}
  \begin{pmatrix}
    x \\ y \\ z \\ w
  \end{pmatrix}
  \longmapsto
  \begin{pmatrix}
    s \, x \\ s \, y \\ s^2 \, z \\ s^{-4} \, w
  \end{pmatrix}.
\end{equation}
The secondary fan is:
\begin{figure}[htbp]
  \centering
     \begin{picture}(160,10)(-90,-5)
      \multiput(-80,0)(20,0){7}{\makebox(0,0){$\cdot$}}
      \put(0,0){\makebox(0,0){$\bullet$}}
      \put(22,6){\makebox(0,0){$\scriptstyle 1,2$}}
      \put(42,6){\makebox(0,0){$\scriptstyle 3$}}
      \put(-62,6){\makebox(0,0){$\scriptstyle 4$}}
      \put(0,0){\vector(1,0){20}}
      \put(0,0){\vector(1,0){40}}
      \put(0,0){\vector(-1,0){80}}
    \end{picture}
    \caption{The secondary fan for $\cY = K_{\PP(1,1,2)}$}
  \label{fig:KP112secondaryfan}
\end{figure}

\noindent For $\xi$ in the right-hand chamber, the GIT quotient $\CC^4
\GIT{\xi} \Cstar$ gives $\cY$; for $\xi$ in the left-hand chamber,
$\CC^4 \GIT{\xi} \Cstar$ gives $\cX$.

\subsection*{The $T$-Action}

The action of $T = \Cstar$ on $\CC^4$ such that $\alpha \in T$ acts as
\[
\begin{pmatrix}
  x \\ y \\ z \\ w
\end{pmatrix}
\longmapsto
\begin{pmatrix}
  x \\ y \\ z \\ \alpha w
\end{pmatrix}.
\]
descends to give $T$-actions on $\cX$, $X$, and $\cY$.  The induced
action on $\cX$ is
\[
\begin{bmatrix}
  x \\ y \\ z
\end{bmatrix}
\longmapsto
\begin{bmatrix}
  \alpha^{1/4} x \\   \alpha^{1/4} y \\   \alpha^{1/2} z
\end{bmatrix},
\]
and the induced action on $\cY$ is the canonical $\Cstar$-action on
the line bundle $K_{\PP(1,1,2)} \to \PP(1,1,2)$.  The crepant partial
resolution
\[
\xymatrix{
  K_{\PP(1,1,2)} \ar[rd] && \big[\CC^3/\ZZ_4\big] \ar[ld] \\
  &\CC^3/\ZZ_4&}
\]
is $T$-equivariant; the left-hand map here collapses the zero
section.

\subsection*{Bases for Everything}

We have
\begin{align*}
  & r := \rank H^2(\cY;\CC) = 1, &
  & s := \rank H^2(\cX;\CC) = 0.
\end{align*}
Fix bases for $H(\cY)$ exactly as in the previous section:
\begin{align*}
  &\varphi_0 = \fun_0, &
  &\varphi_1 = p, &
  &\varphi_2 = p^2, &
  &\varphi_3 = \fun_{1/2}, \\
  &\varphi^0 = 2 \lambda p^2, & 
  &\varphi^1 = 2 \lambda p - 8 p^2, &
  &\varphi^2 = 2 \lambda \fun_0 - 8 p, &
  &\varphi^3 = 2 \lambda \fun_{1/2}.
\end{align*}

The components of the inertia stack of $\cX$ are indexed by elements
of $\ZZ_4$.  Let $\fun_{k/4} \in H(\cX)$ denote the orbifold
cohomology class which restricts to the unit class on the inertia
component indexed by $[k] \in \ZZ_4$ and restricts to zero on the
other components.  Let
\begin{align*}
  &\phi_0 = \fun_0, &
  &\phi_1 = \fun_{1/4}, &
  &\phi_2 = \fun_{1/2}, &
  &\phi_3 = \fun_{3/4},\\
  \intertext{so that}
  &\phi^0 = \textstyle {\lambda^3 \over 8} \fun_0, &
  &\phi^1 = 4 \fun_{3/4}, &
  &\phi^2 = 2 \lambda \fun_{1/2}, &
  &\phi^3 = 4 \fun_{1/4}.
\end{align*}

\subsection*{Step 1: A Family of Elements of $\cL_\cY$}
Let
\begin{equation}
  \label{eq:IKP112again}
  I_\cY(y,z) := z \, 
  \sum_{\substack{d: d \geq 0,\\
      2d\in \ZZ}}
  y^{d+p/z}
  {
    \prod_{-4d<m\leq 0} (\lambda - 4 p + m z) 
    \over
    \prod_{\substack{b : 0 < b \leq d, \\ \fr(b) = \fr(d)}}
    (p + b z)^2
    \prod_{1 \leq m \leq 2d} (2p+mz)
  }
  \fun_{\fr(-d)}.
\end{equation}
Proposition~\ref{thm:KP112mirror} shows that $y \mapsto I_\cY(y,-z)$
gives a family of elements of $\cL_\cY$.

\subsection*{Step 2: $I_\cY$ Determines $\cL_\cY$}

We showed in Step~4 of Section~\ref{sec:KP112} that $\cL_\cY$ is
uniquely determined by the fact that $y \mapsto I_\cY(y,-z)$ is a
family of elements of $\cL_\cY$.

\subsection*{Step 3: A Family of Elements of $\cLX$}

Let
\begin{equation}
  \label{eq:IC3Z4}
  I_\cX(x,z) := z \, x^{-\lambda/z}
  \sum_{l \geq 0} {x^l  \over l! \, z^l}
  \prod_{\substack{b : 0 \leq b < {l  \over 4}, \\ \fr{b} = \fr{l
        \over 4}}} 
  \big(\textstyle {\lambda \over 4} - b z)^2 \;
  \prod_{\substack{b : 0 \leq b < {l  \over 2}, \\ \fr{b} = \fr{l
        \over 2}}} 
  \big(\textstyle {\lambda \over 2} - b z) \;
  \fun_{\fr({l\over 4})}.
\end{equation}
This converges, in a region where $|x|$ is small, to a multivalued
analytic function which takes values in $\cHX$.  Theorem~\ref{thm:BZn}
and Proposition~\ref{thm:stuff}(a) imply that $I_\cX(x,-z) \in \cLX$
for all $x$ in the domain of convergence of $I_\cX$.

\subsection*{Step 4: $I_\cX$ Determines $\cLX$}

We have:

\begin{cor} \label{cor:C3Z4mirror}
  \begin{align*}
    \JJ_\cX\big(\tau^1 \fun_{1/4},z\big) &= x^{\lambda/z} I_\cX(x,z) &
    \text{where} & & \tau^1 &= \sum_{m \geq 0} {x^{4m+1} \over
      (4m+1)!}  {\Gamma\big(\textstyle{3 \over 4}\big)^2 \over
      \Gamma\big(\textstyle{3 \over 4} - m\big)^2}
    {\Gamma\big(\textstyle{1 \over 2}\big) \over
      \Gamma\big(\textstyle{1 \over 2} - 2m\big)}.
  \end{align*}
  \qed
\end{cor}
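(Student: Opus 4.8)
The plan is to transcribe the proof of Corollary~\ref{cor:C3Z3mirror} to the present case. First I would invoke Step~3: Theorem~\ref{thm:BZn} and Proposition~\ref{thm:stuff}(a) give $I_\cX(x,-z) \in \cLX$, and one further application of Proposition~\ref{thm:stuff}(a) (multiplication by $\exp(-\lambda\log x/z) = x^{-\lambda/z}$) shows that $x \mapsto x^{-\lambda/z} I_\cX(x,-z)$ is also a family of elements of $\cLX$. Writing $P_l(z)$ for the product of the two finite products in \eqref{eq:IC3Z4}, so that $x^{-\lambda/z} I_\cX(x,-z) = -z \sum_{l \geq 0} \tfrac{x^l}{l!\,(-z)^l}\, P_l(-z)\, \fun_{\fr(l/4)}$, the next step is to expand the right-hand side as a Laurent series in $z$.

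The heart of the matter is to check that this expansion equals $-z + \tau^1 \fun_{1/4} + O(z^{-1})$ with $\tau^1$ the series in the statement. The $l=0$ term supplies $-z$. Since $P_l$ is homogeneous of degree $d_l = 2\lfloor l/4\rfloor + \lfloor l/2\rfloor$ in $(\lambda,z)$, the $l$-th summand contributes to the $z^0$-coefficient only through the coefficient of $z^{l-1}$ in $P_l$. For $l = 4m+3$ this coefficient is zero because $l-1 > d_l$; for $l = 4m$ the resulting contribution is a multiple of $\lambda\fun_0$, which vanishes since the $b=0$ factors $(\lambda/4)^2$ and $\lambda/2$ make $P_{4m}$ divisible by $\lambda^3$ and hence of $z$-degree below $l-1$; for $l = 4m+2$ the contribution is a multiple of $\fun_{1/2}$, which vanishes since $P_{4m+2}$ is divisible by $\lambda$; and for $l = 4m+1$ one has $l-1 = d_l$, so the survivor is the leading coefficient of $P_{4m+1}$ in $z$. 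Reading this off gives $\tau^1 = \sum_{m\geq 0} \tfrac{x^{4m+1}}{(4m+1)!} \prod_{j=0}^{m-1}(j+\tfrac14)^2 \prod_{j=0}^{2m-1}(j+\tfrac12)$, and the identity $\Gamma(a)/\Gamma(a-m) = \prod_{j=1}^{m}(a-j)$ applied with $a = \tfrac34$ and $a = \tfrac12$ rewrites this as the expression in the corollary.

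Finally, the unique family of elements of $\cLX$ of the form $-z + \tau^1\fun_{1/4} + O(z^{-1})$ is $\tau^1 \mapsto \JJ_\cX(\tau^1\fun_{1/4},-z)$ (the characterisation of $\JJ_\cX$ recalled in Section~\ref{sec:theory}); hence $\JJ_\cX(\tau^1\fun_{1/4},-z) = x^{-\lambda/z}I_\cX(x,-z)$, which is the asserted identity after replacing $z$ by $-z$ (the change of variables $x \rightsquigarrow \tau^1$ being invertible since $\tau^1 = x + O(x^5)$). The main obstacle is the bookkeeping of the middle paragraph — in particular, verifying that the $\fun_0$ and $\fun_{1/2}$ components of the $z^0$-term really do vanish, which is exactly where the $b=0$ factors and the divisibility by powers of $\lambda$ enter; the rest is a routine copy of the $\CC^3/\ZZ_3$ computation.
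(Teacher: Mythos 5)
Your proof is correct and follows exactly the route the paper intends: Section~\ref{sec:C3Z4} omits the argument precisely because it is the transcription of Corollary~\ref{cor:C3Z3mirror} that you carry out, namely using Theorem~\ref{thm:BZn} with Proposition~\ref{thm:stuff}(a), expanding $x^{-\lambda/z}I_\cX(x,-z)$ to the form $-z+\tau^1\fun_{1/4}+O(z^{-1})$, and invoking the uniqueness characterisation of $\JJ_\cX$ on $\cLX$. Your degree bookkeeping for the four residue classes of $l$ mod $4$ (and the $\Gamma$-quotient rewriting) checks out, so nothing further is needed.
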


\noindent Proposition~\ref{thm:stuff}c shows that $\cLX$ is uniquely
determined by the fact that $x \mapsto I_\cX(x,-z)$ is a family of
elements of $\cLX$.  In Appendix~\ref{appendix:computing} we use
Corollary~\ref{cor:C3Z4mirror} to compute genus-zero Gromov--Witten
invariants of $\cX$; our results verify predictions made by Brini and
Tanzini \cite{Brini--Tanzini} on the basis of a correspondence between
Gromov--Witten theory and certain five-dimensional gauge theories.

\subsection*{Step 5: The $B$-model Moduli Space and the Picard--Fuchs
  System}

The $B$-model moduli space $\cM_B$ here has two co-ordinate
patches, one for each chamber in the secondary fan.  Let $x$ be the
co-ordinate corresponding to the left-hand chamber (this is the
chamber that gives rise to $\cX$) and let $y$ be the co-ordinate
corresponding to the right-hand chamber (this chamber gives $Y$).  The
co-ordinate patches are related by
\begin{equation}
  \label{eq:KP112gluing}
  y = x^{-4}
\end{equation}
and so $\cM_B$ is the weighted projective space $\PP(1,4)$.

We regard $I_\cX(x,z)$ as a function on the co-ordinate patch
corresponding to $\cX$ and $I_\cY(y,z)$ as a function on the
co-ordinate patch corresponding to $\cY$.  The components of
$I_\cY(y,z)$ form a basis of solutions to the differential equation
\begin{equation}
  \label{eq:KP112PFagain}
  D_y^2 (2 D_y) (2 D_y - z) f = 
  \Bigg[ y \prod_{m=0}^3 (\lambda - 4D_y-m z) \Bigg] \, f 
\end{equation}
where $D_y = z y {\partial \over \partial y}$ (\emph{c.f.} equation
\ref{eq:KP112PF}).  The components of $I_\cX$ form a basis of
solutions to the differential equation
\begin{equation}
  \label{eq:C3Z4PF}
  \textstyle \big({-{1 \over 4}}D_x\big)^2 
  \big({-{1 \over 2}} D_x\big) 
  \big({-{1 \over 2}}D_x - z\big) f
  =  
  \Bigg[ x^{-4} \prod_{m=0}^3 (\lambda + D_x-m z) \Bigg] \, f
\end{equation}
where $D_x = z x {\partial \over \partial x}$.  

The change of variables \eqref{eq:KP112gluing} turns
\eqref{eq:KP112PFagain} into \eqref{eq:C3Z4PF} so, as before, if
$\widetilde{I}_\cY(x,z)$ denotes the analytic continuation of $I_\cY$
to the region where $|y|$ is large then there exists a
$\CC(\!(z^{-1})\!)$-linear map $\U:\cHX \to \cH_\cY$ such that
$\U(I_\cX(x,-z)) = \widetilde{I}_\cY(x,-z)$.  This map $\U$ is the
linear symplectomorphism which we seek.
\subsection*{Step 6: Analytic Continuation}

Using the Mellin--Barnes method as before, but treating the
coefficients of $\fun_0$ and $\fun_{1/2}$ in \eqref{eq:IKP112again}
separately, we find that
\begin{multline}
  \label{eq:IKP112ac}
  \widetilde{I}_\cY(x,z) = 
  z\, x^{-\lambda /z} 
  \sum_{n \geq 0} x^n {(-1)^{{-{\lambda - 4 p \over 4z}}} (-1)^{5n
      \over 4} \over 4.n!}
  {\sin \big(\pi \big[ {\lambda - 4p \over z}\big] \big) \over
    \sin \big(\pi \big[ {\lambda - 4p \over 4z} - {n \over 4}\big]
    \big)} \times \\ 
  \shoveright{
    {\Gamma\big(1 + {p \over z}\big)^2 \over 
      \Gamma\big(1 + {\lambda \over 4 z} - {n \over 4}\big)^2} 
    {\Gamma\big(1 + {2p \over z}\big) \over 
      \Gamma\big(1 + {\lambda \over 2 z} - {n \over 2}\big)} 
    \Gamma\big(1+\textstyle{\lambda - 4p \over z}\big) \, \fun_0 
  }
  \\
  + x^{-\lambda /z} 
  \sum_{n \geq 0} x^n {(-1)^{{{1 \over 2}-{\lambda \over 4z}}} (-1)^{5n
      \over 4} \over 4.n!}
  {\sin \big(\pi \big[ {\lambda\over z}\big] \big) \over
    \sin \big(\pi \big[ {\lambda \over 4z} - {n \over 4} - {1 \over
      2}\big] \big)} 
  {\Gamma\big({1 \over 2}\big)^2 \over 
    \Gamma\big(1 + {\lambda \over 4 z} - {n \over 4}\big)^2} 
  {\Gamma\big(1 + {\lambda \over z}\big) \over 
    \Gamma\big(1 + {\lambda \over 2 z} - {n \over 2}\big)} 
  \, \fun_{1/2}.
\end{multline}

\subsection*{Step 7: Compute the Symplectic Transformation} By
comparing powers of $x$ in the equality $\U(I_\cX(x,-z)) =
\widetilde{I}_\cY(x,-z)$, we find that:
\begin{equation}
  \label{eq:C3Z4U}
\begin{aligned}
  \U(\fun_0) &= \textstyle
  {(-1)^{{{\lambda - 4 p \over 4z}}} \over 4}
  {\sin \big(\pi \big[ {\lambda - 4p \over z}\big] \big) \over
    \sin \big(\pi \big[ {\lambda - 4p \over 4z}\big]
    \big)} 
  {\Gamma\big(1 - {p \over z}\big)^2 \over 
    \Gamma\big(1 - {\lambda \over 4 z}\big)^2} 
  {\Gamma\big(1 - {2p \over z}\big) \over 
    \Gamma\big(1 - {\lambda \over 2 z}\big)} 
  \Gamma\big(1-\textstyle{\lambda - 4p \over z}\big) \, \fun_0 
  \\
  & \qquad + \textstyle
  {(-1)^{{{3 \over 2}+{\lambda \over 4z}}}  \over 4 z}
  {\sin \big(\pi \big[ {\lambda\over z}\big] \big) \over
    \sin \big(\pi \big[ {\lambda \over 4z}  + {1 \over
      2}\big] \big)} 
  {\Gamma\big({1 \over 2}\big)^2 \over 
    \Gamma\big(1 - {\lambda \over 4 z}\big)^2} 
  {\Gamma\big(1 - {\lambda \over z}\big) \over 
    \Gamma\big(1 - {\lambda \over 2 z} \big)} 
  \, \fun_{1/2}, \\
  \U(\fun_{1/4}) &= \textstyle
  {(-1)^{{{1 \over 4}+{\lambda - 4 p \over 4z}}} z \over 4}
  {\sin \big(\pi \big[ {\lambda - 4p \over z}\big] \big) \over
    \sin \big(\pi \big[ {\lambda - 4p \over 4z} + {1 \over 4}\big]
    \big)} 
  {\Gamma\big(1 - {p \over z}\big)^2 \over 
    \Gamma\big({3 \over 4} - {\lambda \over 4 z} \big)^2} 
  {\Gamma\big(1 - {2p \over z}\big) \over 
    \Gamma\big({1 \over 2} - {\lambda \over 2 z} \big)} 
  \Gamma\big(1-\textstyle{\lambda - 4p \over z}\big) \, \fun_0 
  \\
  & \qquad + \textstyle
  {(-1)^{{{7 \over 4}+{\lambda \over 4z}}} \over 4}
  {\sin \big(\pi \big[ {\lambda\over z}\big] \big) \over
    \sin \big(\pi \big[ {\lambda \over 4z}  + {3 \over
      4}\big] \big)} 
  {\Gamma\big({1 \over 2}\big)^2 \over 
    \Gamma\big({3 \over 4} - {\lambda \over 4 z}\big)^2} 
  {\Gamma\big(1 - {\lambda \over z}\big) \over 
    \Gamma\big({1 \over 2} - {\lambda \over 2 z}\big)} 
  \, \fun_{1/2}, \\
  \U(\fun_{1/2}) &= \textstyle
  {(-1)^{{{\lambda - 4 p \over 4z}}} (-1)^{1
      \over 2} z^2 \over 2 \lambda}
  {\sin \big(\pi \big[ {\lambda - 4p \over z}\big] \big) \over
    \sin \big(\pi \big[ {\lambda - 4p \over 4z} + {1 \over 2}\big]
    \big)} 
  {\Gamma\big(1 - {p \over z}\big)^2 \over 
    \Gamma\big({1 \over 2} - {\lambda \over 4 z} \big)^2} 
  {\Gamma\big(1 - {2p \over z}\big) \over 
    \Gamma\big(-{\lambda \over 2 z} \big)} 
  \Gamma\big(1-\textstyle{\lambda - 4p \over z}\big) \, \fun_0 
  \\
  & \qquad + \textstyle
  {(-1)^{{1+{\lambda \over 4z}}} z \over 2\lambda}
  {\sin \big(\pi \big[ {\lambda\over z}\big] \big) \over
    \sin \big(\pi \big[ {\lambda \over 4z}\big] \big)} 
  {\Gamma\big({1 \over 2}\big)^2 \over 
    \Gamma\big({1 \over 2} - {\lambda \over 4 z} \big)^2} 
  {\Gamma\big(1 - {\lambda \over z}\big) \over 
    \Gamma\big(-{\lambda \over 2 z}\big)} 
  \, \fun_{1/2}, \\
  \U(\fun_{3/4}) &= \textstyle
  {(-1)^{{{\lambda - 4 p \over 4z}}} (-1)^{{3
      \over 4}} z^3 \over 2(\lambda+z)}
  {\sin \big(\pi \big[ {\lambda - 4p \over z}\big] \big) \over
    \sin \big(\pi \big[ {\lambda - 4p \over 4z} + {3 \over 4}\big]
    \big)} 
  {\Gamma\big(1 - {p \over z}\big)^2 \over 
    \Gamma\big({1 \over 4} - {\lambda \over 4 z} \big)^2} 
  {\Gamma\big(1 - {2p \over z}\big) \over 
    \Gamma\big(- {\lambda \over 2 z} - {1 \over 2}\big)} 
  \Gamma\big(1-\textstyle{\lambda - 4p \over z}\big) \, \fun_0 
  \\
  & \qquad + \textstyle
  {(-1)^{{{\lambda \over 4z}}} (-1)^{{1
      \over 4}} z^2 \over 2(\lambda+z)}
  {\sin \big(\pi \big[ {\lambda\over z}\big] \big) \over
    \sin \big(\pi \big[ {\lambda \over 4z} + {5 \over 4}\big] \big)} 
  {\Gamma\big({1 \over 2}\big)^2 \over 
    \Gamma\big({1 \over 4} - {\lambda \over 4 z}\big)^2} 
  {\Gamma\big(1 - {\lambda \over z}\big) \over 
    \Gamma\big(- {\lambda \over 2 z} - {1 \over 2}\big)} 
  \, \fun_{1/2}.
\end{aligned}
\end{equation}
Note that
\begin{equation}
  \label{eq:C3Z4valueofc}
  \U(\fun_0) = \textstyle \Big(1 + {\lambda \pi \sqrt{-1} \over 4z} \Big) \fun_0 -
  {\pi \sqrt{-1} \over z} p + O(z^{-2}).
\end{equation}
The matrix $M$ of $\U$ does not have a simple form, but in the
non-equivariant limit it becomes
\[
\begin{pmatrix}
 1 & 0 & 0 & 0 \\
 -\frac{\pi \sqrt{-1} }{z} & -\frac{(1+\sqrt{-1}) \sqrt{\pi }}{\Gamma \left(\frac{3}{4}\right)^2} & \sqrt{-1} & -\frac{(1-\sqrt{-1}) \sqrt{\pi } z}{\Gamma \left(\frac{1}{4}\right)^2} \\
 -\frac{7 \pi ^2}{6 z^2} & -\frac{2 \pi ^{3/2}}{z \Gamma \left(\frac{3}{4}\right)^2} & \frac{\pi }{z} & \frac{2 \pi ^{3/2}}{\Gamma \left(\frac{1}{4}\right)^2} \\
 0 & 0 & 1 & 0
\end{pmatrix}.
\]

\subsection*{Conclusions} We have shown that Conjecture~\ref{CRC}
holds, exactly as stated, for the crepant \emph{partial} resolution
$\cY \to X$.

\begin{thm}[A ``Crepant Partial Resolution Conjecture'' for
  \protect{$\big[\CC^3/\ZZ_4\big]$}] \label{thm:C3Z4CRC}
  Conjecture~\ref{CRC} holds for $\cX = \big[\CC^3/\ZZ_4\big]$, $\cY =
  K_{\PP(1,1,2)}$. \qed  
\end{thm}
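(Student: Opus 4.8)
The plan is to assemble Steps~1--7 of Section~\ref{sec:C3Z4}, which between them perform the entire computation; what remains is to record how they combine to give Conjecture~\ref{CRC}, with $\cY$ in place of $Y$ throughout, and to verify the three conditions imposed there on $\U$. This is exactly the argument used for Theorems~\ref{thm:C3Z3CRC} and~\ref{thm:KP112CRC}, so no new idea is needed.

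First I would note that Steps~1--4 establish, using Theorems~\ref{thm:smalllinebundle} and~\ref{thm:BZn} to write down the relevant hypergeometric families and the reconstruction results of Proposition~\ref{thm:stuff} to pin down the cones, that the family $x \mapsto I_\cX(x,-z)$ lies on $\cLX$ and uniquely determines it, and that the family $y \mapsto I_\cY(y,-z)$ lies on $\cL_\cY$ and uniquely determines it. Steps~5--6 identify the $B$-model moduli space as $\cM_B \cong \PP(1,4)$, show that the change of variables \eqref{eq:KP112gluing} carries the Picard--Fuchs equation \eqref{eq:KP112PFagain} governing the components of $I_\cY$ into the equation \eqref{eq:C3Z4PF} governing the components of $I_\cX$, and compute the analytic continuation $\widetilde{I}_\cY$ of $I_\cY$ into the chamber of $\cX$ by the Mellin--Barnes method. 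Step~7 then produces the $\CC(\!(z^{-1})\!)$-linear map $\U:\cHX \to \cH_\cY$ characterised by $\U\big(I_\cX(x,-z)\big) = \widetilde{I}_\cY(x,-z)$, with matrix given explicitly by \eqref{eq:C3Z4U}. Since the components of $\widetilde{I}_\cY$ and of $I_\cX$ are each a basis of solutions of the common differential equation \eqref{eq:C3Z4PF}, this matrix is invertible, so $\U$ is an isomorphism; it is $\CC(\!(z^{-1})\!)$-linear by construction and degree-preserving by inspection of the powers of $z$ in \eqref{eq:C3Z4U}.

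Given all this, the conclusion $\U(\cLX) = \cL_\cY$ after the chosen analytic continuations is immediate, exactly as in the proof of Theorem~\ref{thm:C3Z3CRC}: $\U$ carries the family $I_\cX$ onto the analytic continuation of the family $I_\cY$, and since $\cLX$ is determined by the former family and $\cL_\cY$ by the latter, $\U$ carries $\cLX$ onto the analytic continuation of $\cL_\cY$. It remains only to verify conditions (a), (b), (c) of Conjecture~\ref{CRC}. Condition~(b) is vacuous here, since $H^2(\cX;\CC) = 0$ (we have $s = 0$), so there are no untwisted degree-two classes $\rho$. Condition~(a), that $\U(\fun_\cX) = \fun_\cY + O(z^{-1})$, is read directly off \eqref{eq:C3Z4valueofc}. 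For condition~(c) --- that $\U\big(\cHX^+\big) \oplus \cH_\cY^- = \cH_\cY$ --- and for the assertion that $\U$ preserves the symplectic form, one substitutes the explicit formulas \eqref{eq:C3Z4U} for $\U(\fun_0)$, $\U(\fun_{1/4})$, $\U(\fun_{1/2})$, $\U(\fun_{3/4})$, evaluates them in the algebra $H(\cY)$ with its $\CC(\lambda)$-basis $\fun_0, p, p^2, \fun_{1/2}$, and checks both conditions directly on the resulting $4 \times 4$ matrix.

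I expect this last verification to be the main obstacle. As observed already for Example~I, it is not \emph{a priori} clear from the Mellin--Barnes output that $\U$ is symplectic or that it satisfies~(c); one has to expand the $\Gamma$-function factors and apply the reflection identity $\Gamma(t)\Gamma(1-t) = \pi/\sin(\pi t)$ together with the Gauss multiplication formula for $\Gamma$ in order to expose the cancellations that make the numerator and denominator factors match up as required. Once the matrix is in hand, however, each of these is a finite computation, entirely parallel to the corresponding checks in Sections~\ref{sec:C3Z3} and~\ref{sec:KP112}. In particular no feature of the \emph{partial}-resolution nature of $\cY \to X$ intervenes: Givental's symplectic formalism, the Divisor Equation, the reconstruction results of Proposition~\ref{thm:stuff}, and the twisted-invariant computations of Section~\ref{sec:theory} all apply verbatim when the resolved space is itself an orbifold such as $K_{\PP(1,1,2)}$.
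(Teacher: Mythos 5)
Your proposal is correct and follows essentially the same route as the paper: the paper states Theorem~\ref{thm:C3Z4CRC} with no separate proof precisely because Steps~1--7 of Section~\ref{sec:C3Z4} constitute the argument, concluded exactly as in Theorem~\ref{thm:C3Z3CRC} (the $I$-functions determine $\cLX$ and $\cL_\cY$, and $\U$ carries one family onto the analytic continuation of the other), with conditions (a)--(c) read off from \eqref{eq:C3Z4valueofc} and the explicit matrix \eqref{eq:C3Z4U}, condition (b) being vacuous since $s=0$. Your added remarks on verifying symplecticity and condition (c) directly from the $\Gamma$-function expressions match the paper's own stance that these are finite checks once $\U$ is explicit.
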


\noindent This gives a Ruan-style ``Cohomological Crepant Partial 
Resolution Conjecture'':

\begin{cor}
  The algebra obtained from the $T$-equivariant small quantum
  cohomology algebra of $\cY = K_{\PP(1,1,2)}$ by analytic
  continuation in the parameter $q_1$ followed by the specialization
  $q_1 = -1$ is isomorphic to the $T$-equivariant Chen--Ruan orbifold
  cohomology of $\cX = \big[\CC^3/\ZZ_4\big]$.
\end{cor}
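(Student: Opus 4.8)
The plan is to deduce this from Corollary~\ref{cor:CCRC}, in exactly the way that the Cohomological Crepant Resolution Conjecture for $\big[\CC^3/\ZZ_3\big]$ was deduced from it in Section~\ref{sec:C3Z3}. First I would check that the hypotheses of Corollary~\ref{cor:CCRC} hold here: Conjecture~\ref{CRC} is established by Theorem~\ref{thm:C3Z4CRC}; the orbifold $\cX = \big[\CC^3/\ZZ_4\big]$ is a three-fold and hence semi-positive; and the symplectic transformation $\U$ computed in \eqref{eq:C3Z4U} remains well-defined in the limit $\lambda \to 0$. For this last point one inspects \eqref{eq:C3Z4U}: the apparent poles $1/\lambda$ in the formulae for $\U(\fun_{1/2})$ and $\U(\fun_{3/4})$ are cancelled by the reciprocal Gamma-factors $1/\Gamma\big({-\lambda}/(2z)\big)$ and $1/\Gamma\big({-\lambda}/(2z) - \tfrac12\big)$, each of which vanishes to first order at $\lambda = 0$; the resulting non-equivariant matrix is the one displayed at the end of Step~7 of Section~\ref{sec:C3Z4}.

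Next I would read off the constant $c^1$ of \eqref{eq:defofci}. Equation \eqref{eq:C3Z4valueofc} gives
\[
\U(\fun_\cX) = \varphi_0 + \frac{1}{z}\Big( \tfrac{\pi\sqrt{-1}}{4}\,\lambda\,\varphi_0 - \pi\sqrt{-1}\,p \Big) + O(z^{-2}),
\]
so the class $c$ of \eqref{eq:defofci} is $c = \pi\sqrt{-1}\,p - \tfrac{\pi\sqrt{-1}}{4}\,\lambda\,\varphi_0$. Since here $r = 1$, $s = 0$ and $\varphi_1 = p$, comparison with \eqref{eq:defofci} yields $c^1 = \pi\sqrt{-1}$ (and, incidentally, $d = -\pi\sqrt{-1}/4$).

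Finally, because $s = 0$, the substitution prescribed by Corollary~\ref{cor:CCRC} collapses to the single specialization $q_1 = e^{c^1} = e^{\pi\sqrt{-1}} = -1$, preceded if necessary by analytic continuation in $q_1$; the isomorphism it provides onto the $T$-equivariant Chen--Ruan orbifold cohomology of $\cX$ is then exactly the assertion of the corollary. I expect the only step needing genuine care to be the verification that $\U$ survives the non-equivariant limit; this is the direct analogue of the cancellation already noted in Step~7 of Section~\ref{sec:KP112}, where the dependence of $\U$ on $\lambda$ collapsed after evaluation in $H(\cY)$, and it reduces to a finite, mechanical check on the explicit formulae \eqref{eq:C3Z4U}.
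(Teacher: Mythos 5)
Your proposal is in substance the paper's own proof: one reads off $c^1 = \pi\sqrt{-1}$ (and $d = -\pi\sqrt{-1}/4$) from \eqref{eq:C3Z4valueofc} and applies Corollary~\ref{cor:CCRC} with $s=0$, $r=1$, giving the specialization $q_1 = e^{\pi\sqrt{-1}} = -1$. The one point you pass over, which the paper makes explicit, is that Corollary~\ref{cor:CCRC} is formulated for a crepant resolution by a smooth variety $Y$ and so must first be generalized to the crepant partial resolution by the orbifold $\cY = K_{\PP(1,1,2)}$ (the paper asserts this generalization without proof); also a small slip: $\U(\fun_{3/4})$ in \eqref{eq:C3Z4U} carries the prefactor $z^3/\big(2(\lambda+z)\big)$, which is regular at $\lambda = 0$, so only $\U(\fun_{1/2})$ needs the cancellation of $1/\lambda$ against $1/\Gamma\big({-\lambda/(2z)}\big)$.
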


\begin{proof}
  Corollary~\ref{cor:CCRC} can be generalized to treat crepant partial
  resolutions.  The result follows from this and from equation
  \eqref{eq:C3Z4valueofc}, which shows that $c^1 = \pi \sqrt{-1}$.
\end{proof}

\begin{cor}[The Crepant Resolution Conjecture for
  \protect{$\big[\CC^3/\ZZ_4\big]$}]  \label{thm:CRCC3Z4compose}
  Conjecture~\ref{CRC} holds for $\cX = \big[\CC^3/\ZZ_4\big]$, $Y =
  K_{\FF_2}$.
\end{cor}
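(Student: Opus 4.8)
The plan is to build the required transformation by composing the two that have already been constructed, rather than by redoing the $B$-model analysis from scratch. I would take $\U_1 \colon \cHX \to \cH_{K_{\PP(1,1,2)}}$ to be the degree-preserving $\CC(\!(z^{-1})\!)$-linear symplectic isomorphism produced in the proof of Theorem~\ref{thm:C3Z4CRC} (represented by the matrix \eqref{eq:C3Z4U}), take $\U_2 \colon \cH_{K_{\PP(1,1,2)}} \to \cHY$ to be the one produced in the proof of Theorem~\ref{thm:KP112CRC}, and set $\U := \U_2 \circ \U_1$; a composition of degree-preserving $\CC(\!(z^{-1})\!)$-linear symplectic isomorphisms is again one, so this costs nothing. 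For the statement $\U\(\cLX\) = \cLY$, I would argue exactly as in the proof of Theorem~\ref{thm:C3Z3CRC}: $\U_1$ carries the family $x \mapsto I_\cX(x,-z)$ of \eqref{eq:IC3Z4} to an analytic continuation of the $K_{\PP(1,1,2)}$ $I$-family \eqref{eq:IKP112again}, while $\U_2$ carries that same $I$-family --- which, up to the identification of quantum parameters, is the series \eqref{eq:IKP112} --- to an analytic continuation of the $K_{\FF_2}$ $I$-family \eqref{eq:KF2I}. Since $\U_2$ does not depend on the quantum parameters it commutes with analytic continuation in them, so $\U$ carries the $\big[\CC^3/\ZZ_4\big]$ $I$-family to an analytic continuation of the $K_{\FF_2}$ $I$-family --- concretely, the one obtained by running from the chamber-III cusp to the chamber-I cusp through the chamber-II region of the $B$-model moduli space $\cM_B$ of $K_{\FF_2}$ (Figure~\ref{fig:KF2secondaryfan}). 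Because $\cLX$ is pinned down by the fact that $x \mapsto I_\cX(x,-z)$ lies on it (Step~4 of Section~\ref{sec:C3Z4}) and $\cLY$ is pinned down by $y \mapsto I_Y(y,-z)$ (Step~2 of Section~\ref{sec:KP112}), this forces $\U\(\cLX\)$ to be the chosen analytic continuation of $\cLY$.

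It then remains to verify conditions~(a)--(c) of Conjecture~\ref{CRC} for $\U$. Condition~(b) is vacuous, since $s = \rank H^2(\cX;\CC) = 0$ and so $\cX$ carries no untwisted degree-two classes. For (a) and (c) the key fact is that the matrix of $\U_2$ contains only non-positive powers of $z$ --- this is precisely what makes the Bryan--Graber Conjecture hold in Example~II. Hence $\U_2$ maps $\cH_{K_{\PP(1,1,2)}}^-$ into $\cHY^-$, and since both subspaces are Lagrangian and $\U_2$ is symplectic, $\U_2\(\cH_{K_{\PP(1,1,2)}}^-\) = \cHY^-$; applying $\U_2$ to the decomposition $\U_1\(\cHX^+\) \oplus \cH_{K_{\PP(1,1,2)}}^- = \cH_{K_{\PP(1,1,2)}}$ (condition~(c) for $\U_1$, from Theorem~\ref{thm:C3Z4CRC}) then yields condition~(c) for $\U$. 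And since $\U_2$ introduces no positive powers of $z$, the constant-in-$z$ term of $\U(\fun_\cX) = \U_2\(\U_1(\fun_\cX)\)$ equals the constant term of $\U_2(\fun_{K_{\PP(1,1,2)}}) = \fun_Y + O(z^{-1})$, namely $\fun_Y$, which is condition~(a).

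So this is genuinely a corollary, and I do not expect a substantive obstacle; the one place that calls for care is the splicing of analytic continuations in the first step. There one must check that applying the fixed $\CC(\lambda)(\!(z^{-1})\!)$-linear operator $\U_2$ to an analytic continuation of the $K_{\PP(1,1,2)}$ $I$-function really does produce an analytic continuation of the $K_{\FF_2}$ $I$-function; this works because $\U_2$ is independent of the quantum parameters and because the $K_{\PP(1,1,2)}$ $I$-functions appearing in Sections~\ref{sec:KP112} and~\ref{sec:C3Z4} are, after matching bases and quantum parameters, literally the same series. If one wanted to avoid even this, one could instead rerun Steps~5--7 of Section~\ref{sec:C3Z3} for the pair $\big(\big[\CC^3/\ZZ_4\big], K_{\FF_2}\big)$ directly, analytically continuing along a path in the two-dimensional $\cM_B$ of $K_{\FF_2}$ joining the chamber-III cusp to the chamber-I cusp; the composed matrix above would then just be an explicit evaluation of the resulting connection matrix.
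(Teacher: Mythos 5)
Your proposal is correct and is essentially the paper's own argument: the proof of Corollary~\ref{thm:CRCC3Z4compose} is precisely to take $\U$ to be the composition of the symplectic transformations from Theorems~\ref{thm:C3Z4CRC} and~\ref{thm:KP112CRC}. Your additional verifications --- that $\U_2$ commutes with analytic continuation because it is independent of the quantum parameters, that condition~(b) is vacuous since $H^2(\cX;\CC)=0$, and that (a) and (c) follow from the matrix of $\U_2$ containing only non-positive powers of $z$ --- are exactly the details the paper leaves implicit.
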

\begin{proof}
  Take the symplectic transformation $\U$ to be the composition of
  those from Theorems~\ref{thm:C3Z4CRC} and~\ref{thm:KP112CRC}.
\end{proof}

Let $\U$ be the symplectic transformation from
Corollary~\ref{thm:CRCC3Z4compose}.  Condition (b) in
Conjecture~\ref{CRC} makes it easy to compute $\U$: one essentially
just needs to make the substitutions
\begin{align*}
  p \rightsquigarrow \textstyle{p_1 \over 2}, &&
  \fun_0 \rightsquigarrow (-1)^{p_1 - 2p_2 \over 2 z}, &&
  \fun_{1/2} \rightsquigarrow (-1)^{p_1 - 2p_2 \over 2 z} (-1)^{1/2} 
  {p_1-2 p_2\over 2 },
\end{align*}
in \eqref{eq:C3Z4U}.  The resulting transformation, after setting
$z=1$, agrees with that calculated by Brini--Tanzini in
\cite{Brini--Tanzini}.  We have
\[
\U(\fun_{3/4}) = 
z \, \frac{(1 - \sqrt{-1}) \sqrt{\pi }}{4 \Gamma
  \left(\frac{1}{4}\right)^2} (\lambda -2
p_1) + \text{lower-order terms in $z$,}
\]
so we do not expect the Bryan--Graber conjecture to hold here.  But 
\[
\U(\fun_0) = \fun_{K_{\FF_2}} + \frac{\lambda \pi \sqrt{-1} }{4
  z}-\frac{\pi \sqrt{-1}  p_2}{z} + O(z^{-2}),
\]
so we have
\begin{cor}[The Cohomological Crepant Resolution Conjecture for
  \protect{$\big[\CC^3/\ZZ_4\big]$}] The algebra obtained from the
  $T$-equivariant small quantum cohomology algebra of $Y = K_{\FF_2}$
  by analytic continuation in the parameters $q_1, q_2$ followed by
  the specialization $q_1 = 1, q_2 = -1$ is isomorphic to the
  $T$-equivariant Chen--Ruan orbifold cohomology of $\cX =
  \big[\CC^3/\ZZ_4\big]$.
\end{cor}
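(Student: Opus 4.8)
The plan is to deduce this from Corollary~\ref{cor:CCRC}, applied to the crepant resolution $Y = K_{\FF_2} \to X = \CC^3/\ZZ_4$ and to the symplectic transformation $\U$ of Corollary~\ref{thm:CRCC3Z4compose}, namely the composition of the transformations produced in Theorems~\ref{thm:C3Z4CRC} and~\ref{thm:KP112CRC}. First I would check the hypotheses of Corollary~\ref{cor:CCRC}, in the $T$-equivariant form explained in the footnote to that result: Conjecture~\ref{CRC} holds for this pair by Corollary~\ref{thm:CRCC3Z4compose}; the orbifold $\cX = \big[\CC^3/\ZZ_4\big]$ is Calabi--Yau, since the weights sum to $1+1+2 = 4$, hence semi-positive; and $\U$ remains well-defined as $\lambda \to 0$ because it is the composition of two transformations each of which does, as witnessed by the explicit non-equivariant matrices computed in Step~7 of Sections~\ref{sec:KP112} and~\ref{sec:C3Z4}. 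Since $s = \rank H^2(\cX;\CC) = 0$ and $r = \rank H^2(Y;\CC) = 2$, Corollary~\ref{cor:CCRC} then identifies the Chen--Ruan cohomology of $\cX$ with the algebra obtained from the small quantum cohomology of $Y$ by analytic continuation in $q_1, q_2$ followed by the substitution $q_1 = e^{c^1}$, $q_2 = e^{c^2}$; the auxiliary clause sending $\alpha \in H^2(\cX;\CC)$ to $\pi^\star\alpha$ is vacuous here, as $H^2(\cX;\CC) = H^2(X;\CC) = 0$.

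It then remains to compute the constants $c^1, c^2$, defined by writing $\U(\fun_0) = \fun_Y - c z^{-1} + O(z^{-2})$ and decomposing $c = c^1 \varphi_1 + c^2 \varphi_2 + d\lambda$ against the basis $\varphi_1 = p_1$, $\varphi_2 = p_2$ of $H^2(K_{\FF_2})$. I would take the expansion $\U(\fun_0) = \fun_{K_{\FF_2}} + \tfrac{\lambda\pi\sqrt{-1}}{4z} - \tfrac{\pi\sqrt{-1}\,p_2}{z} + O(z^{-2})$ recorded just above the statement; it is obtained by pushing the leading terms of $\U(\fun_0)$ for the partial resolution $\big[\CC^3/\ZZ_4\big] \to K_{\PP(1,1,2)}$, i.e.\ equation~\eqref{eq:C3Z4valueofc}, through the Example~II transformation, using $\U(\fun_0) = (-1)^{(p_1-2p_2)/(2z)}$ and $\U(p) = p_1/2$ together with $(-1)^{(p_1-2p_2)/(2z)} = 1 + \tfrac{\pi\sqrt{-1}}{2z}(p_1 - 2p_2) + O(z^{-2})$, whereupon the two $p_1$-contributions cancel. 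Reading off coefficients gives $c^1 = 0$ and $c^2 = \pi\sqrt{-1}$, hence $q_1 = e^0 = 1$ and $q_2 = e^{\pi\sqrt{-1}} = -1$ --- precisely the specialization in the statement.

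I do not expect a genuine obstacle here: once Corollary~\ref{thm:CRCC3Z4compose} is in hand, the argument is a short piece of bookkeeping on a single $z^{-1}$-coefficient, exactly as in the proofs of the other Cohomological Crepant Resolution corollaries in the paper. The only point requiring a word of care is the branch of $\log(-1)$ implicit in the factors $(-1)^{\bullet/z}$ occurring in the Example~II and Example~III transformations: these must be taken consistently, say $\log(-1) = \pi\sqrt{-1}$, so that the same analytic continuation underlies both the identity $\U(\cLX) = \cLY$ of Corollary~\ref{thm:CRCC3Z4compose} and the induced analytic continuation of the small quantum product appealed to in Corollary~\ref{cor:CCRC}; with this convention fixed, the value $c^2 = \pi\sqrt{-1}$, and hence $q_2 = -1$, is forced.
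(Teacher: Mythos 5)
Your proposal is correct and is essentially the paper's own (implicit) argument: the Corollary is obtained by applying Corollary~\ref{cor:CCRC} (with $s=0$) to the composed transformation of Corollary~\ref{thm:CRCC3Z4compose}, reading off $c^1=0$, $c^2=\pi\sqrt{-1}$ from the displayed expansion $\U(\fun_0)=\fun_{K_{\FF_2}}+\tfrac{\lambda\pi\sqrt{-1}}{4z}-\tfrac{\pi\sqrt{-1}\,p_2}{z}+O(z^{-2})$, which is exactly the composition of \eqref{eq:C3Z4valueofc} with the Example~II substitutions. Your computation of that expansion (including the cancellation of the $p_1$-terms) and the hypothesis checks (semi-positivity, existence of the non-equivariant limit) match the paper's reasoning.
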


\section{Example IV: $\cX = K_{\PP(1,1,3)}$}
\label{sec:KP113}

Let us now consider the case where $\cX:=K_{\PP(1,1,3)}$ is the
canonical bundle of the weighted projective space $\PP(1,1,3)$ and $Y
\to X$ is the toric crepant resolution of the coarse moduli space of
$\cX$.  We can treat this example using essentially the same methods
as before, so we present our results as a series of exercises for the reader.

\subsection*{Toric Geometry}

Consider the action of $(\Cstar)^2$ on $\CC^5$ such that $(s,t) \in
(\Cstar)^2$ acts as 
\begin{equation}
  \label{eq:KP112action}
  \begin{pmatrix}
    x \\ y \\ z \\ u \\ v
  \end{pmatrix}
  \longmapsto
  \begin{pmatrix}
    t \, x \\ t \, y \\ s \, z \\ s t^{-3} \, u \\ s^{-2} t \, v
  \end{pmatrix}.
\end{equation}
The secondary fan is:
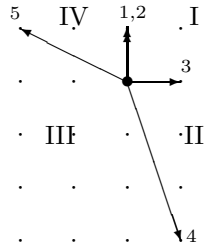
\begin{figure}[htp]
  \centering
     \begin{picture}(90,110)(-45,-55)
      \multiput(-40,20)(20,0){4}{\makebox(0,0){$\cdot$}}
      \multiput(-40,0)(20,0){4}{\makebox(0,0){$\cdot$}}
      \multiput(-40,-20)(20,0){4}{\makebox(0,0){$\cdot$}}
      \multiput(-40,-40)(20,0){4}{\makebox(0,0){$\cdot$}}
      \multiput(-40,-60)(20,0){4}{\makebox(0,0){$\cdot$}}
      \put(0,1){\vector(0,1){20}}
      \put(0,-1){\vector(0,1){20}}
      \put(0,0){\vector(1,0){20}}
      \put(0,0){\vector(1,-3){20}}
      \put(0,0){\vector(-2,1){40}}
      \put(0,0){\makebox(0,0){$\bullet$}}
      \put(22,6){\makebox(0,0){$\scriptstyle 3$}}
      \put(2,26){\makebox(0,0){$\scriptstyle 1,2$}}
      \put(24,-58){\makebox(0,0){$\scriptstyle 4$}}
      \put(-42,26){\makebox(0,0){$\scriptstyle 5$}}
      \put(25,25){\makebox(0,0){I}}
      \put(25,-20){\makebox(0,0){II}}
      \put(-25,-20){\makebox(0,0){III}}
      \put(-20,25){\makebox(0,0){IV}}
    \end{picture}
  \caption{The secondary fan for $Y$}
  \label{fig:KP113resolutionsecondaryfan}
\end{figure}

\begin{ex}
  Show that choosing $\xi$ to lie in chamber II gives 
  $\CC^4 \GIT{\xi} (\Cstar)^2 \cong \cX $, and
  choosing $\xi$ to lie in chamber I gives $\CC^4 \GIT{\xi} (\Cstar)^2 \cong Y$.
\end{ex}

Note that, unlike all the other examples considered in this paper, the
non-compact toric variety $Y$ is \emph{not} presented as the total
space of a vector bundle.

\subsection*{The $T$-Action}

The action of $T = \Cstar$ on $\CC^5$ such that $\alpha \in
T$ maps
\[
\begin{pmatrix}
  x \\ y \\ z \\ u \\ v
\end{pmatrix}
\longmapsto
\begin{pmatrix}
  x \\ y \\ z \\ u \\ \alpha v
\end{pmatrix}
\]
descends to give actions of $T$ on $\cX$, $X$, and $Y$, and the
crepant resolution
\[
\xymatrix{
  Y \ar[rd] && \cX \ar[ld] \\
  &X&}
\]
is $T$-equivariant.

\subsection*{Bases for Everything}

We have
\begin{align*}
  & r := \rank H^2(Y;\CC) = 2, &
  & s := \rank H^2(\cX;\CC) = 1.
\end{align*}
Let $p_1, p_2 \in H(Y)$ denote the $T$-equivariant Poincar\'e-duals to
the divisors $\{z = 0\}$ and $\{x=0\}$ respectively, so that 
\[
H(Y) = \CC(\lambda)[p_1,p_2]/\big\langle p_2^2(\lambda+p_2 - 2p_1),
p_1(p_1-3p_2), p_1^2 p_2 \big\rangle.
\]
Set
\begin{align*}
  &\varphi_0 = 1, &
  &\varphi_1 = p_1, &
  &\varphi_2 = p_2, &
  &\varphi_3 = p_1 p_2, &
  &\varphi_4 = p_2^2.
\end{align*}

Write the inertia stack $\cIX$ of $\cX$ as the disjoint union $\cX_0
\coprod \cX_{1/3} \coprod \cX_{2/3}$, where $\cX_f$ is the component
of the inertia stack corresponding to the fixed locus of the element
$\big(1,e^{2 \pi f \sqrt{-1}}\big) \in (\Cstar)^2$.  We have $\cX_0 =
K_{\PP(1,1,3)}$ and $\cX_{1/3} = \cX_{2/3} = B\ZZ_3$.  Define $\fun_f
\in H(\cX)$ to be the class which restricts to the unit class on the
component $\cX_f$ and restricts to zero on the other components, and
let $p \in H(\cX)$ denote the first Chern class of the line bundle
$\cO(1) \to \PP(1,1,3)$, pulled back to $K_{\PP(1,1,3)}$ via the
natural projection and then regarded as an element of Chen--Ruan
cohomology via the inclusion $\cX =\cX_0 \to \cIX$.  Set
\begin{align*}
  &\phi_0 = \fun_0, &
  &\phi_1 = p, &
  &\phi_2 = p^2, &
  &\phi_3 = \fun_{1/3}, &
  &\phi_4 = \fun_{2/3},
\end{align*}
so that $r_1= {1 \over 3}$.

\subsection*{Characterising $\cLY$}

Let
\begin{multline}
  \label{eq:KP113resolutionIGamma}
  I_Y(y_1,y_2,z) := z \sum_{k,l \geq 0}
  { \Gamma\big(1 + {p_2 \over z} \big)^2 
    \over \Gamma\big(1 + {p_2 \over z} + l \big)^2}
  { \Gamma\big(1 + {p_1 \over z} \big)
    \over \Gamma\big(1 + {p_1 \over z} + k \big)}
  { \Gamma\big(1 + {p_1 - 3p_2 \over z} \big)
    \over \Gamma\big(1 + {p_1 - 3 p_2 \over z} + k - 3l \big)}
  \times \\
  { \Gamma\big(1 + {\lambda - 2 p_1 + p_2 \over z} \big)
    \over \Gamma\big(1 + {\lambda - 2 p_1 + p_2\over z} - 2k + l\big)} \, 
  y_1^{k + p_1/z} y_2^{l + p_2/z}.
\end{multline}

\begin{claim} [\emph{cf.} Section~\ref{sec:KP112}, Step~1.] \label{claim:KP113resolution} 
  \begin{align*}
    I_Y(y_1,y_2,-z) \in \cLY && \text{for all $(y_1,y_2)$ in the
      domain of convergence of $I_Y$.}
  \end{align*}
\end{claim}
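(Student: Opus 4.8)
The plan is to derive the claim from the $T$-equivariant toric mirror theorem rather than from Theorem~\ref{thm:smalllinebundle}. As the text emphasizes, $Y$ is \emph{not} here presented as the total space of a vector bundle, so we cannot proceed by a hypergeometric modification of the $J$-function of a compact base, as in Examples~I--III. Instead I would use that $Y$ is itself a smooth semi-projective toric variety, presented as the GIT quotient $\CC^5 \GIT{\xi} (\Cstar)^2$ with $\xi$ in chamber~I, on which $T = \Cstar$ acts by rotating the single coordinate $v$ and covering an action with compact fixed locus. Its $T$-equivariant genus-zero theory is therefore of precisely the kind to which Givental's mirror theorem for toric varieties \cite{Givental:toric} applies in the equivariant, semi-projective setting (compare the use of such results in \cite{CCIT:computing, CCIT:stacks}).

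First I would assemble the toric data. From the weights of the $(\Cstar)^2$-action on $\CC^5$ above one reads off a basis $p_1, p_2$ for $H^2_T(Y;\CC)$, Poincar\'e-dual to the divisors $\{z=0\}$ and $\{x=0\}$, with dual basis $\beta_1,\beta_2$ generating the Mori cone; the five $T$-equivariant toric boundary divisor classes are then $p_2$ (twice: from $\{x=0\}$ and $\{y=0\}$), $p_1$ (from $\{z=0\}$), $p_1 - 3p_2$ (from $\{u=0\}$) and $\lambda - 2p_1 + p_2$ (from $\{v=0\}$, which carries $\lambda$ since $T$ acts on $v$). These classes sum to $\lambda$, so $Y$ is a local Calabi--Yau; this is exactly the regime in which the toric mirror theorem holds with no correction beyond the mirror map.

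Next I would write down the toric $I$-function attached to this data. Givental's theorem produces
\[
I_Y^{\mathrm{tor}}(y_1,y_2,z) \;=\; z \sum_{k,l \geq 0} y_1^{k+p_1/z}\, y_2^{l+p_2/z}\,
\prod_{j} \frac{\prod_{m \leq 0}(D_j + mz)}{\prod_{m \leq \langle D_j,\, k\beta_1 + l\beta_2\rangle}(D_j + mz)},
\]
one factor for each toric divisor $D_j$, and asserts $I_Y^{\mathrm{tor}}(y_1,y_2,-z) \in \cLY$ on the domain of convergence. It then remains to identify $I_Y^{\mathrm{tor}}$ with the $I_Y$ of \eqref{eq:KP113resolutionIGamma}: the $\{z=0\}$ factor gives $\Gamma(1+p_1/z)/\Gamma(1+p_1/z+k)$ up to an entire prefactor, each of the $\{x=0\},\{y=0\}$ factors gives $\Gamma(1+p_2/z)/\Gamma(1+p_2/z+l)$, the $\{u=0\}$ factor gives $\Gamma(1+(p_1-3p_2)/z)/\Gamma(1+(p_1-3p_2)/z+k-3l)$ (here $\langle p_1-3p_2,\beta_2\rangle=-3<0$, and for $k<3l$ the product over $m \leq k-3l$ is precisely the ratio of Gamma functions with a negative shift), and the $\{v=0\}$ factor gives $\Gamma(1+(\lambda-2p_1+p_2)/z)/\Gamma(1+(\lambda-2p_1+p_2)/z-2k+l)$. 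As in \eqref{eq:KF2I}, the entire prefactors produced in converting products to Gamma-quotients cancel between numerator and denominator, yielding \eqref{eq:KP113resolutionIGamma} exactly. Applying Proposition~\ref{thm:stuff}(a) to absorb the equivariant shift, as in Step~1 of Section~\ref{sec:KP112}, then gives the claim.

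I expect the main obstacle to be bookkeeping rather than conceptual: one must track that $\{u=0\}$ pairs negatively with $\beta_2$, so that its hypergeometric factor is a genuine ratio of Gamma functions and not a truncating product, and one must confirm that the toric mirror theorem is being applied in the case where it is established --- a smooth semi-projective toric variety with a torus action having compact fixed locus --- which is guaranteed by the local Calabi--Yau condition $\sum_j D_j = \lambda$ computed above. No genus-zero reconstruction subtlety enters at this point, since we are only asserting that a single family lies in $\cLY$; the identification of $\cLY$ with the cone determined by $I_Y$, together with the mirror map, is deferred to the analogue of Step~4 of Section~\ref{sec:KP112}.
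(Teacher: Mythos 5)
Your proposal takes essentially the same route as the paper: the paper establishes the Claim precisely by appealing to the argument that proves Theorem~0.1 of \cite{Givental:toric}, applied directly to the non-compact toric variety $Y$ (or, alternatively, to the general mirror theorem of \cite{CCIT:stacks}), which is what you do, together with the correct bookkeeping identifying the equivariant toric $I$-function built from the divisor classes $p_2,p_2,p_1,p_1-3p_2,\lambda-2p_1+p_2$ with \eqref{eq:KP113resolutionIGamma}. The one place the paper is more guarded than you are is that Theorem~0.1 as stated applies only to \emph{compact} toric varieties, so strictly one must verify that its proof extends to this semi-projective $Y$ (or defer to \cite{CCIT:stacks}) rather than quoting the semi-projective equivariant case as already established.
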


\noindent The Claim can be proved using the argument which proves
Theorem~0.1 in \cite{Givental:toric}.  Theorem~0.1 as stated only
applies to compact toric varieties, but the argument which proves it
works for the non-compact toric variety $Y$ as well. The reader who
would prefer not to check this should wait for the full generality of
\cite{CCIT:stacks}.

\begin{ex} (\emph{cf.} Section~\ref{sec:KP112}, Step~2.) 
  \begin{itemize}
  \item[(a)] Check that the series \eqref{eq:KP113resolutionIGamma}
    converges, in a region where $|y_1|$ and $|y_2|$ are sufficiently
    small, to a multi-valued analytic function of $(y_1,y_2)$ which
    takes values in $\cHY$.
  \item[(b)] Use Claim~\ref{claim:KP113resolution} to produce an
    expression for the small $J$-function $J_Y(q_1,q_2,z)$.
  \item[(c)] Show that the series \eqref{eq:JYsmall} defining
    $J_Y(q_1,q_2,z)$ converges (to a multivalued analytic function)
    when $|q_1|$ and $|q_2|$ are sufficiently small.
  \item[(d)] Show that $\cLY$ is uniquely determined by the fact that
    $(y_1,y_2) \mapsto I_Y(y_1,y_2,-z)$ is a family of elements of
    $\cLY$.
  \end{itemize}
\end{ex}

\subsection*{Characterising $\cLX$}

Let
\begin{multline}
  \label{eq:IKP113Gamma}
  I_\cX(x_1,x_2,z) := z \, 
  \sum_{\substack{d: d \geq 0,\\
      3d\in \ZZ}}
  \sum_{\substack{e: e \geq 0,\\
      3e\in \ZZ}}
  {x_1^{3d + 3p/z} x_2^{3e} \over (3e)! \, z^{3e} }
  { \prod_{\substack{b:b \leq 0 \\ \fr(b) = \fr(d-e)}} (p+b z)^2
    \over 
    \prod_{\substack{b:b \leq d-e \\ \fr(b) = \fr(d-e)}} (p+b z)^2}
  \times \\
  { \prod_{\substack{b:-5d-e<b\leq 0 \\ \fr(b) = \fr(-5d-e)}} (\lambda
    - 5p+b z)
    \over 
    \prod_{1 \leq m \leq 3 d} (3p+mz)}
  \fun_{\fr(e-d)}.
\end{multline} 

\begin{claim} [\emph{cf.} Section~\ref{sec:KP112}, Step~3.] \label{claim:KP113} 
  \begin{align*}
    I_\cX(x_1,x_2,-z) \in \cLX && \text{for all $(x_1,x_2)$ in the
      domain of convergence of $I_\cX$.}
  \end{align*}
\end{claim}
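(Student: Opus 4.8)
The plan is to imitate Step~3 of Section~\ref{sec:KP112} (the proof of Proposition~\ref{thm:KP112mirror}), which established the corresponding statement for $K_{\PP(1,1,2)}$. First I would observe that $\cX = K_{\PP(1,1,3)}$ is the total space of the concave line bundle $\cE = \cO_{\PP(1,1,3)}(-5)$ over $\cB = \PP(1,1,3)$ --- here $-5 = -(1+1+3)$, since $K_{\PP(1,1,3)} = \cO(-5)$ --- with $T = \Cstar$ rotating the fibres of $\cE$ and covering the trivial action on $\cB$; write $p$ for the hyperplane class. As explained in Section~\ref{sec:statement}, $\cLX$ then coincides with the Lagrangian germ $\cL^\tw$ encoding the Gromov--Witten invariants of $\cB$ twisted by the inverse $T$-equivariant Euler class $\be^{-1}$ and the bundle $\cE$, so that Theorem~4.6 of \cite{CCIT:computing} produces families of elements of $\cLX$ by applying the modification factors $M_\theta(z)$ of \cite{CCIT:computing}*{\S4.2} to the big $J$-function of $\cB$.

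The second ingredient is a closed form for the relevant $J$-function of $\PP(1,1,3)$. Since the Chen--Ruan cohomology of $\cX$ is generated by the degree-two classes $p = \phi_1$ and $\fun_{1/3} = \phi_3$, Proposition~\ref{thm:stuff}(c) shows that $\cLX$ is reconstructed from the restriction of $\JJ_\cX$ to the two-parameter locus $\tau \in \langle p, \fun_{1/3}\rangle$; this is the source of the two variables $x_1$, $x_2$ in \eqref{eq:IKP113Gamma}, one recording the honest $H^2$-direction and the other the $B\ZZ_3$ twisted-sector direction (compare Theorem~\ref{thm:BZn}, which produces the $(3e)!$ and $z^{-3e}$). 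A suitable family of elements of $\cL_\cB$ on this locus comes either from \cite{CCLT}*{Theorem~1.7} extended from the $H^2$-locus to the full degree-two locus, or --- more conveniently --- from the mirror theorem for toric Deligne--Mumford stacks \cite{CCIT:stacks} applied to the GIT presentation $\CC^5 \GIT{\xi} (\Cstar)^2$ of $\cX$ with $\xi$ in chamber~II of the secondary fan (Figure~\ref{fig:KP113resolutionsecondaryfan}). One then checks, as in the proof of Theorem~\ref{thm:smalllinebundle}, that the surviving topological types have the form $(0,d,S)$ with $S$ a string of $\fun_{1/3}$-insertions followed by one marking of arbitrary sector, that the corresponding modification factor depends only on $d$ and on $\fr(-5d-e)$ --- the shift from $-5d$ to $-5d-e$ being the age contribution of $\cE = \cO(-5)$ at the $3e$ twisted markings --- and that it equals $\prod_{-5d-e < b \leq 0,\ \fr(b)=\fr(-5d-e)}(\lambda - 5p + bz)$; summing $M_\theta(z)J_\theta$ and performing the mirror change of variables then reproduces \eqref{eq:IKP113Gamma}. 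Convergence of $I_\cX$ in a polydisc follows from a routine estimate on the hypergeometric coefficients, just as in Step~2 of Section~\ref{sec:KP112}.

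I expect the main obstacle to be the bookkeeping in the middle step. Unlike $\PP(1,1,2)$, whose unique degree-two twisted sector is absorbed automatically into the fractional curve degrees of a one-variable $I$-function, here one genuinely has to carry the two-parameter restriction of $\JJ_{\PP(1,1,3)}$ and verify that the Euler-class modification factor for \emph{mixed} topological types --- those carrying both a nonzero $\cB$-curve degree $d$ and several $\fun_{1/3}$ marked points --- collapses to the single product above rather than to something more intricate. The cleanest way to sidestep this is to invoke \cite{CCIT:stacks} as a black box, exactly as is done immediately after Claim~\ref{claim:KP113resolution}: applied to the chamber~II GIT data it yields \eqref{eq:IKP113Gamma} directly from the combinatorics of the secondary fan, after which only the convergence statement remains to be checked by hand.
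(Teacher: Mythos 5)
Your bottom line agrees with the paper, because the paper does not in fact prove this Claim by the methods developed in it: immediately after the Claim the author states that ``the methods of this paper will only prove that $I_\cX(x,0,-z) \in \cLX$'' and that the full two-variable statement ``will follow from \cite{CCIT:stacks}.'' Your final fallback --- invoke the toric Deligne--Mumford stack mirror theorem as a black box, applied to the chamber~II GIT data --- is therefore exactly the paper's own position, and your diagnosis of \emph{why} the two-variable statement is needed is also correct: since the degree-two part of Chen--Ruan cohomology includes the twisted class $\fun_{1/3}$, Proposition~\ref{thm:stuff}(c) reconstructs $\cLX$ only from the restriction of $\JJ_\cX$ to the full two-parameter locus, so the $x_2=0$ slice is not enough.

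The gap is in your primary route. You propose to feed the twisting machinery of \cite{CCIT:computing}*{Theorem~4.6} with the restriction of the big $J$-function of $\cB = \PP(1,1,3)$ to the locus spanned by $p$ and $\fun_{1/3}$, obtained by ``extending'' \cite{CCLT}*{Theorem~1.7} from the $H^2$-locus to the full degree-two locus, and you describe the residual difficulty as bookkeeping of modification factors for mixed topological types. But no such extension is available from the results used in this paper: \cite{CCLT}*{Theorem~1.7} identifies only the \emph{small} $J$-function of weighted projective space, i.e.\ the restriction to untwisted $H^2$, and there is no closed formula in the cited literature for the topological-type decomposition of $\JJ_{\PP(1,1,3)}$ along the $\fun_{1/3}$-direction. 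Producing that input (equivalently, an $I$-function with the extra variable $x_2$ recording twisted-sector insertions) is precisely the content of the stronger mirror theorem \cite{CCIT:stacks}; it is the substance of the Claim, not a bookkeeping step, and this is exactly why the author says the paper's methods yield only $I_\cX(x,0,-z)\in\cLX$. (By contrast, for $\cB = B\ZZ_n$ the full big $J$-function \emph{is} known --- \cite{CCIT:computing}*{Proposition~6.1} --- which is why the analogous two-variable issue does not arise in Theorem~\ref{thm:BZn}; and in Section~\ref{sec:KP112} the single twisted variable is absorbed into fractional degrees, so the small $J$-function suffices there.) So either present the Claim as the paper does --- proved only along $x_2=0$ by these methods, with the general case deferred to \cite{CCIT:stacks} --- or supply the extended mirror theorem; the intermediate ``extend CCLT and check the modification factors'' step does not exist as stated.
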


\noindent The methods of this paper will only prove that
$I_\cX(x,0,-z) \in \cLX$ for all $x$ such that $(x,0)$ lies in the
domain of convergence of $I_\cX$.  But this example requires the
stronger result, which will follow from \cite{CCIT:stacks}.

\begin{ex} (\emph{cf.} Section~\ref{sec:KP112}, Step~4.) 
  \begin{itemize}
  \item[(a)] Check that the series \eqref{eq:IKP113Gamma} converges,
    in a region where $|x_1|$ and $|x_2|$ are sufficiently small, to a
    multivalued analytic function which takes values in $\cHX$.
  \item[(b)] Show that 
    \[
    I_\cX(x_1,x_2,z) = z \fun_0 + 3 p \log x_1  - g(x_1,x_2) (\lambda  - 5 p) \fun_0 +
    h(x_1,x_2) \fun_{1/3} + O(z^{-1})
    \]
    for appropriate power series $g(x_1,x_2)$ and $h(x_1,x_2)$, and
    deduce that
    \[
    \JJ_\cX(\tau,z)\big|_{\tau = p \log q + r \fun_{1/3}}
    = e^{\lambda g(x_1,x_2)/z} I_\cX(x_1,x_2,z) 
    \]
    where
    \begin{align} \label{eq:KP113mirrormap}
      q = x_1^3 \exp\big(5 g(x_1,x_2)\big), && r = h(x_1,x_2)
    \end{align}
  \item[(c)] Show that the series $\JJ_\cX(\tau,z)\big|_{\tau = p \log q + r
      \fun_{1/3}}$ converges, in a region where $|q|$ and $|r|$ are
    sufficiently small, to a multivalued analytic function of $q$ and
    $r$ which takes values in $\cHX$.
  \end{itemize}
\end{ex}

\begin{ex} (calculating some Gromov--Witten invariants of $\cX$)
  \begin{itemize}
  \item[(a)] Calculate the first few terms of the power series inverse to
    the ``mirror map'' \eqref{eq:KP113mirrormap}.
  \item[(b)] Deduce that 
    \begin{equation}
      \label{eq:KP113invariants}
      \begin{aligned}
        &\correlator{\fun_{1/3}}^\cX_{0,1,1/3} = -2, \\
        &\correlator{\fun_{1/3}}^\cX_{0,1,4/3} = {3757 \over 648}, \\
        &  \correlator{\fun_{1/3},\fun_{1/3}}^\cX_{0,1,2/3} = {-{13 \over
            18}}, \\
        &\correlator{\fun_{1/3},\fun_{1/3},\fun_{1/3}}^\cX_{0,1,0} = {1
          \over 3}, \\
        &\correlator{\fun_{1/3},\fun_{1/3},\fun_{1/3},\fun_{1/3}}^\cX_{0,1,1/3}
        = {-{2 \over 27}},\\
      \end{aligned}
    \end{equation}
    and so on.
  \end{itemize}
\end{ex}

\subsection*{The Picard--Fuchs System}

Once again, define the $B$-model moduli space $\cM_B$ to be the toric
orbifold corresponding to the secondary fan for $Y$
(Figure~\ref{fig:KP113resolutionsecondaryfan}).  Each chamber of the
secondary fan gives a co-ordinate patch on $\cM_B$: the co-ordinates
$(y_1,y_2)$ coming from chamber~I are dual respectively to $p_1$ and
$p_2$, and the co-ordinates $(x_1,x_2)$ from chamber~II are dual
respectively to $p_1$ and $p_1 - 3 p_2$.  These co-ordinate patches
are related by
\begin{equation}
  \label{eq:KP113resolutiongluing}
  \begin{aligned}
    y_1 &= x_1 x_2 & \qquad \qquad \qquad x_1 &= y_1 y_2^{1/3} \\
    y_2 &= x_2^{-3} & \qquad \qquad \qquad x_2 &= y_2^{-1/3}.
  \end{aligned}
\end{equation}
We regard $I_Y(y_1,y_2,z)$ as a function on the co-ordinate patch
corresponding to chamber I and $I_\cX(x_1,x_2,z)$ as a function on the
co-ordinate patch corresponding to chamber II.  Let
\begin{align*}
  \textstyle D_{x_1} = z x_1 {\partial \over \partial x_1}, &&
  \textstyle  D_{x_2} = z x_2 {\partial \over \partial x_2}, &&
  \textstyle  D_{y_1} = z y_1 {\partial \over \partial y_1}, &&
  \textstyle  D_{y_2} = z y_2 {\partial \over \partial y_2}.
\end{align*}

\begin{ex} (\emph{cf} Section~\ref{sec:KP112}, Step~5.) 
  \begin{itemize}
  \item[(a)] Show that the components of $I_Y(y_1,y_2,z)$, with respect to
    the basis $\{\varphi_\alpha\}$, form a basis of solutions to the
    system of differential equations:
    \begin{align*}
      & D_{y_1}(D_{y_1} - 3D_{y_2}) f = 
      y_1 (\lambda + D_{y_2} - 2 D_{y_1})(\lambda + D_{y_2} - 2D_{y_1} - z) f \\
      & D_{y_2}^2(\lambda + D_{y_2} - 2D_{y_1}) f = 
      y_2 (D_{y_1} - 3D_{y_2})(D_{y_1} - 3D_{y_2} - z)(D_{y_1}-3D_{y_2}-2z)f.
    \end{align*}
  \item[(b)] Show that the components of $I_\cX(x_1,x_2,z)$, with respect to
    the basis $\{\phi_\alpha\}$, form a basis of solutions to the
    system of differential equations:
    \begin{align*}
      & D_{x_2}(D_{x_2} - z)(D_{x_2} - 2z) f = \textstyle
      x_2^3 ({1 \over 3}D_{x_1} - {1 \over 3}D_{x_2})^2
      (\lambda - {5 \over 3}D_{x_1} - {1 \over 3}D_{x_2}) f \\
      & D_{x_1}D_{x_2}f = \textstyle
      x_1 x_2 (\lambda - {5 \over 3}D_{x_1} - {1 \over 3}D_{x_2}) 
      (\lambda - {5 \over 3}D_{x_1} - {1 \over 3}D_{x_2}-z) f \\
      & D_{x_1}(D_{x_1} - z)(D_{x_1} - 2z)  \textstyle
      ({1 \over 3}D_{x_1} - {1 \over 3}D_{x_2})^2 f =
      x_1^3 \prod_{k=0}^{k=4}  
      (\lambda - {5 \over 3}D_{x_1} - {1 \over 3}D_{x_2}-k z) f.
    \end{align*}
  \item[(c)] Show that the change of variables
    \eqref{eq:KP113resolutiongluing} turns the system of differential
    equations in part (a) into the system of differential equations in
    part (b).
  \item[(d)] Deduce that if $\widetilde{I}_Y$ is the analytic
    continuation of $I_Y$ to a neighbourhood of $(x_1,x_2) = (0,0)$
    then there exists a $\CC(\!(z^{-1})\!)$-linear map $\U:\cHX \to
    \cHY$ such that $\U(I_\cX(x_1,x_2,-z)) =
    \widetilde{I}_Y(x_1,x_2,-z)$.
  \end{itemize}
\end{ex}

\noindent As before, the map $\U$ is the linear symplectomorphism that
we seek.

\subsection*{The Symplectic Transformation}

\begin{ex} (\emph{cf} Section~\ref{sec:KP112}, Steps~6 and ~7.)
\item[(a)] Show, using the Mellin--Barnes method, that:
  \begin{multline} \label{eq:KP113resolutionac}
    \widetilde{I}_Y(x_1,x_2,z) = 
    z \sum_{k,n \geq 0}
    {(-1)^{n+k} \over 3.n!}
    { \sin \big( \pi \big[ {p_1 -3 p_2 \over z} \big] \big)
      \over 
      \sin \big( \pi \big[ {p_1 -3 p_2 \over 3z}
      + {k-n \over 3} \big] \big)}
    { \Gamma \big(1 + {p_2 \over z} \big)^2
      \over 
      \Gamma \big(1 + {p_1 \over 3 z} + {k-n \over 3} \big)^2}
    \times \\
    { \Gamma \big(1 + {p_1 \over z} \big)
      \over 
      \Gamma \big(1 + {p_1 \over z} + k \big)}
    \Gamma \big(1 + \textstyle{p_1 - 3 p_2 \over z}\big)
    { \Gamma \big(1 + {\lambda - 2p_1 +p_2 \over z} \big)
      \over 
      \Gamma \big(1 + {3 \lambda - 5 p_1\over 3z} - {5 k+n \over 3} \big)}
    x_1^{k+{p_1 \over z}} x_2^n.
  \end{multline}
\item[(b)] By comparing coefficients of $x_1^a x_2^b (\log x_1)^c$ in
  \eqref{eq:IKP113Gamma} and \eqref{eq:KP113resolutionac}, compute the
  symplectic transformation $\U$.  The non-equivariant limit
  $\lim_{\lambda \to 0} \U$ has matrix:
  \begin{equation}
    \label{eq:KP113nonequivariantU}
    \begin{pmatrix}
      1 & 0 & 0 & 0 & 0 \\
      0 & \frac{1}{3} & 0 & \frac{2 \pi }{3 \sqrt{3} \Gamma \left(\frac{2}{3}\right)^3} & -\frac{2 \pi  z}{3 \sqrt{3} \Gamma \left(\frac{1}{3}\right)^3} \\
      0 & 0 & 0 & -\frac{2 \pi }{\sqrt{3} \Gamma \left(\frac{2}{3}\right)^3} & \frac{2 \pi  z}{\sqrt{3} \Gamma \left(\frac{1}{3}\right)^3} \\
      \frac{\pi ^2}{9 z^2} & 0 & \frac{1}{3} & \frac{2 \pi ^2}{9 z \Gamma \left(\frac{2}{3}\right)^3} & \frac{2 \pi ^2}{9 \Gamma \left(\frac{1}{3}\right)^3} \\
      -\frac{\pi ^2}{3 z^2} & 0 & 0 & -\frac{2 \pi ^2}{3 z \Gamma \left(\frac{2}{3}\right)^3} & -\frac{2 \pi ^2}{3 \Gamma \left(\frac{1}{3}\right)^3}
    \end{pmatrix}.
  \end{equation}
\item[(c)] Prove:
  \begin{thm}[The Crepant Resolution Conjecture for
    \protect{$K_{\PP(1,1,3)}$}] \label{thm:KP113CRC} Conjecture~\ref{CRC}
    holds for $\cX = K_{\PP(1,1,3)}$ and $Y$ its crepant resolution.
  \end{thm}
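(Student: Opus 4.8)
The plan is to run through the seven steps of Section~\ref{sec:C3Z3}, almost all of which have been arranged in the preceding exercises.  First combine Claim~\ref{claim:KP113} with Proposition~\ref{thm:stuff}(c).  The Chen--Ruan cohomology algebra of $\cX = K_{\PP(1,1,3)}$ is generated by the degree-two subspace $H^2_{\text{gen}} = \CC p \oplus \CC \fun_{1/3}$; by Claim~\ref{claim:KP113} the family $(x_1,x_2) \mapsto I_\cX(x_1,x_2,-z)$ lies in $\cLX$, and --- after the mirror map displayed in the exercise following Claim~\ref{claim:KP113} --- it recovers the restriction of $\JJ_\cX$ to $H^2_{\text{gen}}$.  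Hence $\cLX$ is uniquely determined by this family.  The analogous statement for $\cLY$, which uses only Proposition~\ref{thm:stuff}(b), is part~(d) of the exercise following Claim~\ref{claim:KP113resolution}.

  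Next, the Picard--Fuchs exercise shows that the gluing \eqref{eq:KP113resolutiongluing} carries the differential system satisfied by the components of $I_Y$ to the one satisfied by the components of $I_\cX$.  Writing $\widetilde{I}_Y$ for the analytic continuation of $I_Y$ to a neighbourhood of $(x_1,x_2) = (0,0)$, expressed in those co-ordinates, one obtains a $\CC(\!(z^{-1})\!)$-linear isomorphism $\U:\cHX \to \cHY$, independent of $(x_1,x_2)$, with $\U(I_\cX(x_1,x_2,-z)) = \widetilde{I}_Y(x_1,x_2,-z)$; its matrix is the one computed in part~(b) of the present exercise, with non-equivariant limit \eqref{eq:KP113nonequivariantU}.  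Since $\U$ sends the generating family of $\cLX$ to the analytically continued generating family of $\cLY$, and each such family determines its germ, $\U$ carries $\cLX$ to the analytic continuation of $\cLY$.

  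It then remains to check that $\U$ has the three properties in Conjecture~\ref{CRC}, exactly as in Step~7 of Section~\ref{sec:C3Z3}.  From the $T$-equivariant refinement of the matrix --- obtained by comparing coefficients of $x_1^a x_2^b (\log x_1)^c$ in \eqref{eq:IKP113Gamma} and \eqref{eq:KP113resolutionac} \emph{before} sending $\lambda \to 0$ --- one verifies that $\U$ is degree-preserving and intertwines the symplectic forms $\Omega_\cX$ and $\Omega_Y$; that $\U(\fun_\cX) = \fun_Y + O(z^{-1})$; that $\U \circ (\rho\,\CR) = (\pi^\star\rho\,\cup)\circ\U$ for every untwisted $\rho \in H^2(\cX;\CC)$, the essential point being $\U(p) = \tfrac13 p_1 = \pi^\star p$, which propagates to all of $H(\cX)$ using the known Chen--Ruan products; and that $\U(\cHX^+) \oplus \cHY^- = \cHY$, which is immediate from the explicit form of $\U$.

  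The step I expect to be the genuine obstacle is Claim~\ref{claim:KP113} in its full, two-variable form.  The hypergeometric-modification results of Section~\ref{sec:theory} (Theorems~\ref{thm:smalllinebundle}--\ref{thm:BZn}), which sufficed for Examples~I--III, here only yield the restricted membership $I_\cX(x_1,0,-z) \in \cLX$; and that restriction alone does not suffice to reconstruct $\cLX$ through Proposition~\ref{thm:stuff}(c), precisely because $H^2_{\text{gen}}$ contains the twisted class $\fun_{1/3}$, i.e.\ the $x_2$-direction.  The full statement $I_\cX(x_1,x_2,-z) \in \cLX$ --- and, symmetrically, Claim~\ref{claim:KP113resolution} for $Y$, which is not presented here as the total space of a vector bundle --- will follow from the mirror theorem for toric Deligne--Mumford stacks of \cite{CCIT:stacks}.
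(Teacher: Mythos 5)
Your proposal is correct and follows essentially the same route the paper takes (there the argument is distributed across the exercises and the instruction to ``argue as in Theorem~\ref{thm:C3Z3CRC}''): reconstruction of $\cLX$ and $\cLY$ from $I_\cX$ and $I_Y$ via Proposition~\ref{thm:stuff}, matching Picard--Fuchs systems under the gluing \eqref{eq:KP113resolutiongluing}, Mellin--Barnes continuation to produce $\U$, and verification of the properties in Conjecture~\ref{CRC} from the explicit matrix. You have also correctly isolated the one genuine gap exactly where the paper itself flags it: the full two-variable membership of Claim~\ref{claim:KP113} (and the non-compact extension behind Claim~\ref{claim:KP113resolution}) is beyond the hypergeometric-modification results of Section~\ref{sec:theory} and is deferred to \cite{CCIT:stacks}.
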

\end{ex}

\subsection*{Conclusions}
 
Having proved the Crepant Resolution Conjecture in this case, we can
now extract information about small quantum cohomology using
Corollary~\ref{cor:Ruan}.  When we do this, we find  that the quantum
corrections to Ruan's conjecture do not vanish:

\begin{cor}
  Let $\cX = K_{\PP(1,1,3)}$ and let $Y \to X$ be the crepant
  resolution of the coarse moduli space of $\cX$.  There is a power
  series 
  \[
  f(u) = {2 \pi \over \sqrt{3} \Gamma\big({1 \over 3}\big)^3} 
 \Bigg( {-2} u^{1/3} + {3757 \over 648} u^{4/3} + \cdots \Bigg)
 \]
 such that the algebra obtained from the small quantum cohomology
 algebra of $Y$ by analytic continuation in the parameter $q_2$
 followed by the substitution
  \[
  q_i =
  \begin{cases}
    e^{f(u)} u^{1/3} &  i=1 \\
    e^{-3f(u)} & i=2
  \end{cases}
  \]
  is isomorphic to the small quantum cohomology algebra of $\cX$, via
  an isomorphism which sends $p \in H(\cX)$ to ${1 \over 3} p_1 \in
  H(Y)$.
\end{cor}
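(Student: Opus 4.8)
The plan is to read the result off from Corollary~\ref{cor:Ruan}, fed by the explicit symplectic transformation $\U$ produced in the exercises of this section. First I would check that the hypotheses of Corollary~\ref{cor:Ruan} are in force: $\cX = K_{\PP(1,1,3)}$ is a non-compact Calabi--Yau threefold, hence semi-positive; Conjecture~\ref{CRC} holds by Theorem~\ref{thm:KP113CRC}; and $\U$ survives the non-equivariant limit, as witnessed by the matrix \eqref{eq:KP113nonequivariantU}. It then remains to evaluate the constants $c^1,c^2$ of \eqref{eq:defofci} and the power series $f^1,f^2 \in \CC[\![u_1]\!]$ of \eqref{eq:defoffi}, and to match them with the statement.

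For $c^1,c^2$: the first column of \eqref{eq:KP113nonequivariantU} shows that in the non-equivariant limit $\U(\fun_\cX) = \fun_Y + O(z^{-2})$, so the $\varphi_1$- and $\varphi_2$-components of the $z^{-1}$-coefficient of $\U(\fun_\cX)$ vanish; since these components are exactly $c^1$ and $c^2$ (any $\lambda$-term in $c$ is the coefficient $d$, which plays no role in the substitution), we get $c^1 = c^2 = 0$. For $f^1,f^2$: the twisted sectors $\cX_{1/3}$ and $\cX_{2/3}$ of $K_{\PP(1,1,3)}$ have ages $1$ and $2$, so $\deg\phi_3 = 2$ and $\deg\phi_4 = 4$; hence $b_3 = 0$ by definition, and the only class $b_e$ occurring in \eqref{eq:defoffi} that can contribute a $\varphi_1$- or $\varphi_2$-term is $b_4$. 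By definition $b_4$ is the $z^1$-coefficient of $\U(\phi_4) = \U(\fun_{2/3})$, whose $\varphi_1$- and $\varphi_2$-components are read directly from the last column of \eqref{eq:KP113nonequivariantU}:
\[
b_4 = \frac{2\pi}{\sqrt{3}\,\Gamma\left(\tfrac13\right)^3}\Big({-\tfrac13}\,\varphi_1 + \varphi_2\Big) + (\text{a multiple of }\lambda),
\]
the $\lambda$-term feeding only into $g\lambda$ and so being irrelevant.

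Plugging this into \eqref{eq:defoffi} for the $e=4$ term — where $\tfrac12\deg\phi_4 - 2 = 0$, the sign $(-1)^{\frac12\deg\phi_4+1}$ is $-1$, and the dual basis element is $\phi^4 = 3\fun_{1/3}$, exactly as for the $B\ZZ_3$ sectors in Example~I — gives
\[
f^1\varphi_1 + f^2\varphi_2 + g\lambda = -3\Bigg(\sum_{d \in \Eff(\cX)} \correlator{\fun_{1/3}}^\cX_{0,1,d}\,u_1^{d_1}\Bigg)\, b_4 .
\]
Substituting the Gromov--Witten invariants $\correlator{\fun_{1/3}}^\cX_{0,1,1/3} = -2$, $\correlator{\fun_{1/3}}^\cX_{0,1,4/3} = \tfrac{3757}{648},\dots$ from \eqref{eq:KP113invariants} and comparing $\varphi_1$- and $\varphi_2$-components yields $f^1 = f(u)$ with $f$ as in the statement and $f^2 = -3f(u)$. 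Finally, Corollary~\ref{cor:Ruan} with $s=1$, $r=2$, $r_1 = \tfrac13$ and $c^1 = c^2 = 0$ produces the substitution $q_1 = e^{f(u)}u^{1/3}$, $q_2 = e^{-3f(u)}$; and since $\pi^\star p = r_1 p_1 = \tfrac13 p_1$, the isomorphism sends $p$ to $\tfrac13 p_1$, as claimed. The hard part is the bookkeeping concentrated in the two displays above: pinning down the ages of the twisted sectors (hence the degrees $\deg\phi_e$), the dual basis element $\phi^4$, and verifying that the $\lambda$-dependent pieces of $c$ and of the $b_e$ genuinely drop out of the substitution rather than corrupting the algebra isomorphism.
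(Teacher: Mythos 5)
Your proposal is correct and follows essentially the same route as the paper: the paper's proof is simply an invocation of Corollary~\ref{cor:Ruan}, noting that $c^1=c^2=0$ and that $f^1=f(u)$, $f^2=-3f(u)$ come from \eqref{eq:defoffi} together with the invariants \eqref{eq:KP113invariants}. Your additional bookkeeping — the ages of the twisted sectors, $b_3=0$, $b_4$ read off from the last column of \eqref{eq:KP113nonequivariantU}, $\phi^4=3\fun_{1/3}$, and the irrelevance of the $\lambda$-components of $c$ and $b_4$ — is exactly the computation the paper leaves implicit, and it checks out.
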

\begin{proof}
  This is Corollary~\ref{cor:Ruan}.  The quantities $c_1$ and $c_2$
  defined in \eqref{eq:defofci} are zero, and the power series $f(u)$
  comes from equations \eqref{eq:defoffi} and
  \eqref{eq:KP113invariants}.
\end{proof}

\section{Example V: $\cX = \big[\CC^3/\ZZ_5\big]$,
  $\cY = K_{\PP(1,1,3)}$}
\label{sec:C3Z5}
Consider now the crepant partial resolution of $\cX =
\big[\CC^3/\ZZ_5\big]$ by $\cY = K_{\PP(1,1,3)}$.  
We can treat this using exactly the same methods as before,
and we omit all details.  

\begin{figure}[thbp]
  \centering
     \begin{picture}(180,10)(-110,-5)
      \multiput(-80,0)(20,0){7}{\makebox(0,0){$\cdot$}}
      \put(0,0){\makebox(0,0){$\bullet$}}
      \put(22,6){\makebox(0,0){$\scriptstyle 1,2$}}
      \put(62,6){\makebox(0,0){$\scriptstyle 3$}}
      \put(-102,6){\makebox(0,0){$\scriptstyle 4$}}
      \put(0,0){\vector(1,0){20}}
      \put(0,0){\vector(1,0){60}}
      \put(0,0){\vector(-1,0){100}}
    \end{picture}
    \caption{The secondary fan for $\cY = K_{\PP(1,1,3)}$}
    \label{fig:KP113secondaryfan}
\end{figure}

\noindent The secondary fan is shown in
Figure~\ref{fig:KP113secondaryfan}, the $B$-model moduli space $\cM_B$
is $\PP(1,5)$, and the $I$-functions are
\[
I_\cX(x_1,x_2,z) := 
z \, 
\sum_{k,l \geq 0} {x_1^k x_2^l  \over k! l! \, z^{k+l}}
\prod_{\substack{b : 0 \leq b < {k+2l  \over 5}, \\ 
    \fr{b} = \fr{k+2l\over 5}}} 
\big(\textstyle {\lambda \over 5} - b z)^2 \; 
\prod_{\substack{b : 0 \leq b < {3k+l  \over 5}, \\ 
    \fr{b} = \fr{3k+l  \over 5}}} 
\big(\textstyle {3\lambda \over 5} - b z) \;
\fun_{\fr({k+2l\over 5})}
\]
(\emph{c.f.} \cite{CCIT:computing}*{Theorem~4.6 and Proposition~6.1}) and
\begin{multline*}
  I_\cY(y_1,y_2,z) :=  z \, 
  \sum_{\substack{d: d \geq 0,\\
      3d\in \ZZ}}
  \sum_{\substack{e: e \geq 0,\\
      3e\in \ZZ}}
  {y_1^{3d + 3p/z} y_2^{3e} \over (3e)! \, z^{3e} }
  { \prod_{\substack{b:b \leq 0 \\ \fr(b) = \fr(d-e)}} (p+b z)^2
    \over 
    \prod_{\substack{b:b \leq d-e \\ \fr(b) = \fr(d-e)}} (p+b z)^2}
  \times \\
  { \prod_{\substack{b:-5d-e<b\leq 0 \\ \fr(b) = \fr(-5d-e)}} (\lambda
    - 5p+b z)
    \over 
    \prod_{1 \leq m \leq 3 d} (3p+mz)}
  \fun_{\fr(e-d)},
\end{multline*} 
(\emph{c.f.} Section~\ref{sec:KP113} above).  Use the bases 
\begin{align*}
  &\phi_0 = \fun_0, &
  &\phi_1 = \fun_{1/5}, &
  &\phi_2 = \fun_{2/5}, &
  &\phi_3 = \fun_{3/5}, &
  &\phi_4 = \fun_{4/5} \\
\intertext{for $H(\cX)$ and}
  &\varphi_0 = \fun_0, &
  &\varphi_1 = p, &
  &\varphi_2 = p^2, &
  &\varphi_3 = \fun_{1/3}, &
  &\varphi_4 = \fun_{2/3}
\end{align*}
for $H(\cY)$: see Sections~\ref{sec:C3Z4} and~\ref{sec:KP113} for the
notation.  The Mellin--Barnes method produces a linear
symplectomorphism $\U:\cHX\to\cH_\cY$ with matrix given, in the
non-equivariant limit $\lambda \to 0$, by
\[
\begin{pmatrix}
  \scriptscriptstyle
  1 & \scriptscriptstyle 
  0 & \scriptscriptstyle 
  0 & \scriptscriptstyle 
  0 & \scriptscriptstyle 
  0 \\ \scriptscriptstyle 
  0 & \scriptscriptstyle 
  -\frac{\sqrt{2+\frac{2}{\sqrt{5}}} \pi }{\Gamma
    \left(\frac{2}{5}\right) \Gamma \left(\frac{4}{5}\right)^2} & \scriptscriptstyle
  \frac{25 \pi }{\sqrt{2 \left(5+\sqrt{5}\right)} \Gamma
    \left(-\frac{2}{5}\right)^2 \Gamma \left(\frac{4}{5}\right)} & \scriptscriptstyle
  -\frac{5 \pi  z}{\sqrt{2 \left(5+\sqrt{5}\right)} \Gamma
    \left(-\frac{4}{5}\right) \Gamma \left(\frac{2}{5}\right)^2} & \scriptscriptstyle
  \frac{\sqrt{\frac{5}{2} \left(5+\sqrt{5}\right)} \pi  z}{\Gamma
    \left(-\frac{2}{5}\right) \Gamma \left(\frac{1}{5}\right)^2} \\ \scriptscriptstyle 
  -\frac{\pi ^2}{z^2} & \scriptscriptstyle
  -\frac{5 \left(5+3 \sqrt{5}\right) \pi ^2}{z \Gamma
    \left(-\frac{1}{5}\right)^2 \Gamma \left(\frac{2}{5}\right)} & \scriptscriptstyle
  \frac{\left(-1+\frac{3}{\sqrt{5}}\right) \pi ^2}{z \Gamma
    \left(\frac{3}{5}\right)^2 \Gamma \left(\frac{4}{5}\right)} & \scriptscriptstyle
  \frac{\left(1-\frac{3}{\sqrt{5}}\right) \pi ^2}{\Gamma
    \left(\frac{1}{5}\right) \Gamma \left(\frac{2}{5}\right)^2} & \scriptscriptstyle
  \frac{\left(1+\frac{3}{\sqrt{5}}\right) \pi ^2}{\Gamma
    \left(\frac{1}{5}\right)^2 \Gamma \left(\frac{3}{5}\right)} \\ \scriptscriptstyle 
  \frac{\Gamma \left(\frac{2}{3}\right)^3}{5 z} & \scriptscriptstyle
  \frac{\sqrt{3} \csc \left(\frac{2 \pi }{15}\right) \Gamma
    \left(\frac{2}{3}\right)^3}{10 \Gamma \left(\frac{2}{5}\right)
    \Gamma \left(\frac{4}{5}\right)^2} & \scriptscriptstyle
  \frac{\sqrt{3} \csc \left(\frac{\pi }{15}\right) \Gamma
    \left(\frac{2}{3}\right)^3}{10 \Gamma \left(\frac{3}{5}\right)^2
    \Gamma \left(\frac{4}{5}\right)} & \scriptscriptstyle
  \frac{\sqrt{3} z \Gamma \left(\frac{2}{3}\right)^3 \sec
    \left(\frac{7 \pi }{30}\right)}{10 \Gamma \left(\frac{1}{5}\right)
    \Gamma \left(\frac{2}{5}\right)^2} & \scriptscriptstyle
  -\frac{\sqrt{3} z \Gamma \left(\frac{2}{3}\right)^3 \sec
    \left(\frac{\pi }{30}\right)}{10 \Gamma \left(\frac{1}{5}\right)^2
    \Gamma \left(\frac{3}{5}\right)} \\ \scriptscriptstyle 
  -\frac{\Gamma \left(\frac{1}{3}\right)^3}{5 z^2} & \scriptscriptstyle
  -\frac{\sqrt{3} \Gamma \left(\frac{1}{3}\right)^3 \sec
    \left(\frac{\pi }{30}\right)}{10 z \Gamma \left(\frac{2}{5}\right)
    \Gamma \left(\frac{4}{5}\right)^2} & \scriptscriptstyle
  \frac{\sqrt{3} \Gamma \left(\frac{1}{3}\right)^3 \sec \left(\frac{7
        \pi }{30}\right)}{10 z \Gamma \left(\frac{3}{5}\right)^2
    \Gamma \left(\frac{4}{5}\right)} & \scriptscriptstyle
  \frac{\sqrt{3} \csc \left(\frac{\pi }{15}\right) \Gamma
    \left(\frac{1}{3}\right)^3}{10 \Gamma \left(\frac{1}{5}\right)
    \Gamma \left(\frac{2}{5}\right)^2} & \scriptscriptstyle
  \frac{\sqrt{3} \csc \left(\frac{2 \pi }{15}\right) \Gamma
    \left(\frac{1}{3}\right)^3}{10 \Gamma \left(\frac{1}{5}\right)^2
    \Gamma \left(\frac{3}{5}\right)} 
\end{pmatrix}.
\]
Thus we have a ``Crepant Partial Resolution Conjecture'' for
$\big[\CC^3/\ZZ_5\big]$: 
\begin{thm} \label{thm:C3Z5CRC}
  Conjecture~\ref{CRC} holds for $\cX = \big[\CC^3/\ZZ_5\big]$, $\cY =
  K_{\PP(1,1,3)}$. \qed  
\end{thm}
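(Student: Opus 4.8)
The plan is to follow the seven-step template of Examples~III and~IV (Sections~\ref{sec:C3Z4} and~\ref{sec:KP113}) almost verbatim, since $\cX = \big[\CC^3/\ZZ_5\big]$ and $\cY = K_{\PP(1,1,3)}$ fit that framework: $\cX$ is the total space of the rank-three bundle $L_1 \oplus L_1 \oplus L_3$ over $B\ZZ_5$, where $L_e$ is the character $[k] \mapsto \exp\big(2 \pi \sqrt{-1}\, e k/5\big)$ of $\ZZ_5$, carrying the fibrewise $T$-action, while $\cY$ is the space already analysed in Section~\ref{sec:KP113}. The $I$-functions $I_\cX(x_1,x_2,z)$ and $I_\cY(y_1,y_2,z)$, the bases for $H(\cX)$ and $H(\cY)$, and the $B$-model moduli space $\cM_B = \PP(1,5)$ are all recorded above; what remains is to feed these into the template and carry out the analytic continuation.

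I would first establish Steps~1--4. That $y \mapsto I_\cY(y_1,y_2,-z)$ is a family of elements of $\cL_\cY$ is precisely Claim~\ref{claim:KP113}, after the relabelling $(x_1,x_2) \rightsquigarrow (y_1,y_2)$ (here $\cY$ plays the role of the space called $\cX$ in Section~\ref{sec:KP113}). That $x \mapsto I_\cX(x_1,x_2,-z)$ is a family of elements of $\cLX$ follows from \cite{CCIT:computing}*{Theorem~4.6 and Proposition~6.1} --- which give the big $J$-function of $B\ZZ_5$ and its modification under twisting by $\be^{-1}$ and $L_1 \oplus L_1 \oplus L_3$ --- together with Proposition~\ref{thm:stuff}(a) to absorb the $\lambda$-shift, exactly as in Step~3 of Section~\ref{sec:C3Z4}. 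Next I would run the mirror-map arguments of Step~4 of Sections~\ref{sec:C3Z4} and~\ref{sec:KP113}: extracting the $z^1$ and $z^0$ terms of $I_\cX$ and of $I_\cY$ gives changes of variables after which $e^{\lambda(\cdots)/z} I_\cX$ and $e^{\lambda(\cdots)/z} I_\cY$ become $\JJ_\cX$ and $\JJ_\cY$ restricted respectively to the loci $\tau \in \langle \fun_{1/5},\fun_{2/5}\rangle$ and $\tau \in \langle p, \fun_{1/3}\rangle$. Since the Chen--Ruan cohomology of $\cX$ is generated over $\CC(\lambda)$, and even in the non-equivariant limit, by $\{\fun_{1/5},\fun_{2/5}\}$, and that of $\cY$ by $\{p,\fun_{1/3}\}$, Proposition~\ref{thm:stuff}(c) then shows that $\cLX$ and $\cL_\cY$ are each uniquely determined by the corresponding $I$-function.

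Then I would carry out Steps~5--7. The secondary fan for $\cY$ (Figure~\ref{fig:KP113secondaryfan}) has two chambers, whose coordinate patches are related by $y = x^{-5}$ on $\cM_B = \PP(1,5)$, extended to the twisted-sector variables by an explicit change of variables; one checks that the components of $I_\cX$ and of $I_\cY$, in the bases $\{\phi_\alpha\}$ and $\{\varphi_\alpha\}$, form bases of solutions to Picard--Fuchs systems which this change of variables interchanges, so that the analytic continuation $\widetilde{I}_\cY$ of $I_\cY$ across $\cM_B$ to the $\cX$-chart satisfies $\U\big(I_\cX(x_1,x_2,-z)\big) = \widetilde{I}_\cY(x_1,x_2,-z)$ for a unique $\CC(\!(z^{-1})\!)$-linear map $\U : \cHX \to \cHY$. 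Computing $\widetilde{I}_\cY$ by the Mellin--Barnes method --- extracting the coefficient of each power of the relevant $\cM_B$-coordinate and then continuing, as in Step~6 of Section~\ref{sec:C3Z4} --- and comparing coefficients of $x_1^a x_2^b (\log x_1)^c$ yields $\U$, whose non-equivariant limit is the matrix displayed above. Finally, reading off that matrix one checks directly that $\U$ is degree-preserving, symplectic, and satisfies conditions (a)--(c) of Conjecture~\ref{CRC}; and since $\U$ carries $I_\cX$ to $\widetilde{I}_\cY$, while $\cLX$ and $\cL_\cY$ are each determined by their $I$-function, it follows after analytic continuation that $\U(\cLX) = \cL_\cY$, which is Theorem~\ref{thm:C3Z5CRC}.

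The laborious part is the Mellin--Barnes step and its attendant bookkeeping --- applying the reflection identity $\Gamma(a)\Gamma(1-a) = \pi/\sin(\pi a)$ to put the sums in a form where the contour can be closed, choosing the contour, and resumming the residue series into closed form so that the explicit $\U$, with its well-behaved non-equivariant limit, emerges --- exactly as in Sections~\ref{sec:C3Z3} and~\ref{sec:C3Z4}. One further point needs care: just as in Example~IV (see the remark following Claim~\ref{claim:KP113}), a fully rigorous proof that $I_\cY(y_1,y_2,-z) \in \cL_\cY$ in the full two-variable form, that is, including the twisted direction $\fun_{1/3}$, rests on the mirror theorem for toric Deligne--Mumford stacks \cite{CCIT:stacks}; the self-contained methods of this paper give directly only the restriction to the locus $y_2 = 0$, which is not enough here.
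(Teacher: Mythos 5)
Your proposal is correct and follows essentially the same route as the paper, which proves Theorem~\ref{thm:C3Z5CRC} by running the seven-step template of Sections~\ref{sec:C3Z4} and~\ref{sec:KP113} on the stated $I$-functions, bases, and $B$-model moduli space $\PP(1,5)$, with the symplectomorphism produced by Mellin--Barnes continuation and the cones pinned down via Proposition~\ref{thm:stuff}. Your closing caveat --- that the full two-variable statement $I_\cY(y_1,y_2,-z)\in\cL_\cY$ (the $\fun_{1/3}$ direction) rests on \cite{CCIT:stacks} rather than on the self-contained twisted-theory arguments --- is exactly the point the paper itself makes after Claim~\ref{claim:KP113}, so it is an accurate reflection of, not a deviation from, the intended proof.
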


When we try to draw conclusions about small quantum cohomology,
however, a new phenomenon emerges.  For simplicity, let us discuss
this in the non-equivariant limit $\lambda \to 0$, indicating this by
a ($\ast$) following our equations.  In Section~\ref{sec:C3Z4}, when
we were considering $\cX=\big[\CC^3/\ZZ_4\big]$, we had
\[
\tag{$\ast$}
\U\big(\fun_{[\CC^3/\ZZ_4]}\big) = \fun_{K_{\PP(1,1,2)}} - \textstyle {\pi
  \sqrt{-1} \over z} p + O(z^{-2}),
\]
and hence
\[
\tag{$\ast$}
\U\big(J_{[\CC^3/\ZZ_4]}(-z)\big) = -z \fun_{K_{\PP(1,1,2)}} + \pi
  \sqrt{-1} p + O(z^{-1}).
\]
We can therefore identify $\U\big(J_{[\CC^3/\ZZ_4]}(-z)\big)$ with 
\[
\tag{$\ast$}
J_{K_{\PP(1,1,2)}}(q,-z) = -z \fun_{K_{\PP(1,1,2)}} + p \log q +
O(z^{-1})
\]
by setting $\log q = \pi \sqrt{-1}$, or in other words $q = -1$.  This
is how the specialization of quantum parameters in the Cohomological
Crepant Resolution Conjecture arises: see \cite{Coates--Ruan}.  In the
case at hand, however, we have
\[
\tag{$\ast$}
\U\big(\fun_{[\CC^3/\ZZ_5]}\big) = \fun_{K_{\PP(1,1,3)}} + \textstyle
{1\over 5 }\Gamma({2 \over 3})^3  \fun_{1/3} + O(z^{-2})
\]
and thus
\[
\tag{$\ast$}
\U\big(J_{[\CC^3/\ZZ_5]}(-z)\big) = -z \fun_{K_{\PP(1,1,3)}} - \textstyle
{1\over 5 }\Gamma({2 \over 3})^3  \fun_{1/3}+ O(z^{-1}),
\]
which is not equal to the small $J$-function
$J_{K_{\PP(1,1,3)}}(q,-z)$ for any $q$ because the class $\fun_{1/3}$
\emph{comes from the twisted sector}.  We do have an equality
\[
\U\big(J_{[\CC^3/\ZZ_5]}(-z)\big) =
\JJ_{K_{\PP(1,1,3)}}(\tau,-z) \qquad \text{where} \qquad \tau = - \textstyle
{1\over 5 }\Gamma({2 \over 3})^3  \fun_{1/3},
\tag{$\ast$}
\]
but it does not let us conclude anything about small quantum
cohomology.  This is because there is no Divisor Equation for
Chen--Ruan classes from the twisted sector, so we cannot trade the
shift $\tau = 0 \rightsquigarrow \tau = c \fun_{1/3}$ for a
specialization $q \rightsquigarrow e^c$ (or indeed for any other
specialization of the quantum parameter).  

\subsection*{Conclusions}

In light of this, it seems likely that any generalization of the
Cohomological Crepant Resolution Conjecture (and hence also any
generalization of Ruan's Conjecture) to crepant partial resolutions
cannot be phrased in terms of small quantum cohomology alone: it must
involve big quantum cohomology.  It seems also that any such
generalization will no longer involve only roots of unity.

\section{Example VI: A Toric Flop}
\label{sec:toricflop}

Finally, consider the action of $\Cstar$ on $\CC^5$ such that $s \in
\Cstar$ acts as
\begin{equation}
  \label{eq:flopaction}
  \begin{pmatrix}
    x \\ y \\ z \\ u \\ v
  \end{pmatrix}
  \longmapsto
  \begin{pmatrix}
    s \, x \\ s \, y \\ s \, z \\ s^{-1} \, u \\ s^{-2} \, v
  \end{pmatrix}.
\end{equation}
The secondary fan is:

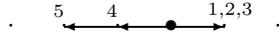
\begin{figure}[thbp]
  \centering
     \begin{picture}(120,10)(-70,-5)
      \multiput(-60,0)(20,0){6}{\makebox(0,0){$\cdot$}}
      \put(0,0){\makebox(0,0){$\bullet$}}
      \put(22,6){\makebox(0,0){$\scriptstyle 1,2,3$}}
      \put(-22,6){\makebox(0,0){$\scriptstyle 4$}}
      \put(-42,6){\makebox(0,0){$\scriptstyle 5$}}
      \put(0,0){\vector(1,0){20}}
      \put(0,0){\vector(-1,0){20}}
      \put(0,0){\vector(-1,0){40}}
    \end{picture}
    \caption{The secondary fan for a toric flop}
    \label{fig:flopsecondaryfan}
\end{figure}

\noindent For $\xi$ in the right-hand chamber of the secondary fan,
the GIT quotient $Y := \CC^5 \GIT{\xi} \Cstar$ is the total space of
the vector bundle $\cO(-1) \oplus \cO(-2) \to \PP^2$.  For $\xi$ in
the left-hand chamber, the GIT quotient $\cX := \CC^5 \GIT{\xi}
\Cstar$ is the total space of $\cO(-1) \oplus \cO(-1) \oplus \cO(-1)
\to \PP(1,2)$.  The birational transformation $Y \dashrightarrow \cX$
induced by moving from the right-hand chamber to the left-hand chamber
is a flop \cite{Corti}.  

To treat this example, we need to make some changes to our general
setup (described in Section~\ref{sec:statement}), but the required
modifications are obvious and so we make them without comment.  As we
have not yet discussed a birational transformation of this type, we
once again give some details of the calculation: the reader will see
that our methods apply here too without significant change.

\subsection*{Bases and $I$-Functions}

We have
\begin{align*}
  & r := \rank H^2(Y;\CC) = 1, &
  & s := \rank H^2(\cX;\CC) = 1.
\end{align*}
The action of $T = \Cstar$ on $\CC^5$ such
that $\alpha \in T$ acts as
\[
\begin{pmatrix}
  x \\ y \\ z \\ u \\ v
\end{pmatrix}
\longmapsto
\begin{pmatrix}
  \alpha x \\ \alpha y \\ \alpha z \\ u \\  v
\end{pmatrix}
\]
induces actions of $T$ on $\cX$ and $Y$, and the flop $Y
\dashrightarrow \cX$ is $T$-equivariant.  Let $p$ be the canonical
$T$-equivariant lift of the first Chern class of the line bundle
$\cO(1) \to \PP^2$, so that
\[
H(Y) = \CC(\lambda)[p]/\langle p^3 \rangle.
\]
We use the basis
\begin{align*}
  & \varphi_0 = 1, &
  &  \varphi_1 = p, &
  & \varphi_2 = p^2
\end{align*}
for $H(Y)$.  The inertia stack of $\cX$ is the disjoint union $\cX_0
\coprod \cX_{1/2}$, where $\cX_0 = \cX$ and $\cX_{1/2} = B \ZZ_2$.
Let $\fun_f \in H(\cX)$ denote the class which restricts to the unit
class on the component $\cX_f$ and restricts to zero on the other
component, and let $\fp \in H(\cX)$ denote the canonical
$T$-equivariant lift of the first Chern class of the line bundle
$\cO(1) \to \PP(1,2)$, pulled back to $\cX$ via the natural projection
$\cX \to \PP(1,2)$ and then regarded as an element of Chen--Ruan
cohomology via the inclusion $\cX =\cX_0 \to \cIX$.  We use the basis
\begin{align*}
  &\phi_0 = \fun_0, &
  &\phi_1 = \fp, &
  &\phi_2 = \fun_{1/2} 
\end{align*}
for $H(\cX)$.

Let
\begin{align*}
&I_Y(y,z) = z \, \sum_{d \geq 0}
{
  \prod_{-2d < m \leq 0} ( 2 \lambda- 2 p + m z)
  \prod_{-d < m \leq 0} ( \lambda -  p + m z)
  \over \prod_{0 < m \leq d} ( p + m z)^3
} \, y^{d + p/z}, \\
\intertext{and let}
& I_\cX(x,z) =  z \, x^{-\lambda/z}
  \sum_{\substack{d: d \geq 0,\\
      2d\in \ZZ}}
  x^{d+\fp/z}
  {
    \prod_{\substack{b : -d < b \leq 0, \\ \fr(b) = \fr(-d)}}
    (\lambda -  \fp + m z)^3 
    \over
    \prod_{\substack{b : 0 < b \leq d, \\ \fr(b) = \fr(d)}}
    (\fp + b z)
    \prod_{1 \leq m \leq 2d} (2\fp+mz)
  }
  \fun_{\fr(-d)}
\end{align*}
Arguing exactly as before yields:

\begin{proposition} \label{thm:flopmirror} We have $I_Y(y,-z) \in
  \cLY$ for all $y$ such that $0<|y|<{1 \over 4}$, and $I_\cX(x,-z)
  \in \cLX$ for all $x$ such that $|x|<4$. \qed
\end{proposition}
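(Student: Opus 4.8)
The plan is to deduce both membership statements from the hypergeometric-modification results of Section~\ref{sec:theory}, by exhibiting each of $Y$ and $\cX$ as the total space of a direct sum of line bundles over a base whose small $J$-function is already known --- exactly the strategy of Examples~I and~II. The only genuinely new ingredient is that here the fibre directions carry \emph{several} distinct $T$-weights (for $Y$), which is handled by the ``obvious modification'' mentioned at the start of this section: in the modification factor of Theorem~\ref{thm:smalllinebundle} one replaces $\lambda$ by $w\lambda$ on a summand on which $T$ acts with weight $w$.

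For $Y$: here $Y$ is the total space of $\cE = \cO_{\PP^2}(-1) \oplus \cO_{\PP^2}(-2)$ over $\cB = \PP^2$, and both summands are concave, so Theorem~\ref{thm:smallvb} applies. Reading off \eqref{eq:flopaction} (after using the GIT torus to return a scaled base point to its chart) one finds that the $\Cstar$-action rotates the $\cO(-1)$-fibre with weight $1$ and the $\cO(-2)$-fibre with weight $2$; the two modification factors are therefore $M_{\cO(-1)}(d) = \prod_{-d<m\leq 0}(\lambda - p + mz)$ and $M_{\cO(-2)}(d) = \prod_{-2d<m\leq 0}(2\lambda - 2p + mz)$, and $M_\cE(d)$ is their product. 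Substituting Givental's formula \cite{Givental:equivariant} for the small $J$-function of $\PP^2$ --- equivalently the identity $\correlator{{\Phi^\epsilon \over z - \psi}}^{\PP^2}_{0,1,d}\Phi_\epsilon = {1 \over \prod_{0<m\leq d}(p+mz)^3}$ for $d>0$, as in the proof of Proposition~\ref{thm:KP2mirror} --- into \eqref{eq:defofIZ} produces precisely the stated $I_Y(y,z)$, and Theorem~\ref{thm:smallvb} gives $I_Y(y,-z)\in\cLY$ throughout the domain of convergence.

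For $\cX$: here $\cX$ is the total space of $\cE = \cO_{\PP(1,2)}(-1)^{\oplus 3}$ over the orbifold $\cB = \PP(1,2)$, with $\Cstar$ rotating all three summands with weight $1$. One argues as in Proposition~\ref{thm:KP112mirror}, applying Theorem~\ref{thm:smallvb} with $\cB = \PP(1,2)$ and feeding in the small $J$-function of $\PP(1,2)$ from \cite{CCLT}*{Theorem~1.7}: the sum runs over $d\in\Eff(\PP(1,2)) = \tfrac12\ZZ_{\geq0}$, the class $\fun_{\fr(-d)}$ records the inertia component picked out by the marked point, and the modification factor $M_\cE(d) = \prod_{-d<b\leq 0,\ \fr(b)=\fr(-d)}(\lambda - \fp + bz)^3$ accounts for the three copies of $\cO(-1)$. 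Finally I would invoke Proposition~\ref{thm:stuff}(a) to multiply through by $\exp(-\lambda\log x / z)$, i.e.\ to insert the prefactor $x^{-\lambda/z}$ --- the manoeuvre used in Step~3 of Sections~\ref{sec:C3Z3} and~\ref{sec:C3Z4} --- thereby matching the stated $I_\cX(x,z)$ and concluding $I_\cX(x,-z)\in\cLX$.

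What remains is to pin down the radii of convergence, which is a routine ratio-test estimate: in $I_Y$ the degree-$d$ summand has leading $z$-coefficient of size $\sim\binom{2d}{d}\sim 4^d$ (the denominator $\prod_{0<m\leq d}(p+mz)^3$ contributes $(d!)^3$ while $M_{\cO(-1)}(d)M_{\cO(-2)}(d)$ contributes $(2d)!\,d!$ up to bounded factors), giving radius $\tfrac14$; in $I_\cX$ the analogous ratio is $(d!)^2/(2d)!\sim 4^{-d}$, giving radius $4$. I expect the main obstacle to be not these estimates but the bookkeeping behind the ``obvious modifications'': one must check that $\be^{-1}$ applied to the $T$-equivariant bundle $\cE$ is still an invertible multiplicative characteristic class --- which it is, since $\cE$ splits into $T$-eigen-line-bundles --- so that Theorem~4.6 of \cite{CCIT:computing}, and hence Theorems~\ref{thm:smalllinebundle} and~\ref{thm:smallvb}, apply verbatim; once that is granted, verifying that the per-summand factors of Theorem~\ref{thm:smalllinebundle} multiply to the products appearing in the displayed $I$-functions is purely mechanical.
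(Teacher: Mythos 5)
Your proposal is correct and follows essentially the same route as the paper, which disposes of this proposition with ``arguing exactly as before'': apply Theorem~\ref{thm:smallvb} (with the weight-$w$ summands carrying $w\lambda$ in the modification factor) to $\cO(-1)\oplus\cO(-2)\to\PP^2$ using Givental's $J_{\PP^2}$, and to $\cO_{\PP(1,2)}(-1)^{\oplus 3}\to\PP(1,2)$ using \cite{CCLT}*{Theorem~1.7}, inserting the prefactor $x^{-\lambda/z}$ via Proposition~\ref{thm:stuff}(a). Your identification of the fibre weights $(1,2)$ and $(1,1,1)$ and the ratio-test radii $\tfrac14$ and $4$ are exactly the details the paper leaves implicit.
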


\noindent Furthermore, as 
\begin{align*}
& x ^{-\lambda/z} I_\cX(x,-z) = -z + \fp \log x + O(z^{-1}) 
& \text{and} &
& I_Y(y,-z) = -z + p \log y + O(z^{-1}) 
\end{align*}
we conclude that:

\begin{cor}
    \begin{align*}
    & J_\cX(u,z) = x^{\lambda/z}I_\cX(u,z) &
    & \text{and} &
    & J_Y(q,z) = I_Y(q,z) .
  \end{align*}
  Note that the mirror maps here are trivial.\qed
\end{cor}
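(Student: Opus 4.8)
I would argue exactly as in Steps~2 and~4 of Section~\ref{sec:C3Z3}. By Proposition~\ref{thm:flopmirror}, $y \mapsto I_Y(y,-z)$ is a family of elements of $\cLY$; and since $x^{-\lambda/z} = \exp(-\lambda\log x/z)$, Propositions~\ref{thm:flopmirror} and~\ref{thm:stuff}(a) together show that $x \mapsto x^{-\lambda/z}I_\cX(x,-z)$ is a family of elements of $\cLX$. Next I would invoke the fact that $J_\cZ(\tau,-z)$ is the unique family of elements of $\cLZ$ of the form $-z + \tau + O(z^{-1})$, together with the observation that restricting $\tau$ to the degree-two locus and applying the Divisor Equation identifies this family, after the substitution $q_i = e^{\tau^i}$, with the small $J$-function (here $r = s = 1$, with $\varphi_1 = p$ on $Y$ and $\phi_1 = \fp$ on $\cX$). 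Matching this against the expansions $I_Y(y,-z) = -z + p\log y + O(z^{-1})$ and $x^{-\lambda/z}I_\cX(x,-z) = -z + \fp\log x + O(z^{-1})$ displayed above then forces $I_Y(y,-z) = J_Y(q,-z)$ with $q = y$ and $x^{-\lambda/z}I_\cX(x,-z) = J_\cX(u,-z)$ with $u = x$; relabelling $z \rightsquigarrow -z$ gives the two claimed identities and exhibits both mirror maps as the identity. Convergence of $J_Y$ near $q = 0$ and of $J_\cX$ near $u = 0$ is then immediate from the convergence of $I_Y$ and $I_\cX$ asserted in Proposition~\ref{thm:flopmirror}.

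The one point that genuinely requires a calculation is the verification of the two displayed expansions, that is, the claim that no degree-two class beyond the $\log$-term survives at order $z^0$ in either $I$-function. For $I_Y$ this is a short degree count: the $d=0$ summand is $z\,y^{p/z} = z + p\log y + O(z^{-1})$ (it terminates because $p^3 = 0$ in $H(Y)$), while for each $d \geq 1$ the two products of Euler factors in the numerator have combined $z$-degree $(2d-1) + (d-1) = 3d - 2$, strictly below the $z$-degree $3d$ of the denominator $\prod_{0<m\leq d}(p+mz)^3$, so that summand lies in $z^{-1}H(Y)[\![z^{-1}]\!]$. The same count handles $x^{-\lambda/z}I_\cX(x,z)$: the $d=0$ summand is $z + \fp\log x$ exactly (since $\fp^2 = 0$), and every summand with $d \geq \tfrac12$ has negative $z$-degree. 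The only slightly delicate case is the single twisted-sector summand $d = \tfrac12$, whose $\fun_{1/2}$-component is $z\,x^{1/2}\big/\big((\fp + \tfrac{z}{2})(2\fp + z)\big)$; using $\fp \cdot \fun_{1/2} = 0$ in Chen--Ruan cohomology this simplifies to $\tfrac{2x^{1/2}}{z}\,\fun_{1/2}$, which is $O(z^{-1})$.

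I do not expect a real obstacle here: the argument is a word-for-word repetition of Examples~I--V, with Proposition~\ref{thm:flopmirror} doing all the work. The feature worth flagging in the write-up is simply that the mirror maps come out trivial --- unlike Examples~II--V, where one had to divide off a factor $e^{\lambda f/z}$ and then reparametrise the quantum variables, here there is no $z^0$ correction to absorb, which is exactly the content of the degree count in the previous paragraph.
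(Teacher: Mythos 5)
Your proposal is correct and follows exactly the route the paper takes: the Corollary is deduced from Proposition~\ref{thm:flopmirror} (together with Proposition~\ref{thm:stuff}(a) to absorb the $x^{\pm\lambda/z}$ factor) and the uniqueness of the family of elements of $\cL_\cZ$ of the form $-z+\tau+O(z^{-1})$, the paper simply displaying the expansions $x^{-\lambda/z}I_\cX(x,-z)=-z+\fp\log x+O(z^{-1})$ and $I_Y(y,-z)=-z+p\log y+O(z^{-1})$ without comment. Your explicit degree count justifying those expansions is a harmless (and correct) elaboration --- note only that the parenthetical appeals to $\fp^2=0$ and $\fp\CR\fun_{1/2}=0$ are not actually needed, since the corresponding terms are already of order $z^{-1}$.
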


\noindent It follows that the Lagrangian submanifold-germs $\cLX$
and $\cLY$ are uniquely determined by
Proposition~\ref{thm:flopmirror}.

\subsection*{The $B$-model Moduli Space and Analytic Continuation}

The $B$-model moduli space $\cM_B$ here is $\PP^1$: it has a
co-ordinate patch (with co-ordinate $x$) corresponding to $\cX$ and a
co-ordinate patch (with co-ordinate $y$) corresponding to $Y$, related
by $y = x^{-1}$.  Regard $I_\cX(x,z)$ as a function on the co-ordinate
patch corresponding to $\cX$ and $I_Y(y,z)$ as a function on the
co-ordinate patch corresponding to $Y$, and denote by
$\widetilde{I}_Y(x,z)$ the analytic continuation of $I_Y$ to a
neighbourhood of $x = 0$.  As before, both $I_\cX$ and
$\widetilde{I}_Y$ have components which form a basis of solutions to
the Picard--Fuchs differential equation
\begin{align*}
  - x D^3 f &=  (\lambda + D)(2 \lambda + 2 D)(2 \lambda + 2 D- z)
  f, & D = z  x\textstyle {\partial \over \partial x}.
\end{align*}
It follows that there exists a $\CC(\!(z^{-1})\!)$-linear isomorphism
$\U:\cHX \to \cHY$ such that $\U(I_\cX(x,-z)) =
\widetilde{I}_Y(x,-z)$.  This is the linear symplectomorphism that we
seek.

The Mellin--Barnes method gives
\begin{multline*}
  \widetilde{I}_Y(x,z) = 
  z\, x^{-\lambda /z} 
  \sum_{k \geq 0} {x^{k+{1\over 2}} \over 2.(2k+1)!} 
  {\Gamma\big(1 +{p \over z}\big)^3 \over 
    \Gamma\big(1 + {\lambda \over z} - k - {1 \over 2}\big)^3}
  \Gamma\big({-k}- \textstyle{1 \over 2}\big)
  \Gamma\big(1 +  \textstyle{2 \lambda - 2p \over z}\big)
  \\
  \Gamma\big(1 +  \textstyle{\lambda - p \over z}\big)
  {\sin \big(\pi \big[ {\lambda - p \over z}\big] \big)
    \sin \big(\pi \big[ {2\lambda - 2p \over z}\big] \big) \over
    \pi \sin \big(\pi \big[ {\lambda - p \over z} - k  - {1 \over 2}\big]
    \big)} \\
  -z\, x^{-\lambda /z} 
  \sum_{k \geq 0} {x^k \over k!(2k)!} 
  {\Gamma\big(1 +{p \over z}\big)^3 \over 
    \Gamma\big(1 + {\lambda \over z} - k\big)^3}
  \Gamma\big(1 +  \textstyle{2 \lambda - 2p \over z}\big)
  \Gamma\big(1 +  \textstyle{\lambda - p \over z}\big)
  {\sin \big(\pi \big[ {2\lambda - 2p \over z}\big] \big) \over
    \pi }
  \\
  \Big( \textstyle H_{2k} + {H_k \over 2} -{ 3 \gamma \over 2} + {1 \over 2} \log
  y - {3 \over 2} \psi\big(1+{\lambda \over z} - k \big) - {\pi \over
    2} \cot\big(\pi\big[ {\lambda - p \over z}\big]\big) \Big),
\end{multline*}
where $\gamma$ is Euler's constant, $\psi(z)$ is the logarithmic
derivative of $\Gamma(z)$, and $H_k$ is the $k$th harmonic number.
Thus
\begin{align*}
  & \U\big(\fun_0\big) = 
  {-{\Gamma\big(1 -{p \over z}\big)^3 \over 
    \Gamma\big(1 - {\lambda \over z} \big)^3}}
  \Gamma\big(1 -  \textstyle{2 \lambda - 2p \over z}\big)
  \Gamma\big(1-  \textstyle{\lambda - p \over z}\big)
  {\sin \big(\pi \big[ {2\lambda - 2p \over z}\big] \big) \over
    \pi } 
  \Big( \textstyle { 3 \gamma \over 2} 
  + {3 \over 2} \psi\big(1-{\lambda \over z} \big) - {\pi \over
    2} \cot\big(\pi\big[ {\lambda - p \over z}\big]\big) \Big), \\
  & \U(\fp) = {- {z \over 2}\, {\Gamma\big(1 -{p \over z}\big)^3 \over 
    \Gamma\big(1 - {\lambda \over z} \big)^3}}
  \Gamma\big(1 -  \textstyle{2 \lambda - 2p \over z}\big)
  \Gamma\big(1 -  \textstyle{\lambda - p \over z}\big)
  {\sin \big(\pi \big[ {2\lambda - 2p \over z}\big] \big) \over
    \pi }, \\
  & \U\big(\fun_{1/2}\big) =
  {-{z^2 \over 4}}
  {\Gamma\big(1 -{p \over z}\big)^3 \over 
    \Gamma\big(1 - {\lambda \over z} - {1 \over 2}\big)^3}
  \Gamma\big({- \textstyle{1 \over 2}}\big)
  \Gamma\big(1 -  \textstyle{2 \lambda - 2p \over z}\big)
    \Gamma\big(1 -  \textstyle{\lambda - p \over z}\big)
  {\sin \big(\pi \big[ {\lambda - p \over z}\big] \big)
    \sin \big(\pi \big[ {2\lambda - 2p \over z}\big] \big) \over
    \pi \sin \big(\pi \big[ {\lambda - p \over z}  + {1 \over 2}\big]
    \big)}.
\end{align*}
Note that
\begin{equation}
  \label{eq:toricflopci}
  \begin{aligned}
    &\U\big(\fun_0\big) = 1 + O\big(z^{-2}\big),\\
    & \U(\fp) = (\lambda-p) + O\big(z^{-2}\big),\\
    & \U\big(\fun_{1/2}\big) = (\lambda - p)^2 + O\big(z^{-1}\big).
  \end{aligned}
\end{equation}
In the non-equivariant limit $\lambda \to 0$, our expressions for $\U$
simplify:
\begin{align*}
  & \U\big(\fun_0\big) \to 
  1 - {\pi^2 p^2 \over 3 z^2}, &
  & \U(\fp) \to {-p}, &
  & \U\big(\fun_{1/2}\big) \to p^2.
\end{align*}

\begin{thm}[A ``Flop Conjecture'' for $\cX$ and $Y$] \label{thm:toricflop}
  There is a choice of analytic continuations of $\cLX$ and $\cLY$
  such that, after analytic continuation, $\U\(\cLX\) = \cLY$.
  Furthermore $\U:\cHX \to \cHY$ is a degree-preserving
  $\CC(\!(z^{-1})\!)$-linear symplectic isomorphism which satisfies
  \begin{itemize}
  \item[(a)] $\U(\fun_\cX) = \fun_Y + O(z^{-1})$;
  \item[(c)] $\U\(\cHX^+ \)\oplus \cHY^- = \cHY$.
  \end{itemize}
\end{thm}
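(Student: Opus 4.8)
\emph{Proof plan.} The argument runs exactly parallel to the proofs of Theorems~\ref{thm:C3Z3CRC} and~\ref{thm:KP112CRC}, and most of the ingredients are already assembled in the preceding discussion; what remains is to put them together and to extract the abstract properties of $\U$ from the explicit formulae computed above. Note that condition~(b) of Conjecture~\ref{CRC} is absent, as it should be: the birational map here is a flop, so there is no morphism $\pi$ and no class $\pi^\star\rho$ to match. So one must produce $\U$, check that it is a degree-preserving $\CC(\!(z^{-1})\!)$-linear symplectic isomorphism satisfying~(a) and~(c), and verify that $\U(\cLX) = \cLY$ after a suitable analytic continuation.

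First I would pin down $\U$ and the analytic continuation. By Proposition~\ref{thm:flopmirror}, $x\mapsto I_\cX(x,-z)$ and $y\mapsto I_Y(y,-z)$ are families of elements of $\cLX$ and $\cLY$, and the Corollary preceding the theorem shows the mirror maps are trivial, so $I_Y(q,-z) = J_Y(q,-z)$ and $x^{-\lambda/z}I_\cX(x,-z) = J_\cX(x,-z)$. As explained in the subsection on the $B$-model moduli space, the components of $I_\cX(x,z)$ and of the Mellin--Barnes analytic continuation $\widetilde I_Y(x,z)$ of $I_Y$ (through $y = x^{-1}$) both form bases of solutions of the Picard--Fuchs equation $-xD^3 f = (\lambda + D)(2\lambda + 2D)(2\lambda + 2D - z)f$ with $D = zx\,\partial/\partial x$; comparing these bases produces a $\CC(\!(z^{-1})\!)$-linear isomorphism $\U\colon\cHX\to\cHY$, independent of $x$, with $\U(I_\cX(x,-z)) = \widetilde I_Y(x,-z)$. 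The term-by-term Mellin--Barnes computation then yields the closed forms for $\U(\fun_0)$, $\U(\fp)$, $\U(\fun_{1/2})$ displayed above, hence the matrix of $\U$; homogeneity of the $I$-functions makes $\U$ degree-preserving.

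Next I would read the analytic properties of $\U$ off this matrix. Condition~(a), $\U(\fun_\cX) = \fun_Y + O(z^{-1})$, is immediate from $\U(\fun_0) = 1 + O(z^{-2})$ in~\eqref{eq:toricflopci}. The same equation shows that $\U(\fun_0)$, $\U(\fp)$, $\U(\fun_{1/2})$ have leading terms $1$, $\lambda - p$, $(\lambda - p)^2$ (up to sign), which form a $\CC(\lambda)$-basis of $H(Y)$, and that $\U$ involves only non-positive powers of $z$, so $\U_\infty := \lim_{z\to\infty}\U$ exists and is an isomorphism; condition~(c), $\U(\cHX^+)\oplus\cHY^- = \cHY$, follows formally from this. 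Symplecticity, $\Omega_Y(\U f,\U g) = \Omega_\cX(f,g)$, amounts to the matrix identity $M(-z)^\top G_Y\,M(z) = G_\cX$, where $M$ is the matrix of $\U$ and $G_\cZ$ the Gram matrix of the pairing $(\,,\,)_\cZ$ over $\CC(\lambda)$; this I would verify directly from the closed forms, using the functional and reflection equations of $\Gamma$, with the non-equivariant reductions $\U(\fun_0)\to 1 - \pi^2 p^2/3z^2$, $\U(\fp)\to -p$, $\U(\fun_{1/2})\to p^2$ as a check.

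Finally, $\U(\cLX) = \cLY$: the cone $\cLX$ is a Givental cone, so its image under the $\CC(\!(z^{-1})\!)$-linear symplectic map $\U$ is again a Givental cone, and it contains the family $x\mapsto\widetilde I_Y(x,-z) = \U(I_\cX(x,-z))$, which is the analytic continuation of $q\mapsto J_Y(q,-z)$; since the Chen--Ruan cohomology of $Y = \cO_{\PP^2}(-1)\oplus\cO_{\PP^2}(-2)$ coincides with $H^\bullet(\PP^2)$ and so is generated in degree $2$, Proposition~\ref{thm:stuff}(b) shows that the analytic continuation of $\cLY$ is the \emph{unique} Givental cone of which this is a family of elements, whence $\U$ carries $\cLX$ onto the analytic continuation of $\cLY$, exactly as in the proof of Theorem~\ref{thm:C3Z3CRC}. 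I expect the main obstacle to be the computation of the previous paragraph: confirming, from the fairly involved Mellin--Barnes output, that $\U$ is genuinely symplectic and satisfies~(c) --- properties which, as remarked after Example~I, are invisible in this construction until the explicit $\U$ is in hand. A secondary point requiring care is the reconstruction just invoked: because $\cX = \cO_{\PP(1,2)}(-1)^{\oplus 3}$ carries a twisted-sector class $\fun_{1/2}$ of age $2$, the Chen--Ruan algebra of $\cX$ is \emph{not} generated in degree $\le 2$, so one cannot run the argument symmetrically through $\cX$; it is the $Y$-side reconstruction, together with the preservation of the Givental-cone structure under $\U$, that does the work.
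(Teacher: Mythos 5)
Your construction of $\U$ and the verification of (a), (c), degree-preservation and symplecticity from the explicit Mellin--Barnes output follow exactly the paper's route: the paper, too, obtains $\U$ by matching bases of solutions of the Picard--Fuchs equation and then simply checks these properties from the closed formulae. Where you genuinely diverge is the final cone-matching step. The paper's proof is ``argue as in Theorem~\ref{thm:C3Z3CRC}'', which uses uniqueness on \emph{both} sides: it asserts (just before the theorem) that $\cLX$ and $\cLY$ are each uniquely determined, via Proposition~\ref{thm:stuff}, by the fact that $I_\cX$ resp.\ $I_Y$ are families of elements of them. You instead run the uniqueness only through the $Y$-side (Proposition~\ref{thm:stuff}(b) for $Y$, whose cohomology is $H^\bullet(\PP^2)$, generated in degree two, with $Y$ semi-positive since it is Calabi--Yau), combined with the observation that $\U$, being a degree-preserving linear symplectomorphism, carries the overruled cone $\cLX$ to an overruled cone containing $\widetilde{I}_Y=\widetilde{J}_Y$. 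This one-sided argument is logically sufficient, and your reason for preferring it is well taken: since $\fun_{1/2}$ has age $2$, the Chen--Ruan algebra of $\cX=\cO_{\PP(1,2)}(-1)^{\oplus 3}$ is \emph{not} generated in degree $2$ (untwisted degree-two classes only generate the untwisted sector), so neither part (b) nor part (c) of Proposition~\ref{thm:stuff} literally applies to $\cX$, and the paper's claim that both germs are determined by Proposition~\ref{thm:flopmirror} glosses this point. What your route costs is that you must make explicit that $\U(\cLX)$ lies in the class of cones to which the reconstruction/uniqueness statement applies --- the same structural fact the paper's template uses silently --- and, for condition (c), that the matrix entries of $\U$ contain no positive powers of $z$, which has to be read off the full expressions rather than only the leading terms displayed in \eqref{eq:toricflopci}. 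With those points made, your argument is correct and slightly leaner than the paper's.
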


\begin{proof}
  Argue as in the proof of Theorem~\ref{thm:C3Z3CRC}.
\end{proof}

The transformation $\U$ does not satisfy any condition analogous to
property~(b) in Conjecture~\ref{CRC}, but we should not expect this.
Property~(b) arises from the fact that $\U$ intertwines certain
monodromies (let us call them \emph{the relevant monodromies}) of the
system of Picard--Fuchs equations coming from mirror symmetry: see
\cite{CCIT:crepant1}*{Proposition~4.7}.  In the case of toric crepant
resolutions the relevant monodromies generate $H^2(\cX)$, but for
general toric crepant birational transformations this is not the case.
The Mellin--Barnes method will always produce a transformation $\U$
which intertwines the relevant monodromies, but in the case at hand
this is vacuously true as the set of relevant monodromies is empty.
For a general flop
\[
\xymatrix{
  \cX \ar[rd]_{p_1} & & Y \ar[ld]^{p_2} \\
  & Z &}
\]
it is reasonable to expect that property~(b) should be replaced by the
assertion 
\begin{align*}
  \U\circ\big(p_1^\star\alpha \CR\big) = (p_2^\star\alpha \CR\big) \circ \U
  && \text{for all $\alpha \in H^2(Z;\CC)$;}
\end{align*}
this condition is also vacuous here.

\begin{cor}[A Ruan/Bryan--Graber-style Flop Conjecture]
  The $\CC(\lambda)$-linear map $\U_\infty:H(\cX) \to H(Y)$ given by
  \begin{align*}
    &\U_\infty\big(\fun_0\big) = 1, &
    & \U_\infty(\fp) = (\lambda-p), & 
    & \U_\infty\big(\fun_{1/2}\big) = (\lambda - p)^2,
  \end{align*}
  induces an algebra isomorphism between the small quantum cohomology
  of $\cX$ and the algebra obtained from the small quantum cohomology
  of $Y$ by analytic continuation in the quantum parameter $q$
  followed by the substitution $u = q^{-1}$.
\end{cor}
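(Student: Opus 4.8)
The plan is to follow, essentially verbatim, the argument deriving Corollary~\ref{cor:BG} from Conjecture~\ref{CRC} given in \cite{Coates--Ruan}, and to check that it survives the two features special to the present situation: that condition~(b) of Conjecture~\ref{CRC} is replaced by nothing, and that the change of variables relating the quantum parameters is dictated by the geometry of the $B$-model moduli space $\cM_B$ rather than by a covering relation of the form $\pi^\star\omega_i = r_i\omega'_i$.

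\textbf{Step 1 (existence and shape of $\U_\infty$).} First I would observe, from the explicit formulas for $\U(\fun_0)$, $\U(\fp)$, $\U(\fun_{1/2})$ displayed before Theorem~\ref{thm:toricflop}, or more efficiently from \eqref{eq:toricflopci} together with the fact (built into the Mellin--Barnes construction) that $\U$ is linear over $\CC(\!(z^{-1})\!)$ and defined over $\CC(\lambda)$, that every matrix entry of $\U$ involves only non-positive powers of $z$. Hence $\U_\infty := \lim_{z\to\infty}\U$ exists, and \eqref{eq:toricflopci} identifies it with the map in the statement: $\U_\infty(\fun_0) = 1$, $\U_\infty(\fp) = \lambda - p$, $\U_\infty(\fun_{1/2}) = (\lambda-p)^2$. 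As a limit of the degree-preserving symplectic isomorphism $\U$, the map $\U_\infty$ is a degree-preserving linear isometry $H(\cX)\to H(Y)$ with $\U_\infty(\fun_\cX) = \fun_Y$.

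\textbf{Step 2 (reduce to the small $J$-functions).} As noted just above, the mirror maps here are trivial, so $J_\cX(u,z) = x^{\lambda/z}I_\cX(x,z)$ with $u=x$, $J_Y(q,z) = I_Y(y,z)$ with $q=y$, while the two charts of $\cM_B = \PP^1$ are glued by $y = x^{-1}$. From $\U(I_\cX(x,-z)) = \widetilde I_Y(x,-z)$, the $\CC(\!(z^{-1})\!)$-linearity of $\U$ (so that $\U$ commutes with multiplication by $\exp(a\lambda/z)$; cf. Proposition~\ref{thm:stuff}(a)), and the prefactor $x^{-\lambda/z}$ carried by $\widetilde I_Y$, one deduces that $\U$ carries the family $u\mapsto J_\cX(u,-z)\in\cLX$ onto the analytic continuation (the one furnished by Theorem~\ref{thm:toricflop}) of the family $q\mapsto J_Y(q,-z)\in\cLY$ under the substitution $q = u^{-1}$.

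\textbf{Step 3 (from the cone to the quantum product — the main point).} This is the heart of the matter, and it is exactly the cone-to-quantum-cohomology dictionary used to prove Corollary~\ref{cor:BG}. The small quantum product on $H(\cZ)$ is reconstructed from the germ $\cLZ$ near the point $J_\cZ(q,-z)$: its structure constants appear in the expansion of $z\partial_{q^i}z\partial_{q^j}J_\cZ(q,-z)$ along the $\CC[z]$-module $T_{J_\cZ(q,-z)}\cLZ$, which is generated over $\CC[z]$ by the first derivatives $z\partial_{q^k}J_\cZ$. Since $\U$ is $\CC(\!(z^{-1})\!)$-linear and, after analytic continuation, carries $\cLX$ onto $\cLY$, it carries tangent spaces onto tangent spaces and intertwines these module structures; because $\cX$ and $Y$ are Calabi--Yau $3$-folds, hence semi-positive, the degree gradings force the relevant sums to behave well as $z\to\infty$, and one concludes that $\U_\infty$ intertwines the two small quantum products after the substitution $q = u^{-1}$. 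Finally, \eqref{eq:toricflopci} shows that the $z^{-1}$-coefficient of $\U(\fun_\cX)$ vanishes, so the class $c$ of \eqref{eq:defofci} is zero: no extra rescaling of the quantum parameter (none of the $e^{c}$-factors appearing in Corollary~\ref{cor:BG}) is needed, and the change of variables is exactly $u = q^{-1}$. Assembling Steps 1--3 gives the Corollary.

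\textbf{The main obstacle} is Step 3: verifying that the argument of \cite{Coates--Ruan} yielding the quantum-cohomology statement of Corollary~\ref{cor:BG} goes through with condition~(b) of Conjecture~\ref{CRC} removed. In the crepant-resolution setting, (b) was used to pin down the $e^{c^i}u_i^{r_i}$ form of the change of variables; here the analogous identification is instead supplied by the gluing $y = x^{-1}$ of the $\PP^1$ of $B$-model parameters (Step 2) and by the vanishing of $c$ (Step 1). One must also keep track of the precise sense in which the small quantum product of $Y$ is analytically continued — it is induced by the analytic continuation of $\cLY$, as explained in \cite{Coates--Ruan} — and of the compatibility of the $z\to\infty$ limit with the module structures; but these are routine given semi-positivity, so no genuinely new difficulty arises.
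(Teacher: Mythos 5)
Your proposal is correct and follows essentially the same route as the paper, whose proof simply says to look at equation \eqref{eq:toricflopci} and apply the discussion of \cite{Coates--Ruan}*{\S9}; you have merely unpacked what that application involves (existence of $\U_\infty$ from the non-positive powers of $z$, the trivial mirror maps and the gluing $y=x^{-1}$ giving $q=u^{-1}$, vanishing of $c$, and the cone-to-quantum-product dictionary under semi-positivity).
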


\begin{proof}
  Look at equation \eqref{eq:toricflopci}, and then apply the
  discussion in \cite{Coates--Ruan}*{\S9}.
\end{proof}

\appendix

\section{Genus--Zero Gromov--Witten Invariants of
  $\big[\CC^3/\ZZ_4\big]$ and $K_{\PP(1,1,2)}$}
\label{appendix:computing}

\subsection*{Genus--Zero Gromov--Witten Invariants of
  $\big[\CC^3/\ZZ_4\big]$} 

Set
\begin{align*}
  & A_{n,m} =
  \correlator{\overbrace{\fun_{1/4},\ldots,\fun_{1/4}}^n,\overbrace{\fun_{1/2},\ldots,\fun_{1/2}}^m}^{\big[\CC^3/\ZZ_4\big]}_{0,n+m,0},
  \\
  & B_{n,m} =
  \correlator{\overbrace{\fun_{1/4},\ldots,\fun_{1/4}}^n,\overbrace{\fun_{1/2},\ldots,\fun_{1/2}}^m,\fun_{3/4}}^{\big[\CC^3/\ZZ_4\big]}_{0,n+m+1,0}.
\end{align*}
Then:
\begin{table}[htbp]
  \centering
  \begin{tabular}{@{}cccccccccccc@{}} \toprule
    & 
    $n=0$ &
    $n=1$ &
    $n=2$ &
    $n=3$ &
    $n=4$ &
    $n=5$ &
    $n=6$ &
    $n=7$ &
    $n=8$ &
    $n=9$ &
    $n=10$     \\ \midrule
    $m=0$ 
    & 0 & 0 & 0 & 0 & $-{1 \over 8}$ & 0 & 0 & 0 & $-{9 \over 64}$ & 0
    & 0\\
    $m=1$ 
    & 0 & 0 & ${1 \over 4}$ & 0 & 0 & 0 & ${7 \over 128}$ & 0 & 0 & 0\\
    $m=2$ 
    & 0 & 0 & 0 & 0 & $-{1 \over 32}$ & 0 & 0 & 0 & $-{143 \over 512}$\\
    $m=3$ 
    & 0 & 0 & ${1 \over 32}$ & 0 & 0 & 0 & ${3 \over 32}$ & 0\\
    $m=4$ 
    & $-{1 \over 8}$ & 0 & 0 & 0 & $-{11 \over 256}$ & 0 & 0\\
    $m=5$ 
    & 0 & 0 & ${1 \over 32}$ & 0 & 0 & 0\\
    $m=6$ 
    & $-{1 \over 16}$ & 0 & 0 & 0 & $-{147 \over 1024}$\\
    $m=7$ 
    & 0 & 0 & ${87 \over 1024}$ & 0\\
    $m=8$ 
    & $-{1 \over 8}$ & 0 & 0\\
    $m=9$ 
    & 0 & 0\\ 
    $m=10$
    & $-{17 \over 32}$    \\ 
    \bottomrule
  \end{tabular}
  \caption{The values of $A_{n,m}$ for $n+m \leq 10$.} 
\end{table}

\begin{table}[htbp]
  \centering
  \begin{tabular}{@{}ccccccccccc@{}} \toprule
    & 
    $n=0$ &
    $n=1$ &
    $n=2$ &
    $n=3$ &
    $n=4$ &
    $n=5$ &
    $n=6$ &
    $n=7$ &
    $n=8$ &
    $n=9$     \\ \midrule
    $m=0$ 
    & 0 & 0 & 0 & 0 & 0 & $-{5\lambda \over 128}$ & 0 & 0 & 0 &
    $-{865 \lambda \over 2048}$\\
    $m=1$ 
    & 0 & 0 & 0 & ${3\lambda \over 128}$ & 0 & 0 & 0 & 
    ${117 \lambda  \over 1024}$ & 0 \\
    $m=2$ 
    & 0 & $-{\lambda \over 32}$ & 0 & 0 & 0 & $-{41 \lambda \over
      1024}$ & 0 & 0\\
    $m=3$ 
    & 0 & 0 & 0 & ${5 \lambda \over 256}$ & 0 & 0 & 0\\
    $m=4$ 
    & 0 & $-{\lambda \over 64}$ & 0 & 0 & 0 & $-{487 \lambda \over
      4096}$ \\
    $m=5$ 
    & 0 & 0 & 0 & ${201 \lambda \over 4096}$ & 0\\
    $m=6$ 
    & 0 & $-{\lambda \over 32}$ & 0 & 0\\
    $m=7$ 
    & 0 & 0 & 0\\
    $m=8$ 
    & 0 & $-{17 \lambda \over 128}$ \\
    $m=9$ 
    & 0\\
    \bottomrule
  \end{tabular}
  \caption{The values of $B_{n,m}$ for $n+m \leq 9$.} 
\end{table}

\noindent The first rows of these tables can be produced, as in
Section~\ref{sec:C3Z3}, using the fact that the $I$-function
\eqref{eq:IC3Z4} gives a family $x \mapsto x^{-\lambda/z} I_\cX(x,-z)$
of elements of $\cL_{[\CC^3/\ZZ_4]}$.  The rest of the tables can be
produced in the same way, using the fact that
\[
  I'(x,z) := z \, 
  \sum_{k,l \geq 0} {x_1^k x_2^l  \over k! \, l! \, z^{k+l}}
  \textstyle \prod_{\substack{b : 0 \leq b < {k+2l  \over 4}, \\ \fr{b} = \fr{k+2l
        \over 4}}} 
  \big(\textstyle {\lambda \over 4} - b z)^2 \;
  \prod_{\substack{b : 0 \leq b < {k  \over 2}, \\ \fr{b} = \fr{k
        \over 2}}} 
  \big(\textstyle {\lambda \over 2} - b z) \;
  \fun_{\fr({k+2l\over 4})}
\]
gives a family $x \mapsto I'(x,z)$ of elements of
$\cL_{[\CC^3/\ZZ_4]}$: this is an immediate consequence of
\cite{CCIT:computing}*{Theorem~4.6 and~Proposition~6.1}.

\subsection*{Genus--Zero Gromov--Witten Invariants of
  $K_{\PP(1,1,2)}$}

Set
\begin{align*}
  & a_d = \correlator{\phantom{\mid}}^{K_{\PP(1,1,2)}}_{0,0,d}, &
  & b_d = \correlator{p^2}^{K_{\PP(1,1,2)}}_{0,1,d}, &
  & c_d = \correlator{\fun_{1/2}}^{K_{\PP(1,1,2)}}_{0,1,d},
\end{align*}
where $a_d$ is the correlator with no insertions.  Applying
Corollary~\ref{cor:KP112mirror}, just as in Section~\ref{sec:C3Z3},
gives:

\begin{table}[hbtp]
  \centering
  \begin{tabular}{@{}ccccccc@{}} \toprule
    & 
    $d=1$ &
    $d=2$ &
    $d=3$ &
    $d=4$ &
    $d=5$ &
    $d=6$ \\ \midrule
    $a_d$ 
    & $-\frac{7}{2}$ & $-\frac{265}{16}$ & $-\frac{5471}{27}$ &
    $-\frac{467721}{128}$ & $-\frac{20372507}{250}$ &
    $-\frac{448940081}{216}$  \\ \addlinespace[0.2em]
    $b_d$
    & $\frac{11 \lambda }{4}$ & $\frac{525 \lambda }{16}$ &
    $\frac{6152 \lambda }{9}$ & $\frac{1146765 \lambda }{64}$ &
    $\frac{53305261 \lambda }{100}$ & $\frac{51550873 \lambda }{3}$ \\
    \bottomrule
  \end{tabular}
  \caption{The values of $a_d$ and $b_d$ for $d\leq 6$.} 
\end{table}

\begin{table}[hbtp]
  \centering
  \begin{tabular}{@{}cccccccc@{}} \toprule
    & 
    $d={1 \over 2}$ &
    $d={3 \over 2}$ &
    $d={5 \over 2}$ &
    $d={7 \over 2}$ &
    $d={9 \over 2}$ &
    $d={11 \over 2}$ &
    $d={13 \over 2}$ \\ \midrule
    $c_d$ 
    & $-2$ & $-\frac{52}{9}$ & $-\frac{2002}{25}$ &
    $-\frac{83004}{49}$ & $-\frac{3554552}{81}$ &
    $-\frac{154984300}{121}$ & $-\frac{6835086702}{169}$\\
    \bottomrule
  \end{tabular}
  \caption{The values of $c_d$ for $d\leq 7$.} 
\end{table}

\begin{bibdiv}
\begin{biblist}

\bib{AGV:1}{article}{
   author={Abramovich, Dan},
   author={Graber, Tom},
   author={Vistoli, Angelo},
   title={Algebraic orbifold quantum products},
   conference={
      title={Orbifolds in mathematics and physics},
      address={Madison, WI},
      date={2001},
   },
   book={
      series={Contemp. Math.},
      volume={310},
      publisher={Amer. Math. Soc.},
      place={Providence, RI},
   },
   date={2002},
    pages={1--24},
   review={\MR{1950940 (2004c:14104)}},
}

\bib{AGV:2}{article}{
  author={Abramovich, Dan},
  author={Graber, Tom},
  author={Vistoli, Angelo},
  title={Gromov--Witten theory of Deligne--Mumford stacks},
  date={2006},
  eprint={arXiv:math.AG/0603151},
}

\bib{ABK}{article}{
  author={Aganagic, Mina},
  author={Bouchard, Vincent},
  author={Klemm, Albrecht},
  title={Topological Strings and (Almost) Modular Forms},
  date={2006},
  eprint={hep-th/0607100},
}
  
\bib{Audin}{book}{
  author={Audin, Mich{\`e}le},
  title={Torus actions on symplectic manifolds},
  series={Progress in Mathematics},
  volume={93},
  publisher={Birkh\"auser Verlag},
  place={Basel},
  date={2004},
  pages={viii+325},
  isbn={3-7643-2176-8},
  review={\MR{2091310 (2005k:53158)}},
}

\bib{Barannikov:periods}{article}{
  author={Barannikov, Serguei},
  title={Quantum periods. I. Semi-infinite variations of Hodge structures},
  journal={Internat. Math. Res. Notices},
  date={2001},
  number={23},
  pages={1243--1264},
  issn={1073-7928},
  review={\MR{1866443 (2002k:32017)}},
}

\bib{Bertram--Kley}{article}{
   author={Bertram, Aaron},
   author={Kley, Holger P.},
   title={New recursions for genus-zero Gromov-Witten invariants},
   journal={Topology},
   volume={44},
   date={2005},
   number={1},
   pages={1--24},
   issn={0040-9383},
   review={\MR{2103998 (2005h:14131)}},
}

\bib{Boissiere--Mann--Perroni:1}{article}{
  title = {Crepant resolutions of weighted projective spaces and
    quantum deformations},
  author = {Boissiere, Samuel},
  author = {Mann, Etienne},
  author = {Perroni, Fabio},
  date={2006},
  eprint = {arXiv:math/0610617v2},
}
\bib{Boissiere--Mann--Perroni:2}{article}{
  title = {The cohomological crepant resolution conjecture for $\PP(1,3,4,4)$},
  author = {Boissiere, Samuel},
  author = {Mann, Etienne},
  author = {Perroni, Fabio},
  date = {2007},
  eprint = {arXiv:0712.3248v1},
}

\bib{Brini--Tanzini}{article}{ 
  author = {Brini, Andrea},
  author = {Tanzini, Alessandro},
  title = {Exact results for topological strings on resolved $Y^{p,q}$
    singularities},
  date = {2008},
  eprint = {arXiv:0804:2598v1},
}

\bib{Bryan--Gholampour}{article}{
  title = {Hurwitz-Hodge Integrals, the E6 and D4 root systems, and
    the Crepant Resolution Conjecture},
  author = {Bryan, Jim},
  author = {Gholampour, Amin },
  date= {2007},
  eprint = {arXiv:0708.4244v1},
}

\bib{Bryan--Graber}{article}{
  author = {Bryan, Jim},
  author = {Graber, Tom},
  title = {The Crepant Resolution Conjecture},
  date = {2006},
  eprint = {arXiv:math.AG/0610129},
}

\bib{Bryan--Graber--Pandharipande}{article}{
  title = {The orbifold quantum cohomology of $\CC^2/\ZZ_3$ and
    Hurwitz--Hodge integrals},
  author = {Bryan, Jim},
  author = {Graber, Tom},
  author = {Pandharipande, Rahul},
  date = {2005},
  eprint = {arXiv:math/0510335v1},
}

\bib{CDGP}{article}{
   author={Candelas, Philip},
   author={de la Ossa, Xenia C.},
   author={Green, Paul S.},
   author={Parkes, Linda},
   title={A pair of Calabi-Yau manifolds as an exactly soluble
   superconformal theory},
   journal={Nuclear Phys. B},
   volume={359},
   date={1991},
   number={1},
   pages={21--74},
   issn={0550-3213},
   review={\MR{1115626 (93b:32029)}},
}

\bib{Cadman--Cavalieri}{article}{
  title = {Gerby Localization, $\ZZ_3$-Hodge Integrals and the GW
    Theory of $\CC^3/\ZZ_3$},
  author = {Cadman, Charles},
  author = {Cavalieri, Renzo},
  date = {2007},
  eprint = {arXiv:0705.2158v3},
  }

\bib{Chen--Ruan:orbifolds}{article}{
   author={Chen, Weimin},
   author={Ruan, Yongbin},
   title={A new cohomology theory of orbifold},
   journal={Comm. Math. Phys.},
   volume={248},
   date={2004},
   number={1},
   pages={1--31},
   issn={0010-3616},
   review={\MR{2104605 (2005j:57036)}},
}

\bib{Chen--Ruan:GW}{article}{
   author={Chen, Weimin},
   author={Ruan, Yongbin},
   title={Orbifold Gromov--Witten theory},
   conference={
      title={Orbifolds in mathematics and physics},
      address={Madison, WI},
      date={2001},
   },
   book={
      series={Contemp. Math.},
      volume={310},
      publisher={Amer. Math. Soc.},
      place={Providence, RI},
   },
   date={2002},
   pages={25--85},
   review={\MR{1950941 (2004k:53145)}},
}

\bib{Chiang--Klemm--Yau--Zaslow}{article}{
   author={Chiang, T.-M.},
   author={Klemm, A.},
   author={Yau, S.-T.},
   author={Zaslow, E.},
   title={Local mirror symmetry: calculations and interpretations},
   journal={Adv. Theor. Math. Phys.},
   volume={3},
   date={1999},
   number={3},
   pages={495--565},
   issn={1095-0761},
   review={\MR{1797015 (2002e:14064)}},
}

\bib{CCIT:crepant1}{article}{
  title = {Wall-Crossings in Toric Gromov--Witten Theory I: Crepant
    Examples},
  author = {Tom Coates},
  author = {Alessio Corti},
  author = {Hiroshi Iritani},
  author = {Hsian-Hua Tseng},
  date = {2006},
  eprint = {arXiv:math.AG/0611550}
}

\bib{CCIT:computing}{article}{
  title = {Computing Genus-Zero Twisted Gromov-Witten Invariants},
  author = {Tom Coates},
  author = {Alessio Corti},
  author = {Hiroshi Iritani},
  author = {Hsian-Hua Tseng},
  date = {2007},
  eprint = {arXiv:math/0702234},
}

\bib{CCIT:typeA}{article}{
  title = {The Crepant Resolution Conjecture for Type A Surface
    Singularities},
  author = {Tom Coates},
  author = {Alessio Corti},
  author = {Hiroshi Iritani},
  author = {Hsian-Hua Tseng},
  date = {2007},
  eprint = {arXiv:0704.2034v2},
}

\bib{CCIT:stacks}{article}{
  title = {The Small Quantum Orbifold Cohomology of Fano Toric Deligne--Mumford
    Stacks},
  author = {Tom Coates},
  author = {Alessio Corti},
  author = {Hiroshi Iritani},
  author = {Hsian-Hua Tseng},
  status = {in preparation},
}

\bib{CCLT}{article}{
  title={The Quantum Orbifold Cohomology of Weighted Projective Space},
  author={Coates, Tom},
  author={Corti, Alessio},
  author={Lee, Yuan-Pin},
  author={Tseng, Hsian-Hua},
  date={2006},
  eprint={arXiv:math.AG/0608481},
}

\bib{Coates--Givental:QRRLS}{article}{
   author={Coates, Tom},
   author={Givental, Alexander},
   title={Quantum Riemann--Roch, Lefschetz and Serre},
   journal={Ann. of Math. (2)},
   volume={165},
   date={2007},
   number={1},
   pages={15--53},
   issn={0003-486X},
   review={\MR{2276766 (2007k:14113)}},
}

\bib{Coates--Iritani:inprep}{article}{
  author = {Tom Coates,}
  author = {Hiroshi Iritani},
  status = {in preparation},
}
\bib{Coates--Ruan}{article}{
  title={Quantum Cohomology and Crepant Resolutions: A Conjecture},
  author={Coates, Tom},
  author={Ruan, Yongbin},
  date={2007},
  eprint={arXiv:0710.5901v2},
}

\bib{Corti}{article}{
   author={Corti, Alessio},
   title={What is$\dots$a flip?},
   journal={Notices Amer. Math. Soc.},
   volume={51},
   date={2004},
   number={11},
   pages={1350--1351},
   issn={0002-9920},
   review={\MR{2105240}},
}
		
 \bib{Dubrovin}{article}{
    author={Dubrovin, Boris},
    title={Geometry of $2$D topological field theories},
    conference={
       title={Integrable systems and quantum groups},
       address={Montecatini Terme},
       date={1993},
    },
    book={
       series={Lecture Notes in Math.},
       volume={1620},
       publisher={Springer},
       place={Berlin},
    },
    date={1996},
    pages={120--348},
    review={\MR{1397274 (97d:58038)}},
 }
		
\bib{Elezi}{article}{
   author={Elezi, Artur},
   title={Mirror symmetry for concavex vector bundles on projective spaces},
   journal={Int. J. Math. Math. Sci.},
   date={2003},
   number={3},
   pages={159--197},
   issn={0161-1712},
   review={\MR{1903033 (2004c:14106)}},
}

\bib{Gillam}{article}{
  title = {The Crepant Resolution Conjecture for 3-dimensional flags
    modulo an involution},
  author = {Gillam, W. D.},
  date = {2007},
  eprint = {arXiv:0708.0842v1},
}

\bib{Givental:equivariant}{article}{
   author={Givental, Alexander B.},
   title={Equivariant Gromov-Witten invariants},
   journal={Internat. Math. Res. Notices},
   date={1996},
   number={13},
   pages={613--663},
   issn={1073-7928},
   review={\MR{1408320 (97e:14015)}},
}
		
\bib{Givental:toric}{article}{
   author={Givental, Alexander B.},
   title={A mirror theorem for toric complete intersections},
   conference={
      title={Topological field theory, primitive forms and related topics
      (Kyoto, 1996)},
   },
   book={
      series={Progr. Math.},
      volume={160},
      publisher={Birkh\"auser Boston},
      place={Boston, MA},
   },
   date={1998},
   pages={141--175},
   review={\MR{1653024 (2000a:14063)}},
}

\bib{Givental:elliptic}{article}{
   author={Givental, Alexander},
   title={Elliptic Gromov-Witten invariants and the generalized mirror
   conjecture},
   conference={
      title={Integrable systems and algebraic geometry},
      address={Kobe/Kyoto},
      date={1997},
   },
   book={
      publisher={World Sci. Publ., River Edge, NJ},
   },
   date={1998},
   pages={107--155},
   review={\MR{1672116 (2000b:14074)}},
}
	
 \bib{Givental:quantization}{article}{
   author={Givental, Alexander B.},
   title={Gromov-Witten invariants and quantization of quadratic
     Hamiltonians},
   language={English, with English and Russian summaries},
   journal={Mosc. Math. J.},
   volume={1},
   date={2001},
   number={4},
   pages={551--568, 645},
   issn={1609-3321},
   review={\MR{1901075 (2003j:53138)}},
 } 

\bib{Givental:symplectic}{article}{
  author={Givental, Alexander B.},
  title={Symplectic geometry of Frobenius structures},
  conference={
    title={Frobenius manifolds},
  },
  book={
    series={Aspects Math., E36},
    publisher={Vieweg},
    place={Wiesbaden},
  },
  date={2004},
  pages={91--112},
  review={\MR{2115767 (2005m:53172)}},
}

\bib{Graber--Pandharipande}{article}{
   author={Graber, T.},
   author={Pandharipande, R.},
   title={Localization of virtual classes},
   journal={Invent. Math.},
   volume={135},
   date={1999},
   number={2},
   pages={487--518},
   issn={0020-9910},
   review={\MR{1666787 (2000h:14005)}},
}

\bib{Hori--Vafa}{article}{
  author={Hori, Kentaro},
  author={Vafa, Cumrun}, 
  title={Mirror symmetry}, 
  date={2000},
  eprint={arXiv: hep-th/0002222}
}

\bib{Horja}{article}{
   author={Horja, Paul R.},
   title={Hypergeometric functions and mirror symmetry in toric varieties},
   date = {1999},
   eprint={math.AG/9912109}
}

\bib{Iritani:gen}{article}{
  author={Iritani, Hiroshi}, 
  title={Quantum $D$-modules and generalized mirror transformations} ,
  date = {2004},
  eprint={arXiv:math.DG/0411111} 
}  

\bib{Iritani:integral}{article}{
  author={Iritani, Hiroshi}, 
  title={Real and integral structures in quantum cohomology I: toric
    orbifolds},
  date = {2007},
  eprint={arXiv:0712.2204v2},
}

\bib{Iritani:inprogress}{article}{
  author={Iritani, Hiroshi},
  title={Wall-Crossings in Toric Gromov--Witten Theory III},
  status={work in progress},
}  

\bib{Kontsevich--Manin:GW}{article}{
  author={Kontsevich, M.},
  author={Manin, Yu.},
  title={Gromov-Witten classes, quantum cohomology, and enumerative
    geometry},
  journal={Comm. Math. Phys.},
  volume={164},
  date={1994},
  number={3},
  pages={525--562},
  issn={0010-3616},
  review={\MR{1291244 (95i:14049)}},
}

 \bib{Lee--Pandharipande}{article}{
    author={Lee, Y.-P.},
    author={Pandharipande, R.},
    title={A reconstruction theorem in quantum cohomology and quantum
    $K$-theory},
    journal={Amer. J. Math.},
    volume={126},
    date={2004},
    number={6},
    pages={1367--1379},
    issn={0002-9327},
    review={\MR{2102400 (2006c:14082)}},
 }

\bib{Lian--Liu--Yau:I}{article}{
   author={Lian, Bong H.},
   author={Liu, Kefeng},
   author={Yau, Shing-Tung},
   title={Mirror principle. I},
   journal={Asian J. Math.},
   volume={1},
   date={1997},
   number={4},
   pages={729--763},
   issn={1093-6106},
   review={\MR{1621573 (99e:14062)}},
}	

\bib{Perroni}{article}{
   author={Perroni, Fabio},
   title={Chen-Ruan cohomology of $ADE$ singularities},
   journal={Internat. J. Math.},
   volume={18},
   date={2007},
   number={9},
   pages={1009--1059},
   issn={0129-167X},
   review={\MR{2360646}},
}

\bib{Reid}{article}{
   author={Reid, Miles},
   title={Young person's guide to canonical singularities},
   conference={
      title={Algebraic geometry, Bowdoin, 1985},
      address={Brunswick, Maine},
      date={1985},
   },
   book={
      series={Proc. Sympos. Pure Math.},
      volume={46},
      publisher={Amer. Math. Soc.},
      place={Providence, RI},
   },
   date={1987},
   pages={345--414},
   review={\MR{927963 (89b:14016)}},
}

\bib{Rose}{article}{
  author={Rose, Michael}
  title={A Reconstruction theorem for genus zero 
    Gromov--Witten invariants of stacks.} 
  date = {2006},
  eprint={math.AG/0605776}
}

\bib{Ruan:CCRC}{article}{
   author={Ruan, Yongbin},
   title={The cohomology ring of crepant resolutions of orbifolds},
   conference={
      title={Gromov-Witten theory of spin curves and orbifolds},
   },
   book={
      series={Contemp. Math.},
      volume={403},
      publisher={Amer. Math. Soc.},
      place={Providence, RI},
   },
   date={2006},
   pages={117--126},
   review={\MR{2234886 (2007e:14093)}},
}
		
\bib{Wise}{article}{
  title = {The genus zero Gromov-Witten invariants of $\big[\mathop{Sym}^2
    \PP^2\big]$},
  author = {Wise, Jonathan },
  date = {2007},
  eprint = {arXiv:math/0702219v2},
}

\end{biblist}
\end{bibdiv}

\end{document}